 \newtheorem{theorem}{Theorem}[section]
 \newtheorem{proposition}[theorem]{Proposition}
 \newtheorem{lem}[theorem]{Lemma}
 \newtheorem{cor}[theorem]{Corollary}
 \theoremstyle{definition}
 \newtheorem{defi}[theorem]{Definition}
 \theoremstyle{remark}
 \newtheorem{remark}[theorem]{Remark}
\numberwithin{equation}{section}
\renewcommand{\Re}{\operatorname{Re}}
\newcommand{\ran}{\operatorname{ran}}
\renewcommand{\Im}{\operatorname{Im}}
\newcommand{\rg}{\operatorname{rg}}
\newcommand{\trig}{\text{trig}}
\DeclarePairedDelimiter\ceil{\lceil}{\rceil}
\DeclarePairedDelimiter\floor{\lfloor}{\rfloor}
\newcommand{\R}{\mathbb{R}}
\newcommand{\la}{\lambda}
\renewcommand{\O}{\mathcal{O}}
\newcommand{\C}{\mathbb{C}}
\newcommand{\uf}{\textup{\textbf{u}}}
\newcommand{\Hf}{\textup{\textbf{H}}}
\newcommand{\vf}{\textup{\textbf{v}}}
\newcommand{\Af}{\textup{\textbf{A}}}
\newcommand{\hf}{\textup{\textbf{h}}}
\newcommand{\Lf}{\textup{\textbf{L}}}
\newcommand{\Sf}{\textup{\textbf{S}}}
\newcommand{\I}{\textup{\textbf{I}}}
\newcommand{\Uf}{\textup{\textbf{U}}}
\newcommand{\Rf}{\textup{\textbf{R}}}
\newcommand{\Kf}{\textup{\textbf{K}}}
\newcommand{\gf}{\textup{\textbf{g}}}
\newcommand{\rf}{\textup{\textbf{r}}}
\newcommand{\ff}{\textup{\textbf{f}}}
\newcommand{\Cf}{\textup{\textbf{C}}}
\newcommand{\Pf}{\textup{\textbf{P}}}
\newcommand{\Rm}{\mathcal{R}}
\newcommand{\Nf}{\textup{\textbf{N}}}
\newcommand{\X}{\mathcal{X}}
\newcommand{\B}{\mathbb{B}}
\title[]{Existence and stability of discretely self-similar blowup for a wave maps type equation}
\author{Irfan Glogi\'c}
\address{Fakult\"at f\"ur Mathematik, Universit\"at Bielefeld, D-33501 Bielefeld, Germany}
\email{irfan.glogic@uni-bielefeld.de}
\author{David Hilditch}
\address{Departamento de F\'isica, Instituto Superior T\'ecnico, Universidade de Lisboa, Avenida Rovisco Pais 1, 1049-001 Lisboa, Portugal}
\email{david.hilditch@tecnico.ulisboa.pt}
\author{David Wallauch}
\address{EPFL SB MATH PDE, Batiment MA, Station 8, CH-1015 Lausanne}
\email{david.wallauch@epfl.ch}
\thanks{This research was funded in whole or in part by the Austrian Science Fund (FWF) 10.55776/PAT5825523, and by FCT/Portugal 2023.12549.PEX and UID/99/2025.}
\begin{document}
\maketitle

\begin{abstract}
	We study finite-time blowup for a nonlinear wave equation for maps from the Minkowski space $\R^{1+d}$ into the 1-sphere $\mathbb{S}^1$, whose nonlinearity exhibits a null-form structure. We construct, for every dimension $d \geq 1$, a countable family of discretely self-similar blowup solutions, which are even for $d=1$ and radial for $d \geq 2$. The main contribution of the paper is a detailed nonlinear stability analysis of this family of solutions. For $d \geq 2$, we consider radial data, while in $d=1$ we allow for general perturbations. After linearizing around the self-similar profiles in similarity variables, we construct resolvents of the resulting highly non-self-adjoint operators through Liouville-Green transformations and precise Volterra-type asymptotics. 
    The construction itself, which occupies most of the paper, is technically challenging, as it is performed in arbitrary dimensions and for a countable family of operators in each.
    Combined with a detailed spectral analysis of the linearized operators, this yields  sharp semigroup bounds and allows us to establish nonlinear stability of all discretely self-similar profiles in all dimensions, with precise co-dimension determined by the unstable spectrum. To our knowledge, this is the first result on the existence and stability of discretely self-similar blowup for a geometric wave equation.

\end{abstract}

\section{Introduction}

\noindent Self-similar solutions often play an important role in blowup dynamics of nonlinear wave equations, capturing both the generic mechanism of singularity formation and the structure of solutions near the threshold for blowup; see, e.g., \cite{Biz01,BizBie15,BieBizMal17,GloMalSch20}. While continuously self-similar blowup has been the focus of extensive rigorous analysis over the past two decades, discretely self-similar (DSS) blowup, despite playing a significant role in numerical and heuristic studies of critical phenomena in hyperbolic contexts (most notably in the work of Choptuik on gravitational collapse \cite{Cho93}), is far less understood from a rigorous perspective. Motivated by the desire to develop a mathematically tractable framework for DSS phenomena, in this paper we study a semilinear wave equation whose geometric structure permits the existence of discretely self-similar solutions.

 We consider maps $U$ from the Minkowski space $(\R^{1+d},\eta)$ into the 1-sphere $\mathbb{S}^1$, where $\eta$ denotes the flat Minkowski metric $(\eta_{\alpha\beta})=\text{diag}(-1,1,\dots,1)$ in the Euclidean coordinate system on $\R^{1+d}$. By means of the canonical embedding $\mathbb{S}^1 \hookrightarrow \R^2$, we think of $U$ as a pair of real functions $U_1,U_2$ satisfying the constraint
\begin{equation}\label{Eq:Unit_sphere_cond}
	U_1^2 + U_2^2 = 1.
\end{equation}
For $U=(U_1,U_2)$, we consider the following system of nonlinear wave equations\footnote{Here, and throughout the paper,  the Einstein summation convention is in force, with Greek indices running from $0$ to $d$, and Latin indices going from $1$ to $d$. Also, the indices are raised and lowered with respect to the Minkowski metric $\eta$. Furthermore, following standard notation, we denote the $0$-th coordinate by $t$ and the remaining ones by $x^i$, $i = 1, \dots, d$.}
\begin{equation}\label{Eq:WM_system}
\begin{split}
	\partial^{\alpha}\partial_{\alpha}U_1+(U_1+U_2)(\partial^{\alpha}U_1\partial_{\alpha}U_1 + \partial^{\alpha}U_2\partial_{\alpha}U_2)=0, 	\\
	\partial^{\alpha}\partial_{\alpha}U_2+(U_2-U_1)(\partial^{\alpha}U_1\partial_{\alpha}U_1 + \partial^{\alpha}U_2\partial_{\alpha}U_2)=0,
	\end{split}
\end{equation}
with initial data 
\begin{equation}\label{id}
    U[0]:=(U,\partial_t U)\vert_{t=0}
\end{equation}
taking values in the tangent bundle $T\mathbb{S}^1$. The latter condition can be expressed, in terms of the coordinate functions $U_1,U_2$, by means of \eqref{Eq:Unit_sphere_cond} and
\begin{equation}\label{Eq:Ort_cond}
	U_1\, \partial_t U_1 + U_2 \, \partial_t U_2 = 0,
\end{equation}
for $t=0$. Since \eqref{Eq:WM_system} is semilinear, well-posedness for initial data of high regularity, say $U[0] \in H^s \times H^{s-1} (\R^d)$ for $s > \frac{d}{2}+1$, is classical.\footnote{Strictly speaking, this does not apply to initial data satisfying the sphere condition \eqref{Eq:Unit_sphere_cond}. To accommodate  such data, it suffices to consider spaces  $H_{loc,u}^s \times H_{loc,u}^{s-1} (\R^d)$, consisting of functions that have possibly infinite $H^s \times H^{s-1}$-norm, which is, however, uniformly locally bounded. Well-posedness in these spaces follows directly from local existence in lightcones.}
We note that the nonlinearity in \eqref{Eq:WM_system} possesses the so-called \emph{null structure}  \cite{Kla84}, which can be exploited in order to lower the Sobolev regularity $s$. Equations with such nonlinear structure were, in fact, extensively studied by Klainerman and Machedon during the 1990s. In particular, the problem of optimal local well-posedness is fairly well understood; see \cite{KlaMac93,KlaMac94,KlaMac95,KlaMac96}. We point out that if the initial data for \eqref{Eq:WM_system} also satisfy \eqref{Eq:Unit_sphere_cond}-\eqref{Eq:Ort_cond}, then local solutions necessarily obey \eqref{Eq:Unit_sphere_cond}. This establishes well-posedness of the flow in $\mathbb{S}^1$, as well as the persistence of regularity:~smooth data solutions are smooth for as long as they exist. 

In view of the nonlinear nature of \eqref{Eq:WM_system}, a central question following local well-posedness is that of finite-time singularity formation:
\begin{center}
	\emph{Can smooth initial data lead to loss of smoothness in finite time?}
\end{center}
  Once the occurrence of blowup is confirmed, the analysis naturally proceeds to the classification of all possible blowup profiles, with particular focus on identifying those that are generic, i.e., that persist under small perturbations.
In this paper, we consider these questions for \eqref{Eq:WM_system}, in all dimensions $d \geq 1$, where for $d \geq 2$ we restrict to the radial case. We note that the symbol $d$ is reserved exclusively for the spatial dimension, and is therefore assumed to be positive integer throughout the whole paper.

\subsection{Self-similar solutions}
 Since \eqref{Eq:WM_system} is invariant under the  rescaling 
\begin{equation}\label{Def:Scaling}
	U \mapsto U_{\la}, \quad \text{where} \quad U_{\la}(t,x)= U\left(\frac{t}{\la},\frac{x}{\la} \right), \quad \la > 0,
\end{equation}
it is plausible to look for (backward) self-similar solutions, which represent concrete examples of finite-time blowup. Self-similar solutions, by definition, are invariant under \eqref{Def:Scaling}. More precisely, a classical solution $U$ to \eqref{Eq:WM_system} on the backward lightcone of $(0,0)$  
$$
\Gamma^-:=\{(t,x)\in \R^{1+d}: t < 0, \ |x|\leq -t\}=\bigcup_{t \in (-\infty,0)} \{t\} \times \overline{\B^d_{-t}},
$$
satisfying $U_\la=U$ for all $\la>0$, is called \emph{continuously self-similar}.  Consequently, for $(t,x) \in \Gamma^-$, we have that
\begin{equation*}
	U(t,x)=U_{-t}(t,x)=U\left(\frac{t}{-t},\frac{x}{-t} \right)=U\left(-1,\frac{x}{-t} \right),
\end{equation*} 
and therefore
\begin{equation}\label{Def:CSS}
	U(t,x)=Z\left( \xi \right), \quad \text{where} \quad \xi = \frac{x}{-t},
\end{equation} 
for some $Z:\overline{\mathbb{B}^d_1} \rightarrow \mathbb{S}^1$. If, on the other hand, $U=U_\la$ holds only for a non-empty discrete set of positive values of $\la$ different from 1, then we speak of a \emph{discretely self-similar solution}. Furthermore, as the invariance under \eqref{Def:Scaling} for one value of $\la$ must also hold for all integer powers of $\la$, we can, without loss of generality, assume that $\la > 1$. In that case, by defining $W:\R \times \overline{\mathbb{B}^d_1}  \rightarrow \mathbb{S}^1$ via
\begin{equation}\label{Def:DSS}
	U(t,x)=W(\tau,\xi), \quad \text{where} \quad \tau=- \ln (-t), \quad \xi = \frac{x}{-t},
\end{equation}
we get that for $\mathcal{T}=\ln \la$ and $\tau \in \R$
\begin{equation*}\label{Eq:Period_T_0}
	W(\tau+\mathcal{T},\xi)=U(-e^{-(\tau+\mathcal{T})},\xi e^{-(\tau+\mathcal{T})})= U\left(\frac{-e^{-\tau}}{\la},\frac{\xi e^{-\tau}}{\la} \right)=U(-e^{-\tau},\xi e^{-\tau})= W(\tau,\xi). 
\end{equation*}
In other words, $W$ is periodic in $\tau$
\begin{equation}\label{Eq:Period_T}
	W(\tau+\mathcal{T},\xi)= W(\tau,\xi) \quad  \text{for some} \quad \mathcal{T}>0. 
\end{equation}
Note that both continuously and discretely self-similar solutions that are smooth and non-trivial, exhibit finite-time blowup as $t \rightarrow 0^-$. More precisely, a non-constant similarity profile
\begin{equation}\label{Eq:V}
	Z \in C^\infty \times C^{\infty}(\overline{\mathbb{B}^d_1}),
\end{equation}
respectively
\begin{equation}\label{Eq:W}
	W \in C^\infty \times C^\infty (\R \times \overline{\mathbb{B}^d_1}),
\end{equation}
can, by finite speed of propagation, be smoothly extended outside the closed unit ball $\overline{\mathbb{B}^d_1}$, in an appropriate manner, thereby producing smooth initial data for \eqref{Eq:WM_system} that satisfy \eqref{Eq:Unit_sphere_cond}-\eqref{Eq:Ort_cond} at $t=-1$, and whose subsequent evolution leads to a loss of regularity at the origin as $t \rightarrow 0^-$.

\subsubsection{Continuously self-similar (CSS) solutions}
Continuously self-similar solutions abound in hyperbolic evolution equations, particularly in those that are supercritical. In fact, existence and stability of CSS solutions in hyperbolic contexts has been a very active area of research over the last two to three decades. The main protagonist on the numerical and heuristic side has been Bizoń; see, e.g., \cite{Biz00,Biz02,BizChm05,BizMaiWas07,BizBie15}. On the rigorous side, there are numerous additional works, e.g., for wave maps \cite{Don11,DonSchAic12,Glo25}, semilinear wave equation \cite{MerZaa07,DonSch12,DonSch14,Ost24}, hyperbolic Yang-Mills equations \cite{Don14,Glo24,DonOst24}, compressible Euler (and Navier-Stokes) equations \cite{MerRapRod22,MerRapRod22a}, the Euler-Poisson system \cite{GuoHadJan21,GuoHadJan22,GuoHadJan25}, and the Einstein-Euler system \cite{GuoHadJan23}.
From these works, one infers that the nature of blowup (both generic and non-generic) is most often described by radial CSS solutions. In stark contrast to the above-mentioned models, this turns out not to be the case for \eqref{Eq:WM_system}.
\begin{theorem}{\emph{(Nonexistence of continuously self-similar blowup)}.}\label{thm:nonexistence}
	For $d=1$, and for $d \geq 2$ under radial symmetry, the system \eqref{Eq:WM_system} admits no continuously self-similar blowup solutions. More precisely, there exists no non-constant classical solution to \eqref{Eq:WM_system} on $\Gamma^-$ of the form \eqref{Def:CSS} for which \eqref{Eq:V} holds. 
\end{theorem}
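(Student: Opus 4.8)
The plan is to reduce the problem to an ODE analysis by exploiting the sphere constraint \eqref{Eq:Unit_sphere_cond} to pass to an angular variable. Writing $U=(\cos\theta,\sin\theta)$ and substituting into \eqref{Eq:WM_system}, the null-form combination $\partial^\alpha U_1\partial_\alpha U_1+\partial^\alpha U_2\partial_\alpha U_2$ collapses to $\partial^\alpha\theta\,\partial_\alpha\theta$, while $\partial^\alpha\partial_\alpha U_1$ produces $-\sin\theta\,\partial^\alpha\partial_\alpha\theta-\cos\theta\,\partial^\alpha\theta\partial_\alpha\theta$, and similarly for $U_2$. The two equations in \eqref{Eq:WM_system} should then both be equivalent to the single scalar equation $\partial^\alpha\partial_\alpha\theta=0$, i.e. $\theta$ is a solution of the free wave equation (one checks that the coefficient vectors $(U_1+U_2,U_2-U_1)$ are precisely arranged so that the nonlinear terms cancel the $\cos\theta\,\partial\theta\partial\theta$ and $\sin\theta\,\partial\theta\partial\theta$ contributions). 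This is the key structural observation: solutions of \eqref{Eq:WM_system} valued in $\mathbb{S}^1$ correspond to solutions of $\Box\theta=0$.

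Next I would impose the self-similar ansatz \eqref{Def:CSS}. A CSS solution $U(t,x)=Z(\xi)$ with $\xi=x/(-t)$ corresponds to $\theta(t,x)=\Theta(\xi)$ for some (locally defined, since $\theta$ need only be defined modulo $2\pi$) smooth function $\Theta:\overline{\mathbb{B}^d_1}\to\R$; in the radial case $\Theta=\Theta(\rho)$ with $\rho=|\xi|\in[0,1]$. Substituting the self-similar form into $\Box\theta=0$ and converting to the variable $\rho$ yields a linear second-order ODE for $\Theta$ of the form
\begin{equation*}
	(1-\rho^2)\Theta''(\rho)+\left(\frac{d-1}{\rho}-a\rho\right)\Theta'(\rho)=0
\end{equation*}
for an explicit constant $a$ (coming from the self-similar reduction of the d'Alembertian; for $d=1$ the $\frac{d-1}{\rho}$ term is absent and one treats the even solution on $[0,1]$). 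This is a first-order linear ODE in $\Theta'$, so it can be solved explicitly by separation of variables: $\Theta'(\rho)=C\rho^{-(d-1)}(1-\rho^2)^{b}$ for suitable exponent $b$. Regularity at $\rho=0$ forces $C=0$ for $d\geq 2$ (the factor $\rho^{-(d-1)}$ is non-integrable / singular unless $C=0$), while for $d=1$ evenness forces $\Theta'(0)=0$, and then analysis of the resulting expression on $[0,1]$ together with the requirement that $\Theta\in C^\infty(\overline{\mathbb{B}^d_1})$ up to the light cone $\rho=1$ again forces $C=0$. Hence $\Theta$ is constant, so $Z$ is constant, contradicting non-triviality.

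The step I expect to require the most care is verifying that the correspondence between \eqref{Eq:WM_system} and $\Box\theta=0$ is genuinely an equivalence at the level of the CSS problem, and in particular justifying the passage to the angular variable: a priori $\theta$ is only locally defined, and one must argue it extends to a single-valued smooth function on the simply connected domains $\overline{\mathbb{B}^d_1}$ (radial case) or on the relevant even-symmetric domain for $d=1$, which is where simple connectivity and smoothness of $Z$ enter. One must also be careful about the behavior at the tip $\rho=1$ of the light cone, where the ODE is singular (the coefficient $1-\rho^2$ vanishes): the explicit solution formula must be checked to be incompatible with smoothness up to and including $\rho=1$ unless the constant vanishes, which is what rules out the nontrivial branch. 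Once these points are handled, the nonexistence follows immediately from the explicit integration of the reduced ODE. An alternative, perhaps cleaner, route avoiding the angular variable is to substitute \eqref{Def:CSS} directly into \eqref{Eq:WM_system}, obtain a coupled first-order system for $(Z_1',Z_2')$ along rays, use the constraint $Z_1^2+Z_2^2=1$ (hence $Z_1Z_1'+Z_2Z_2'=0$) to decouple, and conclude that $|Z'|$ satisfies a scalar ODE whose only regular solution is $Z'\equiv 0$; the obstacle is the same, namely controlling regularity at $\rho=0$ and $\rho=1$.
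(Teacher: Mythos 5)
There is a genuine gap, and it is at the very first step. Your claimed cancellation is false: the system \eqref{Eq:WM_system} does \emph{not} reduce to the free wave equation for the angle. With $U_1=\sin\theta$, $U_2=\cos\theta$ one has $\partial^\alpha\partial_\alpha U_1=\cos\theta\,\Box\theta-\sin\theta\,\partial^\alpha\theta\partial_\alpha\theta$ and $\partial^\alpha U_1\partial_\alpha U_1+\partial^\alpha U_2\partial_\alpha U_2=\partial^\alpha\theta\partial_\alpha\theta$, so the first equation of \eqref{Eq:WM_system} becomes $\cos\theta\,\big(\Box\theta+\partial^\alpha\theta\partial_\alpha\theta\big)=0$ and the second becomes $-\sin\theta\,\big(\Box\theta+\partial^\alpha\theta\partial_\alpha\theta\big)=0$; the coefficients $(U_1+U_2,\,U_2-U_1)$ are arranged so that the \emph{two} equations collapse consistently into the \emph{single} scalar equation $\Box\theta+\partial^\alpha\theta\partial_\alpha\theta=0$ (the paper's equation \eqref{Eq:theta}), not so that the quadratic term disappears (the same happens, up to a sign, with your ordering $U=(\cos\theta,\sin\theta)$). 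Consequently your reduced ODE is missing the term $(1-\rho^2)\big(\Theta'(\rho)\big)^2$: the correct self-similar reduction is a Riccati-type equation, your separated solution $\Theta'(\rho)=C\rho^{-(d-1)}(1-\rho^2)^b$ is not its general solution, and the step ``regularity forces $C=0$'' does not apply. This matters structurally: the nonlinear ODE admits a one-parameter family of nontrivial solutions on $(0,1)$, and the real content of the theorem (handled in the paper by setting $\varphi=\Theta'$, observing that $1/\varphi$ solves a linear first-order ODE, and integrating it explicitly via hypergeometric primitives) is to show that \emph{every} member of that family is singular at $\rho=0$ when $d\geq2$, respectively at the endpoints $\pm1$ when $d=1$. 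Your ``alternative route'' via $|Z'|$ runs into exactly the same issue, since the scalar ODE it produces is again the nonlinear one.

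The linearization idea can be salvaged, but not for $\theta$ itself: since $\Box e^{\theta}=e^{\theta}\big(\Box\theta+\partial^\alpha\theta\partial_\alpha\theta\big)$, it is $v=e^{\theta}$ (a positive function) that solves the free wave equation, and a CSS solution corresponds to a positive profile $W=e^{\Theta}$ solving the linear self-similar ODE $(1-\rho^2)W''+\big(\tfrac{d-1}{\rho}-2\rho\big)W'=0$; applying your explicit integration and the regularity analysis at $\rho=0$ (for $d\geq2$) and at $\xi=\pm1$ (for $d=1$, where you should not impose evenness, since the theorem allows general one-dimensional profiles) to $W$ does yield the nonexistence, and would be a legitimate, arguably cleaner, variant of the paper's argument. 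As written, however, the proposal rests on an incorrect algebraic identity and integrates the wrong ODE, so the proof does not go through.
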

\noindent  The proof is contained in Section \ref{sec:funct_setup}, and relies on passing to polar coordinates on $\mathbb{S}^1$. Namely, by letting
\begin{equation}\label{Eq:polar}
	U_1(t,x)=\sin \theta(t,x) \quad \text{and}  \quad U_2(t,x)=\cos \theta(t,x),
\end{equation}
for $\theta: \R^{1+d} \rightarrow \R$, the system \eqref{Eq:WM_system} reduces to a single equation for the polar angle $\theta$
\begin{equation}\label{Eq:theta}
	\partial^\alpha \partial_\alpha \theta + \partial^{\alpha} \theta \partial_{\alpha} \theta=0,
\end{equation}
which is a well-known variant of the wave maps equation. This reduction also allows us to perform most of our stability analysis on \eqref{Eq:theta}, and then translate the obtained results to \eqref{Eq:WM_system} via norm equivalence relations between $U$ and $\theta$.

\subsubsection{Discretely self-similar solutions}
In contrast to the nonexistence of continuously self-similar solutions, the system \eqref{Eq:WM_system} admits, for every $d \geq 1$, infinitely many discretely self-similar solutions. What is remarkable is that these solutions can be written down in closed form.
\begin{theorem}\label{Thm:Existence}{\emph{(Existence of discretely self-similar blowup)}.}\label{thm:dss_existence}
For every $d \geq 1$, there exists a countable family of profiles $\{W_n\}_{n \in \mathbb{N}}$ that satisfy \eqref{Eq:Period_T}, \eqref{Eq:W}, and, by \eqref{Def:DSS}, yield discretely self-similar solutions to \eqref{Eq:WM_system} which are smooth on $\Gamma^-$ and blow up at $(0,0)$. The profiles $W_n$ are explicitly given by  
	\begin{equation}\label{Eq:W_anzatz}
		W_n(\tau,\xi)=\big( \sin \left( \ln[\phi_n(|\xi|)]-n\tau\right),\cos \left( \ln[\phi_n(|\xi|)]-n\tau \right) \big),
	\end{equation}
	where $\phi_n$ is a positive and even polynomial defined by the finite hypergeometric sum
	\begin{equation}\label{Def:phi_n_intro}
		\phi_n(\rho)= \sum_{j=0}^{\lfloor \frac{n}{2} \rfloor} \frac{(-\frac{n}{2})_j(\frac{1-n}{2})_j}{(\frac{d}{2})_j j!} \rho^{2j}=\,_2F_1\left(\frac{1-n}{2},-\frac{n}{2};\frac{d}{2};\rho^2\right),
	\end{equation}
where $(a)_j$ stands for $a(a+1) \cdot \dots (a+j-1)$.
\end{theorem}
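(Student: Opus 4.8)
The plan is to push everything through the polar reduction \eqref{Eq:polar}: it suffices to produce, for each $n$, a smooth discretely self-similar solution $\theta_n$ of the scalar equation \eqref{Eq:theta} on $\Gamma^-$, since then $U_n=(\sin\theta_n,\cos\theta_n)$ solves \eqref{Eq:WM_system}, and smoothness, $\tau$-periodicity (modulo $2\pi$), non-triviality, and blowup at the origin are all inherited. Concretely I would use the ansatz $\theta_n(t,x)=\Phi_n(\rho)-n\tau$ in the similarity variables $\tau=-\ln(-t)$, $\rho=|\xi|=|x|/(-t)$ — this is precisely the assertion that $(\sin\theta_n,\cos\theta_n)=W_n$ with $W_n$ as in \eqref{Eq:W_anzatz} and $\Phi_n=\ln\phi_n$ — and verify that $\phi_n$ is forced to solve the hypergeometric equation.

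Substituting this ansatz into \eqref{Eq:theta}, i.e.\ writing $\partial^\alpha\partial_\alpha\theta=-\partial_t^2\theta+\partial_r^2\theta+\tfrac{d-1}{r}\partial_r\theta$ for radial functions and carrying out the change of variables $(t,r)\mapsto(\tau,\rho)$, a direct computation reduces \eqref{Eq:theta} to the $\tau$-autonomous ODE
\begin{equation*}
(1-\rho^2)\Phi_n''+\Big[\tfrac{d-1}{\rho}+2(n-1)\rho\Big]\Phi_n'+(1-\rho^2)(\Phi_n')^2+n(1-n)=0 .
\end{equation*}
The key point — which is really what makes the problem exactly solvable — is that upon setting $\Phi_n=\ln\phi_n$ and using the log-derivative identity $\Phi_n''=\phi_n''/\phi_n-(\Phi_n')^2$, the quadratic term $(1-\rho^2)(\Phi_n')^2$ cancels, leaving the \emph{linear} equation
\begin{equation*}
(1-\rho^2)\phi_n''+\Big[\tfrac{d-1}{\rho}+2(n-1)\rho\Big]\phi_n'+n(1-n)\,\phi_n=0 .
\end{equation*}
With the substitution $z=\rho^2$ this becomes the Gauss hypergeometric equation $z(1-z)\phi_n''+\big[c-(a+b+1)z\big]\phi_n'-ab\,\phi_n=0$ with $c=\tfrac d2$, $a+b=\tfrac12-n$, $ab=\tfrac{n(n-1)}{4}$; solving the quadratic yields $\{a,b\}=\{\tfrac{1-n}{2},-\tfrac n2\}$, so ${}_2F_1\!\big(\tfrac{1-n}{2},-\tfrac n2;\tfrac d2;\rho^2\big)$ is a solution, and since $n$ is a positive integer one of these parameters is a non-positive integer, the series terminates at $j=\lfloor n/2\rfloor$, and one recovers exactly the polynomial $\phi_n$ of \eqref{Def:phi_n_intro}.

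It remains to establish the qualitative properties of $\phi_n$ and assemble the conclusion. Evenness is immediate from \eqref{Def:phi_n_intro}. For positivity I would observe that for $0\le j\le\lfloor n/2\rfloor$ every factor of each Pochhammer product $(\tfrac{1-n}{2})_j$ and $(-\tfrac n2)_j$ is strictly negative, so each carries the sign $(-1)^j$ and their product is positive; dividing by $(\tfrac d2)_j\,j!>0$ shows all coefficients of $\phi_n$ are positive, whence $\phi_n(\rho)\ge\phi_n(0)=1>0$ for all $\rho\in\R$. Consequently $\Phi_n=\ln\phi_n\in C^\infty(\R)$ and, since $\phi_n$ is even, $\xi\mapsto\phi_n(|\xi|)$ is a polynomial in $\xi$, so $W_n$ in \eqref{Eq:W_anzatz} lies in $C^\infty\times C^\infty(\R\times\overline{\B^d_1})$ and satisfies $W_n(\tau+\tfrac{2\pi}{n},\xi)=W_n(\tau,\xi)$, i.e.\ \eqref{Eq:Period_T} with $\mathcal T=\tfrac{2\pi}{n}$ and $\la=e^{2\pi/n}>1$. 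The associated $U_n$ from \eqref{Def:DSS} is smooth on $\Gamma^-$ and solves \eqref{Eq:WM_system} by the reduction above; it is non-constant since $\theta_n$ genuinely depends on $\tau$, so by finite speed of propagation it extends to smooth Cauchy data (as in the discussion around \eqref{Eq:W}) whose evolution loses regularity at $(0,0)$ — for instance $\partial_t\theta_n(t,0)=n/t\to-\infty$ as $t\to0^-$. The only step requiring real care is the change-of-variables computation and spotting the cancellation identity in the second paragraph; the identification with the hypergeometric equation and the verification of polynomiality and positivity are then routine.
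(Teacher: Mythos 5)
Your proposal is correct and follows essentially the same route as the paper: reduce \eqref{Eq:WM_system} to the $\theta$-equation \eqref{Eq:theta} via the polar ansatz, plug in $\theta_n=\ln\phi_n(|\xi|)-n\tau$ in similarity variables, observe that the logarithmic substitution cancels the quadratic null-form term and leaves exactly the linear ODE $(1-\rho^2)\phi_n''+\bigl(\tfrac{d-1}{\rho}+2(n-1)\rho\bigr)\phi_n'+n(1-n)\phi_n=0$ stated after the theorem, and identify its terminating hypergeometric (polynomial) solution \eqref{Def:phi_n_intro}. Your verification of the remaining points (positivity of all coefficients, hence $\phi_n\geq 1$; smoothness and $\tfrac{2\pi}{n}$-periodicity of $W_n$; velocity blowup at the origin as $t\to 0^-$) correctly fills in the routine details the paper leaves implicit.
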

 \noindent We note that  profiles $\phi_n$ are, in fact, the polynomial solutions of the (effectively) hypergeometric ODE
\begin{equation*}
	(1-\rho^2)\phi_n''(\rho) + \left( \frac{d-1}{\rho} + 2(n-1)\rho\right)\phi_n'(\rho) +n(1-n) \phi_n(\rho)=0,
\end{equation*}
to which the system \eqref{Eq:WM_system} reduces under the ansatz \eqref{Eq:W_anzatz}.
Observe that the (minimal) period $\mathcal{T}$ of the DSS profile $W_n$ is $\frac{2\pi}{n}$.

\subsubsection{Stability of DSS blowup}
 We observe that Theorem \ref{Thm:Existence}, combined with the time-translation and $\mathbb{S}^1$-rotation symmetries of \eqref{Eq:WM_system}, enables us to define a family of (generalized) DSS solutions to \eqref{Eq:WM_system}.
\begin{align}\label{def:blowupsolution}
U_{n,c}^T(t,x)=\begin{pmatrix}
\sin \big(\ln \big[(T-t)^n\phi_n\big(\frac{|x|}{T-t}\big)\big]+c\big)
\\
\cos\big(\ln \big[(T-t)^n\phi_n\big(\frac{|x|}{T-t}\big)\big]+c\big)
\end{pmatrix}, \quad n \in \mathbb{N}, \ T>0,\ c \in \R.
\end{align}
%
Note that each $U_{n,c}^T$ evolves from smooth initial data at $t=0$, and exhibit velocity blowup at the origin as $t \rightarrow T^-$. The main goal of this paper is to study stability of the blowup mechanism described by $U_{n,c}^T$.
%
%
To state our stability results, we recall that $\B^d_r$ stands for the $d$-dimensional ball of radius $r$ centered at zero, and we fix the notation for truncated backward lightcones
$$
\Gamma^T(x_0):=\{(t,x)\in \R^{1+d}: 0 \leq t < T, \ |x-x_0|\leq T-t\}.
$$
We also use a compact notation to denote the solution data at any time $t \geq 0$
\begin{equation*}
U[t]=\big( U(t,\cdot),\partial_tU(t,\cdot) \big).
\end{equation*}
For expository as well as technical reasons, we first treat the case $d \geq 2$, where we consider radial perturbations of $U_{n,0}^1$.
\begin{theorem}\label{thm:stabdgeq2}{\emph{(Stability of discretely self-similar blowup, $d \geq 2$, radial data)}.}
Let $d\geq 2$. Fix $n \in \mathbb{N}$ and set $k=2n+\ceil{\frac{d}{2}}+1$. Then there exist constants $M>1$ and $\delta_0>0$ such that for any $0<\delta \leq \delta_0$ and any smooth and radial
\begin{equation*}
    (F,G):\overline{\B^d_{1+\delta_0}} \rightarrow T\mathbb{S}^1 \subseteq \R^2 \times \R^2,
\end{equation*}
 that satisfy
\begin{align}\label{small_pert}
\|(F,G)-U_{n,0}^1[0]\|_{H^k\times H^{k-1} (\B^d_{1+\delta_0})}\leq\frac{\delta}M,
\end{align}
 the following hold:
\begin{itemize} [leftmargin=5mm]\setlength{\itemsep}{3mm}
	\item[\emph{1.}] \emph{($n = 1$)  Stable blowup:}  If $n=1$, then there exist $T\in [1-\delta, 1+\delta]$ and $c\in [-\delta,\delta]$ such that initial data 
    $$
    U[0]=(F,G)
    $$
    give rise to a unique classical solution $U\in C^\infty\times C^\infty(\Gamma^T(0))$ to \eqref{Eq:WM_system} that blows up at $(T,0)$, and furthermore satisfies
	\begin{align}\label{ineq:thm01}
		\left\|U[t]- U_{1,c}^T[t]\right\|_{H^s \times H^{s-1}(\B^d_{T-t})} &\lesssim (T-t)^{\frac{d}{2}-s+\frac12},
	\end{align}
	for all $s\in \mathbb{N}$ with $ s \leq k=\lceil\tfrac{d}{2}\rceil+3$, and all $t \in [0,T)$.
	
	\item[\emph{2.}] \emph{($n \geq 2$) Co-dimension $n-1$ stable blowup:} If $n\geq 2$, then there exists a coordinate representation of $(F,G)$ by a pair of smooth functions  $f,g :\overline{\B^d_{1+\delta_0}} \rightarrow \R$, i.e,
$$
F(x)=\begin{pmatrix}
\sin f(x)
\\
\cos f(x)
\end{pmatrix}
\quad \text{and}  \quad
G(x)=\begin{pmatrix}
g(x) \cos{f(x)}
\\
-g(x) \sin{f(x)}
\end{pmatrix}, 
$$
and there exist $T\in [1-\delta, 1+\delta]$, $c\in [-\delta,\delta]$, and $\eta_j \in [-\delta,\delta]$, $j=2,\dots n$, as well as smooth and radial functions $h_{j,1}, h_{j,2} : \overline{\B^d_{1+\delta_0}} \rightarrow \R$ such that the corrected initial data 
	$$
	U(0,x)=\begin{pmatrix}
		\sin \tilde f(x)
		\\
		\cos \tilde f(x)
	\end{pmatrix},
	\quad \partial_tU(0,x)=
	\begin{pmatrix}
		\tilde g(x)\cos \tilde f(x)
		\\
		-\tilde g(x)\sin \tilde f(x)
	\end{pmatrix},
	$$
	where
	\begin{equation}\label{data_mod}
			\tilde f(x)=f(x)-\sum_{j=2}^{n}\eta_j h_{j,1}(x) \quad \text{and} \quad \tilde g(x)=g(x)-\sum_{j=2}^{n}\eta_j h_{j,2}(x),
	\end{equation}
	lead to a unique classical solution $U\in C^\infty\times C^\infty(\Gamma^T(0))$ to \eqref{Eq:WM_system} which blows up at $(T,0)$, and furthermore satisfies
    \begin{equation}\label{ineq:thm1}
		\left\|U[t]- U_{n,c}^T[t]\right\|_{H^s \times H^{s-1}(\B^d_{T-t})} \lesssim (T-t)^{\frac{d}{2}-s+\frac12},
	\end{equation}
	for all $s\in \mathbb{N}$ with $s \leq k$, and all $t \in [0,T)$.
\end{itemize}
\end{theorem}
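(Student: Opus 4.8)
The plan is to reduce the problem to the scalar equation \eqref{Eq:theta} and then run a standard—but technically involved—stable/unstable manifold argument in similarity coordinates. First I would use the polar representation \eqref{Eq:polar} to write $U = (\sin\theta,\cos\theta)$, observing that the hypotheses of the theorem furnish, via the global chart, smooth coordinate functions $f = \theta(0,\cdot)$ and $g$, and that the DSS profile $U_{n,c}^T$ corresponds to the explicit scalar profile $\theta_n^T(t,x) = \ln[(T-t)^n\phi_n(|x|/(T-t))] + c$. Passing to similarity variables $\tau = -\ln(T-t)$, $\xi = x/(T-t)$ and writing $\theta = \theta_n + \psi$ (absorbing the $c$ and $T$ freedom later), the equation \eqref{Eq:theta} becomes an abstract evolution equation $\partial_\tau\Psi = \mathbf{L}_n\Psi + \mathbf{N}_n(\Psi)$ on a suitable Hilbert space of radial functions on $\mathbb{B}^d_1$, with $\mathbf{L}_n$ the linearization around the (now $\tau$-periodic, since $\theta_n$ depends on $\tau$ through $-n\tau$) profile. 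Because $\theta_n$ is genuinely $\tau$-dependent, $\mathbf{L}_n$ is not autonomous; one standard device is to view the pair $(\tau \bmod \mathcal{T}, \Psi)$ and study the monodromy/period map, or—since the $\tau$-dependence enters only through a phase that the equation \eqref{Eq:theta} is insensitive to (it depends on $\theta$ only through derivatives)—to check that in fact the linearized operator is autonomous after the reduction, leaving a genuinely $\tau$-independent generator.

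Next I would carry out the spectral analysis of $\mathbf{L}_n$ announced in the abstract: using the Liouville–Green/Volterra techniques, one identifies $\sigma(\mathbf{L}_n)$, shows it lies in $\{\Re\lambda \le -\kappa\} \cup \{\text{finitely many eigenvalues with }\Re\lambda \ge 0\}$, and pins down the unstable eigenvalues. The symmetries of the problem force two "trivial" unstable/neutral modes: the time-translation symmetry (varying $T$) produces an eigenvalue $\lambda = 1$ with an explicit eigenfunction obtained by differentiating $\theta_n^T$ in $T$, and the $\mathbb{S}^1$-rotation symmetry (varying $c$) produces an eigenvalue $\lambda = 0$. For $n = 1$ these are claimed to be the only non-decaying modes, so the codimension of the stable manifold is exactly matched by the two-parameter family $(T,c)$; for $n \ge 2$ there are additionally $n-1$ genuinely unstable eigenvalues (presumably $\lambda \in \{1, 2, \dots\}$ or similar, coming from the structure of the hypergeometric operator), which must be killed by the $n-1$ Lyapunov–Perron parameters $\eta_j$ and the correctors $h_{j,1},h_{j,2}$. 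I would then establish the sharp semigroup bound $\|e^{\tau\mathbf{L}_n}\mathbf{P}\| \lesssim e^{-\kappa\tau}$ on the stable subspace (range of the spectral projection $\mathbf{P}$ complementary to the unstable eigenspaces) — this is where the resolvent construction of the bulk of the paper is used, via a Gearhart–Prüss/Hille–Yosida-type argument in the non-self-adjoint setting.

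With the linear theory in hand, the nonlinear argument is the by-now standard fixed-point scheme: one writes the Duhamel formula for $\Psi$, projects onto stable and unstable subspaces, and seeks $\Psi \in C([0,\infty);\mathcal{H})$ with $\|\Psi(\tau)\| \lesssim e^{-\kappa\tau}$ (equivalently, translating back via the norm-equivalence between $\theta$-norms and $U$-norms and the change of variables, the pointwise-in-time decay $(T-t)^{d/2-s+1/2}$ in $H^s\times H^{s-1}(\mathbb{B}^d_{T-t})$). The unstable directions are handled by the Lyapunov–Perron method: the stable-manifold condition determines the parameters $T$, $c$, and (for $n\ge 2$) $\eta_2,\dots,\eta_n$ as functions of the data, with the quantitative control \eqref{small_pert} ensuring they lie in $[1-\delta,1+\delta]$, $[-\delta,\delta]$, $[-\delta,\delta]$ respectively; the implicit function theorem / Banach fixed point in a weighted space closes the loop. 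Finiteness of the data norm on the slightly larger ball $\mathbb{B}^d_{1+\delta_0}$ together with finite speed of propagation gives a classical solution on the truncated cone $\Gamma^T(0)$, and persistence of regularity yields smoothness up to the blowup time; the pointwise bound \eqref{ineq:thm1} then certifies that $U$ genuinely blows up at $(T,0)$ (the profile $U_{n,c}^T$ itself has unbounded velocity there, and the difference is lower-order).

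The main obstacle, as the authors themselves flag, is the linear theory — specifically constructing the resolvent $(\lambda - \mathbf{L}_n)^{-1}$ with uniform-in-$\lambda$ bounds on vertical lines, uniformly across the countable family $\{\mathbf{L}_n\}$ and across dimensions $d$. The operator is highly non-self-adjoint, the profile $\theta_n$ degenerates as $n\to\infty$, and the Liouville–Green analysis near the regular-singular point $|\xi|=1$ (the lightcone) must be done with enough precision to rule out spurious spectrum in $\Re\lambda \ge -\kappa$ and to extract the correct exponential rate; the Volterra-iteration asymptotics that pin down the connection coefficients are the delicate part. A secondary difficulty is the $d=1$ case with general (non-radial) perturbations, which is deferred in this excerpt but requires decomposing into angular modes (even/odd) and controlling infinitely many channels uniformly — though for the statement at hand ($d\ge 2$, radial) this does not arise. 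Finally, translating all estimates between the $\theta$-formulation and the original $U$-system requires the norm-equivalence lemmas referenced after \eqref{Eq:theta}; these are routine but must be tracked carefully to land the exact exponent $\frac{d}{2}-s+\frac12$.
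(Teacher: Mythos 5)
Your proposal follows essentially the same route as the paper: reduction to the scalar $\theta$-equation, passage to similarity variables with an autonomous linearization (for exactly the reason you give — the equation sees $\theta$ only through its derivatives, so the $-n\tau$ phase drops out), identification of $\lambda=0,1$ as the rotation and time-translation symmetry modes plus $n-1$ genuine instabilities for $n\ge 2$, sharp semigroup bounds via the resolvent construction and Gearhart--Pr\"uss, a Lyapunov--Perron fixed point determining $T$, $c$ and the $\eta_j$, and transfer back to the $U$-system through norm-equivalence lemmas. This matches the paper's proof in structure and in all essential steps, up to internal technical devices (e.g.\ the bounded conjugation simplifying the linearized operator before the ODE analysis).
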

For $d=1$, we work with general initial data. Consequently, additional symmetries need to be taken into account. To introduce these in a convenient manner, we first observe that in one dimension the even polynomials $\phi_n$ from \eqref{Def:phi_n_intro} can be written as
\begin{equation}\label{Def:phi_n}
	\phi_n(|x|)=\frac{1}{2}\left[(1+x)^n+(1-x)^n\right].
\end{equation}
This leads to defining a more general class of profiles
\begin{equation}\label{Def:phi_n,gamma}
	\phi_{n,\gamma_1,\gamma_2}(x):=\frac12\left[\gamma_1(1+x)^n+\gamma_2(1-x)^n\right], \quad \gamma_1,\gamma_2>0.
\end{equation}
Moreover, given that we also need to pay attention to time and space translations, we arrive at the following family of solutions to Eq.~\eqref{Eq:WM_system}
$$
U_{n,x_0,\gamma_1,\gamma_2}^T(t,x)=\begin{pmatrix}
\sin \big(\ln \big[(T-t)^n\phi_{n,\gamma_1,\gamma_2}\big(\frac{x+x_0}{T-t}\big)\big]\big)
\\
\cos\big(\ln \big[(T-t)^n\phi_{n,\gamma_1,\gamma_2}\big(\frac{x+x_0}{T-t}\big)\big]\big)
\end{pmatrix}.
$$
For simplicity, and to unify the notation for the blow-up solutions in both cases $d \geq 2$ and $d = 1$, we will refer in future discussions to $U_{n,0,1,1}^T$ simply as $U_{n,0}^T$. We remark that the $\mathbb{S}^1$-rotation symmetry is encoded in the parameter $\gamma$. Lastly, to state our stability result, we define an index set $J_n$ as
\begin{align*}
J_2=\emptyset \quad \text{and} \quad J_n=\{2,\dots,n-1\} \text{ for }n\geq 3.
\end{align*}
\begin{theorem}\label{thm:stabd1}{\emph{(Stability of discretely self-similar blowup, $d=1$, general data)}.}
Let $d=1$ and $n \in \mathbb{N}$. Then there exist constants $M>1$ and $\delta_0>0$ such that for any $0 <\delta \leq \delta_0/2$ and any smooth
\begin{equation*}
    (F,G):[-1-\delta_0,1+\delta_0] \rightarrow T\mathbb{S}^1 \subseteq \R^2 \times \R^2,
\end{equation*}
that satisfy
\begin{align*}
\|(F,G)-U_{n,0,1,1}^1[0]\|_{H^{n+2}\times H^{n+1} (-1-\delta_0,1+\delta_0)}&\leq\frac{\delta}M,
\end{align*}
there exists a coordinate representation 
$$
(F(x),G(x))=\left(\begin{pmatrix}
\sin f(x)
\\
\cos f(x)
\end{pmatrix}
,
\begin{pmatrix}
g(x)\cos f(x)
\\
-g(x)\sin f(x)
\end{pmatrix}
\right),
$$
by a pair of real functions $f,g\in C^\infty([-1-\delta_0,1+\delta_0])$, and there exist $T,\gamma_1,\gamma_2\in [1-\delta, 1+\delta]$, $x_0\in [-\delta,\delta]$, and, in case $n \geq 2$, for all $j\in J_n$ constants $\eta_j^\pm,\eta_n \in [-\delta,\delta]$ as well as smooth functions $h_{j,1}^\pm, h_{j,2}^\pm, h_{n,1},h_{n,2}: [1-\delta, 1+\delta] \rightarrow \R$, such that the corrected initial data 
$$
U[0](x)=\left(\begin{pmatrix}
\sin \tilde f(x)
\\
\cos \tilde f(x)
\end{pmatrix}
,
\begin{pmatrix}
\tilde g(x)\cos \tilde f(x),
\\
-\tilde g(x)\sin\tilde f(x)
\end{pmatrix}\right),
$$
where
\begin{align*}
\tilde f(x)&=f(x)-\sum_{j\in J_n}\big(\eta_j^+ h_{j,1}^+(x)+ \eta_j^- h_{j,1}^-(x)\big) -\eta_n h_{n,1}(x),
\\
\tilde g(x)&=g(x)-\sum_{j\in J_n}\big(\eta_j^+ h_{j,2}^+(x) +\eta_j^- h_{j,2}^-(x)\big) - \eta_n h_{n,2}(x).
\end{align*}
yield a unique classical solution $U\in C^\infty\times C^\infty(\Gamma^T(x_0))$ to \eqref{Eq:WM_system} that blows up at $(T,x_0)$ and furthermore satisfies the estimate
\begin{align}\label{ineq:thm02}
\left\|U[t]-U_{n,x_0,\gamma_1,\gamma_2}^T[t]\right\|_{H^s \times H^{s-1}(x_0-T+t,x_0+T-t)} &\lesssim (T-t)^{1-s},
\end{align}
for all $s\in \mathbb{N}$ with $s \leq n+2$, and all $t \in [0,T)$.
\end{theorem}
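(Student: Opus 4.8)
My plan is to run the Lyapunov--Perron scheme for self-similar blowup, adapted to the discretely self-similar and (for $d=1$) non-symmetric setting; it rests on the resolvent and spectral analysis of the preceding sections. Via the global chart $U=(\sin\theta,\cos\theta)$ I first transcribe the claim into one for the scalar equation \eqref{Eq:theta}, whose blowup solution is the profile $\theta^\ast(t,x)=\ln\big[(T-t)^n\phi_{n,\gamma_1,\gamma_2}\big(\tfrac{x-x_0}{T-t}\big)\big]$. Writing $\theta=\theta^\ast+\psi$ and passing to the similarity coordinates $\tau=-\ln(T-t)$, $\xi=\tfrac{x-x_0}{T-t}$ on the truncated cone $\Gamma^T(x_0)$, the key structural point is that $(T-t)\partial_t\theta^\ast$ and $(T-t)\partial_x\theta^\ast$ depend on $\xi$ alone; consequently, after rescaling, the linearization about $\theta^\ast$ is \emph{autonomous}. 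In a first-order formulation adapted to the light cone this yields $\partial_\tau\Psi=\Lf\Psi+\Nf(\Psi)$ on the space $\mathcal H:=H^{n+2}\times H^{n+1}(\B^1_1)$, where $\Lf=\Lf_0+\Lf'$, $\Lf_0$ is the free-wave generator in similarity variables, $\Lf'$ the first-order potential coming from $\partial^\alpha\theta^\ast\partial_\alpha$, and $\Nf$ a quadratic nonlinearity mapping $\mathcal H$ into itself.

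Next I would invoke the spectral picture from the earlier sections. The operator $\Lf$ generates a strongly continuous semigroup on $\mathcal H$, and its spectrum in $\{\Re\la>-\tfrac12\}$ is a finite set of eigenvalues with explicit rational eigenfunctions; these are located by perturbing the general solution $e^\theta=G(p)+H(q)$ of \eqref{Eq:theta} (with $p,q$ the null coordinates vanishing at the tip) by monomials $p^{j'},q^{j'}$, which produces the modes $\la=n-j'$ with eigenfunctions $(1\pm\xi)^{j'}/\phi_n(\xi)$. Among them, $\la=0$ is two-dimensional (the independent rescalings of $G$ and $H$, i.e.\ the weights $\gamma_1,\gamma_2$, with the $\mathbb S^1$-rotation along the diagonal), $\la=1$ is two-dimensional (time and space translation, i.e.\ $T,x_0$); and for $n\ge2$ the eigenvalues $\la\in\{2,\dots,n-1\}$ are two-dimensional (a left- and a right-moving eigenfunction each, hence the pairs $\eta_j^\pm$, $j\in J_n$) while $\la=n$ is one-dimensional (a nonvanishing constant in $G+H$ at the tip, i.e.\ the loss-of-blowup direction $\eta_n$). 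Letting $\Pf$ be the Riesz projection onto the span of all of these, the \emph{sharp} semigroup bound
\[
\big\|e^{\tau\Lf}(\I-\Pf)\big\|_{\mathcal H\to\mathcal H}\lesssim e^{-\tau/2},\qquad\tau\ge0,
\]
is precisely what the resolvent construction of the bulk of the paper (Liouville--Green transformations and precise Volterra asymptotics) delivers; those asymptotics are also what exclude further spectrum in the strip $-\tfrac12<\Re\la\le0$.

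For the nonlinear closure, I fix the modulation parameters $\mathbf v=(T,x_0,\gamma_1,\gamma_2,(\eta_j^\pm)_{j\in J_n},\eta_n)$ in a small box about the base point; the corrected data of the statement then yield, in similarity variables, initial data $\Psi(0)=\Uf(\mathbf v)$ of size $\O(\delta)$. In the Banach space $\X:=\{\Psi\in C([0,\infty),\mathcal H):\|\Psi\|_\X:=\sup_{\tau\ge0}e^{\tau/2}\|\Psi(\tau)\|_{\mathcal H}<\infty\}$ I would solve the Duhamel equation by contraction: the flow on $\ran(\I-\Pf)$ supplies the decay, the a priori growing components along $\ran\Pf$ are suppressed by the standard device of subtracting a suitable $\ran\Pf$-valued correction $\Cf(\Psi,\mathbf v)$ from the data, and $\Nf$ is estimated using the algebra property of $\mathcal H$ (with $H^{n+1}(\B^1_1)\hookrightarrow L^\infty$) together with Moser-type estimates, giving a quadratic Lipschitz bound; for $\delta$ small this yields a unique $\Psi_{\mathbf v}\in\X$. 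One then selects $\mathbf v$ with $\Cf(\Psi_{\mathbf v},\mathbf v)=0$ via a Brouwer-type argument, using that each parameter is aligned with a single eigendirection so the relevant finite-dimensional map is invertible to leading order. Finally, from $\|\Psi(\tau)\|_{\mathcal H}\lesssim e^{-\tau/2}$ and the scaling identity $\|\theta(t)-\theta^\ast(t)\|_{H^s(\B^1_{T-t})}\lesssim(T-t)^{1/2-s}\|\psi(\tau)\|_{H^s(\B^1_1)}$ ($s\le n+2$) one obtains \eqref{ineq:thm02} for $\theta$; transferring back to $U$ through the norm equivalences (valid since $\phi_{n,\gamma_1,\gamma_2}>0$ on $[-1,1]$) and invoking persistence of regularity gives smoothness and blowup at $(T,x_0)$.

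The hardest part, beyond the resolvent analysis already carried out, is the bookkeeping forced by general one-dimensional data: one must enumerate \emph{all} modes with $\Re\la\ge0$ — in particular the left/right doubling producing the pairs $\eta_j^\pm$, and the two extra profile weights $\gamma_1,\gamma_2$, which are indispensable precisely because generic perturbations destroy the symmetry $\gamma_1=\gamma_2$ — verify the non-degeneracy of the parameter-to-unstable-data map needed for the Brouwer step, and ensure that the $\theta\leftrightarrow U$ norm equivalences survive $n+2$ derivatives.
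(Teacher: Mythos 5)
Your proposal follows essentially the same route as the paper: reduction to the $\theta$-equation, similarity variables centered at $(T,x_0)$, a first-order formulation on $H^{n+2}\times H^{n+1}(-1,1)$, identification of the $(2n+1)$-dimensional unstable subspace with exactly the multiplicity structure of the paper (eigenvalues $0,\dots,n$, each of the first $n$ doubled into left/right modes $(1\pm\xi)^{n-\lambda}/\phi_{n,\gamma}$, the top one simple), a Gearhart--Pr\"uss-type decay estimate off the unstable modes, a Lyapunov--Perron fixed point with corrections along the genuinely unstable directions, a Brouwer-type selection of $(T,x_0,\gamma_1,\gamma_2,\eta)$, and the final rescaling and $\theta\leftrightarrow U$ transfer. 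Three points where you diverge or should be more careful. First, your identification of the modes via $e^{\theta}=G(p)+H(q)$ and monomial perturbations $p^{j'},q^{j'}$ is a genuinely different (and appealing) way to see the unstable spectrum; the paper instead solves the spectral ODE explicitly, whose fundamental system $(1\pm\xi)^{n-\lambda}$ also yields completeness of the list and semi-simplicity, which your d'Alembert argument alone does not give -- you would still need the resolvent to exclude further spectrum, as you note. Second, you attribute the uniform resolvent bounds to the Liouville--Green/Volterra machinery of the bulk of the paper, but that construction is carried out for radial data in $d\ge 2$ and does not literally cover $d=1$; in one dimension the paper writes the resolvent in closed form (Section 5), which delivers the uniform bounds more directly, and since $\gamma_1,\gamma_2$ are modulation parameters you must also track uniformity (or continuity) of the semigroup bound in $\gamma$ -- the paper does this through the conjugation to the $\gamma$-independent operator $\widetilde{\Lf}_n$ and the uniform statement of Theorem \ref{thm:semigroup1d}. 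Third, your bookkeeping ``$\lambda=1$ is two-dimensional (time and space translation)'' fails for $n=1$: there $\lambda=1=n$ has multiplicity one, $\phi_1$ is constant, and the $x_0$-direction degenerates at $\gamma_1=\gamma_2$ (cf.\ the remark in Lemma \ref{lem:expansionU}), so the parameter-to-unstable-data map is not a clean bijection in that case; the scheme still closes because $x_0$ is then a dummy parameter, but your Brouwer non-degeneracy step must be phrased accordingly. None of these affects the overall correctness of the strategy.
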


Some remarks are in order.

\begin{remark}(\emph{On the interpretation of stability}).
	We note that the DSS solutions $U^T_{n,0}$ satisfy
	\begin{align*}
		\|U^T_{n,0}[t]\|_{H^s\times H^{s-1}(\B^d_{T-t})}\simeq (T-t)^{\frac{d}{2}-s},
	\end{align*}
	for any $s\geq 1$, indicating, in particular, that the loss of smoothness at $(T,0)$ is detected only by Sobolev norms of high enough order, i.e., $s>\frac{d}{2}$. Furthermore, the additional power of $\frac12$ on the right-hand side of \eqref{ineq:thm01}, \eqref{ineq:thm1} and \eqref{ineq:thm02} shows relative convergence of $U[t]$ to the explicit blowup solution with slightly varied symmetry parameters. In particular, the ground-state solution $U_{1,0}^1$ is stable up to symmetries. The excited profiles $U_{n,0}^1$, for $n \geq 2$, are, however, only co-dimension $n-1$ stable (with the co-dimension measured in the chart) under radial perturbations in $d \geq 2$, and co-dimension $2n-3$ stable in $d=1$ under general perturbations. Modification of data in \eqref{data_mod} stems from projecting along (genuinely) unstable linear modes. We also mention that, using our method, the convergence rate given by the additional $\tfrac{1}{2}$~power can be improved to any~$\nu<1$; going beyond that can be achieved by further modifications of initial data along stable  modes.
\end{remark}

\begin{remark}(\emph{On the prescribed regularity}).
	The specific choice of the Sobolev order $k$ is due to several reasons. First, for technical simplicity, we chose integral $k$. Furthermore, to ensure well-posedness in the strong sense (and for having access to the useful $L^\infty$ embedding), we require $k >  \frac{d}{2}+1$. Additionally, when studying stability of specific profiles $U_{n,0}^T$, we impose even more regularity, i.e., $k \geq \frac d2 + 1 + 2n$. This is a feature of the discreteness of the scaling reflected in the $n\tau$ shift in \eqref{Eq:W_anzatz}, which introduces an additional loss of derivatives compared to the standard energy theory.
\end{remark}

\begin{remark}
	(\emph{On the correction of the initial data}).
As already mentioned, the corrections to the initial data arise from spectral instabilities of the blow-up solution in similarity variables. Remarkably, although the underlying linear operator is non-self-adjoint, its unstable point spectrum can nevertheless be computed explicitly for every $U_{n,0}^T$. This follows from a reduction to a connection problem for a hypergeometric equation, which, in turn, yields a discrete sequence of eigenvalues, the non-negative ones being ${0,1,\dots,n}$ (see \eqref{lem:unstable eig}). For $d\geq 2$, the eigenvalues are simple, while for $d=1$ the multiplicity of the largest one, $\lambda=n$, is one, and two otherwise.
Consequently, in an appropriately chosen chart, the corrected initial data are obtained by subtracting from the prescribed initial data their projection onto the unstable eigenspaces.  Furthermore, we can also explicitly write down the functions $h_{j,i}$ appearing in Theorem \ref{thm:stabdgeq2} as
\begin{align*}
h_{j,1}(x)=\frac{_2F_1\left[\frac{j-n}{2},\frac{j-n+1}{2};\frac{d}{2};|x|^2\right]}{\phi_n(|x|)},
\qquad
h_{j,2}(x)=
(j+x^i\partial_{x_i})h_{j,1}(x),
\end{align*}
and those appearing in Theorem \ref{thm:stabd1} as
\begin{align*}
h_{j,1}^\pm(x)&=\frac{(1\pm x)^{n-\lambda}}{2 \phi_n(x)}, \qquad  h_{j,2}(x)=(j+x \partial_x)h_{j,1}(x),
\\
h_{n,1}(x)&=\frac 1{\phi_n(x)},\qquad \qquad h_{n,2}(x)=(n+ x \partial_x) h_{n,1}(x).
\end{align*}
\end{remark}

\begin{remark}(\emph{On the unstable spectrum}).\label{rem:unst_spect}
	One can, in fact, fully justify the emergence of the unstable spectrum mentioned in the previous remark. We do this in the context of Theorem \ref{thm:stabdgeq2}; the justification for Theorem \eqref{thm:stabd1} is analogous. The eigenvalues $\la=0$ and $\la=1$ correspond to $\mathbb{S}^1$-rotation and time-translation symmetries of \eqref{Eq:WM_system}. To justify the rest, we do the following. First, we note that
	\begin{equation}\label{Def:v_n}
		v_0(t,x):=1 \quad \textup{and} \quad v_n(t,x):=(1-t)^n\phi_n\left(\frac{|x|}{1-t}\right), \quad n \geq 1,
	\end{equation}
	are, in fact, self-similar solutions to the linear wave equation. Consequently, adding a multiple of $v_m$ to $v_n$, for $0 \leq m<n$, in \eqref{def:blowupsolution} yields another solution to \eqref{Eq:WM_system} which represents an unstable branch of $U_{n,0}^1$. Since the powers of the extinction prefactors in \eqref{Def:v_n} are nonnegative integers, this forces all integers from 1 to $n$ to appear as spectral instabilities of $U^1_{n,0}$. Interestingly, our spectral analysis reveals that no additional instabilities occur beyond $\la=0,1,\dots,n$.
\end{remark}

\begin{remark}(\emph{On refinement of stable blowup}).
	In view of the previous remark, one can consider a more general family of solutions than the one in \eqref{def:blowupsolution}, where, in the argument of the natural logarithm, instead of $v_n$ one considers linear combinations
	$
		 \sum_{i=0}^{n}\alpha_i v_i.
	$
	 In this way, all of the spectral instabilities $0,1,\dots,n$ become symmetry eigenvalues related to the freedom of the choice of coefficients $\alpha_i$. This then yields a refinement of the stable blowup profile $U^T_{1,0}$ by higher order corrections.
\end{remark} 

\begin{remark}(\emph{On the relation with the $\theta$-equation}).
	The proofs of Theorems \ref{thm:stabdgeq2} and \ref{thm:stabd1} both rely on the corresponding stability results for the equation \eqref{Eq:theta} governing the evolution of the polar angle~$\theta$. We subsequently lift these results to the full system \eqref{Eq:WM_system} by means of equivalence relations between the Sobolev norms of $U$ and $\theta$. In view of the independent significance of the problem of existence and stability of blowup for~\eqref{Eq:theta}, we state these as separate theorems; see Theorem \ref{prop:theta exist} for $d \geq 2$ - radial data, and Theorem \ref{thm;wavemaps} for $d=1$ - general data.
\end{remark}

\subsubsection{Blowup for $1d$ wave maps}\label{Sec:1d_wave}
As a curiosity, we conclude with a result on blowup for the one-dimensional wave maps equation, which is a byproduct of the proof of Theorem~\ref{thm:stabd1}, and appears to be new. To set the stage, we recall an important observation concerning wave maps from the $(1+1)$-dimensional Minkowski space~$\R^{1+1}$ into compact target manifolds. It has been known since the 1980s that wave maps from~$\R^{1+1}$ into the $n$-sphere~$\mathbb{S}^n$ are globally regular; see~\cite{Gu80,GinVel82,Sha88}. In fact, this remains true for any \emph{closed} target manifold, as this ensures coercivity of the associated energy functional, which, in turn, provides sufficient control over the wave maps flow to guarantee boundedness of higher Sobolev norms, thereby yielding global regularity for any smooth initial data. That compactness of the target is a necessary condition in this context, is established in the following theorem. 
\begin{theorem}\label{thm;wavemaps_informal}{\emph{(Stable blowup for 1d wave maps, informal statement)}.}
	There exists a non-compact and complete Riemannian manifold $(M,g)$ that allows for a wave map from the Minkowski space $\R^{1+1}$ into $M$, which blows up in finite time and furthermore describes a stable blowup mechanism for the corresponding Cauchy problem.
\end{theorem}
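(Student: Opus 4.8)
The plan is to upgrade the finite-time blowup solution for the scalar equation \eqref{Eq:theta} into a genuine wave map into a non-compact target, and then transfer the stability statement of Theorem~\ref{thm:stabd1} to that geometric setting. First I would recall that \eqref{Eq:theta}, i.e.\ $\partial^\alpha\partial_\alpha\theta+\partial^\alpha\theta\,\partial_\alpha\theta=0$, is precisely the wave maps equation for maps $\theta:\R^{1+1}\to(M,g)$ where $M=\R$ is equipped with the conformally flat metric $g(\theta)=e^{2\theta}\,d\theta^2$ (equivalently, the wave map equation $\partial^\alpha\partial_\alpha\theta+\Gamma(\theta)\,\partial^\alpha\theta\,\partial_\alpha\theta=0$ with a single Christoffel symbol $\Gamma(\theta)=1$ coming from this metric). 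The point is that the substitution $\sigma=e^{\theta}$ turns $g$ into the flat metric $d\sigma^2$ on the half-line $\sigma>0$, so $(M,g)$ is isometric to $\big((0,\infty),d\sigma^2\big)$, which is \emph{incomplete}. To repair this, I would instead set $\sigma$ to run over all of $\R$, i.e.\ take $M=\R_\sigma$ with the flat metric; then a wave map $\sigma:\R^{1+1}\to\R$ is just a solution of the free wave equation $\partial^\alpha\partial_\alpha\sigma=0$, which is globally regular, and there is no blowup — so the naive choice fails, and the non-compact manifold must be chosen more carefully.

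The correct construction is to glue: take the function $w=\ln\phi_n$ of the similarity variable, recall from Theorem~\ref{thm:dss_existence}/\eqref{Def:phi_n} that the DSS blowup solution of \eqref{Eq:theta} in physical coordinates is $\theta(t,x)=\ln\big[(T-t)^n\phi_n\big(\tfrac{x}{T-t}\big)\big]$, and observe that along this solution $\theta\to-\infty$ as $(t,x)\to(T,0)$ while $\partial_t\theta$ blows up. The target geometry seen by this solution is $e^{2\theta}d\theta^2$, which near $\theta=-\infty$ has finite distance to the "end"; this is exactly why the solution can reach the boundary of the target in finite time and why the associated energy fails to be coercive. So I would build $(M,g)$ as a $1$-dimensional manifold (diffeomorphic to $\R$) carrying a metric $g=\psi(\theta)^2 d\theta^2$ that agrees with $e^{2\theta}d\theta^2$ on the range of $\theta$ actually explored by the blowup solution and its small perturbations — say for $\theta\le C$ — and is modified for large $\theta$ so that $\int^{+\infty}\psi<\infty$ as well, making $M$ complete (the metric completion adds two points "at infinity," but one works with the open complete manifold, or doubles it to a genuine complete manifold without boundary such as a surface of revolution realizing the metric). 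Completeness plus $\int\psi<\infty$ on at least one end gives non-compactness; crucially the modification is supported away from the dynamics we care about, so the blowup solution of \eqref{Eq:theta} remains a bona fide wave map into $(M,g)$ that blows up at $(T,0)$.

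Next I would invoke finite speed of propagation together with Theorem~\ref{thm:stabd1} (more precisely its $\theta$-counterpart, Theorem~\ref{thm;wavemaps} referenced in the excerpt): take initial data for the wave map that is a small $H^{n+2}\times H^{n+1}$ perturbation, supported in a slightly enlarged interval, of the blowup datum $\theta[0]$ for \eqref{Eq:theta}; by continuity of the data-to-solution map and the $L^\infty$-embedding ($n+2>\tfrac32$), the solution stays in the region $\theta\le C$ where $g=e^{2\theta}d\theta^2$, hence it solves \eqref{Eq:theta}, hence Theorem~\ref{thm;wavemaps} applies verbatim and yields a solution blowing up at some nearby $(T,x_0)$ with the convergence rate \eqref{ineq:thm02}. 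This is the stable blowup mechanism for the wave map into $M$. The main obstacle — and the only genuinely new point beyond bookkeeping — is constructing $(M,g)$ so that it is simultaneously (i) complete, (ii) non-compact, and (iii) isometric to the model $e^{2\theta}d\theta^2$ geometry on the whole region visited by the perturbed flow, while being a smooth honest Riemannian manifold (one convenient choice: realize the prescribed rotationally symmetric metric as an embedded surface of revolution in $\R^3$, with profile curve chosen to match $e^{2\theta}$ for $\theta\le C$ and to close up or extend to infinite length with finite "width" otherwise, and check that $H^s$ maps into a coordinate chart lift to $H^s$ maps into the surface — this is where one must be slightly careful, but it is standard once the chart is fixed). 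Everything else is a direct translation of the already-established one-dimensional stability theorem.
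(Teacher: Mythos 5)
Your first paragraph is in fact exactly the paper's own proof: in Section~5 the target is taken to be $M=\R$ with precisely the metric $g=e^{2\theta}d\theta^2$ of \eqref{Riem_man}, the single Christoffel symbol is $\Gamma(\theta)=1$, so the wave maps equation into $(M,g)$ \emph{is} the $\theta$-equation \eqref{Eq:theta_d=1_expl}, and the blowup and its stability are then exactly Theorem~\ref{thm;wavemaps} (the $\theta$-counterpart of Theorem~\ref{thm:stabd1}) applied to the solutions \eqref{Def:theta_gamma}; non-compactness and completeness of the target are asserted there in one line, without proof. Your observation that, via $\sigma=e^{\theta}$, this target is isometric to the flat half-line $((0,\infty),d\sigma^2)$ — so that the end $\theta\to-\infty$ is at finite distance and the blowup solutions become, in the coordinate $\sigma$, explicit positive free waves vanishing at the blowup point — is therefore a legitimate criticism of that one-line completeness claim rather than a deviation from the paper's route.

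The genuine gap is in your repair, and it is not cosmetic. A metric $\psi(\theta)^2d\theta^2$ on $\R$ is complete if and only if $\int^{+\infty}\psi=\int_{-\infty}\psi=\infty$; you keep $\psi=e^{\theta}$ on $\{\theta\le C\}$, which leaves the $\theta\to-\infty$ end at finite length $e^{C}$, so your glued manifold is still incomplete no matter what is done for $\theta>C$, while the modification you do propose there ($\int^{+\infty}\psi<\infty$) runs in the wrong direction: it destroys the end that was already complete, yields a space of finite diameter whose metric completion is a compact interval, ``working with the open complete manifold'' is a contradiction in terms, and doubling the completion produces a circle, hence a compact target. More fundamentally, you cannot put the modification ``away from the dynamics'': the blowup solution itself sweeps out $\theta\to-\infty$ (on the backward cone $\theta^T_n=n\ln(T-t)+O(1)$), so the region visited by the flow is of the form $(-\infty,C]$ and \emph{contains} the incomplete end; finite speed of propagation and smallness give an upper bound on $\theta$, never a lower one, and the approach to $-\infty$ \emph{is} the blowup. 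Hence conditions (i) and (iii) in your own list are mutually exclusive for this solution. The surface-of-revolution escape fails for the same structural reason: for a curve-valued map to remain a wave map into a surface the image curve must be a geodesic (otherwise the normal component of the tension field survives), and since in the arclength coordinate of the image the map is the free wave $\sigma=e^{\theta}$, composing it with a \emph{complete} geodesic extends it to a globally smooth wave map — the singularity is resolved rather than preserved. So, as written, your construction does not produce a complete non-compact target carrying a (stable) blowup; it correctly reproduces the paper's identification and correctly flags that the paper's completeness assertion for \eqref{Riem_man} is unjustified, but it does not close that gap.
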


\noindent For the complete, quantitative, statement (which includes existence of unstable blowup mechanisms as well), see Theorem \ref{thm;wavemaps}.
\medskip

\subsection{Outline of this work}
We provide a brief outline of the article, with a focus on the proof of Theorem \ref{thm:stabdgeq2}, which occupies most of this work. The proof, in turn, relies on the stability analysis of blowup solutions 
\begin{equation}\label{Eq:theta_n}
    \theta^T_n(t,x) = \ln\left[(T-t)^n\phi_n\left(\frac{|x|}{T-t}\right)\right]
\end{equation}
to the equation \eqref{Eq:theta}, governing the evolution of the polar angle $\theta$.
Given the self-similar structure of the profiles $\phi_n$, the starting point is to pass to the \emph{similarity variables}
$$
\tau=\ln T-\ln(T-t), \qquad \xi^i=\frac{x^i}{T-t}.
$$ 
Transforming to these coordinates
provides a convenient reformulation of the equation \eqref{Eq:theta} to study the dynamics of solutions that start close to $U_n^T$ (i.e., to $\theta_n^T$).
In these coordinates, we linearize \eqref{Eq:theta} around the blowup profile $\theta_n^T$ and study an appropriate first-order vector formulation of the resulting evolution equation.

Already at the level of the linear stability analysis, matters become technically highly delicate. The main reason for this is that the linearization of the null-form nonlinearity in \eqref{Eq:theta}, given by the operator
\begin{align*}
\Lf'_n \ff(\xi)=\begin{pmatrix}
0
\\
2\frac{\partial_i\phi_n(|\xi|)}{\phi_n(|\xi|)} \partial^i f_1(\xi)+\left(2n-2\frac{\xi^i \partial_i\phi_n(|\xi|)}{\phi_n(|\xi|)} \right) f_2(\xi)
\end{pmatrix} \quad \textup{for} \quad \ff(\xi)=
\begin{pmatrix}
    f_1(\xi) \\ f_2(\xi)
\end{pmatrix},
\end{align*}
exhibits no compactness relative to the free wave operator $\Lf_0$. This, together with the non-self-adjointness of the generator $\Lf_0 + \Lf'_n$, renders typical abstract approaches to obtaining the  linear stability principle ineffective. We point out that in certain instances, such as in \cite{CheMcNSch23}, the structure of the linearization allows for a conjugation to a compact operator, which, by an abstract argument yields the underlying spectral mapping property for the generator (see, e.g., Theorem B.1 in \cite{Glo22}). Such a soft argument is, however, provably impossible in our setting. The key obstruction is the seemingly harmless reaction term $2n f_2(\xi)$, which introduces growing modes that have to be taken into account, in sharp contrast to the aforementioned work \cite{CheMcNSch23}. 

Given the above situation, we resort to a constructive approach, and aim to establish sharp bounds on the corresponding resolvents. In view of the Gearhart-Pr\"uss-Greiner theorem, this then yields the necessary (and sharp) semigroup bounds. This is where reduction to radial symmetry in higher dimensions becomes crucial. Namely, together with a conjugation argument, we reduce the $d$-dimensional resolvent construction to inverting an ODE operator of the following form
\begin{align*}
\widetilde{\Lf}_{n}\ff(\rho)=
\begin{pmatrix}
   -\rho f_1'(\rho) +  f_2(\rho)
\\
 f_1''(\rho)+\frac{d-1}{\rho}f_1'(\rho) - \rho f_2'(\rho)-f_2(\rho)
\end{pmatrix}
+
\begin{pmatrix}
0
\\
2nf_2(\rho)
\end{pmatrix}
+
\begin{pmatrix}
0
\\
V_n(\rho) f_1(\rho)
\end{pmatrix},
\end{align*}
where the first term on the right-hand side corresponds to the radial wave operator $\Lf_0$, second is the reaction term, and $V_n$ is a smooth potential.

Understanding spectral properties of the operator $\widetilde{\Lf}_{n}$ reduces to analyzing the associated generalized spectral ODE
\begin{equation}\label{Eq: outline}
(\rho^2-1)f''(\rho)+\left( -\frac{d-1}{\rho}+2(\lambda-n+1)\rho \right)f'(\rho)+\lambda(\lambda-2n+1)f(\rho)+V_n(\rho) f(\rho)=G(\rho).
\end{equation}
In particular, the unstable eigenvalues of $\widetilde{\Lf}_{n}$ correspond to the values of $\la$ for which the homogeneous version of the above ODE admits a solution that is smooth on $[0,1]$. Determining such $\la$ constitutes a classical connection problem, which, in the case of a constant potential $V_n$, is completely resolved, due to equation \eqref{Eq: outline} being (effectively) hypergeometric. In this way, we obtain the exact structure of the unstable spectrum of $\widetilde{\Lf}_{n}$ (cf.~Remark \ref{rem:unst_spect}).

At this point, we turn to the asymptotic construction of the resolvent of $\widetilde{\Lf}_{n}$. This is the content of Section \ref{sec:resolvent}, which occupies a substantial portion of the paper, and we therefore describe it in some detail here. We point out that we rely primarily on Liouville-Green transformations and Volterra iterations.
First, near the origin, we construct a fundamental system to the homogeneous version of \eqref{Eq: outline}, consisting of perturbed Bessel functions. Near $\rho=1$, on the other hand, a fundamental system can be directly constructed without relying on Bessel analysis. After gluing these together, one must carefully carry out a substantial number of cancellations in order to obtain a regular solution (in fact, a smooth solution, provided the forcing $G$ is) of \eqref{Eq: outline}. In view of the fact that this analysis is largely insensitive to the precise form of the potential $V_n$,  we carry out the resolvent construction for an arbitrary smooth potential $V_n$.
Subsequently, we derive uniform resolvent bounds for the operator obtained by subtracting from $\widetilde{\Lf}_n$ the compact potential term $(0,V_n f_1(\rho))$. Having these bounds at hand, we reintroduce the potential term and carefully characterize the unstable modes generated by it. This then leads to the desired semigroup bounds for the linear flow (see Theorem \ref{thm:semigroup}). We emphasize that one of the key complicating factors here is that, in view of our aim of obtaining a comprehensive treatment of the whole family of DSS profiles $U^T_{n,c}$, we had to treat $\widetilde{\Lf}_n$ in arbitrary dimension $d \geq 2$ and for arbitrary $n\in \mathbb{N}$.

In Section \ref{sec:nonlin}, we exploit the established exponential decay of the semigroup ``orthogonal" to the unstable modes to solve the nonlinear problem. For this, we employ standard fixed point schemes, along with Lyapunov-Perron-type arguments for projecting away the unstable directions.
 
  Finally, in Section \ref{sec:1d} we consider the considerably simpler one-dimensional case. While we again encounter an ODE operator analogous to $\widetilde{\Lf}_n$ above, the resolvent construction turns out to be technically much simpler. In fact, one can explicitly exhibit the resolvent in closed form, with the desired regularity being manifest. Additionally, the absence of symmetry assumptions on the initial data in this case gives rise to additional unstable symmetry modes that need to be taken into account. For these reasons, we treat this case separately.

\subsection{Physical motivation, Related results, and Discussion}
One of the main physical origins of interest in  discrete self-similarity is the seminal numerical discovery of Choptuik in the early 1990s concerning the Einstein-scalar field system in $1+3$ dimensions \cite{Cho93}, where he observed that initial data near the threshold between dispersion and black hole formation evolve toward a universal, non-trivial solution exhibiting discrete self-similarity. This DSS solution, now known as the \emph{Choptuik critical solution}, has become a central object in the field of critical phenomena in gravitational collapse \cite{GunHilMar25}. 

Subsequent numerical investigations showed that, at least in the spherically symmetric setting, DSS solutions arise robustly across a variety of models, including higher-dimensional scalar field equations \cite{BirHusKun02}, nonlinear sigma-models \cite{HusLecPur00}, Yang-Mills fields \cite{ChoChmBiz96} and others; for an extensive up-to-date overview, we refer to \cite{GunHilMar25}. These studies also reveal that in supercritical geometric systems, the absence of CSS solutions leads naturally to the emergence of DSS behaviour in the context of blowup dynamics. In general relativity, empirically DSS behaviour manifests only near the threshold for blowup and, depending on the particular matter model, may be complemented by the formation of extremal black holes elsewhere in the solution space \cite{Kehle:2022uvc}.  Despite their importance, there are, in fact, very few rigorous results that treat the existence of DSS solutions; see, e.g., \cite{ReiTru19} for the Choptuik solution, and \cite{Tao16} for DSS blowup for a defocusing nonlinear wave system. The typical approach has remained primarily empirical and continues to resist a general PDE-level understanding, especially with regard to stability analysis, for which there are virtually no rigorous results. This is, in no small extent, due to the complexity of the actual physical models, which makes them intractable by the available rigorous analysis techniques.
For this reason, it is desirable to design relatively simple models that capture the self-similar behavior in the vicinity of blowup. Various such constructions have, in fact, been made in the literature, but most so far exhibit continuous, rather than discrete, self-similarity. Consequently, one would desire to have a simple system that admits, on the one hand, a small data global existence result, while, on the other hand, exhibits large-data DSS breakdown of solutions. The system we treat here \eqref{Eq:WM_system}, has been somewhat recently constructed and numerically explored by the second author together with Su\'arez Fern\'andez and Vicente in \cite{SuaVicHil21}. In particular, both the large data generic blowup behavior as well as the threshold phenomena have been numerically observed to exhibit DSS behavior.

In this paper, we rigorously justify several of the observations from \cite{SuaVicHil21}. Moreover, our results provide one of the first mathematically rigorous realizations, in a geometric setting, of many structural features previously observed only numerically. Specifically, the nonexistence of CSS blowup for our model (Theorem \ref{thm:nonexistence}), combined with the existence of a countable family of DSS solutions (Theorem \ref{thm:dss_existence}), directly parallels the CSS–DSS dichotomy seen in simulations of physical models.
The ground state DSS profile is stable up to symmetry, while higher excitations exhibit increasing co-dimension of stability, consistent with the increasing number of unstable modes. It is furthermore expected that the excited DSS solutions lie at the threshold for blowup, but we have no rigorous statement to that effect. The stability properties we establish are compatible with those already found numerically, and furthermore suggest a potentially rich spectral structure of higher co-dimension DSS solutions. If realized, such a structure would be directly analogous to that known from CSS examples, but to the best of our knowledge has not been recovered numerically in the physics literature.

There are some parallels of our approach with recent works of the third author on the optimal blowup stability of CSS solutions in nonlinear wave equations \cite{DonWal22a,Wal23,DonWal25,Wal24} (see also the original work of Donninger \cite{Don17}). Common with this paper are the delicate resolvent constructions exploiting Liouville-Green transformations together with asymptotic expansions. What sets the present work apart is the discrete nature of the self-similarity, which introduces new analytical obstructions. In our case, the linearized operator acquires an additional large reaction term proportional to $n$, complicating the construction of resolvents and preventing the use of softer conjugation arguments available for CSS or autonomous-in-$\tau$ problems, like in, e.g., \cite{CheMcNSch23}. In this sense, the techniques we develop extend the classical resolvent toolkit into a regime where the underlying spectral mapping property does not follow from soft (compactness-based) arguments.

We note that scalar wave equations with derivative-power nonlinearities have been a topic of research for a long time, starting with the early work of Glassey \cite{Gla73}. Most contributions focus on local well-posedness, the determination of critical exponents that mark thresholds for small-data global existence, and lifespan estimates; since these aspects are not central to our paper, we simply refer to \cite{HidWanYok12} for a nice overview. Although many existing works establish blowup, the arguments are typically indirect, and consequently the underlying blowup mechanisms remain poorly understood. There are, in fact, only a handful of recent results that address these issues directly. For stability of ODE-type blowup in one dimension for quadratic (spatial and temporal derivative) nonlinearities, see \cite{GhoLiuMas25,Gou25}. Moreover, as we were finalizing this manuscript, a further interesting work appeared \cite{RaeLiu25}, establishing stability of ODE-type blowup for the $\theta$-equation \eqref{Eq:theta} in one dimension. One of our results, namely Theorem \ref{thm;wavemaps}, can therefore be viewed as a generalization of this result to all higher excitations of the ODE profile. We further emphasize that our approach differs from that of \cite{RaeLiu25}. Rather than relying on coercive estimates, which tend to obscure the spectral structure intrinsic to the problem, we work directly with resolvent-type estimates. This leads to a shorter and more transparent analysis, which consequently allows a unified treatment in all spatial dimensions and for all members of the blowup family.

Finally, we situate our results into the more general PDE program of classifying finite-time singularities in nonlinear evolution equations. Across dispersive, parabolic, and hyperbolic PDEs, there has been significant progress on understanding when singularities occur, what their local profiles look like, and how stable these profiles are under perturbations. In these contexts, CSS solutions often capture key blowup phenomena. In contrast, DSS blowup is less rigid, as it does not arise from an autonomous ODE reduction in similarity variables, nor does it yield an elliptic equation for the profile. Instead, the profile evolves periodically in logarithmic time.
Our work demonstrates that in spite of this, a full stability theory is nevertheless attainable in a natural geometric model. Moreover, the fact that the construction works in arbitrary spatial dimension highlights a dimension-independent mechanism behind DSS phenomena, something that numerical relativity has long suggested but which has not previously been proven in a PDE framework.

\section{Functional setup and proof of Theorem \ref{thm:nonexistence}}\label{sec:funct_setup}
\noindent To study the stability properties of $U_{n,0}^T$, it is convenient to work with the $\theta$-equation \eqref{Eq:theta} and its corresponding blowup solutions 
$$
\theta^T_n(t,x) = \ln\left[(T-t)^n\phi_n\left(\frac{|x|}{T-t}\right)\right],
$$
and then transfer estimates back to the original system \eqref{Eq:WM_system}. To that end, the correct functional setup is vital. In view of the self-similar structure of $\theta_n^T$, a convenient coordinate system is given by the \emph{similarity variables}
\begin{equation}\label{def:sv}
    \tau=\ln T-\ln(T-t), \qquad \xi^i=\frac{x^i}{T-t}.
\end{equation}
By the differential relations 
\begin{align*}
\partial_t= \frac{e^\tau}{T}(\partial_\tau +\xi^i \partial_{\xi_i}),\qquad \partial_{x_i}=\frac{e^{\tau}}{T}\partial_{\xi_i},
\end{align*}
Eq.~\eqref{Eq:theta} gets transformed into
\begin{align}\label{eq:theta_sim_var}
\left(-\partial_\tau
-\partial_\tau^2-2\partial_\tau \xi^i \partial_{\xi_i}-2\xi^i \partial_{\xi_i}+(\delta^{ij}-\xi^i\xi^j )\partial_{\xi_i}\partial_{\xi_j}
\right)\psi(\tau,\xi)+N(\psi)(\tau,\xi)=0,
\end{align}
with
\begin{align*}
N(\psi)(\tau,\xi)=-(\partial_\tau+\xi^i\partial_{\xi_i})\psi(\tau,\xi)(\partial_\tau+\xi^j\partial_{\xi_j})\psi(\tau,\xi)+ \partial_{\xi_i}\psi(\tau,\xi)\partial_{\xi^i}\psi(\tau,\xi)
\end{align*}
and 
$$
\psi(\tau,\xi)=\theta(T-Te^{-\tau},Te^{-\tau} \xi).
$$
With this minimal setup, we are already in the position to prove Theorem \ref{thm:nonexistence}.
\begin{proof}[Proof of Theorem \ref{thm:nonexistence}]
   First, we note that the statement of the theorem is equivalent to asserting the nonexistence of static solutions to \eqref{eq:theta_sim_var} that are smooth on $[-1,1]$ for $d=1$, and radial and smooth on $\overline{\mathbb{B}^d_1}$ for $d \geq 2$.  We therefore argue separately for $d=1$ and $d\geq 2$, starting with the latter case. 
Note that, for radial $\tau$-independent functions, the equation \eqref{eq:theta_sim_var}
reduces to 
\begin{align*}
\left((1-\rho^2)\partial_\rho^2-2\rho\partial_\rho+\frac{d-1}{\rho}\partial_\rho\right) \widetilde \psi(\rho)+(1-\rho^2)(\partial_\rho \widetilde \psi(\rho))^2=0
\end{align*}
with $\rho=|\xi|$, and $\widetilde \psi$ being the radial representative of $\psi$.
To find all non-constant solutions to this equation, we set $\varphi=\widetilde{\psi}'$ to obtain 
\begin{align}\label{eq:ODE_varphi}
-\left(\frac{1}{\varphi(\rho)}\right)'+(1-\rho^2)^{-1}\left(-2\rho+\frac{d-1}{\rho}\right) \left(\frac{1}{\varphi(\rho)}\right)=-1.
\end{align}
This tells us that $\varphi$ is a meromorphic function whose multiplicative inverse satisfies the linear ODE \eqref{eq:ODE_varphi},
 which can, furthermore, be recast as
\begin{align*}
\left(\frac{1}{\varphi(\rho)}\rho^{1-d}(1-\rho^2)^{\frac{d-3}2}\right)'=\rho^{1-d}(1-\rho^2)^{\frac{d-3}2}.
\end{align*}
Now, if $d$ is odd, then a primitive of $\rho^{1-d}(1-\rho^2)^{\frac{d-3}2}$ is given by $\frac{\rho^{2-d} \,_2F_1(1-\frac{d}{2},\frac{3-d}{2},2-\frac{d}{2},\rho^2)}{2-d}$,
 and we see that for $\rho \in (0,1)$
$$
\varphi(\rho)=\frac{\rho^{1-d}(2-d)(1-\rho^2)^{\frac{d-3}2}}{\rho^{2-d} \,_2F_1(1-\frac{d}{2},\frac{3-d}{2},2-\frac{d}{2},\rho^2)+c}
$$
where $c$ is a free parameter.
So, as $\,_2F_1(1-\frac{d}{2},\frac{3-d}{2},2-\frac{d}{2},0)=1$, the function $\varphi$ exhibits singular behavior at the origin, irrespective of the choice of $c$, and the claim follows. For even dimensions,
a primitive of $\rho^{1-d}(1-\rho^2)^{\frac{d-3}2}$ is given by
$\frac{(1-\rho^2)^{\frac{d-1}{2}}\,_2F_1(\frac{d-1}{2},\frac{d}{2},\frac{d+1}{2},1-\rho^2)}{1-d}$.
Thus,
\begin{align*}
\varphi(\rho)=\frac{(1-d)\rho^{1-d}(1-\rho^2)^{\frac{d-3}{2}}}{(1-\rho^2)^{\frac{d-1}{2}} \,_2F_1(\frac{d-1}{2},\frac{d}{2},\frac{d+1}{2},1-\rho^2)+c}
\end{align*}
if $d\geq 4$, and 
\begin{align*}
\varphi(\rho)=\frac{1}{\rho(1-\rho^2)^{\frac{1}2}(\frac12[\ln(1+\sqrt{1-\rho^2}) - \ln(1-\sqrt{1-\rho^2})]+c)}
\end{align*}
if $d=2$. In the former case, from standard hypergeometric theory (see, for instance, \cite{OlvLonBoiClar10}) it follows that $$
\,_2F_1 \big(\frac{d-1}{2},\frac{d}{2},\frac{d+1}{2},1-\rho^2\big)
$$
has a pole of order $d-2$ at the origin. Hence, $\varphi$ is unbounded near $\rho=0$. The same is obviously true for the latter case as well. This finishes the case $d \geq 2$. For the one-dimensional case, we argue in a similar fashion to arrive at 
$$
\varphi(x)=\frac{2}{(1-x^2)([\ln(1+x)-\ln(1-x)]+c)},
$$ which is manifestly singular at the endpoints of $(-1,1)$.
\end{proof}
We now turn to the stability analysis of $\theta_n^T$. In similarity variables, solutions $\theta_n^T$, which blow up as $t \rightarrow T^-$, become global-in-time solutions 
$$
\psi^T_n(\tau,\xi):=\ln( T^n e^{-n\tau}\phi_n(|\xi|))
$$
to \eqref{eq:theta_sim_var}. In this way, the problem of the stability of blowup via $\theta_n^T$ inside the backward light cone of the blowup point $(T,0)$ is transformed into the problem of the asymptotic stability of the global solution $\psi_n^T$ inside an infinite cylinder with base $\B_1^d$. To proceed, it is convenient to work with a first-order formulation of \eqref{eq:theta_sim_var}. Hence, we set
\begin{align*}
\psi_1(\tau,\xi)&=\psi(\tau,\xi),
\\
\psi_2(\tau,\xi)&=\partial_\tau \psi(\tau,\xi)+\xi^i\partial_{\xi_i} \psi(\tau,\xi).
\end{align*}
This yields an evolution equation for 
$$
\Psi(\tau):=(\psi_1(\tau,\cdot),\psi_2(\tau,\cdot)),
$$
namely
\begin{align}\label{Eq:abstract evolution}
\partial_\tau \Psi(\tau)= \Lf_0 \Psi(\tau)+{\Nf(\Psi(\tau))},
\end{align}
where the linear term on the right-hand side is the wave operator in similarity variables, i.e.,
\begin{align*}
\Lf_0 \ff(\xi)=
\begin{pmatrix}
f_2(\xi)-\xi^i \partial_{\xi_i} f_1(\xi)
\\
 -f_2(\xi)-\xi^i \partial_{\xi_i}f_2(\xi)+\Delta f_1(\xi)
\end{pmatrix} \quad \text{for} \quad 
\ff(\xi)=
\begin{pmatrix}
 f_1(\xi)
\\
f_2(\xi)
\end{pmatrix},
\end{align*}
and the nonlinearity is
\begin{align*}
\Nf(\ff)(\xi)=\begin{pmatrix}
0
\\
\partial^if_1(\xi) \partial_i f_1(\xi)-f_2(\xi)^2
\end{pmatrix}.
\end{align*}
Furthermore, our global solution $\psi_n^T$ becomes
\begin{align*}
\Psi^T_n(\tau,\xi):= \begin{pmatrix}
\ln( T^n e^{-n\tau}\phi_n(|\xi|) )
\\
-n+\frac{\xi^i \partial_i\phi_n(|\xi|)}{\phi_n(|\xi|)} 
\end{pmatrix}.
\end{align*}
Now, to study the flow near $\Psi^T_n$, we consider the perturbation ansatz
\begin{equation}\label{pert_ansatz}
    \Psi(\tau)=\Psi^T_n(\tau,\cdot) + \Phi(\tau).
\end{equation}
Linearization of $\Nf$ around $\Psi^T_n$ yields the following operator
\begin{align*}
\Lf'_n \ff(\xi)=\begin{pmatrix}
0
\\
2\frac{\partial_i\phi_n(|\xi|)}{\phi_n(|\xi|)} \partial_\xi^i f_1(\xi)+\left(2n-2\frac{\xi^i \partial_i\phi_n(|\xi|)}{\phi_n(|\xi|)} \right) f_2(\xi)
\end{pmatrix}.
\end{align*}
Thus, we  arrive at the equation governing the perturbation $\Phi(\tau)$
\begin{align}\label{eq:Phi}
\partial_\tau \Phi(\tau)= \Lf_n \Phi(\tau)+\Nf(\Phi(\tau)),
\end{align}
where 
$$
 \Lf_n=  \Lf_0+\Lf'_n.
$$
We note that the operators $\Lf_n$ and $\Nf$ are completely independent of $T$; all dependence on $T$ is now carried solely by the initial data. 
To construct solutions to \eqref{eq:Phi}, a convenient functional framework is given by the Hilbert space
\begin{equation}\label{eq:H}
    \mathcal{H} :=
\begin{cases}
  H^{n+2} \times H^{n+1}(-1,1),
  &  d = 1,
   \\[6pt]
   H^{\lceil d/2 \rceil + 2n + 1}\times
  H^{\lceil d/2 \rceil + 2n}(\mathbb{B}^d_1), 
  &  d \ge 2.
\end{cases}
\end{equation}

We note that the local-in-space nature of \eqref{eq:Phi}, together with the fact that the passage to similarity variables destroys the self-adjoint structure of the wave equation in physical variables, prevent us from employing classical Fourier-analytic methods to construct solutions to \eqref{eq:Phi} in $\mathcal{H}$. Consequently, we adopt an abstract approach based on semigroup theory. To this end, by using standard arguments, we infer that the operator $\Lf_n$, when initially endowed with a domain consisting of smooth functions $C^\infty\times C^\infty(\overline{\B^d_1}))$, is closable in $\mathcal{H}$, and we consequently use the same notation to denote its closure 
\[
\Lf_n:\mathcal{D}(\Lf_n) \subseteq \mathcal{H} \rightarrow \mathcal{H}.
\]
In view of the semilinear structure of \eqref{eq:Phi}, and since the nonlinear operator $\Nf$ is locally Lipschitz continuous on $\mathcal{H}$, we expect the nonlinear stability properties of $\Psi_n^T$ to be governed by its linear stability properties. However, the linear stability problem associated with \eqref{eq:Phi} turns out to be rather subtle. Indeed, much of the analysis in this paper is devoted to resolving this difficulty. We now expand on this point. By abstract arguments, combined with some recent results on self-similar blowup in wave equations, one can show that the free operator $\Lf_0$ (once properly interpreted as a closed operator on $\mathcal{H}$) generates an exponentially decaying semigroup on $\mathcal{H}$. This, in particular, implies that the spectrum of this operator is completely contained in the left half-plane.
The linearization $\Lf'_n$, however, is merely a bounded operator on $\mathcal{H}$, possessing no compactness relative to ${\Lf}_0$. Consequently, abstract spectral mapping arguments based on relative compactness become ineffective for resolving the linear stability problem.

For the reasons outlined above, we forgo soft arguments and instead adopt a constructive approach. First, we show that the unstable spectrum of $\Lf_n$ consists solely of eigenvalues. We achieve this by constructing the resolvent on the complement of the point spectrum. We then establish uniform bounds on the resulting resolvent, which, by the Gearhart-Pr\"uss-Greiner theorem, yields exponential decay ``orthogonally'' to the unstable modes, thereby establishing the linear stability principle. The resolvent construction occupies a substantial portion of the paper and is carried out in Section \ref{sec:resolvent}. However, before proceeding, we introduce a convenient structural simplification. Namely, we introduce an operator that is boundedly similar to $\Lf_n$ but has a simpler structure, making it more suitable for the resolvent analysis. We then perform the resolvent calculus for the new operator and then transfer the resulting bounds to $\Lf_n$ via bounded similarity.

\begin{lem}\label{lem:conjugation}
Let $d,n \in \mathbb{N}$. Then there exists an invertible  bounded linear operator $\Cf:\mathcal{H}\to \mathcal{H}$ which preserves the test space $C^\infty\times C^\infty(\overline{\B^d_1})$ and satisfies
\begin{align*}
\Cf^{-1} \Lf_n \Cf \ff(\xi)= \begin{pmatrix}
f_2(\xi)-\xi^i \partial_{\xi_i} f_1(\xi)
\\
(2n-1)f_2(\xi) -\xi^i \partial_{\xi_i}f_2(\xi)+ \Delta f_1(\xi)
\end{pmatrix}
+ 
\begin{pmatrix}
0
\\
n(1-n) f_1(\xi)
\end{pmatrix}
=:\widehat{\Lf}_n\ff(\xi)+\Lf'_{V_n}\ff(\xi),
\end{align*}
for all $\ff \in C^\infty\times C^\infty(\overline{\B^d_1})$.
\end{lem}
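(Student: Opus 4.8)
The plan is to construct $\Cf$ explicitly as a multiplication-type operator that absorbs the first-order derivative term $2\frac{\partial_i\phi_n}{\phi_n}\partial^i f_1$ from $\Lf'_n$ into the Laplacian, turning it into a pure potential term. The natural candidate is conjugation of the first component by the scalar function $\phi_n(|\xi|)$ (or a power thereof). Concretely, I would try $\Cf\ff = (\phi_n f_1, f_2)$, or more likely $\Cf\ff = (\phi_n f_1, \phi_n f_2)$ together with a lower-triangular correction, and compute $\Cf^{-1}\Lf_n\Cf$. The heuristic is the standard one: if $\psi = \phi_n g$, then $\Delta(\phi_n g) = \phi_n \Delta g + 2\nabla\phi_n\cdot\nabla g + (\Delta\phi_n) g$, so conjugating the Laplacian by $\phi_n$ produces exactly a drift term $2\frac{\nabla\phi_n}{\phi_n}\cdot\nabla g$ matching the one in $\Lf'_n$, at the cost of a potential $\frac{\Delta\phi_n}{\phi_n}g$. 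Using the hypergeometric ODE satisfied by $\phi_n$ quoted right after Theorem \ref{thm:dss_existence}, namely $(1-\rho^2)\phi_n'' + (\frac{d-1}{\rho}+2(n-1)\rho)\phi_n' + n(1-n)\phi_n = 0$, one should be able to identify the resulting scalar coefficients and check that the reaction term $2nf_2$ and the drift $-2\frac{\xi^i\partial_i\phi_n}{\phi_n}f_2$ combine with the conjugated first-row terms to produce the clean operator $\widehat{\Lf}_n + \Lf'_{V_n}$ with constant potential $n(1-n)$.

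The key steps, in order, are: (i) write down the explicit ansatz for $\Cf$ — I expect something of the form $\Cf = \begin{pmatrix} \phi_n & 0 \\ a(\xi) & \phi_n \end{pmatrix}$ (multiplication operators) where $a$ is chosen to cancel cross terms, possibly $a = \xi^i\partial_i\phi_n$ or a multiple of $\phi_n$ times a function; (ii) verify that $\Cf$ and $\Cf^{-1}$ are bounded on $\mathcal H$ and preserve the test space — this uses that $\phi_n$ is a \emph{positive} polynomial on $\overline{\B^d_1}$ (positivity is asserted in Theorem \ref{thm:dss_existence}), hence $\phi_n$ and $1/\phi_n$ are smooth and bounded with all derivatives on the compact set $\overline{\B^d_1}$, so multiplication by them is bounded on every Sobolev space in \eqref{eq:H}; (iii) compute $\Lf_n\Cf\ff$ componentwise using the product/chain rules, substitute the hypergeometric ODE for $\phi_n$ to eliminate $\phi_n''$ in favor of lower-order terms, and collect; (iv) apply $\Cf^{-1}$ and read off that the result equals $\widehat{\Lf}_n\ff + \Lf'_{V_n}\ff$, matching the stated constant potential $n(1-n)$ and the modified reaction coefficient $2n-1$. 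Throughout, since $\Lf_n$ and $\Cf$ are both defined first on $C^\infty\times C^\infty(\overline{\B^d_1})$, all manipulations are legitimate pointwise computations with smooth functions; no closure subtleties arise at this stage.

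The main obstacle I anticipate is bookkeeping rather than conceptual: getting the off-diagonal entry $a(\xi)$ of $\Cf$ exactly right so that \emph{both} the spurious $f_2$-drift term $-2\frac{\xi^i\partial_i\phi_n}{\phi_n}f_2$ and any cross terms between the two components cancel, while simultaneously ensuring that the first row of $\Cf^{-1}\Lf_n\Cf$ remains the \emph{unchanged} transport operator $f_2 - \xi^i\partial_{\xi_i}f_1$. That the first row is unchanged is a strong constraint — it essentially forces $\Cf$ to act on the second component in a way compatible with $\phi_n$-scaling of the first, which is why I expect the diagonal form $\mathrm{diag}(\phi_n,\phi_n)$ plus a strictly-lower correction. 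One must also double-check the interaction of the $\xi^i\partial_{\xi_i}$ Euler operator with multiplication by $\phi_n(|\xi|)$: since $\xi^i\partial_{\xi_i}(\phi_n g) = \phi_n \xi^i\partial_{\xi_i} g + (\xi^i\partial_i\phi_n)g$, the dilation generator also produces a lower-order term $(\xi^i\partial_i\phi_n)g$ that must be tracked and cancelled against the analogous contribution from the potential reshuffling; this is precisely where the specific coefficient $2(n-1)\rho$ in the $\phi_n$-ODE is used to make the homogeneity work out. Once the ansatz is pinned down, verifying the identity is a finite, if slightly tedious, computation with no analytic difficulty.
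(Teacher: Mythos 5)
Your overall strategy is the paper's: conjugate $\Lf_n$ by an explicit multiplication-type lower-triangular operator built from $\phi_n$, use positivity of $\phi_n$ on $\overline{\B^d_1}$ for boundedness and test-space preservation, and use the hypergeometric ODE for $\phi_n$ to reduce the zeroth-order terms to the constant potential $n(1-n)$. The one substantive issue is that your concrete ansatz has the conjugation \emph{direction} inverted. The paper takes
\begin{align*}
\Cf \ff(\xi)= \frac{1}{\phi_n(|\xi|)}\begin{pmatrix}
1 & 0\\
-\frac{\xi^i \partial_i \phi_n(|\xi|)}{\phi_n(|\xi|)} & 1
\end{pmatrix}\ff(\xi),
\qquad
\Cf^{-1}\ff(\xi)= \begin{pmatrix}
\phi_n(|\xi|) & 0\\
\xi^i \partial_i \phi_n(|\xi|) & \phi_n(|\xi|)
\end{pmatrix}\ff(\xi),
\end{align*}
so your proposed matrix $\mathrm{diag}(\phi_n,\phi_n)$ with lower entry $a=\xi^i\partial_i\phi_n$ is exactly the paper's $\Cf^{-1}$, not its $\Cf$. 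With the convention demanded by the statement, $\Cf^{-1}\Lf_n\Cf$, your choice does not cancel the drift but doubles it: the second component produces
\begin{align*}
\phi_n^{-1}\Big[\Delta(\phi_n f_1)+2\tfrac{\partial_i\phi_n}{\phi_n}\,\partial^i(\phi_n f_1)\Big]
=\Delta f_1 + 4\tfrac{\partial_i\phi_n}{\phi_n}\,\partial^i f_1 + (\dots)f_1,
\end{align*}
and no choice of the off-diagonal entry $a$ can repair this, since the $a$-dependent first-order contributions are $-\frac{a}{\phi_n}\,\xi^i\partial_i f_1$ (from $-\xi^i\partial_i(af_1)$ in the second slot) and $+\frac{a}{\phi_n}\,\xi^i\partial_i f_1$ (from the second row of $\Cf^{-1}$ hitting $-\xi^i\partial_i(\phi_n f_1)$), which cancel each other. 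Note that your own heuristic already points the right way: since conjugating the \emph{clean} Laplacian by multiplication by $\phi_n$ \emph{creates} the drift $2\frac{\partial_i\phi_n}{\phi_n}\partial^i$, to remove that same drift from $\Lf_n$ you must conjugate by multiplication by $\phi_n^{-1}$, i.e.\ swap the roles of your $\Cf$ and $\Cf^{-1}$. Once this flip is made, the remaining steps you outline --- boundedness of multiplication by $\phi_n^{\pm 1}$ and $\xi^i\partial_i\phi_n$ on the spaces in \eqref{eq:H} because $\phi_n$ is a positive even polynomial on the closed ball, the unchanged first row being enforced by the off-diagonal entry, the Euler-operator bookkeeping, and the use of the ODE for $\phi_n$ to identify the potential --- coincide with the paper's computation and present no further difficulty.
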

\begin{proof}
We define $\Cf$ as
\begin{align*}
\Cf \ff(\xi)= \frac{1}{{\phi_n(|\xi|)} }\begin{pmatrix}
1 &0
\\
-\frac{\xi^i \partial_i \phi_n(|\xi|)}{\phi_n(|\xi|)}  &1 
\end{pmatrix}\ff(\xi)
\end{align*} and note that $\Cf^{-1}$ is given by
\begin{align*}
\Cf^{-1}\ff(\xi)= \phi_n(|\xi|)\begin{pmatrix}
1 &0
\\
\frac{\xi^i \partial_i \phi_n(|\xi|)}{\phi_n(|\xi|)}   &1 
\end{pmatrix}\ff(\xi).
\end{align*}
Consequently, we consider the conjugated operator
\begin{align*}
\Lf_{n,con}:= \phi_n(|\xi|)\begin{pmatrix}
1 &0
\\
\frac{\xi^i \partial_i \phi_n(|\xi|)}{\phi_n(|\xi|)}   &1 
\end{pmatrix} (\Lf_n) \phi_n(|\xi|)^{-1}\begin{pmatrix}
1 &0
\\
-\frac{\xi^i \partial_i \phi_n(|\xi|)}{\phi_n(|\xi|)}  &1 
\end{pmatrix}.
\end{align*} 
Then, upon setting $\widehat V_n(\xi):=\frac{\xi^i \partial_i \phi_n(|\xi|)}{\phi_n(|\xi|)}$, the first component of $\Lf_{n,con} \ff $ is given by
\begin{align*}
\phi_n(|\xi|)\left[\phi_n(|\xi|)^{-1}f_2(\xi)-\phi_n(|\xi|)^{-1}\widehat V_n(\xi)f_1(\xi)-\xi^i \partial_i [\phi_n(|\xi|)^{-1} f_1(\xi)]
\right].
\end{align*}
Furthermore, as
\begin{align*}
-\xi^i \partial_i \left[\phi_n(|\xi|)^{-1}\right]=\phi_n(|\xi|)^{-1}\widehat V_n(\xi),
\end{align*}
one readily concludes that 
\begin{align*}
[\Lf_{n,con}\ff]_1=[\Lf_n \ff]_1,
\end{align*}
as stated in the lemma.
Thus, we turn to computing $[\Lf_{n,con}\ff]_2$, which is given by
\begin{align*}
[&\Lf_{n,con}\ff(\xi)]_2
\\
&= 
\phi_n(|\xi|)\left[
-\phi_n(|\xi|)^{-1} f_2(\xi)+ \phi_n(|\xi|)^{-1} \widehat V_n(\xi) f_1(\xi) -\xi^i \partial_{i}\left( \phi_n(|\xi|)^{-1}\left[ f_2(\xi)- \widehat V_n(\xi) f_1(\xi)\right]\right)\right]
\\
&\quad+ \phi_n(|\xi|) \Delta\left[\phi_n(|\xi|)^{-1} f_1(\xi)\right]+  \widehat V_n(\xi)\left[f_2(\xi)-\xi^i \partial_{i}f_1(\xi)\right]
\\
&\quad+ \phi_n(|\xi|)\left[ 2\frac{\partial_i\phi_n(|\xi|)}{\phi_n(|\xi|)} \partial^i [\phi_n(|\xi|)^{-1}f_1(\xi)]+\Big(2n-2\frac{\xi^i \partial_i\phi_n(|\xi|)}{\phi_n(|\xi|)} \Big) \left( \phi_n(|\xi|)^{-1}\left[f_2(\xi)-  \widehat V_n(\xi) (f_1)\right]\right)
\right]
\\
&=(2n-1)f_2(\xi)- \xi^i \partial_{\xi_i}f_2(\xi)+\Delta f_1(\xi)
\\
&\quad -f_2(\xi) \phi_n(|\xi|)\xi^i \partial_{i} \left[\phi_n(|\xi|)^{-1}\right] + \widehat V_n(\xi) \xi^i \partial_{i}  f_1(\xi)+2 \phi_n(|\xi|)\partial^i \left[\phi_n(|\xi|)^{-1}\right] \partial_i f_1(\xi) 
\\
&\quad +  \widehat V_n(\xi)\left[f_2(\xi)-\xi^i \partial_{i}f_1(\xi)\right]+2\frac{\partial_i\phi_n(|\xi|)}{\phi_n(|\xi|)} \partial^i f_1(\xi) -2\frac{\xi^i \partial_i\phi_n(|\xi|)}{\phi_n(|\xi|)}f_2(\xi) 
\\
&\quad + f_1(\xi)\bigg[ \widehat V_n(\xi)+\phi_n(|\xi|) \xi^i \partial_{i}[ \phi_n(|\xi|)^{-1} \widehat V_n(\xi)] -2n \widehat V_n(\xi) +\phi_n(|\xi|) \Delta \left[\phi_n(|\xi|)^{-1}\right] 
\\
&\quad +2\phi_n(|\xi|)\frac{\partial_i\phi_n(|\xi|)}{\phi_n(|\xi|)} \partial^i \phi_n(|\xi|)^{-1} +2\frac{\xi^i \partial_i\phi_n(|\xi|)}{\phi_n(|\xi|)} \widehat V_n(\xi)
\bigg].
\end{align*}
Note that the first line after the second equal sign, i.e., the terms
$$
(2n-1)f_2(\xi)- \xi^i \partial_{i}f_2(\xi)+\Delta f_1(\xi),
$$ exactly constitute the noncompact part of the second component of the operator we want to end up with, while the last two lines add up to the potential term $n(1-n)f_1(\xi)$.
Thus, the claim follows if we can show that remaining two lines vanish, in other words if we show that 
\begin{multline}\label{Eq:vanish}
0=-f_2(\xi) \phi_n(|\xi|)\xi^i \partial_{\xi_i} \phi_n(|\xi|)^{-1} + \widehat V_n(\xi) \xi^i \partial_{i}  f_1(\xi)+2 \phi_n(|\xi|)\partial^i \left[\phi_n(|\xi|)^{-1}\right] \partial_i f_1(\xi) 
\\
\quad +  \widehat V_n(\xi)\left[f_2(\xi)-\xi^i \partial_{\xi_i}f_1(\xi)\right]+2\frac{\partial_i\phi_n(|\xi|)}{\phi_n(|\xi|)} \partial_\xi^i f_1(\xi) -2\frac{\xi^i \partial_i\phi_n(|\xi|)}{\phi_n(|\xi|)}f_2(\xi) .
\end{multline}
For this, we compute that
\begin{align*}
\widehat V_n(\xi) - \phi_n(|\xi|)\xi^i \partial_{i}\left[\phi_n(|\xi|)^{-1}\right] -2\frac{\xi^i \partial_i\phi_n(|\xi|)}{\phi_n(|\xi|)}=0.
\end{align*}
Consequently, all terms containing $f_2$ in \eqref{Eq:vanish} cancel each other out exactly. Moreover, we note that
\begin{multline*} 
\widehat V_n(\xi) \xi^i \partial_{i}  f_1(\xi)+2 \phi_n(|\xi|)\partial^i \left[\phi_n(|\xi|)^{-1}\right] \partial_i f_1(\xi) - \widehat V_n(\xi)\xi^i \partial_{i}f_1(\xi)+2\frac{\partial_i\phi_n(|\xi|)}{\phi_n(|\xi|)} \partial^i f_1(\xi) 
\\
= 2 \phi_n(|\xi|)\partial^i \left[\phi_n(|\xi|)^{-1}\right]\partial_i f_1(\xi) +2\frac{\partial_i\phi_n(|\xi|)}{\phi_n(|\xi|)} \partial_\xi^i f_1(\xi) =0.
\end{multline*}
Lastly, a direct computation shows that the potential is of the claimed form. 
\end{proof}
\begin{remark}
The structural property of the operators $\Lf_n$ outlined in the lemma above, reflects a similar earlier finding in \cite{CheMcNSch23} in the context of the hyperbolic Skyrme model.
\end{remark}
\noindent Let us briefly explain our choice of notation for the operator $\Lf'_{V_n}$. Although $\Lf'_{V_n}$ is, in the present setting, simply the multiplication operator by a constant nilpotent matrix
\begin{equation*}
    \begin{pmatrix}
0 & 0
\\
n(1-n) & 0
\end{pmatrix},
\end{equation*}
our arguments below, in particular the resolvent estimates in Section \ref{sec:resolvent}, do not depend on the exact value of the non-zero entry above, and apply to multiplication by any
\begin{equation*}
\begin{pmatrix}
0 & 0
\\
V(\xi) & 0
\end{pmatrix},
\end{equation*}
for a smooth and radial potential $V$. For this reason, and with future applications in mind, we treat this general case. We also note that $(\widehat{\Lf}_n,\mathcal{D}(\widehat{\Lf}_n))$ with $\mathcal{D}(\widehat{\Lf}_n)=C^\infty\times C^\infty(\overline{\B^d_1}))$, is closable in $\mathcal{H}$; for notational simplicity, we denote its closure by the same symbol, and furthermore denote
\begin{align*}
\widetilde{\Lf}_n:= \widehat{\Lf}_n +  \Lf_{V_n}'.
\end{align*}
As a consequence of the lemma above, we have the following correspondence of the spectra.
\begin{lem}\label{lem:corr_spectra}
One has that $\sigma(\Lf_n)=\sigma(\widetilde{\Lf}_n)$. Furthermore, if $\lambda$ is an eigenvalue of $\Lf_n$ with eigenfunction $\ff$, then $\lambda$ is also an eigenvalue of $\widetilde{\Lf}_n$ with an eigenfunction $\Cf^{-1} \ff$. The natural converse also holds.
\end{lem}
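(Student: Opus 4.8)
The plan is to upgrade the pointwise identity of Lemma~\ref{lem:conjugation} — valid a priori only on the test space $\mathcal{D}:=C^\infty\times C^\infty(\overline{\B^d_1})$ — to an identity of closed operators on $\mathcal{H}$, and then to invoke the standard fact that bounded similarity transformations preserve spectra, resolvents, and eigenfunctions.

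First I would note that $\Cf$ and $\Cf^{-1}$, given explicitly in the proof of Lemma~\ref{lem:conjugation}, are bounded operators on $\mathcal{H}$ that restrict to bijections of $\mathcal{D}$ onto itself. Hence, defining $\Cf^{-1}\Lf_n\Cf$ on the domain $\Cf^{-1}\mathcal{D}(\Lf_n)$, one obtains a closed operator (conjugation of a closed operator by a homeomorphism of $\mathcal{H}$ is closed) which agrees with $\widetilde{\Lf}_n$ on $\mathcal{D}$ by Lemma~\ref{lem:conjugation}. A short approximation argument — given $f$ with $\Cf f\in\mathcal{D}(\Lf_n)$, pick $g_k\in\mathcal{D}$ with $g_k\to\Cf f$ and $\Lf_n g_k\to\Lf_n\Cf f$, and set $f_k:=\Cf^{-1}g_k\in\mathcal{D}$, so that $f_k\to f$ and $\widetilde{\Lf}_n f_k=\Cf^{-1}\Lf_n\Cf f_k\to\Cf^{-1}\Lf_n\Cf f$ — shows that this closed operator is in fact the closure of $\widetilde{\Lf}_n|_{\mathcal{D}}$. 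Since $\widetilde{\Lf}_n=\widehat{\Lf}_n+\Lf'_{V_n}$ with $\widehat{\Lf}_n$ the closure of $\widehat{\Lf}_n|_{\mathcal{D}}$ and $\Lf'_{V_n}$ bounded (so that closing commutes with adding $\Lf'_{V_n}$), the operator $\widetilde{\Lf}_n$ equals that closure, and we conclude
\[
\Cf^{-1}\Lf_n\Cf=\widetilde{\Lf}_n,\qquad \mathcal{D}(\widetilde{\Lf}_n)=\Cf^{-1}\mathcal{D}(\Lf_n).
\]

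With this identity the spectral statement is immediate. For every $\lambda\in\C$ we have $\widetilde{\Lf}_n-\lambda=\Cf^{-1}(\Lf_n-\lambda)\Cf$; since $\Cf$ and $\Cf^{-1}$ are bounded, $\Lf_n-\lambda$ admits a bounded inverse on $\mathcal{H}$ if and only if $\widetilde{\Lf}_n-\lambda$ does, whence $\sigma(\Lf_n)=\sigma(\widetilde{\Lf}_n)$, together with $(\widetilde{\Lf}_n-\lambda)^{-1}=\Cf^{-1}(\Lf_n-\lambda)^{-1}\Cf$ on the common resolvent set. For the eigenfunction correspondence, if $\Lf_n\ff=\lambda\ff$ with $\ff\neq 0$, then $\Cf^{-1}\ff\in\mathcal{D}(\widetilde{\Lf}_n)$, $\Cf^{-1}\ff\neq 0$ by injectivity of $\Cf^{-1}$, and $\widetilde{\Lf}_n(\Cf^{-1}\ff)=\Cf^{-1}\Lf_n\Cf\,\Cf^{-1}\ff=\lambda\,\Cf^{-1}\ff$; the converse is identical upon swapping the roles of $\Cf$ and $\Cf^{-1}$.

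The only step that requires genuine care is the passage from $\mathcal{D}$ to the closures, i.e.\ checking that conjugation by the bounded invertible operator $\Cf$ commutes with operator closure and that the domains transform as stated; this is a soft functional-analytic point rather than a computation, and I expect it to be essentially the only obstacle.
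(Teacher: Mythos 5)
Your argument is correct and is precisely the reasoning the paper leaves implicit: Lemma~\ref{lem:corr_spectra} is stated as a direct consequence of Lemma~\ref{lem:conjugation}, i.e.\ bounded similarity of the closed operators, with the only delicate point being exactly the one you isolate — that conjugation by the bounded invertible $\Cf$ commutes with taking closures (using that $C^\infty\times C^\infty(\overline{\B^d_1})$ is a core for $\Lf_n$, is preserved by $\Cf^{\pm1}$, and that closing commutes with adding the bounded operator $\Lf'_{V_n}$). Nothing to correct.
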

In what follows, whenever $d \geq 2$, we work in the radial setting. Indeed, since all operators introduced above, including $\Cf$ and $\Cf^{-1}$, map radial test functions onto radial ones, the closures of their restrictions to the radial test space coincide with the restrictions of their non-radial closures to the radial subspace $\mathcal{H}_{rad} \subseteq \mathcal{H}$.
Moving forward, for $d\geq 2$, we will henceforth regard  all involved function spaces as radial. By a slight abuse of notation, we also identify any radial function with its radial representative. In particular, we have that 
\begin{equation}\label{def:free operator radial case}
\widehat \Lf_{n}\ff(\rho)=\begin{pmatrix}
f_2(\rho)-\rho f_1'(\rho)
\\
(2n-1)f_2(\rho) -\rho f_2'(\rho)+ f_1''(\rho)+\frac{d-1}{\rho}f_1'(\rho)
\end{pmatrix}
\end{equation}
for all $\ff \in C_{rad}^\infty\times C_{rad}^\infty(\overline{\B^d_1})$.

We now state a result from semigroup theory that establishes an equivalence between the dynamics on $\mathcal{H}_{rad}$ generated by $\Lf_{n}$ and $\widetilde{\Lf}_n$.

\begin{lem}\label{lem:boundcarriesover}
Let $d \geq 2$ and $n \in \mathbb{N}$. Assume that the operator $\widetilde{\Lf}_n$ generates a $C_0$-semigroup $(\widetilde \Sf_n(\tau))_{\tau \geq 0}$ of bounded linear operators on $\mathcal{H}_{rad}$, and suppose that for the spectral projection $\widetilde \Pf_n$ corresponding to a finite set $S$ of isolated semi-simple eigenvalues of $\widetilde{\Lf}_n$ one has that
\begin{align}
\|\widetilde \Sf_n(\tau)(\I-\widetilde \Pf_n) \ff \|_{\mathcal{H}}&\lesssim e^{-\frac \tau 2}\|\ff \|_{\mathcal{H}},
\end{align}
for all $\ff \in \mathcal{H}_{rad}$ and all $\tau \geq 0$. Then $\Lf_n$ generates a $C_0$-semigroup $(\Sf_n(\tau))_{\tau \geq 0}$ of bounded linear operators on $\mathcal{H}_{rad}$, and if $\Pf_n$ denotes the spectral projection relative to $\Lf_n$ onto the same set $S$ of semi-simple eigenvalues, we have that
\begin{align}
\|\Sf_n(\tau)(\I- \Pf_n) \ff \|_{\mathcal{H}}&\lesssim e^{-\frac \tau 2}\|\ff \|_{\mathcal{H}},
\end{align}
for all $\ff \in \mathcal{H}_{rad}$ and all $\tau \geq 0$. Finally, the analogous result holds in $d=1$.
\end{lem}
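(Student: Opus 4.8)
The plan is to deduce everything from the bounded similarity established in Lemma~\ref{lem:conjugation}, namely $\widetilde{\Lf}_n=\Cf^{-1}\Lf_n\Cf$, together with the semigroup and spectral hypotheses assumed for $\widetilde{\Lf}_n$. Since $\phi_n$ is a strictly positive smooth polynomial on the compact set $\overline{\B^d_1}$ (resp.\ on $[-1,1]$ when $d=1$), multiplication by $\phi_n^{\pm1}$ and by the smooth coefficient $\xi^i\partial_i\phi_n/\phi_n$ are bounded on each factor of $\mathcal{H}$; hence $\Cf$ and $\Cf^{-1}$ are bounded invertible operators on $\mathcal{H}$, and, as noted in the text, they preserve the radial subspace, so also act boundedly and invertibly on $\mathcal{H}_{rad}$.

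First I would upgrade the conjugation identity of Lemma~\ref{lem:conjugation} from the test space to the closures. Because $\Cf$ maps $C^\infty\times C^\infty(\overline{\B^d_1})$ onto itself, this space is a core for both $\Lf_n$ and $\widetilde{\Lf}_n$, and the conjugate of a closed operator by bounded invertible maps has closed graph, a routine density/core approximation argument shows that $\Lf_n=\Cf\widetilde{\Lf}_n\Cf^{-1}$ as closed operators on $\mathcal{H}_{rad}$ (and on $\mathcal{H}$). Consequently $\Lf_n$ generates the $C_0$-semigroup $\Sf_n(\tau):=\Cf\widetilde{\Sf}_n(\tau)\Cf^{-1}$: the semigroup law, strong continuity at $\tau=0$, and the identification of the generator of $(\Sf_n(\tau))_{\tau\ge0}$ with $\Cf\widetilde{\Lf}_n\Cf^{-1}=\Lf_n$ all follow by conjugating the corresponding properties of $(\widetilde{\Sf}_n(\tau))_{\tau\ge0}$ and invoking uniqueness of generators.

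Next, for the spectral projections I would use $\sigma(\Lf_n)=\sigma(\widetilde{\Lf}_n)$ from Lemma~\ref{lem:corr_spectra}, so that the finite set $S$ of isolated semi-simple eigenvalues of $\widetilde{\Lf}_n$ is also a finite set of isolated eigenvalues of $\Lf_n$. Conjugating resolvents gives $(\lambda-\Lf_n)^{-1}=\Cf(\lambda-\widetilde{\Lf}_n)^{-1}\Cf^{-1}$ for $\lambda\notin\sigma(\widetilde{\Lf}_n)$, and integrating over a contour $\Gamma$ encircling $S$ and no other spectrum yields $\Pf_n=\Cf\widetilde{\Pf}_n\Cf^{-1}$; since the Jordan structure is invariant under similarity, the eigenvalues in $S$ remain semi-simple for $\Lf_n$. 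Therefore $\I-\Pf_n=\Cf(\I-\widetilde{\Pf}_n)\Cf^{-1}$, whence $\Sf_n(\tau)(\I-\Pf_n)=\Cf\,\widetilde{\Sf}_n(\tau)(\I-\widetilde{\Pf}_n)\,\Cf^{-1}$, and the assumed bound (applied to $\Cf^{-1}\ff\in\mathcal{H}_{rad}$) gives
\[
\|\Sf_n(\tau)(\I-\Pf_n)\ff\|_{\mathcal{H}}
\le \|\Cf\|\,\big\|\widetilde{\Sf}_n(\tau)(\I-\widetilde{\Pf}_n)(\Cf^{-1}\ff)\big\|_{\mathcal{H}}
\lesssim e^{-\tau/2}\|\Cf^{-1}\ff\|_{\mathcal{H}}
\lesssim e^{-\tau/2}\|\ff\|_{\mathcal{H}}
\]
for all $\ff\in\mathcal{H}_{rad}$ and $\tau\ge0$. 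For $d=1$, Lemma~\ref{lem:conjugation} provides the very same conjugation operator $\Cf$, built from the positive polynomial $\phi_n$ on $[-1,1]$, so the argument is verbatim the same.

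The main obstacle here is not analytic depth but care in the functional-analytic bookkeeping: Lemma~\ref{lem:conjugation} asserts the conjugation identity only on test functions, so one has to confirm it promotes to an identity of the closed operators before transporting semigroups, resolvents, and Riesz projections along $\Cf$. Once this is settled, everything reduces to the standard fact that a bounded similarity transports generators, $C_0$-semigroups, spectra, and spectral projections, changing the transported decay estimate only by the harmless factor $\|\Cf\|\,\|\Cf^{-1}\|$.
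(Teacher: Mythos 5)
Your proposal is correct and is essentially the paper's argument: the paper proves this lemma by citing Theorem 3.13 of \cite{CheMcNSch23}, which is precisely the bounded-similarity transport of generators, semigroups, resolvents, and Riesz projections along $\Cf$ that you carry out, including the step of promoting the conjugation identity from the common core $C^\infty\times C^\infty(\overline{\B^d_1})$ to the closed operators. Your filled-in details (core preserved by $\Cf^{\pm1}$, $\Pf_n=\Cf\widetilde{\Pf}_n\Cf^{-1}$ via the contour integral, loss of only the factor $\|\Cf\|\,\|\Cf^{-1}\|$ in the decay estimate) are exactly what that cited argument requires.
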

\begin{proof}
The proof follows that of Theorem 3.13 in \cite{CheMcNSch23}.
\end{proof}

\section{Resolvent calculus in more than 1 dimension}\label{sec:resolvent}
\noindent This section is devoted to studying, for arbitrary dimension $d\geq 2$, arbitrary $n \in \mathbb{N}$, and arbitrary $V\in C^\infty_{rad}(\overline{\B^d_1})$, the resolvent of the operator 
$$
\widehat \Lf_{n,V}:=\widehat \Lf_{n}+\Lf_{V}': \mathcal{D}(\widehat \Lf_{n,V})\subseteq \mathcal{H}_{rad} \rightarrow \mathcal{H}_{rad},
$$
where $\Lf'_{V} : \mathcal{H}_{rad} \rightarrow \mathcal{H}_{rad}$ is defined by
\begin{equation}\label{def:L'V}
    \Lf_V' \ff(\rho)=\begin{pmatrix}
0
\\
V(\rho) f_1(\rho)
\end{pmatrix},
\end{equation}
and
$$
\mathcal{D}(\widehat{\Lf}_{n,V})=\mathcal{D}(\widehat{\Lf}_n).
$$ 
We recall that $\widehat{\Lf}_n$ is the closure of its restriction to the radial test space $C^\infty_{rad}\times C^{\infty}_{rad}(\overline{\B^d_1})$, given in \eqref{def:free operator radial case}.
Also, we remark that $(\widehat{\Lf}_{n,V},\mathcal{D}(\widehat{\Lf}_{n,V}))$ is a closed operator, as a bounded perturbation of a closed operator $(\widehat{\Lf}_{n},\mathcal{D}(\widehat{\Lf}_{n}))$. To proceed, we note the basic lemma that describes the dynamics on $\mathcal{H}_{rad}$ generated by $\widehat{\Lf}_n$.
\begin{lem}\label{lem:simplegen}
Let $d \geq 2$ and $n\in \mathbb{N}$. Then the operator $\widehat{\Lf}_n:\mathcal{D}(\widehat{\Lf}_{n})\subseteq \mathcal{H}_{rad}\rightarrow \mathcal{H}_{rad}$ generates a $C_0$-semigroup $(\Sf(\tau))_{\tau \geq 0}$ of bounded operators on $\mathcal{H}_{rad}$ whose growth bound satisfies
\begin{align}
\omega_0(\Sf) \leq 2n.
\end{align}
\end{lem}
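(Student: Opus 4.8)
The plan is to realize $\widehat{\Lf}_n$ as a bounded perturbation of the radial free wave operator $\Lf_0$ in similarity variables and then to appeal to the generation result for $\Lf_0$ recalled in Section~\ref{sec:funct_setup} together with the bounded perturbation theorem for semigroup generators. Comparing \eqref{def:free operator radial case} with the radial form of $\Lf_0$ from Section~\ref{sec:funct_setup}, one reads off that on the common core $C^\infty_{rad}\times C^\infty_{rad}(\overline{\B^d_1})$ one has
\[
\widehat{\Lf}_n\ff(\rho)=\Lf_0\ff(\rho)+\mathbf{B}\ff(\rho),\qquad
\mathbf{B}\ff(\rho):=\begin{pmatrix}0\\ 2n f_2(\rho)\end{pmatrix},
\]
the only difference being the replacement of the reaction coefficient $-1$ in the second component of $\Lf_0$ by $2n-1$.

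The operator $\mathbf{B}$ extends to a bounded operator on $\mathcal{H}_{rad}$ with $\|\mathbf{B}\|_{\mathcal{L}(\mathcal{H}_{rad})}=2n$: indeed $\|\mathbf{B}\ff\|_{\mathcal{H}}=2n\|f_2\|_{H^{\lceil d/2\rceil+2n}}\le 2n\|\ff\|_{\mathcal{H}}$, with equality for $f_1=0$. Since $\mathbf{B}$ is bounded and $C^\infty_{rad}\times C^\infty_{rad}(\overline{\B^d_1})$ is a core both for $\widehat{\Lf}_n$ (by definition) and for $\Lf_0$, the closure of $(\Lf_0+\mathbf{B})$ restricted to this core equals the closure of $\Lf_0$ restricted there, plus $\mathbf{B}$; thus, keeping the same symbols for the closures, $\widehat{\Lf}_n=\Lf_0+\mathbf{B}$ as closed operators on $\mathcal{H}_{rad}$, with $\mathcal{D}(\widehat{\Lf}_n)=\mathcal{D}(\Lf_0)$.

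Because $\Lf_0$ leaves the radial subspace invariant, the generation result for $\Lf_0$ recalled in Section~\ref{sec:funct_setup} (namely, that as a closed operator on $\mathcal{H}$ it generates an exponentially decaying $C_0$-semigroup) restricts to $\mathcal{H}_{rad}$, and what I would actually use is the sharpened form of that statement: $\Lf_0$ generates a $C_0$-semigroup $(\Sf_0(\tau))_{\tau\ge0}$ on $\mathcal{H}_{rad}$ which is contractive, $\|\Sf_0(\tau)\|\le 1$, with respect to a suitable inner product on $\mathcal{H}_{rad}$ that is equivalent to the standard one and compatible with the product splitting $\ff=(f_1,f_2)$, so that $\|\mathbf{B}\|=2n$ is preserved in that norm. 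The bounded perturbation theorem for generators, applied in this norm, then shows that $\widehat{\Lf}_n=\Lf_0+\mathbf{B}$ generates a $C_0$-semigroup $(\Sf(\tau))_{\tau\ge0}$ on $\mathcal{H}_{rad}$ with $\|\Sf(\tau)\|\le e^{\|\mathbf{B}\|\tau}=e^{2n\tau}$; since this norm is equivalent to the original one, we obtain $\|\Sf(\tau)\|_{\mathcal{H}}\lesssim e^{2n\tau}$, i.e.\ $\omega_0(\Sf)\le 2n$, as claimed.

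\textbf{Main obstacle.} All of the substantive content sits in the generation and (quasi-)contractivity of $\Lf_0$ on the high-regularity radial space $\mathcal{H}_{rad}$. This is not purely soft: it rests on constructing an inner product adapted to the transport part $-\rho\partial_\rho$, whose antisymmetric contribution on $\B^d_1$ produces, at each order of differentiation, a bulk term with an unfavourable sign that must be compensated by a boundary term at $\rho=1$ of favourable sign (this being the analytic manifestation of energy flowing out through the lateral boundary of the backward light cone), and the high regularity $\lceil d/2\rceil+2n+1$ is what makes the net top-order contribution nonpositive; one also needs the standard range condition $\ran(\mu-\Lf_0)=\mathcal{H}_{rad}$ for some $\mu>0$, which is accessible by a direct ODE argument for large $\mu$. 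A secondary, bookkeeping point is to keep the resulting equivalent norm uncoupled enough in the two Sobolev blocks that $\mathbf{B}$ retains operator norm exactly $2n$, so that the bounded-perturbation estimate delivers the clean exponent $2n$ rather than a constant multiple of it; everything else (passage to the common core, boundedness of $\mathbf{B}$, the invocation of the perturbation theorem) is routine.
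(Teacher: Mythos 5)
Your proposal follows essentially the same route as the paper: decompose $\widehat{\Lf}_n=\Lf_0+\mathbf{B}$ with $\mathbf{B}\ff=(0,2nf_2)$ of operator norm at most $2n$, invoke a generation result for the radial free operator $\Lf_0$ in a suitable equivalent norm, and conclude via the Bounded Perturbation Theorem. The only (harmless) difference is that the paper does not need your stronger input of exact contractivity in a single equivalent norm: it uses, from the proof of Theorem 2.1 in \cite{Ost24}, a family of equivalent norms $\|\cdot\|_{\mathfrak{C}_\varepsilon}$ in which $\Lf_0$ is $\varepsilon$-quasicontractive for every $\varepsilon>0$, so the perturbation argument gives $e^{(2n+\varepsilon)\tau}$ and the bound $\omega_0(\Sf)\leq 2n$ follows by letting $\varepsilon\to 0$, which relaxes the ``main obstacle'' you describe (one still needs, as you correctly note, that the modified norms respect the product splitting so that $\mathbf{B}$ keeps norm $\leq 2n$).
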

\begin{proof}
For $\ff \in C^\infty_{rad}\times C^{\infty}_{rad}(\overline{\B^d_1})$ we rewrite $\widehat{\Lf}_n \ff$ as 
 \begin{align*}
\widehat{\Lf}_n \ff(\rho)= \begin{pmatrix}
f_2(\rho)-\rho f_1'(\rho)
\\
-f_2(\rho) -\rho f_2'(\rho)+ f_1''(\rho)+\frac{d-1}{\rho}f_1'(\rho)
\end{pmatrix}+ \begin{pmatrix}
0\\
2nf_2(\rho) 
\end{pmatrix}
=:(\Lf_0+\Lf_{per})\ff(\rho).
\end{align*}
We note that $\Lf_0$ is the radial wave operator in similarity coordinates, and its generation properties in Sobolev spaces on $\B^d_1$ are well-understood. In particular, from the proof of Theorem 2.1 in \cite{Ost24}, it follows that for every $\varepsilon>0$ there exists a norm, $\| \cdot\|_{\mathfrak{C}_{\varepsilon}}$, on $\mathcal{H}_{rad}$, which is equivalent to the $\mathcal{H}$-norm, and such that the closure of the operator $(\Lf_0,\mathcal{D}(\Lf_0))$ with $\mathcal{D}(\Lf_0):=C^\infty_{rad}\times C^{\infty}_{rad}(\overline{\B^d_1}))$ (which we denote by the same symbol $\Lf_0$) generates a $C_0$-semigroup $(\Sf_0(\tau))_{\tau\geq 0}$ of bounded operators on $(\mathcal{H}_{rad},\| \cdot\|_{\mathfrak{C}_{\varepsilon}})$ for which
$$
\|\Sf_0(\tau)\ff\|_{\mathfrak{C}_{\varepsilon}}\leq e^{\varepsilon\tau} \|\ff\|_{\mathfrak{C}_{\varepsilon}},
$$
for all $\ff\in \mathcal{H}_{rad}$ and all $\tau\geq 0$.
Then, from the Bounded Perturbation Theorem we conclude that $\widehat{\Lf}_n=\Lf_0+\Lf_{per}$ generates a $C_0$-semigroup $(\Sf(\tau))_{\tau \geq 0}$ on $(\mathcal{H}_{rad},\| \cdot\|_{\mathfrak{C}_{\varepsilon}})$ for which
$$
\|\Sf(\tau)\ff\|_{\mathfrak{C}_{\varepsilon}}\leq e^{(2n+\varepsilon) \tau}\|\ff\|_{\mathfrak{C}_{\varepsilon}},
$$
for all $\ff\in \mathfrak{C}_{\varepsilon}$ and all $\tau\geq 0$. Thus, the claim follows from the freedom of choosing $\varepsilon >0$ and the aforementioned equivalence of norms $\| \cdot\|_{\mathfrak{C}_{\varepsilon}}$ and $\| \cdot\|_{\mathcal{H}}$ on $\mathcal{H}_{rad}$. 
\end{proof}
As a consequence, we obtain the following result about the spectra of $\Lf_0$ and $\widehat\Lf_n$.
\begin{cor}
    For all $d \geq 2$ and $n \in \mathbb{N}$, we have that
 \begin{equation}\label{spec_s}
        \sigma(\Lf_0) \subseteq \{ z \in \C: \Re z \leq 0 \},
    \end{equation}
    and
    \begin{equation}\label{spec_n}
        \sigma(\widehat{\Lf}_n) \subseteq \{ z \in \C: \Re z \leq 2n \}.
    \end{equation}
\end{cor}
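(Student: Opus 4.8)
The plan is to deduce both inclusions from the elementary relationship between the spectrum of a $C_0$-semigroup generator and the growth bound of the semigroup, so no new analysis is needed beyond what Lemma \ref{lem:simplegen} and its proof already provide. Recall the standard fact: if a closed operator $A$ generates a $C_0$-semigroup $(T(\tau))_{\tau \geq 0}$ with $\|T(\tau)\| \leq M e^{\omega \tau}$ for all $\tau \geq 0$, then every $\lambda$ with $\Re \lambda > \omega$ belongs to the resolvent set of $A$, with $(\lambda - A)^{-1}$ represented by the Laplace transform $\int_0^\infty e^{-\lambda \tau} T(\tau)\,d\tau$; consequently $\sigma(A) \subseteq \{ z \in \C : \Re z \leq \omega \}$, and hence $\sigma(A) \subseteq \{ z \in \C : \Re z \leq \omega_0(T)\}$. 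I would invoke this twice.

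For \eqref{spec_n}: by Lemma \ref{lem:simplegen}, $\widehat{\Lf}_n$ generates a $C_0$-semigroup $(\Sf(\tau))_{\tau \geq 0}$ on $\mathcal{H}_{rad}$ with $\omega_0(\Sf) \leq 2n$. Applying the quoted fact with $\omega = 2n + \varepsilon$ for each $\varepsilon > 0$ and then letting $\varepsilon \downarrow 0$ yields $\sigma(\widehat{\Lf}_n) \subseteq \{ z \in \C : \Re z \leq 2n \}$. For \eqref{spec_s}, I would argue identically with the semigroup $(\Sf_0(\tau))_{\tau \geq 0}$ generated by $\Lf_0$: as recalled in the proof of Lemma \ref{lem:simplegen} (following the proof of Theorem 2.1 in \cite{Ost24}), for every $\varepsilon > 0$ there is a norm $\|\cdot\|_{\mathfrak{C}_\varepsilon}$ on $\mathcal{H}_{rad}$, equivalent to $\|\cdot\|_{\mathcal{H}}$, with $\|\Sf_0(\tau)\ff\|_{\mathfrak{C}_\varepsilon} \leq e^{\varepsilon \tau}\|\ff\|_{\mathfrak{C}_\varepsilon}$; by norm equivalence this gives $\|\Sf_0(\tau)\|_{\mathcal{H} \to \mathcal{H}} \leq C_\varepsilon e^{\varepsilon \tau}$, so $\omega_0(\Sf_0) \leq \varepsilon$, and since $\varepsilon$ is arbitrary, $\omega_0(\Sf_0) \leq 0$. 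Thus $\sigma(\Lf_0) \subseteq \{ z \in \C : \Re z \leq 0 \}$.

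I do not expect any genuine obstacle here: the corollary is a soft consequence of Lemma \ref{lem:simplegen} (and the structure of its proof) together with the Hille--Yosida-type localization of the spectrum of a semigroup generator. The only point meriting a line of care is the passage from the auxiliary norms $\|\cdot\|_{\mathfrak{C}_\varepsilon}$ to the genuine $\mathcal{H}$-operator norm, which introduces an $\varepsilon$-dependent multiplicative constant; since only the exponential rate matters for the spectral inclusion, this is harmless after sending $\varepsilon \downarrow 0$.
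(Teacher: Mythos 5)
Your proposal is correct and coincides with the paper's reasoning: the corollary is stated as an immediate consequence of Lemma \ref{lem:simplegen}, whose proof supplies exactly the $\varepsilon$-dependent equivalent norms and growth bounds ($e^{\varepsilon\tau}$ for $\Sf_0$, $e^{(2n+\varepsilon)\tau}$ for $\Sf$) that you feed into the standard fact that a generator's spectrum lies in the half-plane $\Re z \leq \omega_0$. Your one point of care, the harmless $\varepsilon$-dependent constant from the norm equivalence, is handled correctly.
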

One easily sees that \eqref{spec_s} is in fact sharp, since $\la=0$ is an eigenvalue of $\Lf_0$, with a constant eigenfunction $(1,0)$. Additionally, we note that the essential spectrum of $\Lf_0$ consists of a half-plane that is strictly to the left of the imaginary axis. As will emerge from our resolvent analysis below, the unstable spectrum induced by perturbing $\Lf_0$ with $\Lf_{per}$ consists entirely of eigenvalues. We note that this is not something one could expect a priori, given that $\Lf_{per}$ is merely bounded. As it turns out, these eigenvalues can be computed explicitly.
\begin{lem}\label{lem: specl0}
Let $d \geq 2 $ and $n\in \mathbb{N}$. Consider a complex number $\lambda$ with $\Re \lambda \geq -\frac34$. Then $\lambda$ is an eigenvalue of $\widehat \Lf_n$ if and only if $\lambda\in \{0,1,3,5,\dots 2n-1\}$.
Moreover, each such eigenvalue $\la$ has geometric multiplicity 1, and an associated eigenfunction is given by
\begin{equation}\label{eigenf}
\hf_{\lambda}(\rho)=\begin{pmatrix}
\,_2F_1(\frac{\lambda}{2},-n+\frac{\lambda+1}{2};\frac{d}{2};\rho^2)
\\
(\lambda+\rho \partial_\rho)\,_2F_1(\frac{\lambda}{2},-n+\frac{\lambda+1}{2};\frac{d}{2};\rho^2)
\end{pmatrix}.
\end{equation}
\end{lem}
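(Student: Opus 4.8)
The plan is to convert the eigenvalue problem $\widehat{\Lf}_n\ff=\lambda\ff$ into a scalar second-order ODE, recognize it as hypergeometric, and then read off the point spectrum from the associated connection problem at $\rho=1$. First I would write $\ff=(f_1,f_2)$ and use the first component of $\widehat{\Lf}_n\ff=\lambda\ff$ to eliminate $f_2=(\lambda+\rho\partial_\rho)f_1$; substituting into the second component gives the single equation
\[
(1-\rho^2)f_1''+\Big(\tfrac{d-1}{\rho}+(2n-2-2\lambda)\rho\Big)f_1'+\lambda(2n-1-\lambda)f_1=0 ,
\]
which under $z=\rho^2$ is the hypergeometric equation with $a=\tfrac{\lambda}{2}$, $b=-n+\tfrac{\lambda+1}{2}$, $c=\tfrac d2$. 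Since $\Re\lambda\ge-\tfrac34$ we have $c\notin\mathbb{Z}_{\le0}$, so a fundamental system near $\rho=0$ consists of the even, real-analytic function $f_{1,0}(\rho):={}_2F_1(a,b;c;\rho^2)$ and a solution behaving like $\rho^{2-d}$ (like $\log\rho$ when $d=2$), which is not in $H^1(\B^d_1)$. Any distributional eigenfunction in $\mathcal{H}_{rad}\subseteq H^1\times L^2$ is, by interior ODE regularity, smooth on $(0,1)$ and solves the ODE classically there, so its first component must be a multiple of $f_{1,0}$; this already forces geometric multiplicity $\le1$, and since $f_2=(\lambda+\rho\partial_\rho)f_1\in H^{k-1}$ whenever $f_1\in H^k$, the question reduces to: for which $\lambda$ does $f_{1,0}$ belong to $H^k(\B^d_1)$, where $k=\lceil\tfrac d2\rceil+2n+1$?

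Next I would analyze $f_{1,0}$ at $\rho=1$, a regular singular point with exponents $0$ and $s:=c-a-b=n+\tfrac{d-1}{2}-\lambda$. A direct estimate using $\Re\lambda\ge-\tfrac34$ and the definition of $k$ shows $\Re s<k-\tfrac12$, so the branch of the local fundamental system with exponent $s$ — together with its logarithmic modifications in the resonant cases $s\in\mathbb{Z}$ — never lies in $H^k$ near $\rho=1$. Hence $f_{1,0}\in H^k(\B^d_1)$ exactly when the coefficient of that singular branch in the expansion of $f_{1,0}$ about $\rho=1$ vanishes. Feeding in the standard hypergeometric connection formulas — treating $s\notin\mathbb{Z}$, $s\in\mathbb{Z}_{>0}$, $s=0$, $s\in\mathbb{Z}_{<0}$ separately — one checks that in every case this coefficient is a \emph{nonzero} multiple of $\big(\Gamma(a)\Gamma(b)\big)^{-1}$, the remaining Gamma-factors being finite and nonvanishing under $\Re\lambda\ge-\tfrac34$. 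Thus $f_{1,0}\in H^k(\B^d_1)$ iff $a\in\mathbb{Z}_{\le0}$ or $b\in\mathbb{Z}_{\le0}$, i.e. iff the hypergeometric series terminates; in that case $f_{1,0}$ is a polynomial, so $\ff=(f_{1,0},(\lambda+\rho\partial_\rho)f_{1,0})\in C^\infty\times C^\infty(\overline{\B^d_1})\subseteq\mathcal{D}(\widehat{\Lf}_n)$ is a genuine eigenfunction.

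Finally I would do the bookkeeping: $a=\tfrac{\lambda}{2}\in\mathbb{Z}_{\le0}$ gives $\lambda\in\{0,-2,-4,\dots\}$, of which only $\lambda=0$ satisfies $\Re\lambda\ge-\tfrac34$; and $b=-n+\tfrac{\lambda+1}{2}\in\mathbb{Z}_{\le0}$ gives $\lambda\in\{2n-1,2n-3,\dots\}$, of which $\lambda\in\{1,3,\dots,2n-1\}$ survive. The union is $\{0,1,3,5,\dots,2n-1\}$, and for each such $\lambda$ the eigenfunction $(f_{1,0},(\lambda+\rho\partial_\rho)f_{1,0})$ is precisely $\hf_\lambda$ from \eqref{eigenf}, with geometric multiplicity $1$ by the uniqueness of $f_{1,0}$.

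The hard part is the resonant portion of the second step: when $c-a-b\in\mathbb{Z}$ the connection formulas carry $\log$-terms, and one must verify that the obstruction to $H^k$-regularity is still governed by $(\Gamma(a)\Gamma(b))^{-1}$ and is not cancelled by some coincidence — in particular that the residual power-type and $(1-\rho^2)^{s}\log(1-\rho^2)$ contributions with $\Re s<k-\tfrac12$ genuinely leave $H^k$. A slightly softer alternative is to observe that $\lambda\mapsto$ (the relevant connection coefficient) is entire and to locate its zeros directly; but the explicit connection formulas give the most transparent bookkeeping as well as the explicit eigenfunctions.
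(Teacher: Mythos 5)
Your proposal is correct and follows essentially the same route as the paper: eliminate $f_2$, reduce to the scalar radial ODE, substitute $z=\rho^2$ to obtain the hypergeometric equation with $a=\tfrac{\lambda}{2}$, $b=-n+\tfrac{\lambda+1}{2}$, $c=\tfrac{d}{2}$, and settle the connection problem at $\rho=1$, concluding that the required regularity forces the series to terminate, which yields exactly $\lambda\in\{0,1,3,\dots,2n-1\}$ with the eigenfunctions \eqref{eigenf}. The only difference is that where the paper simply invokes the classical resolution of the hypergeometric connection problem, you carry it out by hand via the explicit connection coefficients proportional to $\big(\Gamma(a)\Gamma(b)\big)^{-1}$ (including the resonant logarithmic cases), which is a sound, if more laborious, substitute for the citation.
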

\begin{proof}
First, we observe that if $\la \in \C$ and $\ff \in \mathcal{D}(\widehat \Lf_n)$ 
satisfy 
\begin{align*}
(\lambda-\widehat \Lf_n)\ff=0,
\end{align*}
then the components of $\ff$ satisfy the system
\begin{gather}
\lambda f_1(\rho)+\rho \partial_\rho f_1(\rho)=f_2(\rho),\nonumber
\\
(\rho^2-1)f_1''(\rho)+\left( -\frac{d-1}{\rho}+2(\lambda-n+1)\rho \right)f_1'(\rho)+\lambda(\lambda-2n+1)f_1(\rho)=0,\label{Eq:weird one}
\end{gather}
weakly on $(0,1)$. Furthermore, by Sobolev embedding of $\mathcal{H}_{rad}$ it follows that $f_1$ and $f_2$ are in fact classical solutions on $(0,1)$.
Note that the Frobenius indices of \eqref{Eq:weird one} are
$$
\{0,-d+2\}  \text{ at } \rho=0, \text{ and } \{0,\frac{d-1}2-\lambda+n\} \text{ at }\rho=1.
$$ 
Therefore, if $\Re \lambda \geq -\frac34$ then $f_1$, which belongs to $H^{\lceil d/2 \rceil + 2n+1}_{rad}(\mathbb{B}^d_1)$, must be analytic at both endpoints $\rho=0$ and $\rho=1$. To find all such solutions to \eqref{Eq:weird one}, we set $\rho^2=z$ and $g(z)=f_1(\rho)$. This transforms \eqref{Eq:weird one} into
\begin{align}\label{eq:g}
  z(1-z) g''(z)+ \left(
  \frac d2 -(\lambda-n+\frac32)z \right) g'(z)-\frac{\lambda}{4}(\lambda-2n+1) g(z)=0,
\end{align}
which is a canonical hypergeometric equation, i.e., it is of the form
\begin{align*}
z(1-z)g''(z)+\left(\gamma-(\alpha+\beta+1\right)g'(z)-\alpha \beta g(z)&=0,
\end{align*}
with
$$
c=\frac d2,\qquad \alpha=\frac \lambda2, \qquad \beta=-n+\frac{\lambda+1}{2}.
$$
Thus, we have reduced the above analysis to the classical connection problem for the hypergeometric equation, which is completely resolved; see, e.g., \cite{OlvLonBoiClar10}. Consequently, we conclude that \eqref{eq:g} admits a solution $g\in C^\infty([0,1])$ for $\Re \lambda \geq -\frac34$ 
if and only if $$\lambda\in \{0,1,3,5,\dots, 2n-1\}.$$ 
Then, by back-substitution, one infers that the corresponding eigenvalues of $\widehat{\Lf}_n$ have geometric multiplicity 1, with eigenfunctions given explicitly by \eqref{eigenf}.
  \end{proof}

\begin{remark}
    We note that the lemma above remains valid for $\Re \la \geq -\nu$ for any $\nu \in (0,1)$. For simplicity, as well as for convenience, we fix $\nu = \frac{3}{4}$  throughout.
\end{remark}

Now we turn to the spectral analysis of the full operator $\widehat{\Lf}_{n,V}=\widehat{\Lf}_n + \Lf_{V}'$. First, we establish the compactness of $\Lf_{V}'$.
\begin{lem}
    Let $d \geq 2$, $n \in \mathbb{N}$, and $V\in C^\infty_{rad}(\overline{\B^d_1})$. Then the operator $\Lf_{V}':\mathcal{H}_{rad} \rightarrow \mathcal{H}_{rad}$, defined in \eqref{def:L'V}, is compact.
\end{lem}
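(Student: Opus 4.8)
The plan is to realize $\Lf_V'$ as a composition of bounded linear maps in which one factor is a compact Rellich embedding, exploiting the one-derivative gap between the two components of $\mathcal{H}_{rad}$. Abbreviate $k:=\lceil d/2\rceil+2n+1$, so that $\mathcal{H}_{rad}=H^k_{rad}(\B^d_1)\times H^{k-1}_{rad}(\B^d_1)$. I would write $\Lf_V'$ as the composition
\[
\mathcal{H}_{rad}\xrightarrow{\ \pi_1\ }H^k_{rad}(\B^d_1)\xrightarrow{\ M_V\ }H^k_{rad}(\B^d_1)\xrightarrow{\ \iota\ }H^{k-1}_{rad}(\B^d_1)\xrightarrow{\ j\ }\mathcal{H}_{rad},
\]
where $\pi_1\ff:=f_1$ is the (bounded) coordinate projection, $M_V g:=Vg$ is multiplication by $V$, $\iota$ is the inclusion, and $jg:=(0,g)$. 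Since $\B^d_1$ is a bounded smooth domain, the Rellich--Kondrachov theorem gives that $\iota\colon H^k(\B^d_1)\hookrightarrow H^{k-1}(\B^d_1)$ is compact; as $H^k_{rad}(\B^d_1)$ is a closed subspace of $H^k(\B^d_1)$, the restriction of $\iota$ to it is still compact. The maps $\pi_1$, $M_V$, $j$ are bounded, and the composition of a compact operator with bounded ones is compact, which gives the claim.

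The only point requiring a line of justification is the boundedness of $M_V$ on $H^k(\B^d_1)$. Since $V\in C^\infty(\overline{\B^d_1})$, all partial derivatives of $V$ up to order $k$ are bounded on $\B^d_1$, i.e.\ $V\in W^{k,\infty}(\B^d_1)$, and the Leibniz rule together with this bound yields $\|Vg\|_{H^k}\lesssim \|V\|_{W^{k,\infty}}\|g\|_{H^k}$; multiplication by a radial function preserves radiality, so $M_V$ indeed maps $H^k_{rad}$ to itself. This also confirms that $\Lf_V'$ is well defined into $\mathcal{H}_{rad}$, since for $f_1\in H^k$ one has $Vf_1\in H^k\subseteq H^{k-1}$, consistent with the factorization above.

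I do not anticipate any genuine obstacle: the statement is the standard fact that a multiplication operator by a smooth function, viewed between Sobolev spaces over a bounded domain that differ by one derivative, is compact. The one thing to be careful about is the bookkeeping of the Sobolev indices --- it is precisely because the second slot of $\mathcal{H}_{rad}$ carries one derivative less than the first that the Rellich embedding is the relevant compact factor. If one prefers not to restrict Rellich--Kondrachov to the radial subspace, one may equivalently argue on the full spaces $H^k(\B^d_1)\supseteq H^k_{rad}(\B^d_1)$ and invoke the general fact that the restriction of a compact operator to a closed subspace remains compact.
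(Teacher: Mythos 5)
Your proposal is correct and follows essentially the same route as the paper: compactness is obtained from the Rellich embedding $H^{k}_{rad}(\B^d_1)\hookrightarrow H^{k-1}_{rad}(\B^d_1)$ together with boundedness of multiplication by the smooth potential $V$, composed with the obvious projection and insertion maps. Your extra bookkeeping (the $W^{k,\infty}$ bound for $M_V$ and restriction of the compact embedding to the closed radial subspace) merely spells out what the paper leaves implicit.
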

\begin{proof}
    This follows directly from the smoothness of $V$ and the compactness of the embedding 
 $$   H^{\lceil d/2 \rceil + 2n + 1}_{rad}(\mathbb{B}^d_1) \hookrightarrow 
  H^{\lceil d/2 \rceil + 2n}_{rad}(\mathbb{B}^d_1). 
  $$
\end{proof}
We note that, since $\Lf_{V}'$ is compact, we expect the unstable spectrum of $\widehat{\Lf}_{n,V}$ to be structurally similar to that of $\widehat{\Lf}_n$. Indeed, in what follows, we show that the unstable spectrum of $\widehat{\Lf}_{n,V}$ consists solely of eigenvalues. What is more, we show that
\begin{equation}\label{eq:spec_3/4}
    \{ z \in \C : \Re z \geq -\frac{3}{4} \} \setminus \sigma_p(\widehat{\Lf}_{n,V}) \subseteq \varrho(\widehat{\Lf}_{n,V}).
\end{equation}
To start, we note the following result.
\begin{lem}
    We have that
    \begin{equation*}
        \sigma(\widehat{\Lf}_{n,V}) \cap \{ z \in \C : \Re z >2n \} \subseteq \sigma_p(\widehat{\Lf}_{n.V}). 
    \end{equation*}
\end{lem}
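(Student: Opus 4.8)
The plan is to combine the growth bound for the semigroup generated by $\widehat{\Lf}_n$ with a Fredholm-alternative argument exploiting the compactness of $\Lf_V'$. First I would invoke Lemma \ref{lem:simplegen}, which gives $\omega_0(\Sf) \le 2n$ for the semigroup generated by $\widehat{\Lf}_n$. By the standard relation between the growth bound of a $C_0$-semigroup and the spectrum of its generator, this yields $\sigma(\widehat{\Lf}_n) \subseteq \{z \in \C : \Re z \le 2n\}$; in particular, the open half-plane $\{z \in \C : \Re z > 2n\}$ is contained in the resolvent set $\varrho(\widehat{\Lf}_n)$, so $(z - \widehat{\Lf}_n)^{-1}$ is a bounded operator on $\mathcal{H}_{rad}$ for every such $z$.

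Next, fix $z$ with $\Re z > 2n$. Since $\widehat{\Lf}_{n,V} = \widehat{\Lf}_n + \Lf_V'$ with $\mathcal{D}(\widehat{\Lf}_{n,V}) = \mathcal{D}(\widehat{\Lf}_n)$, I would factor
\begin{equation*}
z - \widehat{\Lf}_{n,V} = (z - \widehat{\Lf}_n) - \Lf_V' = (z - \widehat{\Lf}_n)\big(\I - (z - \widehat{\Lf}_n)^{-1}\Lf_V'\big).
\end{equation*}
Set $K_z := (z - \widehat{\Lf}_n)^{-1}\Lf_V'$. Because $\Lf_V'$ is compact on $\mathcal{H}_{rad}$ by the preceding lemma, and $(z - \widehat{\Lf}_n)^{-1}$ is bounded, the operator $K_z$ is compact. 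By the Fredholm alternative, $\I - K_z$ is either boundedly invertible or has nontrivial kernel.

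In the first case, $z - \widehat{\Lf}_{n,V}$ is the composition of two boundedly invertible operators, hence $z \in \varrho(\widehat{\Lf}_{n,V})$. In the second case, pick $\ff \ne 0$ with $K_z \ff = \ff$; then $\ff = (z - \widehat{\Lf}_n)^{-1}\Lf_V'\ff$ lies in $\mathcal{D}(\widehat{\Lf}_n) = \mathcal{D}(\widehat{\Lf}_{n,V})$ and satisfies $(z - \widehat{\Lf}_n)\ff = \Lf_V'\ff$, i.e.\ $(z - \widehat{\Lf}_{n,V})\ff = 0$, so $z \in \sigma_p(\widehat{\Lf}_{n,V})$. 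Thus every $z$ with $\Re z > 2n$ lies in $\varrho(\widehat{\Lf}_{n,V}) \cup \sigma_p(\widehat{\Lf}_{n,V})$, which is exactly the claim. I do not expect a genuine obstacle here — the argument is entirely soft; the only points to be careful about are that $\widehat{\Lf}_{n,V}$ and $\widehat{\Lf}_n$ share the same domain (so the factorization above makes sense as an operator identity on $\mathcal{D}(\widehat{\Lf}_n)$) and that compactness of $\Lf_V'$ genuinely passes to $K_z$, both of which are immediate from what has already been established.
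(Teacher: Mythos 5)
Your proposal is correct and follows exactly the route the paper intends: it combines the spectral inclusion $\sigma(\widehat{\Lf}_n)\subseteq\{\Re z\le 2n\}$ (the corollary of Lemma \ref{lem:simplegen}) with the compactness of $\Lf_V'$ via the factorization $z-\widehat{\Lf}_{n,V}=(z-\widehat{\Lf}_n)(\I-(z-\widehat{\Lf}_n)^{-1}\Lf_V')$ and the Fredholm alternative, which is precisely the "immediate consequence" the paper leaves implicit. No gaps; your version simply spells out the standard argument.
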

\begin{proof}
    This is an immediate consequence of \eqref{spec_n} and the compactness of $\Lf_{V}'$.
\end{proof}
 To prove \eqref{eq:spec_3/4}, it remains to treat the strip
\begin{equation}\label{strip}
    \{ z \in \C : -\frac{3}{4} \leq \Re z \leq 2n \}.
\end{equation}
For this, we construct the resolvent of $\widehat\Lf_{n,V}$ on the complement of the point spectrum inside \eqref{strip}. In view of the radial nature of the problem, this amounts to solving the corresponding inhomogeneous ODE, which is the content of the following section.
For convenience, we first define the space of ``even" smooth functions on $[0,1]$
\begin{equation}\label{Def:C_e_inf}
	C_e^\infty ([0,1]):= \{ u \in C^\infty ([0,1]) : u^{(2k+1)}(0)=0 \text{ for }  k \in \mathbb{N}_0 \},
\end{equation}
and note that $V\in C^\infty_{rad}(\overline{\B^d_1})$ is equivalent to $V \in C^\infty_e([0,1])$; see Lemma 2.1 in \cite{Glo22}.
\subsection{ODE analysis}
In order to construct the resolvent of $\widehat \Lf_{n,V}$, we first note that, similar to above, the spectral equation \begin{align*}
(\lambda-\widehat\Lf_{n,V})\ff=\gf
\end{align*}
is equivalent to the system 
\begin{gather*}
\lambda f_1(\rho)+\rho \partial_\rho f_1(\rho)-g_1(\rho)=f_2(\rho),
\\
(\rho^2-1)f_1''(\rho)+\left( -\frac{d-1}{\rho}+2(\lambda-n+1)\rho \right)f_1'(\rho)+[\lambda(\lambda-2n+1)-V(\rho)]f_1(\rho)= G_\lambda(\rho),
\end{gather*}
where 
$$
G_\lambda(\rho)= (\lambda-2n+1)g_1(\rho)+\rho g_1'(\rho)+g_2(\rho).
$$
Thus, we need to construct for the generalized spectral ODE
\begin{equation}\label{Eq:generalised specODE}
(\rho^2-1)f''(\rho)+\left( -\frac{d-1}{\rho}+2(\lambda-n+1)\rho \right)f'(\rho)+[\lambda(\lambda-2n+1)-V(\rho)]f(\rho)=G_\lambda(\rho)
\end{equation}
solutions that exhibit sufficiently high regularity.
To proceed with this task, we introduce several definitions.
\begin{defi}
We set 
\begin{align*}
S_{n,V}:=\{z\in\C:-\frac34 \leq \Re z \leq 2n \} \setminus \big(\sigma_p(\widehat{\Lf}_{n,V}) \cup \big\{z \geq 0: \frac{d-1}{2}+n-z\in \mathbb{N}\big\} \big).
\end{align*}
Furthermore, we denote
\begin{equation}\label{M_V}
M_{n,V} := \sup \bigl\{\, |\Im z| : \Re z \ge -\frac34,\; z \in \sigma_p(\widehat{\Lf}_{n,V}) \cup \{0\} \,\bigr\},
\end{equation}
and consequently define
\begin{equation}\label{hat_S_n}
 \widehat S_{n,V}:= \big\{z \in S_{n,V}: |\Im z| \geq \frac12+M\big\}.
\end{equation}
\end{defi}
The motivation for introducing $S_{n,V}$ is twofold. First, we want to exclude eigenvalues. Second, for $\lambda\in \{z \geq 0: \frac{d-1}{2}+n-z\in \mathbb{N}\}$ a logarithmic term appears in the local representation of solutions, and we prefer to avoid this complication in the resolvent construction. These exceptional points will instead be handled later by soft arguments. We also note that, for a general $V$, it is not a priori clear that $M<
+\infty$. Nevertheless, from later arguments, it will follow that $M$ is always finite.
For notational convenience, we also define functions of symbol type.
\begin{defi}
Let $I\subseteq \R$ be open, $\rho_0\in \R \setminus I$, and $\alpha \in \R$. We say that a smooth function $f:I \to \C$ is of symbol type and write $f(\rho)=\O((\rho_0-\rho)^{\alpha})$ if
\begin{align*}
|\partial_\rho^n f(\rho)|\lesssim_n |\rho_0-\rho|^{\alpha-n}
\end{align*}
 for all $\rho \in I$ and all $n\in \mathbb{N}_0$. Likewise, given $U\subseteq \C$ open, for $g:U\to \R$ we write  $g=\O(\langle\omega\rangle^{\alpha})$ if
 \begin{align*}
|\partial_\omega^n g(\la)|\lesssim_n \langle\omega\rangle^{\alpha-n}
\end{align*}
for all $\la=\varepsilon+i\omega \in U$ and $n\in \mathbb{N}_0$,
where $\langle\omega\rangle$ denotes the Japanese bracket $\sqrt{1+|\omega|^2}$.
Analogously, for $\alpha, \beta \in \R$ fixed we write
$$
h(\rho,\lambda)=\O((\rho-\rho_0)^{\alpha} \langle\omega\rangle^{\beta}) \quad \text{ if } \quad |\partial_\rho^n\partial_\omega^k h(\rho,\lambda)|\lesssim_{n,k} |\rho_0-\rho|^{\alpha-n}\langle\omega\rangle^{\beta-k}
$$
for all $\rho\in I$, $\la=\varepsilon+i\omega \in U$, and $n,k \in \mathbb{N}_0$.
\end{defi}
We now fix our convention for branch cuts for square roots and logarithms, as we will later on consider 
$$
\sqrt{-d+1-2n+2\lambda} \quad \text{and}  \quad \ln \big[i( \frac{1-d}{2}-n+\lambda)\big].
$$ 
We will use two different conventions depending on $\text{sgn}(\frac{1-d}{2}-n+\Re\lambda)$. On the half-plane $
\Re \lambda< \frac{1-d}{2}-n,
$
we take the branch cut (of the function $\ln(i z)$ in the second case) to be half-line $[\frac{1-d}{2}-n,\infty)$, while, in case 
$
\Re \lambda\geq \frac{1-d}{2}-n,
$
we pick the line $(-\infty,\frac{1-d}{2}-n]$ as our cut.

Now, to construct solutions to the equation \eqref{Eq:generalised specODE} on $(0,1)$, we first make the transformation $$f(\rho)=\rho^{\frac{1-d}{2}} (1-\rho^2)^{\frac{d-2\lambda-3+2n}{4}} v(\rho),$$
which, for $G_\lambda=0$, leads to the equation
\begin{multline}\label{Eq: reduced}
v''(\rho) -\left[\frac{3+d^2-4d}{4\rho^2(1-\rho^2)^2}+\frac{2d(1+2n-2\lambda)-6-4n+4n^2 -8n \lambda+4 \lambda+4\lambda^2}{4(1-\rho^2)^2}\right] v(\rho)
\\
=\left(\frac{V(\rho)}{1-\rho^2}+\frac{4n^2-4n}{1-\rho^2}\right) v(\rho).
\end{multline}
Our analysis now proceeds by treating the endpoints $\rho=0$ and $\rho=1$ separately, starting with the latter. For this, we rewrite \eqref{Eq: reduced} as
\begin{multline*}
v''(\rho) -\left[\frac{d^2-2d+2d(2n-2\lambda)-3-4n+4n^2 -8n \lambda+4 \lambda+4\lambda^2}{4(1-\rho^2)^2}\right] v(\rho)
\\
= \left(\frac{3+d^2-4d}{4\rho^2(1-\rho^2)}+\frac{V(\rho)}{1-\rho^2}+\frac{4n^2-4n}{1-\rho^2}\right) v(\rho).
\end{multline*}
Now, one notes that a fundamental system to the equation
\begin{align*}
w''(\rho)-\left[\frac{d^2-2d+2d(2n-2\lambda)-3-4n+4n^2 -8n \lambda+4 \lambda+4\lambda^2}{4(1-\rho^2)^2}\right]w(\rho)=0
\end{align*}
is given by
\begin{align*}
h_1(\rho,\lambda)&=\frac{\sqrt{1-\rho^2}}{\sqrt{\alpha_n(\lambda)}}\left(\frac{1-\rho}{1+\rho}\right)^{\frac{\alpha_n(\lambda)}{4}},
\\
\widetilde h_1(\rho,\lambda)&=\frac{\sqrt{1-\rho^2}}{\sqrt{\alpha_n(\lambda)}}\left(\frac{1-\rho}{1+\rho}\right)^{\frac{-\alpha_n(\lambda)}{4}},
\end{align*}
with
\begin{equation}
\alpha_n(\lambda)=-d+1-2n+2\lambda,
\end{equation}
provided that $\lambda\neq \frac{d-1}{2}+n$ and $\la \notin S_{n,V}$.
Furthermore, we make the following definition.
\begin{defi}
For $r\in (0,1)$ fixed, let $\rho_\lambda$ denote a smoothed out version of the function 
$$
\la \mapsto\min\left\{r ,\frac{1}{|-d+1-2n+2\lambda|}\right\}.
$$
\end{defi}

\begin{lem}\label{lem:fund1}
For any $V \in C^\infty([0,1])$ there exists $r\in (0,1)$ such that the following holds:
For any $\lambda\in S_{n,V}$ fixed, there exists a fundamental system of solutions to Eq.~\eqref{Eq: reduced} of the form
\begin{align*}
v_{1,V}(\rho,\lambda)&=h_1(\rho,\lambda)[1+(1-\rho)\O(\rho^{-1})]=\frac{\sqrt{1-\rho^2}}{\sqrt{\alpha_n(\lambda)}}\left(\frac{1-\rho}{1+\rho}\right)^{\frac{\alpha_n(\lambda)}{4}}[1+(1-\rho)\O(\rho^{-1})],
\\
\widetilde v_{1,V}(\rho,\lambda)&=\widetilde h_1(\rho,\lambda)[1+(1-\rho)\O(\rho^{-1})]=\frac{\sqrt{1-\rho^2}}{\sqrt{\alpha_n(\lambda)}}\left(\frac{1-\rho}{1+\rho}\right)^{\frac{-\alpha_n(\lambda)}{4}}[1+(1-\rho)\O(\rho^{-1})]
\end{align*}
for all $\rho \geq \rho_\lambda$. Furthermore, if one restricts $\lambda$ to the set $\widetilde S_{n,V}$ one obtains the asymptotics
\begin{align*}
v_{1,V}(\rho,\lambda)&=\frac{\sqrt{1-\rho^2}}{\sqrt{\alpha_n(\lambda)}}\left(\frac{1-\rho}{1+\rho}\right)^{\frac{\alpha_n(\lambda)}{2}}[1+(1-\rho)\O(\rho^{-1}\langle \omega\rangle^{-1})],
\\
\widetilde v_{1,V}(\rho,\lambda)&=\frac{\sqrt{1-\rho^2}}{\sqrt{\alpha_n(\lambda)}}\left(\frac{1-\rho}{1+\rho}\right)^{\frac{-\alpha_n(\lambda)}{2}}[1+(1-\rho)\O(\rho^{-1}\langle \omega\rangle^{-1})].
\end{align*}
\end{lem}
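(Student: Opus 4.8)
The plan is a Liouville--Green reduction followed by a Volterra iteration anchored at $\rho=1$. As recorded just above, the algebraic substitution in \eqref{Eq:generalised specODE} turns the homogeneous equation into \eqref{Eq: reduced}, which, after moving the lower-order singular terms to the right, reads $v''(\rho)-q_n(\rho,\lambda)v(\rho)=R(\rho)v(\rho)$ with
\[
q_n(\rho,\lambda)=\frac{\alpha_n(\lambda)^2-4}{4(1-\rho^2)^2},\qquad
R(\rho)=\frac{3+d^2-4d}{4\rho^2(1-\rho^2)}+\frac{V(\rho)+4n^2-4n}{1-\rho^2},
\]
and $h_1,\widetilde h_1$ form an explicit fundamental system of the model equation $w''=q_n w$. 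Two structural facts make the perturbative scheme effective. First, $R$ has only a \emph{simple} pole at $\rho=1$ (and at worst a double pole at $\rho=0$), whereas $q_n$ carries the full double pole; this discrepancy is what will produce the $(1-\rho)$ factor in the error. Second --- and this is the point of the normalization $1/\sqrt{\alpha_n(\lambda)}$ --- a direct computation gives $h_1\widetilde h_1=(1-\rho^2)/\alpha_n(\lambda)$ and $\widetilde h_1(\rho)/h_1(\rho)=\big((1-\rho)/(1+\rho)\big)^{-\alpha_n(\lambda)/2}$, from which one sees that the Wronskian $W(h_1,\widetilde h_1)$ equals the constant $1$, independently of $\lambda$.

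With unit Wronskian, variation of parameters recasts $v''-q_nv=Rv$ as the Volterra equation
\[
v_{1,V}(\rho,\lambda)=h_1(\rho,\lambda)+\int_\rho^1\big(h_1(\rho,\lambda)\widetilde h_1(s,\lambda)-\widetilde h_1(\rho,\lambda)h_1(s,\lambda)\big)R(s)\,v_{1,V}(s,\lambda)\,ds,
\]
and analogously for $\widetilde v_{1,V}$ with $h_1,\widetilde h_1$ interchanged (or, alternatively, $\widetilde v_{1,V}$ via reduction of order). Writing $v_{1,V}=h_1(1+a)$ and using the identities above, the kernel collapses to the explicit form
\[
\frac{h_1(s)}{h_1(\rho)}\big(h_1(\rho)\widetilde h_1(s)-\widetilde h_1(\rho)h_1(s)\big)=\frac{1-s^2}{\alpha_n(\lambda)}\Big[1-\Big(\tfrac{(1-s)(1+\rho)}{(1+s)(1-\rho)}\Big)^{\alpha_n(\lambda)/2}\Big],
\]
so $a$ solves a Volterra equation with this kernel against $R(s)(1+a(s))\,ds$. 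The iteration converges by the usual quasi-nilpotence of Volterra operators; measuring in the weighted sup-norm $\sup_{\rho_\lambda\le\rho<1}\tfrac{\rho}{1-\rho}|a(\rho)|$ --- the weight being exactly the one encoding the target error $(1-\rho)\O(\rho^{-1})$ --- the substitution $u=(1-s)/(1-\rho)$ makes the $(1-\rho)$-gain near $\rho=1$ transparent, while the prefactor $1/\alpha_n(\lambda)$ together with the cut-off $\rho_\lambda\sim\min\{r,|\alpha_n(\lambda)|^{-1}\}$ renders the resulting bound uniform over $\lambda\in S_{n,V}$; in particular it controls the regime $|\Im\lambda|\to\infty$, where $[\rho_\lambda,1)$ stretches toward $\rho=0$ and the $\rho^{-2}$-singularity of $R$ has to be absorbed (this is what forces the $\rho^{-1}$ weight). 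Differentiating the fixed-point identity in $\rho$ and re-running the estimates upgrades $a$ to a genuine symbol-type $(1-\rho)\O(\rho^{-1})$ function; since the substitution relating $v$ to $f$ is smooth on $(0,1)$ and $V\in C_e^\infty([0,1])$, smoothness on the open interval is automatic.

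The refined asymptotics on $\widetilde S_{n,V}$ (the large-$|\Im\lambda|$ part of $S_{n,V}$) come from oscillation. There $\alpha_n(\lambda)/2$ has large imaginary part, so $\big(\tfrac{(1-s)(1+\rho)}{(1+s)(1-\rho)}\big)^{\alpha_n(\lambda)/2}$ is rapidly oscillating in $s$, with phase whose $s$-derivative is $\asymp|\Im\lambda|/(1-s^2)$, hence nonvanishing on $(\rho,1)$. A single non-stationary-phase integration by parts against this phase --- with the boundary contribution at $s=\rho$ (where the phase vanishes, but that piece is already $\O((1-\rho))$) and the interaction at $s=1$ (where $R$ is singular but the oscillation is fast enough to absorb it) handled by hand --- extracts the extra factor $\langle\omega\rangle^{-1}$; differentiating the identity once more yields $(1-\rho)\O(\rho^{-1}\langle\omega\rangle^{-1})$.

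I expect the main obstacle to be precisely the uniformity in $\lambda$ over the $\lambda$-dependent interval $[\rho_\lambda,1)$. The indicial gap of the model at $\rho=1$ equals $\alpha_n(\lambda)/2$, which degenerates as $\alpha_n(\lambda)\to0$; this is why $\lambda=\tfrac{d-1}{2}+n$ (where $h_1=\widetilde h_1$ and the normalization fails) and the logarithmic resonances $\tfrac{d-1}{2}+n-\lambda\in\mathbb{N}$ are excised in the definition of $S_{n,V}$, and why $\rho_\lambda$ must collapse like $|\alpha_n(\lambda)|^{-1}$. At the same time, the enlargement of $[\rho_\lambda,1)$ as $|\Im\lambda|\to\infty$ brings in the left-endpoint singularity of $R$, while the oscillatory gain at the right endpoint must be maintained alongside $L^1$-in-$s$ control of the (iterated) kernels near \emph{both} endpoints. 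Threading these three competing features --- degenerating indicial gap, left-endpoint singularity, high-frequency oscillation --- through a single weighted Volterra scheme is the technical heart of the lemma; once that framework is set up, what remains is the (long but routine) bookkeeping of the iteration and of its $\rho$- and $\omega$-derivatives.
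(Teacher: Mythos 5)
Your setup is on the right track and several of your computations are correct: the model potential is indeed $q_n(\rho,\lambda)=\frac{\alpha_n(\lambda)^2-4}{4(1-\rho^2)^2}$, one has $h_1\widetilde h_1=(1-\rho^2)/\alpha_n(\lambda)$ and $W(h_1,\widetilde h_1)=1$, and $R$ has only a simple pole at $\rho=1$; a Liouville--Green plus weighted Volterra scheme is also what the paper invokes (its proof is a citation to Lemma 3.1 of \cite{Wal24}). The gap is in \emph{which} solution you construct by integrating from the endpoint. After the normalization $v_{1,V}=h_1(1+a)$, your collapsed kernel contains the factor $\big(\tfrac{(1-s)(1+\rho)}{(1+s)(1-\rho)}\big)^{\alpha_n(\lambda)/2}$, whose modulus is $\big(\tfrac{(1-s)(1+\rho)}{(1+s)(1-\rho)}\big)^{\Re\alpha_n(\lambda)/2}$. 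On $S_{n,V}$ one has $\Re\alpha_n(\lambda)=2\Re\lambda-(d-1+2n)$, which is negative whenever $\Re\lambda<\frac{d-1}{2}+n$; then $h_1$ is the \emph{dominant} solution at $\rho=1$, the factor blows up like $(1-s)^{\Re\alpha_n(\lambda)/2}$ as $s\to1$, and since $(1-s^2)R(s)$ is merely bounded there, $\int_\rho^1$ of the kernel \emph{diverges} as soon as $\Re\alpha_n(\lambda)\le-2$, i.e.\ for all $\lambda\in S_{n,V}$ with $\Re\lambda\le n+\frac{d-3}{2}$ — this covers the neighbourhood of the unstable eigenvalues and the whole line $\Re\lambda=-\frac34$ where the uniform bounds are actually needed. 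No weight of the form $\sup\frac{\rho}{1-\rho}|a(\rho)|$ can repair a divergent $s$-integral, and the non-stationary-phase argument only extracts $\langle\omega\rangle^{-1}$ factors; it does not render a non-absolutely-integrable kernel summable. The "interchanged" equation you propose for $\widetilde v_{1,V}$ suffers the same defect in the complementary regime $\Re\alpha_n(\lambda)\ge 2$.

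The correct organization (and the one behind the cited Lemmas 3.1--3.2 of \cite{Wal24}, mirrored in the paper's treatment of the $\rho=0$ endpoint) is asymmetric: run the Volterra iteration from $\rho=1$ only for the \emph{recessive} member of the pair — which of $h_1,\widetilde h_1$ that is depends on the sign of $\Re\alpha_n(\lambda)$, which is exactly why the paper fixes two different branch conventions — since for that solution the ratio enters with the favourable exponent, the collapsed kernel is $\O\big(\tfrac{(1-s^2)|R(s)|}{|\alpha_n(\lambda)|}\big)$, and the weighted iteration closes uniformly for $\rho\ge\rho_\lambda$. The dominant solution is then produced by reduction of order through the constant Wronskian, $v_{\mathrm{dom}}(\rho)=v_{\mathrm{rec}}(\rho)\int v_{\mathrm{rec}}(s)^{-2}\,ds$ with the base point chosen to match $h_1$ (resp.\ $\widetilde h_1$), and the symbol-type errors $(1-\rho)\O(\rho^{-1})$ and $(1-\rho)\O(\rho^{-1}\langle\omega\rangle^{-1})$ are propagated through that explicit integral. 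Your proposal mentions reduction of order only as an optional alternative for $\widetilde v_{1,V}$ and leaves $v_{1,V}$ to the divergent scheme; relatedly, your list of anticipated obstacles (degenerating indicial gap, left-endpoint singularity, oscillation) misses the dominant/recessive dichotomy in $\Re\alpha_n(\lambda)$, which is the actual structural reason the construction must be split into these two steps.
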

\begin{proof}
This follows from the same considerations as in Lemma 3.1 in \cite{Wal24}.
\end{proof} 
We note that the dependence on the potential of the functions $v_{1,V}, \widetilde v_{1,V}$ is entirely contained in the symbol terms.
We now turn to the endpoint $\rho=0$. Here, it is convenient to utilize Bessel functions. For this, we define a diffeomorphism $\varphi:(0,1)\to (0,\infty)$ by
$$
\varphi(\rho)=\frac12[\ln(1+\rho)-\ln(1-\rho)],
$$
as well as
\begin{align*}
Q_\varphi(\rho):=\frac{1}{(1-\rho^2)^2}=-\frac{3}{4}\frac{\varphi''(\rho)^2}{\varphi'(\rho)^2}+\frac{1}{2}\frac{\varphi'''(\rho)}{\varphi'(\rho)},
\end{align*}
and rewrite the equation \eqref{Eq: reduced} in a different manner as 
\begin{multline*}
v''(\rho) -\left[\frac{3+d^2-4d}{4\varphi(\rho)^2(1-\rho^2)^2}+\frac{d^2-2d+ 2d(2n-2\lambda)+1-4n+4n^2 -8n \lambda+4 \lambda+4\lambda^2}{4(1-\rho^2)^2}\right] v(\rho)
\\
=-Q_\varphi(\rho)v(\rho)+\left[\frac{3+d^2-4d}{4\rho^2(1-\rho^2)}-\frac{3+d^2-4d}{4\varphi(\rho)^2(1-\rho^2)^2}+\frac{V(\rho)}{1-\rho^2}+\frac{4n^2-4n}{1-\rho^2}\right] v(\rho).
\end{multline*}
Then, we change variables according to $w(\varphi(\rho))= (1-\rho^2)^{-\frac12}v(\rho)$, to transform the equation
\begin{multline}\label{Eq: before bessel}
v''(\rho) -\left[\frac{3+d^2-4d}{4\varphi(\rho)^2(1-\rho^2)^2}+\frac{d^2-2d+ 2d(2n-2\lambda)+1-4n+4n^2 -8n \lambda+4 \lambda+4\lambda^2}{4(1-\rho^2)^2}\right] v(\rho)
\\
=-Q_\varphi(\rho)v(\rho)
\end{multline}
into
\begin{align*}
w''(\varphi)-\frac{3+d^2-4d}{4\varphi(\rho)^2}w(\varphi)-\frac{1}{4}\alpha_n(\lambda)^2 w(\varphi)=0.
\end{align*}
Thus, provided that $\alpha_n(\lambda)\neq0$, one infers that the equation \eqref{Eq: before bessel} has a fundamental system of solutions given by
\begin{align*}
b_0(\rho,\lambda)&=\sqrt{1-\rho^2}\sqrt{\varphi(\rho)}J_{\frac{d}{2}-1}(i \alpha_n(\lambda)\varphi(\rho)),
\\
\widetilde b_0(\rho,\lambda)&=\sqrt{1-\rho^2}\sqrt{\varphi(\rho)}Y_{\frac{d}{2}-1}(i \alpha_n(\lambda)\varphi(\rho)),
\end{align*}
where $J_\nu$ and $Y_\nu$ are the standard Bessel functions. Lastly, we define another strip-like set which contains $S_{n,V}$ as 
\begin{align*}
S_n:=\{z\in\C:-\frac34 \leq \Re z \leq 2n \} \setminus \{\frac{d-1}{2}+n\},
\end{align*}
and
 $$\widetilde S_{n}:= \{z \in S_{n}:|z-\frac{d-1}{2}+n|\geq \frac{1}{4} \}. 
 $$
\begin{lem}\label{lem:fund0}
 Suppose $V \in C^\infty_e([0,1])$. Let $\widehat\rho_\lambda$  be a smoothed out version of the function 
 $$ \min \{1+\frac{r}{2}, 2|\alpha_n(\lambda)|\},
 $$
 with $r$ from Lemma \ref{lem:fund1}. Then for any $\lambda\in S_n$ there exists a fundamental system of solutions to Eq.~\eqref{Eq: reduced} of the form
\begin{align*}
v_{0,V}(\rho ,\lambda)&=\sqrt{1-\rho^2}\sqrt{\varphi(\rho)}J_{\frac{d}{2}-1}(i \alpha_n(\lambda)\varphi(\rho))+\rho^{\frac{d+3}{2}}\alpha_n(\lambda)^{\frac{d-1}{2}} e_{0,V}(\rho),
\\
\widetilde v_{0,V}(\rho,\lambda)&=\sqrt{1-\rho^2}\sqrt{\varphi(\rho)}Y_{\frac{d}{2}-1}(i \alpha_n(\lambda)\varphi(\rho))+\O(\rho^{2}\langle\omega\rangle^{0})+\O(\rho^{\frac{7-d}{2}}\langle\omega\rangle^{1-\frac d2}),
\end{align*}
for all $0 \leq \rho \leq \widehat\rho_\lambda$,
where $\omega=\Im \la$, the function $e_{0,V}$ is  smooth and even with respect to $\rho$, and satisfies the estimates
\begin{align}\label{est: e0}
|\partial_\rho^\ell e_{0,V}(\rho,\lambda)|&\lesssim_{\ell,\lambda} 1.
\end{align}
Furthermore, if one restricts $\lambda$ to the set $\widetilde S_{n}$, one obtains
\begin{align*}
|\partial_\rho^\ell \partial_\omega^n e_{0,V}(\rho,\lambda)|&\lesssim \omega^{\ell-n}.
\end{align*}
\end{lem}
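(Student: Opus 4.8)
The plan is to reformulate \eqref{Eq: reduced}, near the regular endpoint $\rho=0$, as a perturbed modified Bessel equation and then construct the two solutions by Volterra iteration off the exact Bessel fundamental system. Following the rewriting already recorded in the excerpt, after the Liouville--Green substitution $w(\varphi(\rho))=(1-\rho^2)^{-1/2}v(\rho)$ the principal part of \eqref{Eq: reduced} becomes $w''-\frac{3+d^2-4d}{4\varphi^2}w-\frac14\alpha_n(\lambda)^2w=0$, whose solutions are $\sqrt\varphi\,J_{d/2-1}(i\alpha_n\varphi)$ and $\sqrt\varphi\,Y_{d/2-1}(i\alpha_n\varphi)$ (note $\frac{3+d^2-4d}{4}=(\frac d2-1)^2-\frac14$), so in the $v$-variable the homogeneous fundamental system is exactly $b_0,\widetilde b_0$ from the statement. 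The remainder, i.e. the difference between \eqref{Eq: reduced} and \eqref{Eq: before bessel}, is multiplication by
\[
W(\rho):=-Q_\varphi(\rho)+\frac{3+d^2-4d}{4}\Bigl(\frac{1}{\rho^2(1-\rho^2)}-\frac{1}{\varphi(\rho)^2(1-\rho^2)^2}\Bigr)+\frac{V(\rho)+4n^2-4n}{1-\rho^2},
\]
which is independent of $\lambda$. The first key observation is that $W\in C^\infty_e([0,1))$: although each term in the parenthesis has a $\rho^{-2}$ pole at the origin, their difference is smooth and even there, because $\frac{1}{\varphi(\rho)^2(1-\rho^2)}=\rho^{-2}(1+\O(\rho^2))$ with an even remainder (recall $\varphi=\operatorname{arctanh}$). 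Hence on any interval $[0,\widehat\rho_\lambda]$ the effective perturbation is a bounded, even function.

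Next, one records the properties of the leading term. The Wronskian $\mathscr W(b_0,\widetilde b_0)$ is a nonzero constant (equal to $\frac2\pi$, since the $(1-\rho^2)$-prefactor and $\varphi'(\rho)=(1-\rho^2)^{-1}$ cancel and $W(J_\nu,Y_\nu)(z)=\frac{2}{\pi z}$), so the Green's kernel is simply $G(\rho,s,\lambda)=\frac\pi2\bigl(b_0(\rho,\lambda)\widetilde b_0(s,\lambda)-b_0(s,\lambda)\widetilde b_0(\rho,\lambda)\bigr)$. One then needs pointwise and symbol-type bounds for $J_{d/2-1}$ and $Y_{d/2-1}$ with the argument $i\alpha_n(\lambda)\varphi(\rho)$, uniformly for $\lambda\in S_n$ and $\rho\in[0,\widehat\rho_\lambda]$, together with control of $\rho$- and $\omega$-derivatives; this is precisely the role of the cutoff $\widehat\rho_\lambda\asymp\min\{1,|\alpha_n(\lambda)|\}$, which confines the argument to a regime where the standard small-argument expansions (valid near $\rho=0$, or for $\lambda$ near the excluded point $\frac{d-1}{2}+n$) and the standard large-argument/uniform asymptotics (see \cite{OlvLonBoiClar10}) apply with derivative estimates. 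For even $d$, where $\nu=\frac d2-1\in\mathbb N_0$ and $Y_\nu$ carries a logarithm, the logarithmic term is simply kept inside $\widetilde b_0$ and requires only minor extra bookkeeping.

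With $b_0,\widetilde b_0$ in hand, $v_{0,V}$ is constructed as the solution of the Volterra equation
\[
v_{0,V}(\rho,\lambda)=b_0(\rho,\lambda)+\frac\pi2\int_0^\rho\bigl(b_0(\rho,\lambda)\widetilde b_0(s,\lambda)-b_0(s,\lambda)\widetilde b_0(\rho,\lambda)\bigr)\,W(s)\,v_{0,V}(s,\lambda)\,ds ,
\]
whose Neumann series converges: writing $\nu=\frac d2-1$ and using $b_0\sim\alpha_n^\nu\varphi^{\nu+1/2}$, $\widetilde b_0\sim\alpha_n^{-\nu}\varphi^{-\nu+1/2}$ near $\rho=0$, the first kernel term yields an integrand $\lesssim\varphi(s)$ and the second $\lesssim\varphi(s)^{2\nu+1}=\varphi(s)^{d-1}$, both integrable since $d\ge2$; iterating produces a correction of size $\O(\varphi(\rho)^{\nu+5/2})=\O(\rho^{(d+3)/2})$, and factoring out $\rho^{(d+3)/2}\alpha_n(\lambda)^{(d-1)/2}$ leaves $e_{0,V}$, which one checks is smooth and even in $\rho$ (the iteration preserves the $C^\infty_e$ structure since $W$ and the products $b_0b_0$, $b_0\widetilde b_0$, $\widetilde b_0\widetilde b_0$ all have the appropriate even power-series form) and $\O(1)$ in $\rho$ with a $\lambda$-dependent constant. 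The dominant solution $\widetilde v_{0,V}$ is obtained analogously, integrating the same Volterra equation from a fixed interior base point (to avoid the $\widetilde b_0$-singularity), which produces the stated $\O(\rho^2)+\O(\rho^{(7-d)/2}\langle\omega\rangle^{1-d/2})$ corrections. Finally, the refined bounds on $\widetilde S_n$ — where $|\alpha_n(\lambda)|\gtrsim1$, so the Bessel argument is genuinely large away from $\rho=0$ — follow by differentiating the Volterra equation in $\omega$ and propagating the symbol-type bounds on $b_0$, $\widetilde b_0$, and $W$ through the iteration.

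The main obstacle is the uniform-in-$\lambda$ control of the Bessel factors together with all their $\rho$- and $\omega$-derivatives across the transition from the small-argument to the large-argument regime, which is what forces the delicate choice of $\widehat\rho_\lambda$ and of the set $\widetilde S_n$. A close second is verifying the two structural facts that make the scheme work at all: the cancellation of the $\rho^{-2}$ singularity in $W$, and the preservation of the even-smoothness of the correction $e_{0,V}$ through the Volterra iteration — the latter being exactly what is needed later for the gluing at $\rho=1$ and the extraction of genuinely regular (indeed smooth) resolvent solutions.
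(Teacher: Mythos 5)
Your skeleton is the paper's: compare with the Bessel system $b_0,\widetilde b_0$ produced by the Liouville--Green substitution, set up the Volterra equation \eqref{Ansatz1} with constant Wronskian $\tfrac{2}{\pi}$, use the cancellation of the $\rho^{-2}$ pole in the perturbation, and obtain the $\lambda$-uniform symbol bounds on $\widetilde S_n$ afterwards; constructing $\widetilde v_{0,V}$ by a Volterra equation anchored at an interior base point is an acceptable substitute for the paper's variation-of-constants step. One bookkeeping slip: your $W$ double-counts $Q_\varphi$. The comparison equation \eqref{Eq: before bessel} already carries the $-Q_\varphi v$ term --- it is exactly what the substitution $w(\varphi(\rho))=(1-\rho^2)^{-1/2}v(\rho)$ absorbs --- so the difference between \eqref{Eq: reduced} and \eqref{Eq: before bessel} is the paper's $\widetilde V$, with no separate $-Q_\varphi$; with your $W$ the Volterra iterate solves an equation shifted by $(1-\rho^2)^{-2}v$. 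Since the extra term is smooth, even and bounded on the relevant region, no estimate breaks, but the formula is not the difference you claim it is.

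The substantive gap is in your proof of the smoothness and evenness of $e_{0,V}$, i.e., of \eqref{est: e0}, in even dimensions. You assert that the products $b_0 b_0$, $b_0\widetilde b_0$, $\widetilde b_0\widetilde b_0$ ``have the appropriate even power-series form'', but for $d$ even $Y_{\frac d2-1}$ contains the term $\tfrac{2}{\pi}\ln\!\big(i\alpha_n(\lambda)\varphi(\rho)\big)J_{\frac d2-1}$, so these products carry logarithms and are not even power series; this is exactly the case the paper isolates as the difficult one, and dismissing it as ``minor extra bookkeeping'' skips the actual mechanism. What saves regularity is structural: in the kernel the logarithms enter only through the combination $\ln\!\big(i\alpha_n(\lambda)\varphi(t)\big)-\ln\!\big(i\alpha_n(\lambda)\varphi(\rho)\big)$, and an integration by parts (using $\varphi(\rho)=\rho(1+\widetilde\varphi(\rho))$ with $\widetilde\varphi$ smooth) converts the corresponding contribution into $\rho^2$ times a function with bounded $\rho$-derivatives; only after this cancellation can one iterate the Volterra identity to propagate \eqref{est: e0}, and the same device, combined with $\ln\!\big(i\alpha_n(\lambda)\varphi(\rho)\big)\lesssim (|\alpha_n(\lambda)|\rho)^{-1}$, is what yields the $\O(\langle\omega\rangle^{\ell-n})$-type bounds on $\widetilde S_n$. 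Without identifying this cancellation, your claim that the iteration preserves the $C^\infty_e$ structure of $e_{0,V}$ --- the heart of the lemma for even $d$, and the input needed later for the gluing and the regularity of the resolvent --- is unsupported.
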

\begin{remark}
We note that we actually do not need estimates involving derivatives with respect to $\omega$. However, they follow from our method, and might be useful for future applications.
\end{remark}
\begin{proof}
Since $b_0$ and $\widetilde b_0$ are two linearly independent solutions of Equation \eqref{Eq: before bessel}
with Wronskian 
$$
W(b_0(\cdot,\lambda),\widetilde b_0(\cdot,\lambda))=\frac{2}{\pi},$$
we need to construct a solution to the integral equation
\begin{align}\label{Ansatz1}
b(\rho,\lambda)= b_0(\rho,\lambda)&-\frac{\pi}{2}b_0(\rho,\lambda)\int_{0}^{\rho} \widetilde b_0(t,\lambda)\widetilde{V}(t)b(t,\lambda)d t
+\frac{\pi}{2}\widetilde b_0(\rho,\lambda)\int_{0}^{\rho} b_0(t,\lambda)\widetilde{V}(t,\lambda)b(t,\lambda) d t,
\end{align}
where
$$
\widetilde{V}(\rho)= -\frac{3+d^2-4d}{4\varphi(\rho)^2(1-\rho^2)^2}+\frac{3+d^2-4d}{4\rho^2(1-\rho^2)}+\frac{V(\rho)}{1-\rho^2} \in C^\infty([0,1)).
$$
We will only prove the result in even dimensions, as this is the technically more complicated case due to the appearance of a logarithm; the odd case follows analogously.
To this end, we note that, as $d$ is even, one has that
\begin{align*}
J_{\frac{d}{2}-1}(z)&=2^{-\frac{d}{2}+1} z^{\frac{d}{2}-1}\sum_{j=0}^\infty \frac{4^{-j}z^{2j}}{j!\Gamma(\frac{d}{2}+j)},
\\
Y_{\frac{d}{2}-1}(z)&=\frac{2^{\frac{d}{2}-1}}{\pi} z^{-\frac{d}{2}+1}\sum_{j=0}^{\frac{d}{2}-2} \frac{(\frac{d}{2}-2-j)!4^{-j}z^{2j}}{j!}+\frac{2}{\pi}\ln z J_{\frac{d}{2}-1}(z)
\\
&\quad - \frac{2^{-\frac{d}{2}+1}}{\pi} z^{\frac{d}{2}-1}\sum_{j=0}^{\frac{d}{2}-2} \frac{(\frac{d}{2}-2-j)!4^{-j}[\gamma(j+1)\gamma(\frac{d}{2}+j)]z^{2j}}{j!(\frac{d}{2}-1+j)!},
 \end{align*}
 where $\gamma(z)=\frac{\Gamma'(z)}{\Gamma(z)}$, with $\Gamma$ denoting the Gamma function.
Therefore, on $(0,\widehat{\rho}_\lambda)\times \widetilde S_{n}$, we have
\begin{align*}
\sqrt{1-\rho^2}J_{\frac{d}{2}-1}(i(\alpha_n(\lambda)\varphi(\rho))&=[\alpha_n(\lambda)\varphi(\rho)]^{\frac{d}{2}-1}j(\rho,\lambda),
\\
\sqrt{1-\rho^2}Y_{\frac{d}{2}-1}(i(\alpha_n(\lambda)\varphi(\rho))&=[\alpha_n(\lambda)\varphi(\rho)]^{-\frac{d}{2}+1}y_1(\rho,\lambda)+[\alpha_n(\lambda)\varphi(\rho)]^{\frac{d}{2}-1}y_2(\rho,\lambda) 
\\
&\quad +\frac{2}{\pi}\ln [i\alpha_n(\lambda)\varphi(\rho)][\alpha_n(\lambda)\varphi(\rho)]^{\frac{d}{2}-1}j(\rho,\lambda),
\end{align*}
where $j(\rho,\lambda)$, $y_1(\rho,\lambda)$, and $y_2(\rho,\lambda)$ all satisfy the estimate
\begin{align*}
|\partial_\rho^j \partial_\omega^{\ell} f(\rho,\lambda)|&\lesssim_{j,\ell} \langle\omega\rangle^{j-\ell},
\end{align*}
for all $j,\ell \in \mathbb{N}_0$.
We now divide \eqref{Ansatz1} by $(i\alpha_n(\lambda)\rho)^{\frac{d-1}{2}}$
to arrive at the equation 
\begin{equation}\label{eq:int0}
e(\rho,\lambda)=\frac{b_0(\rho,\lambda)}{(i\alpha_n(\lambda)\varphi(\rho))^{\frac{d-1}{2}}}+\int_{0}^{\rho}K(\rho,t,\lambda)e(t,\lambda) dt,
\end{equation}
where
$$
K(\rho,t,\lambda)=\frac{\pi\widetilde{V}(t)}{2}\left( \frac{\widetilde b_0(\rho,\lambda)}{\varphi(\rho)^{\frac{d-1}{2}}}b_0(t,\lambda)\varphi(t)^{\frac{d-1}{2}}-\frac{b_0(\rho,\lambda)}{\varphi(\rho)^{\frac{d-1}{2}}}\widetilde b_0(t,\lambda)\varphi(t)^{\frac{d-1}{2}}\right),
$$
with $e(\rho,\lambda)=\frac{b(\rho,\lambda)}{(i\alpha_n(\lambda)\varphi(\rho))^{\frac{d-1}{2}}}$. By standard arguments, one shows that for any $$\lambda \in \{z\in \C: -\frac{3}{4}\leq \Re z\leq 2n+1\}$$ there exists a unique solution to this equation of the form $$
e_{0,V}(\rho,\lambda)=\frac{b(\rho,\lambda)}{(i\alpha_n(\lambda)\varphi(\rho))^{\frac{d-1}{2}}}+\O(\rho^2),$$ which is additionally a continuous function in $\lambda$ that satisfies
\begin{align*}
e_{0,V}(\rho,\la)=1+\O(\rho^2 \langle\omega\rangle^{0})
\end{align*}
on $(0,\widehat{\rho}_\lambda)\times  S_0$. The goal now is to show that improved smoothness properties hold. For this, we explicitly spell out 
\begin{align*}
\int_{0}^{\rho}K(\rho,t,\lambda)dt&= \int_{0}^{\rho}\frac{\pi\widetilde V(t)}{2}
\bigg[\frac{\varphi(t)^{d-1}y_1(\rho,\lambda)j(t,\lambda)}{\varphi(\rho)^{d-2}}+y_2(\rho,\lambda)j(t,\lambda) \alpha_n(\lambda)^{d-2}\varphi(t)^{d-1}
\\
&\quad+\frac{2}{\pi}\ln( i\alpha_n(\lambda)\varphi(\rho))\alpha_n(\lambda)^{d-2}\varphi(t)^{d-1}j(t,\lambda)j(\rho,\lambda)\bigg]
\\
&\quad-\frac{\pi\widetilde V(t)}{2} \bigg[\varphi(t)y_1(t,\lambda)j(\rho,\lambda)+\alpha_n(\lambda)^{d-2}\varphi(t)^{d-1}y_2(t,\lambda)j(\rho,\lambda)
\\
&\quad +\frac{2}{\pi}\ln( i\alpha_n(\lambda)\varphi(t))\alpha_n(\lambda)^{d-2}\varphi(t)^{d-1}j(t,\lambda)j(\rho,\lambda)\bigg]dt.
\end{align*}
Now, by a scaling argument, one readily infers that 
\begin{align*}
 \int_{0}^{\rho}\frac{\pi\widetilde V(t)}{2}&
\bigg[\frac{\varphi(t)^{d-1}y_1(\rho,\lambda)j(t,\lambda)}{\varphi(\rho)^{d-2}}+y_2(\rho,\lambda)j(t,\lambda) \alpha_n(\lambda)^{d-2}\varphi(t)^{d-1}
\bigg]
\\
&\quad-\frac{\pi\widetilde V(t)}{2} \bigg[\varphi(t)y_1(t,\lambda)j(\rho,\lambda)+\alpha_n(\lambda)^{d-2}\varphi(t)^{d-1}y_2(t,\lambda)j(\rho,\lambda)\bigg]dt
\\
&
= \rho^2 h(\rho,\lambda),
\end{align*}
where $h$ is a smooth function such that $|\partial_\rho^\ell h(\rho,\lambda)|\leq C_{\ell}(\lambda)$
for all $\lambda \in S_{0}$. Furthermore, by performing a  Taylor expansion, one infers that
$$
\varphi(\rho)=\rho(1+\widetilde \varphi(\rho)), \qquad \widetilde \varphi(\rho)\in C^\infty([0,1)),\qquad \widetilde \varphi>0 \text{ on } [0,1).
$$ 
Therefore, integration by parts shows that
\begin{align*}
 L(\rho,\lambda)&:=\int_{0}^{\rho}\widetilde V(t)j(t,\lambda)j(\rho,\lambda)\alpha_n(\lambda)^{d-2}\varphi(t)^{d-1} \left[\ln( i\alpha_n(\lambda)\varphi(t))-\ln( i\alpha_n(\lambda)\varphi(\rho))\right]dt
 \\
 &\,= \int_{0}^{\rho}\partial_t[\widetilde V(t)j(t,\lambda)j(\rho,\lambda) \alpha_n(\lambda)^{d-2}(1+\widetilde\varphi(t))^{d-1}] \frac{t^d}{d}\left[\ln( i\alpha_n(\lambda)\varphi(t))-\ln( i\alpha_n(\lambda)\varphi(\rho))\right]dt
 \\
 &\ \quad +\int_{0}^{\rho}\widetilde V(t)j(t,\lambda)^2 \alpha_n(\lambda)^{d-2}(1+\widetilde\varphi(t))^{d-2}\frac{t^{d-1}}{d} dt.
\end{align*}
Therefore, also $ L(\rho,\lambda)$ is of the form $\rho^2 h(\rho,\lambda)$ with $|\partial_\rho^\ell h(\rho,\lambda)|\leq C_{\ell}(\lambda)$.
A repeated usage of the identity
 \begin{align*}
 e_{0,V}(\rho,\lambda)&=1+\int_{0}^{\rho}K(\rho,t,\lambda)e_{0,V}(t,\lambda) dt
 \\
 &=1+\int_{0}^{\rho}K(\rho,t,\lambda) dt+\int_{0}^{\rho}K(\rho,t,\lambda)\int_{0}^{t}K(\rho,s,\lambda)e_{0,V}(s,\lambda) ds dt
 \end{align*}
 yields that $e_{0,V}$ satisfies the estimate \eqref{est: e0}. We still need to derive the desired asymptotics in $\lambda$.
 Certainly, the term
 \begin{multline*}
\int_0^\rho \frac{\pi\widetilde V(t)}{2}
\bigg[\frac{\varphi(t)^{d-1}y_1(\rho,\lambda)j(t,\lambda)}{\varphi(\rho)^{d-2}}+y_2(\rho,\lambda)j(t,\lambda) \alpha_n(\lambda)^{d-2}\varphi(t)^{d-1} 
\\
\quad -\varphi(t)y_1(t,\lambda)j(\rho,\lambda)+\alpha_n(\lambda)^{d-2}\varphi(t)^{d-1}y_2(t,\lambda)j(\rho,\lambda) \bigg]dt.
 \end{multline*}
However, by integrating by parts as above, and using the fact that
$$
\ln( i\alpha_n(\lambda)\varphi(\rho))\lesssim  \frac{1}{|\alpha_n(\lambda)|\rho}
$$ 
on the $\text{ supp}(\chi(\lambda))\cap \widetilde S_n$, one also readily controls the 
terms involving logarithms and the desired asymptotics on $e_{0,V}$ follow.
Thus, we have successfully constructed the first claimed solution.
The second solution is then constructed by means of variation of constants, as exhibited in the proof of Lemma 3.2 in \cite{Wal24}. 
\end{proof} 
To move on, we prove that our solutions depend analytically on $\lambda$.
\begin{lem}\label{lem:analyticity}
Let $\rho \in (0,\widehat{\rho}_\lambda)$. Then the functions 
$$
\lambda\mapsto v_{0,V}(\rho,\lambda)\quad \text{and} \quad \la \mapsto \widetilde v_{0,V}(\rho,\lambda)
$$
 are holomorphic on the set $\{z\in \C: -\frac 34\leq \Re z<\frac{d-1}{2}+n\}$.
Similarly for $\rho \in (\rho_\lambda,1)$ the functions
$$
\lambda\mapsto \sqrt{\alpha_n(\lambda)}\,v_{1,V}(\rho,\lambda) \quad \text{and} \quad \sqrt{\alpha_n(\lambda)}\,\widetilde v_{1,V}(\rho,\lambda) 
$$
are holomorphic on $S_{n,V}$.
\end{lem}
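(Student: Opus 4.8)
The plan is to show that holomorphy in $\lambda$ is inherited, step by step, from the Volterra (Neumann series) construction of the four solutions carried out in Lemmas \ref{lem:fund1} and \ref{lem:fund0}, once one strips off the prefactors that carry the branch cuts. The underlying soft principle is the Weierstrass convergence theorem: a locally uniform limit of holomorphic functions is holomorphic, and a parameter integral of a jointly continuous kernel that is holomorphic in the parameter is itself holomorphic (Morera together with Fubini). It therefore suffices to check, on the claimed domain, that (i) the inhomogeneous term and the kernel of each Volterra equation are holomorphic in $\lambda$, and (ii) the Neumann series converges locally uniformly in $(\rho,\lambda)$ — and (ii) is immediate from the estimates already proved in those two lemmas, which are uniform on the relevant $\lambda$-sets.

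For the solutions near $\rho=1$: the only non-holomorphic ingredients in the integral equation defining $v_{1,V},\widetilde v_{1,V}$ are the prefactor $\alpha_n(\lambda)^{-1/2}$ in $h_1,\widetilde h_1$ and the powers $\big(\tfrac{1-\rho}{1+\rho}\big)^{\pm\alpha_n(\lambda)/4}$; the remaining ingredients of the kernel come from the fixed rational coefficients in \eqref{Eq: reduced}. Multiplying by $\sqrt{\alpha_n(\lambda)}$ removes the prefactor, and $\big(\tfrac{1-\rho}{1+\rho}\big)^{\pm\alpha_n(\lambda)/4}=\exp\!\big(\pm\tfrac14\alpha_n(\lambda)\ln\tfrac{1-\rho}{1+\rho}\big)$ is entire in $\lambda$ because $\alpha_n(\lambda)=-d+1-2n+2\lambda$ is a polynomial and $\ln\tfrac{1-\rho}{1+\rho}$ is, for each fixed $\rho\in(\rho_\lambda,1)$, a fixed real number. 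Moreover the cross-products $h_1(\rho)\widetilde h_1(t)$ and $\widetilde h_1(\rho)h_1(t)$ entering the kernel have their $\alpha_n^{-1/2}$-factors cancel, so the kernel is holomorphic in $\lambda$. Hence every term of the Neumann series for $\sqrt{\alpha_n(\lambda)}\,v_{1,V}(\rho,\cdot)$ (resp.\ $\widetilde v_{1,V}$) is holomorphic on $S_{n,V}$, the series converges locally uniformly by the bounds in Lemma \ref{lem:fund1}, and the claim follows.

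Near $\rho=0$ the argument is the same, with the Bessel prefactors playing the role of $\alpha_n^{-1/2}$. One uses the classical facts that $J_\nu(z)/z^\nu$ is entire in $z^2$ and that, in even dimension, $Y_{d/2-1}(z)$ equals $z^{-(d/2-1)}$ times a polynomial in $z^2$, plus $\tfrac{2}{\pi}(\log z)J_{d/2-1}(z)$, plus $z^{d/2-1}$ times an entire function of $z^2$. Setting $z=i\alpha_n(\lambda)\varphi(\rho)$ and recalling that $\alpha_n(\lambda)^2$ is polynomial in $\lambda$, the normalized quantities entering \eqref{eq:int0} are thus holomorphic in $\lambda$ except through the single factor $\ln\big(i\alpha_n(\lambda)\varphi(\rho)\big)$; on $\{-\tfrac34\le\Re z<\tfrac{d-1}{2}+n\}$ one has $\Re\alpha_n(\lambda)<0$, so with the branch cut fixed earlier this logarithm is holomorphic there. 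Consequently the kernel $K(\rho,t,\lambda)$ and the inhomogeneity in \eqref{eq:int0} are holomorphic in $\lambda$, the Volterra iteration of Lemma \ref{lem:fund0} converges locally uniformly, and $\lambda\mapsto v_{0,V}(\rho,\lambda)$, $\lambda\mapsto \widetilde v_{0,V}(\rho,\lambda)$ are holomorphic on $\{-\tfrac34\le\Re z<\tfrac{d-1}{2}+n\}$; in odd dimension the same argument applies verbatim, without the logarithmic term.

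The one point requiring genuine care — the main obstacle — is the branch-cut bookkeeping: one must verify that once $\sqrt{\alpha_n(\lambda)}$ (resp.\ the Bessel prefactor $(i\alpha_n(\lambda)\varphi(\rho))^{\pm(d/2-1)}$) is factored out, the remaining objects are honestly single-valued, and that neither the chosen branch of $\ln(i\alpha_n(\lambda)\varphi(\rho))$ nor that of $\sqrt{\alpha_n(\lambda)}$ meets the parameter region in question. Both hold because $\alpha_n$ is polynomial with its only zero at $\lambda=\tfrac{d-1}{2}+n$, and because $\Re\alpha_n(\lambda)<0$ precisely on $\{\Re\lambda<\tfrac{d-1}{2}+n\}$; once this is checked, the holomorphy is a soft corollary of the Weierstrass theorem applied to the locally uniformly convergent Volterra series.
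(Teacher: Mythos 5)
Your proposal is correct and takes essentially the same route as the paper: one verifies that the explicit Bessel/power building blocks are holomorphic under the fixed branch conventions (using that $\Re\alpha_n(\lambda)<0$ on the region in question, so the branch point $\lambda=\tfrac{d-1}{2}+n$ is avoided) and then propagates holomorphy through the Volterra construction, the only mechanical difference being that you sum the Neumann series and invoke the Weierstrass convergence theorem, whereas the paper applies $\partial_{\overline{\lambda}}$ to the fixed-point equation and uses that the resulting homogeneous Volterra equation has only the trivial solution. Two harmless imprecisions: in the kernel near $\rho=1$ the factors $\alpha_n(\lambda)^{-1/2}$ do not cancel but combine to the single-valued $\alpha_n(\lambda)^{-1}$, which is still holomorphic on $S_{n,V}$ since $\alpha_n$ does not vanish there; and $\widetilde v_{0,V}$ (similarly $\widetilde v_{1,V}$) is obtained by variation of constants from the first solution rather than by its own Volterra iteration, but holomorphy then follows by the same parameter-integral argument.
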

\begin{proof}
Certainly, $b_0(\rho,\lambda),\widetilde b_0(\rho,\lambda),$ and $ \sqrt{\alpha_n(\lambda)}\,h_1(\rho,\lambda), \sqrt{\alpha_n(\lambda)}\,\widetilde h_1(\rho,\lambda) $ satisfy the assumptions of the lemma. Thus, we need to show that the perturbative parts do as well. By construction, $e_{0,V}$ satisfies
\begin{equation}
e_{0,V}(\rho,\lambda)=\frac{b_0(\rho,\lambda)}{(i\alpha_n(\lambda)\varphi(\rho))^{\frac{d-1}{2}}}+\int_{0}^{\rho}K(\rho,t,\lambda)e_{0,V}(t,\lambda) dt,
\end{equation}
where
$$
K(\rho,t,\lambda)=\frac{\pi\widetilde{V}(t)}{2}\left( \frac{\widetilde b_0(\rho,\lambda)}{\varphi(\rho)^{\frac{d-1}{2}}}b_0(t,\lambda)\varphi(t)^{\frac{d-1}{2}}-\frac{b_0(\rho,\lambda)}{\varphi(\rho)^{\frac{d-1}{2}}}\widetilde b_0(t,\lambda)\varphi(t)^{\frac{d-1}{2}}\right).
$$
Therefore, we obtain
$$
\partial_{\overline{\lambda}} e_{0,V}(\rho,\lambda)=\int_{0}^{\rho}K(\rho,t,\lambda)\partial_{\overline{\lambda}} e_{0,V}(t,\lambda) dt.
$$
This Volterra equation has the unique solution $0$, which implies that $e_{0,V}(\rho,\cdot)$ is holomorphic. Then, as $\widetilde{v}_{0,V}$ is constructed by means of a variation of constants argument, it follows that $\widetilde{v}_{0,V}(\rho,\cdot)$ is holomorphic as well. In the same fashion, one argues for $\sqrt{\alpha_n(\lambda)}\,v_{1,V}(\rho,\lambda)$ and $ \sqrt{\alpha_n(\lambda)}\,\widetilde v_{1,V}(\rho,\lambda) $.
\end{proof}
Next, we glue these solutions together.
\begin{lem}\label{lem:con_coef}
For $\rho\in[\rho_\lambda,\widehat{\rho}_\lambda]$ the solutions $v_1$ and $\widetilde v_1$ have the representations
\begin{align*}
v_{1,V}(\rho,\lambda) &= c_{1,0,V}(\lambda)v_{0,V}(\rho,\lambda)+ c_{1,\widetilde 0,V}(\lambda)\widetilde v_{0,V}(\rho,\lambda),
\\
\widetilde v_{1,V}(\rho,\lambda)&= c_{\widetilde 1,0,V}(\lambda)v_{0,V}(\rho,\lambda)+ c_{\widetilde 1,\widetilde 0,V}(\lambda) \widetilde v_{0,V}(\rho,\lambda),
\end{align*}
with
\begin{align*}
c_{1,0,V}(\lambda)&= \frac{\pi W(h_1(\cdot,\lambda),\widetilde b_0(\cdot,\lambda))(\rho_\lambda)}{2}+\O(\langle\omega\rangle^{-1})=\O(\langle\omega\rangle^{0}),
\\
c_{1,\widetilde 0,V}(\lambda)&= -\frac{\pi W(h_1(\cdot,\lambda),b_0(\cdot,\lambda))(\rho_\lambda)}{2}+\O(\langle\omega\rangle^{-1})=\O(\langle\omega\rangle^{0}),
\end{align*}
and
\begin{align*}
c_{\widetilde 1,0,V}(\lambda)(\lambda)&= \frac{\pi W(\widetilde h_1(\cdot,\lambda),\widetilde b_0(\cdot,\lambda))(\rho_\lambda)}{2}+\O(\langle\omega\rangle^{-1})=\O(\langle\omega\rangle^{0}),
	\\
c_{\widetilde 1,\widetilde 0,V}(\lambda)(\lambda)&=-\frac{\pi W(\widetilde h_1(\cdot,\lambda),b_0(\cdot,\lambda))(\rho_\lambda)}{2}+\O(\langle\omega\rangle^{-1})=\O(\langle\omega\rangle^{0}).
\end{align*}
Furthermore, the functions $c_{1,0,V},c_{1,\widetilde 0,V},c_{\widetilde 1,0,V},c_{\widetilde 1,\widetilde 0,V}$ are holomorphic.
\end{lem}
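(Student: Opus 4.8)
The two pairs $\{v_{0,V},\widetilde v_{0,V}\}$ and $\{v_{1,V},\widetilde v_{1,V}\}$ are both fundamental systems of the \emph{same} homogeneous equation \eqref{Eq: reduced} on the overlap interval $[\rho_\lambda,\widehat\rho_\lambda]$, and since that equation is in normal form $v''=q(\rho,\lambda)v$ with no first-order term, Abel's identity shows that the Wronskian of any two of its solutions is independent of $\rho$. Writing $v_{1,V}$ in the basis $\{v_{0,V},\widetilde v_{0,V}\}$ and solving by Cramer's rule yields
\[
c_{1,0,V}(\lambda)=\frac{W(v_{1,V}(\cdot,\lambda),\widetilde v_{0,V}(\cdot,\lambda))}{W(v_{0,V}(\cdot,\lambda),\widetilde v_{0,V}(\cdot,\lambda))},\qquad c_{1,\widetilde 0,V}(\lambda)=\frac{W(v_{0,V}(\cdot,\lambda),v_{1,V}(\cdot,\lambda))}{W(v_{0,V}(\cdot,\lambda),\widetilde v_{0,V}(\cdot,\lambda))},
\]
and likewise for $c_{\widetilde 1,0,V},c_{\widetilde 1,\widetilde 0,V}$ with $v_{1,V}$ replaced by $\widetilde v_{1,V}$. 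The $\rho$-independence of all these Wronskians is what lets us evaluate numerator and denominator at whichever point of the overlap is most convenient.

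For the denominator I would let $\rho\to 0^+$. By Lemma \ref{lem:fund0} one has $v_{0,V}=b_0+\rho^{(d+3)/2}\alpha_n(\lambda)^{(d-1)/2}e_{0,V}$ and $\widetilde v_{0,V}=\widetilde b_0+\O(\rho^2)+\O(\rho^{(7-d)/2}\langle\omega\rangle^{1-d/2})$, while $b_0,\widetilde b_0$ solve the nearby equation \eqref{Eq: before bessel} with $W(b_0(\cdot,\lambda),\widetilde b_0(\cdot,\lambda))=\tfrac2\pi$. A short computation with the small-argument behaviour of $J_{d/2-1},Y_{d/2-1}$ shows that every cross term between a Bessel leading term and a perturbative remainder vanishes as $\rho\to0$, so that $W(v_{0,V}(\cdot,\lambda),\widetilde v_{0,V}(\cdot,\lambda))=\tfrac2\pi$ for all $\lambda$; in particular the denominator never vanishes and is constant.

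For the numerators I would instead evaluate at $\rho=\rho_\lambda$. By construction of $\rho_\lambda$ one has $|\alpha_n(\lambda)\varphi(\rho_\lambda)|\lesssim 1$, so at $\rho_\lambda$ the leading terms $h_1,\widetilde h_1$ of Lemma \ref{lem:fund1} and the Bessel leading terms $b_0,\widetilde b_0$ of Lemma \ref{lem:fund0} are all in their respective small-argument regimes. Substituting $v_{1,V}=h_1[1+(1-\rho)\O(\rho^{-1})]$ and $\widetilde v_{0,V}=\widetilde b_0+\O(\rho^2)+\O(\rho^{(7-d)/2}\langle\omega\rangle^{1-d/2})$ into the bilinear Wronskian and expanding, the leading contribution is $W(h_1(\cdot,\lambda),\widetilde b_0(\cdot,\lambda))(\rho_\lambda)$; tracking the remaining terms through the explicit expressions at $\rho_\lambda\sim|\alpha_n(\lambda)|^{-1}$, using the symbol bounds of the two lemmas and the cancellation of the $\alpha_n(\lambda)$-powers between the $\alpha_n(\lambda)^{-1/2}$ prefactor of $h_1$ and the $\alpha_n(\lambda)^{(d-1)/2}$-behaviour of $b_0,\widetilde b_0$, one finds that this correction is $\O(\langle\omega\rangle^{-1})$. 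Dividing by $\tfrac2\pi$ gives the claimed formula for $c_{1,0,V}$, and the analogous one — with $b_0$ in place of $\widetilde b_0$ and the sign prescribed by Cramer's rule — for $c_{1,\widetilde 0,V}$; repeating the computation with $\widetilde h_1$ yields $c_{\widetilde1,0,V},c_{\widetilde1,\widetilde0,V}$. Finally, a direct estimate of the four leading Wronskians $W(h_1,b_0),W(h_1,\widetilde b_0),W(\widetilde h_1,b_0),W(\widetilde h_1,\widetilde b_0)$ at $\rho_\lambda$ shows each is $\O(\langle\omega\rangle^0)$, giving the stated $\O(\langle\omega\rangle^0)$ bounds.

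Holomorphy is then immediate from Lemma \ref{lem:analyticity}: for fixed $\rho$ in the overlap the functions $\sqrt{\alpha_n(\lambda)}\,v_{1,V}(\rho,\cdot)$, $\sqrt{\alpha_n(\lambda)}\,\widetilde v_{1,V}(\rho,\cdot)$, $v_{0,V}(\rho,\cdot)$ and $\widetilde v_{0,V}(\rho,\cdot)$ are holomorphic on $S_{n,V}$, so writing e.g.
\[
c_{1,0,V}(\lambda)=\frac{\pi}{2\sqrt{\alpha_n(\lambda)}}\,W\big(\sqrt{\alpha_n(\lambda)}\,v_{1,V}(\cdot,\lambda),\widetilde v_{0,V}(\cdot,\lambda)\big)
\]
exhibits $c_{1,0,V}$ as a product of functions holomorphic on $S_{n,V}$ — note that $\alpha_n$ does not vanish on $S_{n,V}$ and that the branch cut of $\lambda\mapsto\alpha_n(\lambda)^{-1/2}$ is, by our conventions, disjoint from $S_{n,V}$ — hence holomorphic; the other three coefficients are treated identically. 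I expect the main obstacle to be the third step: propagating the symbol-type estimates of Lemmas \ref{lem:fund1} and \ref{lem:fund0} through the bilinear Wronskian evaluated at the $\lambda$-dependent point $\rho_\lambda$, and checking that the $\alpha_n(\lambda)$-powers cancel so that the leading Wronskians come out $\O(\langle\omega\rangle^0)$ while the corrections are genuinely $\O(\langle\omega\rangle^{-1})$.
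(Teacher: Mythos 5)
Your proposal is correct and follows essentially the same route as the paper, whose own proof simply invokes Lemma 3.4 of \cite{DonWal22a}: the connection coefficients are obtained by Cramer's rule as $\rho$-independent Wronskian ratios, the denominator is identified with the Bessel Wronskian $\tfrac{2}{\pi}$ by letting $\rho\to 0^+$, the numerators are evaluated at the matching point $\rho_\lambda$ using the symbol-type expansions of Lemmas \ref{lem:fund1} and \ref{lem:fund0}, and holomorphy follows because the coefficients are Wronskians of the $\lambda$-holomorphic functions of Lemma \ref{lem:analyticity} (with the $\sqrt{\alpha_n(\lambda)}$ normalization handled as you indicate). The quantitative error-tracking that you flag as the main obstacle --- showing the corrections are genuinely $\O(\langle\omega\rangle^{-1})$ rather than merely $\O(\langle\omega\rangle^{0})$ at $\rho_\lambda\sim\langle\omega\rangle^{-1}$ --- is precisely the content of the cited lemma and is likewise not reproduced in the paper.
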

\begin{proof}
The formulae follow as in Lemma 3.4 in \cite{DonWal22a}. The holomorphicity of the functions $c_{1,0,V}$, etc.~is immediate, since they are given in terms of Wronskians of functions that depend holomorphically on $\lambda$.
\end{proof}
To move forward, we undo the transformation
$$
f(\rho)=\rho^{\frac{1-d}{2}} (1-\rho^2)^{\frac{d-2\lambda-3+2n}{4}} v(\rho),
$$
thereby obtaining a fundamental system for the homogeneous variant of the equation \eqref{Eq:generalised specODE}, which we reproduce here for the convenience of the reader
\begin{align}\label{Eq:hom}
(\rho^2-1)f''(\rho)+\left( -\frac{d-1}{\rho}+2(\lambda-n+1)\rho \right)f'(\rho)+[\lambda(\lambda-2n+1)-V(\rho)]f    (\rho)&=0.
\end{align}
\begin{lem}
Let $\lambda\in S_{n,V}$. Then Eq.~ \eqref{Eq:hom} has a fundamental system of solutions given by
\begin{align*}
u_{1,V}(\rho,\lambda)&=\chi_\lambda(\rho)\rho^{\frac{1-d}{2}}(1-\rho^2)^{\frac{d-2\lambda-3+2n}{4}}\left[c_{1,0,V}(\lambda)v_{0,V}(\rho,\lambda)+c_{1,\widetilde 0,V}(\lambda)\widetilde v_{0,V}(\rho,\lambda)\right]
\\
&\quad (1-\chi_\lambda(\rho))\rho^{\frac{1-d}{2}}(1-\rho^2)^{\frac{d-2\lambda-3+2n}{4}}v_{1,V}(\rho,\lambda),
\\
u_{2,V}(\rho,\lambda)&=\chi_\lambda(\rho)\rho^{\frac{1-d}{2}}(1-\rho^2)^{\frac{d-2\lambda-3+2n}{4}}\left[c_{\widetilde 1,0,V}(\lambda)v_{0,V}(\rho,\lambda)+c_{\widetilde 1,\widetilde 0,V}(\lambda)\widetilde v_{0,V}(\rho,\lambda)\right]
\\
&\quad (1-\chi_\lambda(\rho))\rho^{\frac{1-d}{2}}(1-\rho^2)^{\frac{d-2\lambda-3+2n}{4}}\widetilde v_{1,V}(\rho,\lambda),
\end{align*}
where
\begin{align*}
v_{0,V}(\rho ;\lambda)&=\sqrt{1-\rho^2}\sqrt{\varphi(\rho)}J_{\frac{d}{2}-1}(i \alpha_n(\lambda)\varphi(\rho))+\rho^{\frac{d+3}{2}}\alpha_n(\lambda)^{\frac{d-1}{2}} e_{0,V}(\rho),
\\
\widetilde v_{0,V}(\rho;\lambda)&=\sqrt{1-\rho^2}\sqrt{\varphi(\rho)}Y_{\frac{d}{2}-1}(i \alpha_n(\lambda)\varphi(\rho))+\O(\rho^{2}\langle\omega\rangle^{0})+\O(\rho^{\frac{7-d}{2}}\langle\omega\rangle^{1-\frac d2}),
\end{align*}
and
\begin{align*}
\rho^{\frac{1-d}{2}}(1-\rho^2)^{\frac{d-2\lambda-3+2n}{4}}v_{1,V}(\rho,\lambda)&=\rho^{\frac{1-d}{2}}\frac{(1+\rho)^{\frac{d-1}2+n-\lambda}}{\sqrt{\alpha_n(\lambda)}}[1+(1-\rho)\O(\rho^{-1}\langle\omega\rangle^{-1})],
\\
\rho^{\frac{1-d}{2}}(1-\rho^2)^{\frac{d-2\lambda-3+2n}{4}}\widetilde v_{1,V}(\rho,\lambda)&=\rho^{\frac{1-d}{2}}\frac{(1-\rho)^{\frac{d-1}2+n-\lambda}}{\sqrt{\alpha_n(\lambda)}}[1+(1-\rho)\O(\rho^{-1}\langle\omega\rangle^{-1})],
\end{align*}
with $\alpha_n(\lambda)=-d+1-2n+2\lambda$.
\end{lem}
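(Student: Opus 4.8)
The plan is to verify the asserted formulas by gluing the local fundamental systems of the reduced equation \eqref{Eq: reduced} and then transporting them back through the Liouville substitution. Recall that $f(\rho)=w(\rho,\lambda)v(\rho)$, with $w(\rho,\lambda):=\rho^{\frac{1-d}{2}}(1-\rho^2)^{\frac{d-2\lambda-3+2n}{4}}$, is a linear bijection from the solution space of the homogeneous version of \eqref{Eq: reduced} onto that of \eqref{Eq:hom}, since $w(\cdot,\lambda)$ is smooth and non-vanishing on $(0,1)$; hence it suffices to exhibit a fundamental system of \eqref{Eq: reduced} in the glued form and multiply it by $w$.

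First I would fix a cutoff $\chi_\lambda\in C^\infty([0,1])$, equal to $1$ near $\rho=0$ and $0$ near $\rho=1$, with its transition contained in the overlap region $[\rho_\lambda,\widehat\rho_\lambda]\cap(0,1)$ on which Lemma \ref{lem:con_coef} applies; choosing $r$ from Lemma \ref{lem:fund1} small enough makes this region non-degenerate for every $\lambda\in S_{n,V}$. The key observation is that on the transition region Lemma \ref{lem:con_coef} gives $v_{1,V}=c_{1,0,V}v_{0,V}+c_{1,\widetilde 0,V}\widetilde v_{0,V}$ and $\widetilde v_{1,V}=c_{\widetilde 1,0,V}v_{0,V}+c_{\widetilde 1,\widetilde 0,V}\widetilde v_{0,V}$, so in the definition of $u_{i,V}$ the bracketed combination multiplying $\chi_\lambda$ equals the function multiplying $1-\chi_\lambda$; hence on the whole overlap the convex combination is a single genuine solution of \eqref{Eq: reduced} and the $\chi_\lambda'$-terms never appear. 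Since on $[0,\rho_\lambda]$ the bracket is a linear combination of the near-$0$ system of Lemma \ref{lem:fund0} and on $[\widehat\rho_\lambda,1)$ it is $v_{1,V}$ or $\widetilde v_{1,V}$ from Lemma \ref{lem:fund1}, the resulting $u_{i,V}=w\cdot(\text{bracket})$ are well-defined solutions of \eqref{Eq:hom}, smooth on $(0,1)$.

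To confirm that $\{u_{1,V},u_{2,V}\}$ is a fundamental system, I would use that \eqref{Eq: reduced} has no first-order term, so the Wronskian of any two of its solutions is constant; as $u_{i,V}=w\,v_i$ we get $W(u_{1,V},u_{2,V})=w^2\,W(v_1,v_2)$, and evaluating $W(v_1,v_2)$ on $[\widehat\rho_\lambda,1)$, where $v_1,v_2=v_{1,V},\widetilde v_{1,V}$ form a fundamental system by Lemma \ref{lem:fund1}, shows it is a non-zero constant (indeed it equals $W(h_1,\widetilde h_1)$, non-zero as one checks directly from the explicit $h_1,\widetilde h_1$; equivalently, evaluating near $\rho=0$ identifies it with $(c_{1,0,V}c_{\widetilde 1,\widetilde 0,V}-c_{1,\widetilde 0,V}c_{\widetilde 1,0,V})\,W(v_{0,V},\widetilde v_{0,V})$, which is therefore non-zero). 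Since $w\neq 0$ on $(0,1)$, linear independence follows.

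Finally, to read off the displayed asymptotics I would just substitute: near $\rho=0$ the forms of $v_{0,V},\widetilde v_{0,V}$ are quoted verbatim from Lemma \ref{lem:fund0}, while near $\rho=1$ one multiplies $w(\rho,\lambda)$ into the leading behaviour $\tfrac{\sqrt{1-\rho^2}}{\sqrt{\alpha_n(\lambda)}}\big(\tfrac{1-\rho}{1+\rho}\big)^{\pm\alpha_n(\lambda)/4}$ of $v_{1,V},\widetilde v_{1,V}$ and combines the powers: writing $(1-\rho^2)^{\frac{d-2\lambda-3+2n}{4}}(1-\rho^2)^{1/2}=(1-\rho)^{\frac{d-2\lambda-1+2n}{4}}(1+\rho)^{\frac{d-2\lambda-1+2n}{4}}$ against $(1-\rho)^{\pm\alpha_n(\lambda)/4}(1+\rho)^{\mp\alpha_n(\lambda)/4}$, the $(1-\rho)$-exponents cancel for $v_{1,V}$ and the $(1+\rho)$-exponents cancel for $\widetilde v_{1,V}$, leaving $\rho^{\frac{1-d}{2}}(1+\rho)^{\frac{d-1}{2}+n-\lambda}/\sqrt{\alpha_n(\lambda)}$, respectively $\rho^{\frac{1-d}{2}}(1-\rho)^{\frac{d-1}{2}+n-\lambda}/\sqrt{\alpha_n(\lambda)}$, times the error term inherited from Lemma \ref{lem:fund1}. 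This is exactly the claimed form. I expect no serious analytic obstacle: the substantive work is already contained in Lemmas \ref{lem:fund0}, \ref{lem:fund1} and \ref{lem:con_coef}, and the only genuinely delicate point is the uniform-in-$\lambda$ choice of $\chi_\lambda$, i.e.\ ensuring that $[\rho_\lambda,\widehat\rho_\lambda]\cap(0,1)$ is non-degenerate for all $\lambda\in S_{n,V}$, which is precisely why $\rho_\lambda$ and $\widehat\rho_\lambda$ are defined as smoothed-out minima tied to $|\alpha_n(\lambda)|$.
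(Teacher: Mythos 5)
Your proposal is correct and takes essentially the same route as the paper, which states this lemma without a separate proof precisely because it is the direct assembly of Lemmas \ref{lem:fund0}, \ref{lem:fund1} and \ref{lem:con_coef} after undoing the transformation $f(\rho)=\rho^{\frac{1-d}{2}}(1-\rho^2)^{\frac{d-2\lambda-3+2n}{4}}v(\rho)$. The details you supply—that the connection formulas make the two local representations agree on the overlap so the cutoff is purely notational, the constancy and non-vanishing of the Wronskian of the reduced (first-derivative-free) equation transported by $w^2$ (consistent with the paper's recorded value $W(u_{2,V},u_{1,V})=\rho^{1-d}(1-\rho^2)^{\frac{d-3}{2}-\lambda+n}$), and the exponent cancellation yielding $(1\pm\rho)^{\frac{d-1}{2}+n-\lambda}$—are exactly the routine verifications the paper leaves implicit, and they check out.
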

Furthermore, we also remark that $c_{1,\widetilde 0,V}(\lambda)$ does not vanish for any $\lambda\in S_{n,V}$, as this would imply that $\lambda$ is an eigenvalue of $\widehat{\Lf}_{n,V}$. Hence, for $\lambda \in  S_{n,V}$, we can define a third solution as 
\begin{align*}
u_{0,V}:=u_{2,V}-\frac{c_{\widetilde 1,\widetilde 0,V}}{c_{1,\widetilde 0,V}}u_{1,V}.
\end{align*}
Note, that $u_{0,V}$ is the smooth solution at $\rho=0$, as it is given by
\begin{align*}
\left[c_{\widetilde 1,0,V}(\lambda)-\frac{c_{\widetilde 1,\widetilde 1,V}(\lambda)}{c_{1,\widetilde 0,V}(\lambda)}\right]\rho^{\frac{1-d}{2}}(1-\rho^2)^{\frac{d-2\lambda-3+2n}{4}}v_{0,V}(\rho,\lambda)
\end{align*}
on the support of $\chi_\lambda$. We also record
\begin{align*}
W(u_{0,V}(\cdot,\lambda),u_{1,V}(\cdot,\lambda))(\rho)=W(u_{2,V}(\cdot,\lambda),u_{1,V}(\cdot,\lambda))(\rho)=\rho^{1-d}(1-\rho^2)^{\frac{d-3}{2}-\lambda+n}.
\end{align*}
Having constructed these solutions, we come to the task of solving the inhomogeneous ODE \eqref{Eq:generalised specODE}, i.e.,
\begin{align}\label{Eq:inhom}
(\rho^2-1)f''(\rho)+\left( -\frac{d-1}{\rho}+2(\lambda-n+1)\rho \right)f'(\rho)+[\lambda(\lambda-2n+1)-V(\rho)]f(\rho)&= G_\lambda(\rho),
\end{align}
with $
G_\lambda(\rho)= (\lambda-2n+1)g_1(\rho)+\rho g_1'(\rho)+g_2(\rho)$.
\subsection{Resolvent construction}
In order to construct the resolvent of $\widehat{\Lf}_{n,V}$, we continue by stating some useful technical lemmas, starting with a result on equivalence of radial Sobolev norms, whose proof can be found in, e.g.,  \cite{Ost25}.
\begin{lem}
Let $d\geq 2$ and $k\geq 0$ be fixed integers. Then
\begin{align}
\|f \|_{H^{k}(\B^d_1)}\simeq \sum_{j=0}^k\|(\cdot)^{\frac{d-1}{2}+j} D^j f||_{L^2(0,1)}=: \|f\|_{H^k_{rad}(\B^d_1)},
\end{align}
for all $f \in C_{rad}^\infty(\overline{\B^d_1})$, where $D f(\rho):= \rho^{-1} f'(\rho)$.
\end{lem}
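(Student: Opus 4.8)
The plan is to pass to polar coordinates, reducing both sides to weighted $L^2$-norms on $(0,1)$ of the radial profile $\tilde f$ and its iterated $D$-derivatives, and then to prove the two inequalities by combining two elementary algebraic identities with a weighted Hardy inequality and a one-dimensional trace estimate. First I would record the basic comparisons $\|g\|_{L^2(\B^d_1)}\simeq\|\rho^{(d-1)/2}\tilde g\|_{L^2(0,1)}$ and $\|x_l g\|_{L^2(\B^d_1)}\simeq\|\rho^{(d+1)/2}\tilde g\|_{L^2(0,1)}$ for radial $g$, and set $N_k(f):=\sum_{j=0}^{k}\|\rho^{(d-1)/2+j}D^j\tilde f\|_{L^2(0,1)}$, the quantity on the right. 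Next I would establish: (i) for every multi-index $\alpha$, $\partial^\alpha f=\sum_{i=\lceil|\alpha|/2\rceil}^{|\alpha|}P_{\alpha,i}(x)(D^i\tilde f)(\rho)$ with $P_{\alpha,i}$ a homogeneous polynomial of degree $2i-|\alpha|\ge 0$, proved by induction on $|\alpha|$ using $\partial_m(P\,D^l\tilde f)=(\partial_m P)D^l\tilde f+P\,x_m D^{l+1}\tilde f$; and (ii) on radial functions $\Delta=\rho^2D^2+dD$, so that $\Delta^m f=\rho^{2m}D^{2m}\tilde f+\sum_{i=m}^{2m-1}c_{m,i}\rho^{2(i-m)}D^i\tilde f$ with $c_{m,m}=2^m(d/2)_m\ne 0$, and likewise $\partial_l\Delta^m f=x_l\big(\rho^{2m}D^{2m+1}\tilde f+(\text{lower order in }D)\big)$.

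\textbf{The bound $\|f\|_{H^k(\B^d_1)}\lesssim N_k(f)$.} I would estimate $\|\partial^\alpha f\|_{L^2(\B^d_1)}$ for $|\alpha|\le k$ via (i): the top term $i=|\alpha|$ contributes exactly $\|\rho^{(d-1)/2+|\alpha|}D^{|\alpha|}\tilde f\|_{L^2(0,1)}\le N_k(f)$, while the lower terms $i<|\alpha|$ give $\|\rho^{(d-1)/2+2i-|\alpha|}D^i\tilde f\|_{L^2(0,1)}$, whose weight is too singular at $\rho=0$ to be absorbed term by term. The point is \emph{not} to estimate these individually but to trade them \emph{upward} toward $D^k$: writing $D^l\tilde f(\rho)=D^l\tilde f(1)-\int_\rho^1 sD^{l+1}\tilde f(s)\,ds$ and applying the weighted Hardy inequality $\|\int_\rho^1 g\|_{L^2(\rho^{2\sigma}d\rho)}\lesssim\|\rho g\|_{L^2(\rho^{2\sigma}d\rho)}$ (valid for $\sigma>-\tfrac12$, which holds throughout since the weights involved have exponent $\ge(d-1)/2\ge\tfrac12$ because $d\ge 2$), the ``deficit'' between the current weight exponent and the natural one $(d-1)/2+l$ decreases by one at each step; after finitely many steps one reaches $\|\rho^{(d-1)/2+2k-|\alpha|}D^k\tilde f\|_{L^2(0,1)}\le\|\rho^{(d-1)/2+k}D^k\tilde f\|_{L^2(0,1)}\le N_k(f)$. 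The boundary values $|D^l\tilde f(1)|$, $l<k$, produced along the way are controlled by a one-dimensional trace estimate on $[1/2,1]$, namely $|D^l\tilde f(1)|\lesssim\|D^l\tilde f\|_{H^1(1/2,1)}\lesssim N_{l+1}(f)\le N_k(f)$ (using $(D^l\tilde f)'=\rho D^{l+1}\tilde f$).

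\textbf{The reverse bound $N_k(f)\lesssim\|f\|_{H^k(\B^d_1)}$.} Here I would induct on $j$. The base case is $\|\rho^{(d-1)/2}\tilde f\|_{L^2(0,1)}\simeq\|f\|_{L^2(\B^d_1)}$. For $j=2m$ (resp.\ $j=2m+1$) identity (ii) shows $\|\rho^{(d-1)/2+j}D^j\tilde f\|_{L^2(0,1)}$ is bounded by $\|\Delta^m f\|_{L^2(\B^d_1)}$ (resp.\ $\sum_l\|\partial_l\Delta^m f\|_{L^2(\B^d_1)}$), which is $\lesssim\|f\|_{H^j(\B^d_1)}$, plus lower-order remainder terms $\|\rho^{(d-1)/2+2(i-m)}D^i\tilde f\|_{L^2(0,1)}$, $i<j$; these are again pushed upward to $D^k$ by the same Hardy mechanism, with the boundary terms $|D^l\tilde f(1)|$ now controlled directly by $\|f\|_{H^k(\B^d_1)}$ via a trace on the annulus $\{\tfrac12<|x|<1\}$ (where $D^l\tilde f$ is a combination of $\partial^\beta f$, $|\beta|\le l$, with bounded coefficients). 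Closing the induction yields the claimed equivalence.

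\textbf{Main obstacle.} The genuine difficulty is the tension between the weights $\rho^{(d-1)/2+2i-|\alpha|}$ at $\rho=0$ carried by the subleading terms $D^i\tilde f$ and the manifestly finite, nonsingular behaviour of every $\partial^\alpha f$ on $\overline{\B^d_1}$: naive term-by-term estimates fail, and the resolution is the observation that one must trade such terms \emph{upward} in the $D$-order rather than downward, which succeeds precisely because the largest weight $\rho^{(d-1)/2+k}$ sits on the highest derivative $D^k$, and because $d\ge 2$ keeps every intermediate Hardy inequality and boundary integral convergent. Organizing this uniformly across the parity of $k$ (via $\Delta^m$ versus $\nabla\Delta^m$) and bookkeeping the boundary contributions is the remaining delicate point.
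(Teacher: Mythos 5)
The paper itself does not prove this lemma (it simply cites \cite{Ost25}), so your argument has to stand on its own. The forward bound $\|f\|_{H^k(\B^d_1)}\lesssim \|f\|_{H^k_{rad}(\B^d_1)}$ does: the expansion $\partial^\alpha f=\sum_i P_{\alpha,i}\,D^i\tilde f$ with $\deg P_{\alpha,i}=2i-|\alpha|\ge 0$ is correct, and your upward trading via $D^l\tilde f(\rho)=D^l\tilde f(1)-\int_\rho^1 sD^{l+1}\tilde f(s)\,ds$ plus the weighted Hardy inequality works: each step raises the $D$-order by one and the weight exponent by two, so the deficit drops by one, all exponents stay $\ge\frac{d-1}{2}>-\frac12$, the boundary values $|D^l\tilde f(1)|$, $l\le k-1$, are handled by the one-dimensional trace estimate, and every natural-weight quantity you land on is a summand of $\|f\|_{H^k_{rad}}$.

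The reverse bound, however, is circular as written. From $\Delta^m f=\rho^{2m}D^{2m}\tilde f+\sum_{i=m}^{2m-1}c_{m,i}\rho^{2(i-m)}D^i\tilde f$ you get, for $j=2m$,
\begin{equation*}
\big\|\rho^{\frac{d-1}{2}+j}D^j\tilde f\big\|_{L^2(0,1)}\lesssim\|\Delta^mf\|_{L^2(\B^d_1)}+\sum_{i<j}\big\|\rho^{\frac{d-1}{2}+2i-j}D^i\tilde f\big\|_{L^2(0,1)},
\end{equation*}
and you propose to treat the deficient remainder terms by the same Hardy mechanism. But since each Hardy step raises the exponent by two and the order by one, starting at order $i$ with deficit $j-i$ you land after $j-i$ steps exactly on $\|\rho^{\frac{d-1}{2}+j}D^j\tilde f\|_{L^2(0,1)}$ — the very quantity being estimated — with a constant (from the $c_{m,i}$ and the Hardy constants) that is not small, so it cannot be absorbed. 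Pushing further, to $D^k$, only replaces it by $\|\rho^{\frac{d-1}{2}+2k-j}D^k\tilde f\|\le\|\rho^{\frac{d-1}{2}+k}D^k\tilde f\|$, i.e.\ by the top summand of $\|f\|_{H^k_{rad}}$, which at that stage of your induction on $j$ has not yet been bounded by $\|f\|_{H^k}$ (and for $j=k$ it is literally the same term). So the induction never closes: the upward-trading trick is only legitimate when the target norm contains the terms you land on, which is why it succeeds in the first direction but not here. A genuine fix needs a different mechanism for the deficient terms, for instance integration-by-parts identities: already for $k=2$ one checks that $\sum_{|\alpha|=2}\|\partial^\alpha f\|_{L^2(\B^d_1)}^2\simeq\|\rho^{\frac{d+3}{2}}D^2\tilde f\|_{L^2(0,1)}^2+|D\tilde f(1)|^2$, because the cross term $2\int_0^1\rho^{d+1}\Re\big(\overline{D\tilde f}\,D^2\tilde f\big)\,d\rho$ integrates by parts exactly against the deficient term $d\int_0^1\rho^{d-1}|D\tilde f|^2\,d\rho$; an inductive version of such cancellations (deficient cross terms collapsing into boundary terms, which your trace estimate then controls) is what the bound $\|f\|_{H^k_{rad}}\lesssim\|f\|_{H^k}$ actually requires.
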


We will also need the following result.
\begin{lem}\label{lem:f(1)}
Let $j,d \in \mathbb{N}$ with $d \geq 2$ be fixed. Then 
$$
|f^{(j-1)}(1)|\lesssim \|f\|_{H^j(\B^d_1)}
$$
for all $f\in H_{rad}^j(\B^d_1)$.
\end{lem}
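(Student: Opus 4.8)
The plan is to localize near $\rho=1$, where the radial weights are comparable to $1$, and then reduce everything to an elementary one-dimensional trace estimate. First I would observe that, by density of $C^\infty_{rad}(\overline{\B^d_1})$ in $H^j_{rad}(\B^d_1)$, it suffices to prove the bound for $f\in C^\infty_{rad}(\overline{\B^d_1})$: once the inequality $|f^{(j-1)}(1)|\lesssim \|f\|_{H^j(\B^d_1)}$ holds on the dense test space, for a general $f\in H^j_{rad}(\B^d_1)$ one takes an approximating sequence $f_m\to f$ in $H^j$, notes that $(f_m^{(j-1)}(1))_m$ is Cauchy, and \emph{defines} $f^{(j-1)}(1)$ as its limit (equivalently, this is the trace of $f^{(j-1)}$ on the boundary sphere, $f$ being in particular $H^j$ hence $C^{j-1}$ up to $\rho=1$ after restriction to $[\tfrac12,1]$). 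The estimate then passes to the limit.

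For smooth radial $f$, the starting point is the norm equivalence from the preceding lemma, which gives $\sum_{i=0}^j \|(\cdot)^{\frac{d-1}{2}+i}D^if\|_{L^2(0,1)}\simeq \|f\|_{H^j(\B^d_1)}$, where $Df(\rho)=\rho^{-1}f'(\rho)$. Restricting the integrals to $[\tfrac12,1]$, where $\rho^{\frac{d-1}{2}+i}\simeq 1$, yields $\|D^if\|_{L^2(\frac12,1)}\lesssim \|f\|_{H^j(\B^d_1)}$ for $0\le i\le j$. Next I would convert $D$-derivatives into ordinary derivatives on $[\tfrac12,1]$: since $\partial_\rho g=\rho\, Dg$ for any $g$, induction on $i$ gives $\partial_\rho^i f=\sum_{k=0}^i p_{i,k}(\rho)\,D^kf$ with polynomials $p_{i,k}$ (the recursion $p_{i+1,k}=p_{i,k}'+\rho\,p_{i,k-1}$ keeps them polynomial). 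Hence $\|f^{(i)}\|_{L^2(\frac12,1)}\lesssim \sum_{k\le i}\|D^kf\|_{L^2(\frac12,1)}\lesssim \|f\|_{H^j(\B^d_1)}$ for all $i\le j$, i.e.\ the plain Sobolev norm $\|f\|_{H^j(\frac12,1)}$ is controlled by $\|f\|_{H^j(\B^d_1)}$.

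Finally, for $\rho\in[\tfrac12,1]$ write $f^{(j-1)}(1)=f^{(j-1)}(\rho)+\int_\rho^1 f^{(j)}(s)\,ds$, integrate in $\rho$ over $[\tfrac12,1]$, and apply Cauchy–Schwarz to get $|f^{(j-1)}(1)|\lesssim \|f^{(j-1)}\|_{L^2(\frac12,1)}+\|f^{(j)}\|_{L^2(\frac12,1)}\lesssim \|f\|_{H^j(\frac12,1)}\lesssim \|f\|_{H^j(\B^d_1)}$, which is the claim. (Alternatively one invokes the one-dimensional embedding $H^j(\tfrac12,1)\hookrightarrow C^{j-1}([\tfrac12,1])$.) The lemma is essentially routine; the only point that needs a little care is the bookkeeping in passing between $D^k$ and $\partial_\rho^k$ near $\rho=1$ — harmless here because the conversion coefficients are polynomials, hence bounded on $[\tfrac12,1]$ — together with making the density argument give a well-defined value of $f^{(j-1)}(1)$ for $f\in H^j_{rad}(\B^d_1)$ that is not a priori classically differentiable up to the boundary.
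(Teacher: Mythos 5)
Your proposal is correct, and its skeleton is the same as the paper's: a one--dimensional Sobolev/trace estimate near $\rho=1$ is the engine in both cases. The difference lies in how the one--dimensional norms are tied back to $\|f\|_{H^j(\B^d_1)}$. The paper applies the embedding $\|\cdot\|_{L^\infty(0,1)}\lesssim\|\cdot\|_{H^1(0,1)}$ to the weighted functions $\rho^{\frac{d-1}{2}}f^{(j-1)}$ on the whole interval, obtaining $|f^{(j-1)}(1)|\leq \|\rho^{\frac{d-1}{2}}f^{(j-1)}\|_{L^2(0,1)}+\|\rho^{\frac{d-1}{2}}f^{(j)}\|_{L^2(0,1)}$, and then outsources the bound of these weighted norms of \emph{ordinary} radial derivatives by $\|f\|_{H^j(\B^d_1)}$ to an external result (Theorem 2.3 in the cited work of Djakov--Edelstein--Oliveira type). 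You instead localize to $[\tfrac12,1]$, use the in-paper equivalence $\|f\|_{H^j(\B^d_1)}\simeq\sum_i\|(\cdot)^{\frac{d-1}{2}+i}D^if\|_{L^2(0,1)}$ with $D=\rho^{-1}\partial_\rho$, convert $D^k$ into $\partial_\rho^k$ via the polynomial recursion $p_{i+1,k}=p_{i,k}'+\rho\,p_{i,k-1}$ (harmless on $[\tfrac12,1]$), and finish with the elementary averaging/Cauchy--Schwarz trace bound, adding an explicit density step to give meaning to $f^{(j-1)}(1)$ for general $f\in H^j_{rad}(\B^d_1)$. What your route buys is self-containedness: it uses only the norm-equivalence lemma already stated in the paper and avoids the citation relating weighted norms of ordinary derivatives to the radial Sobolev norm; the localization also sidesteps any issue with the weights near the origin. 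What the paper's route buys is brevity, at the cost of leaning on the external weighted-Sobolev characterization. Both arguments are sound.
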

\begin{proof}
By the one-dimensional Sobolev inequality $\|\cdot\|_{L^\infty(0,1)}\lesssim \|\cdot\|_{H^1(0,1)}$,
one readily establishes that 
\begin{align*}
|f^{(j-1)}(1)|&\leq \|\rho^{\frac{d-1}{2}}f^{(j-1)}(\rho)\|_{L^2_\rho(0,1)}+  \|\rho^{\frac{d-1}{2}}f^{(j)}(\rho)\|_{L^2_\rho(0,1)}.
\end{align*}
Consequently, the claim follows from Theorem 2.3 in \cite{DjaEdeOli11}.
\end{proof}
For convenience, we also choose to work with an equivalent norm.
\begin{lem} \label{lem:norm equiv1 }
Let $k,d \in \mathbb{N}_0$ with $d\geq 2$, and $0<\rho_0<1$ be fixed. Then one has the equivalence of norms
\begin{align*}
\|\cdot\|_{H^k_{rad}(\B^d_1)}^2\simeq\|\cdot\|_{H^k_{rad}(\B^d_{\rho_0})}^2+ \|\cdot\|_{H^k([\rho_0,1))}^2=:\|\cdot\|_{\widetilde H^k_{\rho_0}}^2.
\end{align*}
\end{lem}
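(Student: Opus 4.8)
The plan is to reduce everything to the defining characterization of the radial Sobolev norm supplied by the preceding lemma, namely $\|f\|_{H^k_{rad}(\B^d_r)}\simeq\sum_{j=0}^k\|(\cdot)^{\frac{d-1}{2}+j}D^jf\|_{L^2(0,r)}$ (valid on any ball after rescaling), and then simply split the interval $(0,1)$ at $\rho_0$. For each $j\le k$ one has the exact identity
\[
\big\|(\cdot)^{\frac{d-1}{2}+j}D^jf\big\|_{L^2(0,1)}^2=\big\|(\cdot)^{\frac{d-1}{2}+j}D^jf\big\|_{L^2(0,\rho_0)}^2+\big\|(\cdot)^{\frac{d-1}{2}+j}D^jf\big\|_{L^2(\rho_0,1)}^2 ,
\]
and summing over $j$, the first group of terms is, by the preceding lemma applied on $\B^d_{\rho_0}$, comparable to $\|f\|_{H^k_{rad}(\B^d_{\rho_0})}^2$. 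So the only thing left is to identify the second group, $\sum_{j=0}^k\|(\cdot)^{\frac{d-1}{2}+j}D^jf\|_{L^2(\rho_0,1)}^2$, with the ordinary Sobolev norm $\|f\|_{H^k(\rho_0,1)}^2=\sum_{j=0}^k\|f^{(j)}\|_{L^2(\rho_0,1)}^2$.

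For that step the key observation is that on $[\rho_0,1]$, which is bounded away from the origin, the operator $D=\rho^{-1}\partial_\rho$ and the ordinary derivative $\partial_\rho$ are related by a triangular change of basis with smooth, uniformly bounded, invertible coefficient matrix. Indeed, an easy induction (using $(D^\ell f)'=\rho\,D^{\ell+1}f$) gives $D^jf=\sum_{\ell=1}^{j}a_{j,\ell}(\rho)f^{(\ell)}$ and conversely $f^{(j)}=\sum_{\ell=0}^{j}b_{j,\ell}(\rho)D^\ell f$, where the $a_{j,\ell},b_{j,\ell}$ are finite linear combinations of powers of $\rho^{-1}$ and $\rho$, hence smooth on $(0,\infty)$ and thus bounded above and below on the compact interval $[\rho_0,1]$; the diagonal entries $a_{j,j}(\rho)=\rho^{-j}$ and $b_{j,j}(\rho)=\rho^{j}$ are in particular bounded away from zero there. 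Since also each weight $\rho^{\frac{d-1}{2}+j}$ lies between the positive constants $\rho_0^{\frac{d-1}{2}+j}$ and $1$ on $[\rho_0,1]$ (here $d\ge 2$ guarantees the exponent is nonnegative), we obtain, for each fixed $k$,
\[
\sum_{j=0}^k\big\|(\cdot)^{\frac{d-1}{2}+j}D^jf\big\|_{L^2(\rho_0,1)}^2\simeq\sum_{j=0}^k\|D^jf\|_{L^2(\rho_0,1)}^2\simeq\sum_{j=0}^k\|f^{(j)}\|_{L^2(\rho_0,1)}^2=\|f\|_{H^k(\rho_0,1)}^2,
\]
with constants depending only on $k,d,\rho_0$. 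Combining this with the previous paragraph yields the claimed equivalence; it suffices to prove it for $f\in C^\infty_{rad}(\overline{\B^d_1})$ and then pass to the completion, as both sides are norms.

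There is no genuine obstacle here; the statement is essentially bookkeeping once the preceding norm-equivalence lemma is granted. The only point that needs a little care is checking that the comparison between the families $\{D^jf\}_{j\le k}$ and $\{\partial_\rho^jf\}_{j\le k}$ is truly two-sided and uniform on $[\rho_0,1]$ — that is, that both coefficient matrices are triangular with diagonal entries bounded away from zero and all lower-order entries bounded — which is precisely where the hypothesis $\rho_0>0$ enters, since these coefficients blow up as $\rho\to 0$.
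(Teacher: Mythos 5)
Your proof is correct, but it takes a genuinely different (and more direct) route than the paper's. The paper handles the direction $\|\cdot\|_{H^k_{rad}(\B^d_1)}\lesssim\|\cdot\|_{\widetilde H^k_{\rho_0}}$ by the triangle inequality and then proves the reverse bound by induction on $k$: it expands $\|\cdot\|_{\dot H^k_{rad}(\B^d_1)}^2$ as $\int_0^1|f^{(k)}|^2$ plus cross terms and lower-order terms carrying inverse powers of $\rho$, estimates $\int_{\rho_0}^1|f^{(k)}|^2$ by Cauchy--Schwarz against the inductive hypothesis, and absorbs a term $\tfrac12\|f\|_{\dot H^k(\rho_0,1)}^2$ into the left-hand side. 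You instead split the weighted norm exactly at $\rho_0$ and, on $[\rho_0,1]$, pass between the families $\{D^jf\}_{j\le k}$ and $\{f^{(j)}\}_{j\le k}$ via the explicit triangular relations whose coefficients are bounded there and whose diagonal entries $\rho^{\mp j}$ are bounded away from zero; this yields both inequalities at once, with no induction and no absorption, and it isolates cleanly where $\rho_0>0$ enters. What the paper's argument buys is that one never has to write down the inverse change of basis (absorption replaces it); what yours buys is brevity and transparency. Two small points of hygiene: only upper bounds are needed for the off-diagonal coefficients (the phrase ``bounded above and below'' should be reserved for the diagonal entries, since lower-order coefficients may vanish), and the identification of the inner piece with $\|f\|_{H^k_{rad}(\B^d_{\rho_0})}$ is either a tautology (if that symbol denotes the weighted one-dimensional norm on $(0,\rho_0)$) or follows from the rescaled version of the preceding equivalence lemma with $\rho_0$-dependent constants, exactly as you indicate.
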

\begin{proof}
The estimate $ \lesssim $ is an immediate consequence of the triangle inequality. For the other one, one argues inductively in $k$. Certainly, one has that
\begin{align*}
\|\cdot\|_{L^2_{rad}(\B^d_1)}^2\gtrsim \|\cdot\|_{L^2_{rad}(\B^d_\rho)}^2+ \|\cdot\|_{L^2([\rho_0,1))}^2.
\end{align*}
For the induction step, we recall that there exists $n_j,b_j\in \mathbb{Z}$ such that
\begin{align*}
\|\cdot \|_{\dot H^k_{rad}(\B^d_1)}^2= \int_{0}^1 |f^{(k)}(\rho)|^2+\sum_{j=0}^{k-1}\frac{2n_j}{\rho^{b_j}}(\Im+\Re)( f^{(k)}(\rho)f^{(j)}(\rho))+\sum_{i,j=0}^{k-1}  n_j n_i   \frac{f^{(j)}(\rho)\overline{f^{(i)}(\rho)}}{\rho^{b_j+b_i}}d \rho.
\end{align*}
So,
\begin{align*}
\int_{\rho_0}^1 |f^{(k)}(\rho)|^2&\leq \|f\|_{H^k_{rad}(\B^d_1)}^2+\int_{\rho_0}^1\sum_{j=0}^{k-1}4n_j |f^{(k)}(\rho)|\frac{|f^{(j)}(\rho)|}{\rho^{b_j}}+C\left(\sum_{j=0}^{k-1}  n_j\frac{|f^{(j)}(\rho)|}{\rho^{b_j}}\right)^2 d\rho 
\\
&\leq C  \|f\|_{H^k_{rad}(\B^d_1)}^2+ \sum_{j=0}^{k-1} C n_j \|f\|_{\dot H^{k}([\rho_0,1))}\|f\|_{H^{k-1}_{rad}(\B^d_1)}
+C\|f\|_{H^{k-1}_{rad}(\B^d_1)}^2
\\
&
\leq C \|f\|_{H^k_{rad}(\B^d_1)}^2+\frac12 \|f\|_{\dot H^{k}([\rho_0,1))}^2,
\end{align*}
for some $C$ that varies from line to line, and the claim follows.
\end{proof}
With these results, we can now begin the task of constructing and estimating the resolvent of $\widehat{\Lf}_{n,V}$. Naturally, one would like to make the ansatz
\begin{align*}
-u_{1,V}(\rho,\lambda)\int_0^\rho\frac{s^{d-1} u_{0,V}(s,\lambda)G_\lambda(s)}{(1-s^2)^{\frac{d}{2}-\lambda+n-\frac12}}ds-u_{0,V}(\rho,\lambda)\int_\rho^1\frac{s^{d-1}u_{1,V}(s,\lambda)G_\lambda(s)}{(1-s^2)^{\frac{d}{2}-\lambda+n-\frac12}} ds.
\end{align*}
However, this only makes sense if $ \frac{d}{2}-\Re\lambda+n-\frac12<1$ and, as we care for $\Re \lambda\in [-\frac34,2n]$, this condition is certainly not always satisfied. Consequently, we need to make a case distinction depending on the value of $\Re \lambda$. 
For this, we define $$\mu:=\frac{d}{2}-\Re\lambda-\frac12+n \in [\frac d2-n-\frac32,\frac{d}{2}+n+\frac14]=:I.$$ Furthermore, we divide $I$ into the subintervals
\begin{align*}
I= \sum_{j=1}^{\floor{\frac{d}{2}}+n+1} I_j,
\end{align*}
with 
\begin{align*}
I_1&=[\frac d2-n-\frac32,\frac{5}{6}] 
\\
I_j&=[(j-2)+\frac{3}{4}, (j-1)+\frac{5}{6}] , \text{ for } j=2,3,\dots, \floor{\frac{d}{2}}+n
\\
I_{\floor{\frac{d}{2}}+n+1}&=[d+2n-2+\frac{3}{4},\frac{d}{2}+n+\frac14].
\end{align*}
Our choice of these intervals may seem peculiar at first glance. The reason stems from the fact that they indicate how often we will need to integrate by parts. In addition, we want $I_j$ and $I_{j-1}$ to have non-empty intersection. 
The goal now, is to construct the resolvent on each of the $I_j$ starting with $I_1$.
On this interval, $\mu =\frac{d}{2}-\Re\lambda+n-\frac12<1$ which means we can set
\begin{equation}\label{def:R1}
\mathcal{R}_{1,V}(G_\lambda)(\rho,\lambda):=-u_{1,V}(\rho,\lambda)\int_0^\rho\frac{s^{d-1} u_{0,V}(s,\lambda)G_\lambda(s)}{(1-s^2)^{\frac{d}{2}-\lambda+n-\frac12}}ds-u_{0,V}(\rho,\lambda)\int_\rho^1\frac{s^{d-1}u_{1,V}(s,\lambda)G_\lambda(s)}{(1-s^2)^{\frac{d}{2}-\lambda+n-\frac12}} ds,
\end{equation}
where
$$
G_\lambda(\rho)= (\lambda-2n+1)g_1(\rho)+\rho g_1'(\rho)+g_2(\rho).
$$
\begin{lem}\label{lem:resbound1}
For $\lambda \in S_{n,V}\cap \{I_1\times i \R\}$ consider the map $R_{1,V}(\lambda)$ defined by  
$$(g_1,g_2)\mapsto \mathcal{R}_{1,V}(G_\lambda)(\cdot,\lambda),$$ 
with $\Rm_{1,V} $ given by \eqref{def:R1}. Then $R_{1,V}(\lambda)$ is a bounded linear operator from $\mathcal{H}_{rad}$ to $H^k_{rad}(\B^d_1)$.
\end{lem}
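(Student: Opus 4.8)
The plan is to show that $\Rm_{1,V}(G_\lambda)(\cdot,\lambda)\in H^k_{rad}(\B^d_1)$ with $\|\Rm_{1,V}(G_\lambda)(\cdot,\lambda)\|_{H^k_{rad}(\B^d_1)}\lesssim_\lambda\|(g_1,g_2)\|_{\mathcal H}$ for each fixed $\lambda\in S_{n,V}\cap\{I_1\times i\R\}$. Since $\|G_\lambda\|_{H^{k-1}(\B^d_1)}=\|(\lambda-2n+1)g_1+\rho g_1'+g_2\|_{H^{k-1}}\lesssim_\lambda\|(g_1,g_2)\|_{\mathcal H}$ and $H^{k-1}_{rad}(\B^d_1)\hookrightarrow L^\infty$ (as $k-1=\lceil d/2\rceil+2n>\tfrac d2$), it suffices to bound $\Rm_{1,V}(G_\lambda)$ in $H^k_{rad}(\B^d_1)$ by $\|G_\lambda\|_{H^{k-1}}$. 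Well-definedness of the two integrals in \eqref{def:R1} is exactly where the restriction to $I_1$ enters: there $\mu:=\Re[\tfrac d2-\lambda+n-\tfrac12]\le\tfrac56<1$, so, using the endpoint behaviour of the fundamental system $u_{0,V},u_{1,V}$ constructed above together with Lemma~\ref{lem:fund0} (near $s=1$: $u_{1,V}$ smooth and $|u_{0,V}(s)|\lesssim 1+(1-s)^{\mu}$; near $s=0$: $u_{0,V}$ smooth, $u_{1,V}=\O(s^{2-d})$), both integrals converge absolutely. By Lemma~\ref{lem:norm equiv1 } it then remains to estimate $\Rm_{1,V}(G_\lambda)$ on $[0,\rho_0]$ and on $[\rho_0,1)$ separately.

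Near $\rho=1$, I would reorganise \eqref{def:R1} by writing $\int_0^\rho=\int_0^1-\int_\rho^1$ in the first integral, obtaining
\begin{align*}
\Rm_{1,V}(G_\lambda)(\rho,\lambda)=-c_\lambda\,u_{1,V}(\rho,\lambda)+\int_\rho^1\frac{s^{d-1}\big[u_{1,V}(\rho,\lambda)u_{0,V}(s,\lambda)-u_{0,V}(\rho,\lambda)u_{1,V}(s,\lambda)\big]G_\lambda(s)}{(1-s^2)^{\frac d2-\lambda+n-\frac12}}\,ds,
\end{align*}
with $|c_\lambda|\lesssim_\lambda\|G_\lambda\|_{L^\infty}$. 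The first term is a bounded multiple of $u_{1,V}$, which near $\rho=1$ is smooth with bounded derivatives, hence is controlled in $H^k([\rho_0,1))$. For the integral, I would use that, since $\lambda\in S_{n,V}$ excludes the exceptional set $\{z\ge0:\tfrac{d-1}2+n-z\in\mathbb N\}$, the solution $u_{0,V}$ admits a logarithm-free Frobenius representation $u_{0,V}(\rho,\lambda)=(1-\rho)^{\frac{d-1}2+n-\lambda}a(\rho,\lambda)+b(\rho,\lambda)$ near $\rho=1$ with $a,b$ smooth and bounded on $[\rho_0,1]$; inserting this and cancelling $(1-s)^{\frac{d-1}2+n-\lambda}$ against the weight $(1-s^2)^{-(\frac{d-1}2+n-\lambda)}$ splits the kernel into a part that is genuinely smooth on $[\rho_0,1]^2$ — the corresponding operator is, up to smooth factors, an antiderivative acting on $G_\lambda$, hence maps $H^{k-1}([\rho_0,1))$ into $H^k([\rho_0,1))$ — plus a remainder of the shape $(1-\rho)^{\frac{d-1}2+n-\lambda}a(\rho,\lambda)\int_\rho^1(1-s)^{-(\frac{d-1}2+n-\lambda)}(\text{smooth})(s)\,G_\lambda(s)\,ds$, whose $H^k([\rho_0,1))$-norm is bounded, because $\mu<1$, by iterated one-sided weighted Hardy inequalities on $(\rho_0,1)$. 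Equivalently, upon repeatedly differentiating \eqref{def:R1} the boundary terms telescope via the recorded Wronskian $W(u_{0,V},u_{1,V})(\rho)=\rho^{1-d}(1-\rho^2)^{\frac{d-3}2-\lambda+n}$ and the residual singular pieces cancel in pairs; it is precisely $\mu<1$ that lets a single layer of Hardy estimates suffice, in contrast with the intervals $I_j$, $j\ge2$, which require integration by parts.

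Near $\rho=0$, I would use the equivalent radial norm $\|f\|_{H^k_{rad}(\B^d_1)}\simeq\sum_{j=0}^k\|(\cdot)^{\frac{d-1}2+j}D^j f\|_{L^2(0,1)}$ and the perturbed-Bessel asymptotics of Lemma~\ref{lem:fund0}: $u_{0,V}$ is smooth and even at the origin with bounded derivatives, while $u_{1,V}=\O(\rho^{2-d})$ (plus a $\rho^0\log\rho$-type contribution for even $d$). In \eqref{def:R1} the term $-u_{0,V}(\rho,\lambda)\int_\rho^1(\cdots)\,ds$ is $u_{0,V}$ (smooth) times a function of $\rho$ whose $H^k_{rad}(\B^d_{\rho_0})$-norm is controlled by $\|G_\lambda\|_{H^{k-1}}$ through Hardy's inequality, whereas in the term $-u_{1,V}(\rho,\lambda)\int_0^\rho s^{d-1}u_{0,V}(s,\lambda)(\cdots)G_\lambda(s)\,ds$ the singular factor $\rho^{2-d}$ is always paired with an integral that behaves like $\O(\rho^{d})$ times $G_\lambda$-dependent terms, so the product is again only as irregular as $G_\lambda$ permits; distributing the weighted derivatives $(\cdot)^{\frac{d-1}2+j}D^j$ onto $u_{1,V}$, $u_{0,V}$ and the integrals and applying Hardy's inequality repeatedly closes the estimate. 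Adding the two regions gives the claim.

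The step I expect to be the main obstacle is the analysis near $\rho=1$: the individual terms produced by differentiating the variation-of-constants formula are singular — up to order $(1-\rho)^{-(k-1)}$ — and one must carry out the cancellations exactly, i.e.\ verify that the reorganised kernel genuinely splits into a smooth part and a $(1-\rho)^{\frac{d-1}2+n-\lambda}$-remainder, before any Hardy bound can be invoked; tracking the complex exponent $\tfrac{d-1}2+n-\lambda$ (with nonzero imaginary part) through these manipulations, and checking that the Hardy estimates are exactly sharp for the available regularity $G_\lambda\in H^{k-1}$, are the delicate points. The near-$\rho=0$ contribution is, by comparison, routine weighted-Sobolev bookkeeping once the Bessel asymptotics are in hand.
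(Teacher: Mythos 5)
Your outer-region strategy is essentially the paper's in disguise: writing $\int_0^\rho=\int_0^1-\int_\rho^1$ and decomposing $u_{0,V}$ near $\rho=1$ into a $(1-\rho)^{\nu}$-part and a regular part, $\nu=\tfrac{d-1}{2}+n-\lambda$, is the same as the paper's rewriting via $u_{0,V}=u_{2,V}+\widehat c_V(\lambda)u_{1,V}$, and your ``remainder'' is exactly the paper's term $u_{2,V}(\rho)\int_\rho^1 s^{d-1}u_{1,V}(s)G_\lambda(s)(1-s^2)^{-\frac d2+\lambda-n+\frac12}ds$. However, at both places where the real work lies your argument has gaps. At the origin you call the estimate routine, and it is not: distributing $j$ derivatives over $u_{1,V}(\rho)\int_0^\rho s^{d-1}u_{0,V}(s)G_\lambda(s)(1-s^2)^{-\frac d2+\lambda-n+\frac12}ds$ produces, in the worst case (all derivatives on $u_{1,V}=\O(\rho^{2-d})$), pieces of size $\rho^{2-d-j}\cdot\rho^{d}\simeq\rho^{2-j}$, which after the radial weight are square-integrable near $0$ only for $j\lesssim\tfrac d2+2$, while you must reach $j=k=2n+\lceil d/2\rceil+1$. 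The $H^k$-regularity at $\rho=0$ is invisible term by term; it rests on the cancellation of the $\rho^{2-d}$ (resp.\ $\log\rho$) branches between the two variation-of-constants terms, i.e.\ on the fact that $\rho=0$ is only a coordinate singularity. The paper avoids this entirely: it proves the $H^1(\B^d_{1/4})$ bound directly (using that the diagonal boundary terms cancel, so $\partial_\rho\Rm_{1,V}(G_\lambda)=u_{1,V}'\int_0^\rho\cdots+u_{0,V}'\int_\rho^1\cdots$) and then upgrades to $H^k(\B^d_{1/4})$ by interior elliptic regularity, since $\Rm_{1,V}(G_\lambda)$ solves $(\xi^i\xi^j-\delta^{ij})\partial_i\partial_j u+(2\lambda+2)\xi^i\partial_i u+(\lambda^2+\lambda)u=G_\lambda$ on a small ball. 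Without this step, or an explicit series cancellation, your origin estimate does not close.

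Near $\rho=1$ you correctly flag the obstacle but do not resolve it, and the route you name would fail as stated: applying ``iterated one-sided weighted Hardy inequalities'' directly to $(1-\rho)^{\nu}a(\rho)\int_\rho^1(1-s)^{-\nu}(\text{smooth})(s)G_\lambda(s)\,ds$ produces, after $k$ differentiations, individual terms of size $(1-\rho)^{1-k}$ whose removal requires Taylor-expanding the integrand at $s=1$ — that is precisely the integration-by-parts machinery the paper reserves for $I_j$, $j\geq 2$, and the point of $I_1$ is to avoid it. The paper's actual device is: because of the diagonal cancellation only $k-1$ further derivatives are needed (matching $G_\lambda\in H^{k-1}$), and in the singular piece one substitutes $s\mapsto s(1-\rho)+1$, which cancels every power of $(1-\rho)$ identically; each subsequent $\rho$-derivative then produces a factor $(-s)$ taming $(-s)^{-\nu}$ (this is where $\mu\leq\tfrac56<1$ enters), and a single Hardy inequality $\|\tfrac{1}{1-\rho}\int_\rho^1|h^{(j)}|\|_{L^2(r,1)}\lesssim\|h^{(j)}\|_{L^2(r,1)}$ finishes. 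Two smaller caveats: $u_{1,V}$ is constructed only with symbol-type bounds at $\rho=1$, so ``genuinely smooth on $[\rho_0,1]^2$'' should be weakened to those bounds; and the cross terms in your splitting of $u_{0,V}$ are harmless only because they recombine into $\widehat c_V(\lambda)u_{1,V}$, which you should say explicitly rather than absorb into ``smooth''.
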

\begin{proof}
Thanks to Lemma \ref{lem:norm equiv1 } it suffices to bound the $\widetilde H^k_{r}$ norm of $\mathcal{R}_{1,V}(G_\lambda)(\cdot,\lambda)$, with $r=\frac{1}{4}$. For this, one easily computes 
$$
\|\mathcal{R}_{1,V}(G_\lambda)(\cdot,\lambda)\|_{L^2(\B^d_r)} \lesssim \|(g_1,g_2)\|_{\mathcal{H}}.
$$
Next, we note that 
\begin{align*}
\quad \partial_\rho\left[u_{1,V}(\rho,\lambda)\int_0^\rho\frac{s^{d-1} u_{0,V}(s,\lambda)G_\lambda(s)}{(1-s^2)^{\frac{d}{2}-\lambda+n-\frac12}}ds+u_{0,V}(\rho,\lambda)\int_\rho^1\frac{s^{d-1}u_{1,V}(s,\lambda)G_\lambda(s)}{(1-s^2)^{\frac{d}{2}-\lambda+n-\frac12}} ds\right]
\\
=u_{1,V}'(\rho,\lambda)\int_0^\rho\frac{s^{d-1} u_{0,V}(s,\lambda)G_\lambda(s)}{(1-s^2)^{\frac{d}{2}-\lambda+n-\frac12}}ds+u_{0,V}'(\rho,\lambda)\int_\rho^1\frac{s^{d-1}u_{1,V}(s,\lambda)G_\lambda(s)}{(1-s^2)^{\frac{d}{2}-\lambda+n-\frac12}}ds,
\end{align*}
which implies that
$$
\|\mathcal{R}_{1,V}(G_\lambda)(\cdot,\lambda)\|_{H^1(\B^d_{r})} \lesssim \|(g_1,g_2)\|_{\mathcal{H}}.
$$
Recall now that $ \mathcal{R}_{1,V}(G_\lambda)(\cdot,\lambda) $ solves
$$
(\xi^j \xi^i-\delta^{i,j}) \partial_j \partial_i u(\xi)+\left[(2\lambda+ 2)\xi^i\right]\partial_iu(\xi)+[\lambda^2+\lambda] u(\xi)=G_\lambda(\xi),
$$
which is an elliptic PDE on $\B^d_r(0)$. Thus, by elliptic regularity, we conclude that
\begin{align*}
\|\mathcal{R}_{1,V}(G_\lambda)(\cdot,\lambda)\|_{H^k(\B^d_{r})} \lesssim \|(g_1,g_2)\|_{\mathcal{H}},
\end{align*}
and we turn to 
\begin{align*}
\|\mathcal{R}_{1,V}(G_\lambda)\|_{H^k(r,1)}.
\end{align*}
Note that we can rewrite 
\begin{align*}
 u_{1,V}&(\rho,\lambda)\int_0^\rho\frac{s^{d-1} u_{0,V}(s,\lambda)G_\lambda(s)}{(1-s^2)^{\frac{d}{2}-\lambda+n-\frac12}}ds+u_{0,V}(\rho,\lambda)\int_\rho^1\frac{s^{d-1}u_{1,V}(s,\lambda)G_\lambda(s)}{(1-s^2)^{\frac{d}{2}-\lambda+n-\frac12}} ds
\\
&= u_{1,V}(\rho,\lambda)\int_0^\rho\frac{s^{d-1} [u_{2,V}(s,\lambda)+\widehat{c}_V(\lambda)u_{1,V}(s,\lambda)]G_\lambda(s)}{(1-s^2)^{\frac{d}{2}-\lambda+n-\frac12}}ds
\\
&\quad +[u_{2,V}(\rho,\lambda)+\widehat{c}_V(\lambda)u_{1,V}(\rho,\lambda)]\int_\rho^1\frac{s^{d-1}u_{1,V}(s,\lambda)G_\lambda(s)}{(1-s^2)^{\frac{d}{2}-\lambda+n-\frac12}} ds
\\
&= u_{1,V}(\rho,\lambda)\int_0^\rho\frac{s^{d-1} u_{2,V}(s,\lambda)G_\lambda(s)}{(1-s^2)^{\frac{d}{2}-\lambda+n-\frac12}}ds+u_{2,V}(\rho,\lambda)\int_\rho^1\frac{s^{d-1}u_{1,V}(s,\lambda)G_\lambda(s)}{(1-s^2)^{\frac{d}{2}-\lambda+n-\frac12}} ds
\\
&\quad +  \widehat{c}_V(\lambda) u_{1,V}(\rho,\lambda)\int_0^1 \frac{s^{d-1}u_{1,V}(s,\lambda)G_\lambda(s)}{(1-s^2)^{\frac{d}{2}-\lambda+n-\frac12}}ds,
\end{align*}
where $\widehat{c}_V=c_{\widetilde 1,0,V}(\lambda)-\frac{c_{\widetilde 1,\widetilde 0,V}(\lambda)}{c_{1,\widetilde 0,V}(\lambda)}$.
By assumption, $\frac{d}{2}-\Re \lambda-\frac12\leq \frac{5}{6}$, which implies 
\begin{align*}
\left\|u_{1,V}(\rho,\lambda)\int_0^1 \frac{s^{d-1}u_{1,V}(s,\lambda)G_\lambda(s)}{(1-s^2)^{\frac{d}{2}-\lambda+n-\frac12}}ds\right\|_{H^k(r,1)}&\lesssim \||.| G_\lambda\|_{L^{\infty}(0,1)}\lesssim \|(g_1,g_2)\|_{\mathcal{H}}.
\end{align*}
Finally, by rewriting
$$
u_{1,V}(\rho)= \rho^{\frac{1-d}{2}}(1+\rho)^{\frac{d-1}2+n-\lambda}\widetilde u_{1,V}(\rho,\lambda),\quad u_{2,V}(\rho)= \rho^{\frac{1-d}{2}}(1-\rho)^{\frac{d-1}2+n-\lambda} \widetilde u_{2,V}(\rho,\lambda),
$$
for  functions $\widetilde u_{1,V}(\cdot,\lambda),\widetilde u_{2,V}(\cdot,\lambda)\in C^\infty((0,1])$,
one obtains 
\begin{multline*}
\left\|u_{1,V}(\rho,\lambda)\int_0^\rho\frac{s^{d-1} u_{2,V}(s,\lambda)G_\lambda(s)}{(1-s^2)^{\frac{d}{2}-\lambda+n-\frac12}}ds\right\|_{H^k(r,1)}\\ \lesssim \left|\int_0^\rho\frac{s^{d-1}u_{2,V}(s,\lambda)G_\lambda(s)}{(1-s^2)^{\frac{d}{2}-\lambda+n-\frac12}}ds\right|
+ \|G_\lambda(\rho)\|_{H^{k-1}(r,1)}
\lesssim \|(g_1,g_2)\|_{\mathcal{H}}.
\end{multline*}
Similarly,
\begin{align*}
u_{2,V}'(\rho,\lambda)&\int_\rho^1\frac{s^{d-1}u_{1,V}(s,\lambda)G_\lambda(s)}{(1-s^2)^{\frac{d}{2}-\lambda+n-\frac12}} ds
\\
 &=(1-\rho)^{\frac{d-3}{2}+n-\lambda} \widetilde{u}_{2,V}(\rho,\lambda)\int_\rho^1\frac{s^{\frac{d-1}{2}}\widetilde u_{1,V}(s,\lambda)G_\lambda(s)}{(1-s)^{\frac{d}{2}-\lambda+n-\frac12}} ds
\\
&= (1-\rho)^{\frac{d-3}{2}+n-\lambda} \widetilde{u}_{2,V}(\rho,\lambda)\int_{\rho-1}^0\frac{(s+1)^{\frac{d-1}{2}}\widetilde u_{1,V}(s+1,\lambda)G_\lambda(s+1)}{s^{\frac{d}{2}-\lambda+n-\frac12}} ds
\\
&=\widetilde{u}_{2,V}(\rho,\lambda) \int_{-1}^0\frac{(s(1-\rho)+1)^{\frac{d-1}{2}}\widetilde u_{1,V}(s(1-\rho)+1,\lambda)G_\lambda(s(1- \rho)+1)}{(-s)^{\frac{d}{2}-\lambda+n-\frac12}} ds
\\
&=:\widetilde{u}_{2,V}(\rho,\lambda) \int_{-1}^0\frac{h(s(1- \rho)+1)}{(-s)^{\frac{d}{2}-\lambda+n-\frac12}} ds.
\end{align*}
Now, by a variant of Hardy's inequality, one  has that 
$$
\left\|\frac{1}{1-\rho}\int_\rho^1 f(s) ds\right\|_{L^2_\rho(r,1)}\lesssim \|f\|_{L^2(r,1)},
$$
which we use to conclude that
\begin{align*}
\left\|\partial^j_\rho \int_{-1}^0\frac{h(s(1-\rho)+1)}{(-s)^{\frac{d}{2}-\lambda+n-\frac12}} ds\right\|_{L^2(r,1)}^2
&\lesssim  \int_r^1\left(\int_{-1}^0\frac{|h^{(j)}(s(1-\rho)+1)|}{(-s)^{\frac{d}{2}-\lambda-\frac12-j}} ds\right)^2 d\rho
\\
&\lesssim\int_r^1\left(\int_{-1}^0|h^{(j)}(s(1-\rho)+1)|ds\right)^2 d\rho
\\
&=\int_r^1\left((1-\rho)^{-1}\int_{\rho}^1|h^{(j)}(s)|ds\right)^2 d\rho
\\
&\lesssim \|h\|_{H^j(r,1)}.
\end{align*}
Using this, one concludes that 
\begin{multline*}
\left\|\widetilde{u}_{2,V}(\rho,\lambda) \int_{-1}^0\frac{(s(1-\rho)+1)^{\frac{d-1}{2}}\widetilde u_{1,V}(s(1-\rho)+1,\lambda)G_\lambda(s(1- \rho)+1)}{(-s)^{\frac{d}{2}-\lambda+n-\frac12}} ds\right\|_{H^{k-1}(r,1)}
\\
\lesssim \|G_\lambda(\rho)\|_{H^{k-1}(r,1)}\lesssim \|(g_1,g_2)\|_{\mathcal{H}},
\end{multline*}
and the claim follows.
\end{proof}
Next, we  want to derive a uniform bound on $R_{1,V}$. For this, we slightly recast our solutions.
\begin{lem}\label{lem:symboltype}
For $\lambda \in \widetilde S_{n,V}$  we can recast $u_{1,V}, u_{2,V}$, and $u_{0,V}$ as 
\begin{align*}
u_{1,V}(\rho,\lambda)& =\chi_\lambda(\rho) \O(\rho^{2-d} \langle\omega\rangle^{1-\frac{d}{2}}) 
\\
&\quad+(1-\chi_\lambda(\rho)) \rho^{\frac{1-d}{2}}\frac{(1+\rho)^{\frac{d-1}2+n-\lambda}}{\sqrt{\alpha_n(\lambda)}}[1+(1-\rho)\O(\rho^{-1}\langle\omega\rangle^{-1})],
\\
u_{2,V}(\rho,\lambda)& =\chi_\lambda(\rho) \O(\rho^{2-d} \langle\omega\rangle^{1-\frac{d}{2}}) 
\\
&\quad +(1-\chi_\lambda(\rho)) \rho^{\frac{1-d}{2}}\frac{(1-\rho)^{\frac{d-1}2+n-\lambda}}{\sqrt{\alpha_n(\lambda)}}[1+(1-\rho)\O(\rho^{-1}\langle\omega\rangle^{-1})],
\\
u_{0,V}(\rho,\lambda)& =\chi_\lambda(\rho) \O(\rho^{0} \langle\omega\rangle^{\frac{d}{2}-1}) \\
&\quad+(1-\chi_\lambda(\rho)) \rho^{\frac{1-d}{2}}\frac{(1-\rho)^{\frac{d-1}2+n-\lambda}}{\sqrt{\alpha_n(\lambda)}}[1+(1-\rho)\O(\rho^{-1}\langle\omega\rangle^{-1})]
\\
&+ \O(\langle\omega\rangle^0)(1-\chi_\lambda(\rho)) \rho^{\frac{1-d}{2}}\frac{(1-\rho)^{\frac{d-1}2+n-\lambda}}{\sqrt{\alpha_n(\lambda)}}[1+(1-\rho)\O(\rho^{-1}\langle\omega\rangle^{-1})].
\end{align*} 

\end{lem}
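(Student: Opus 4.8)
The statement is a repackaging of the fundamental systems already constructed, so the plan is essentially bookkeeping: read off clean symbol-type bounds from the gluing formulas of the preceding lemma together with Lemmas~\ref{lem:fund0}, \ref{lem:fund1}, \ref{lem:con_coef}, now exploiting that $\lambda$ is restricted to $\widetilde S_{n,V}$ so that all the $\langle\omega\rangle^{-1}$-improvements recorded there are in force. I would organize everything around the partition of unity $1=\chi_\lambda+(1-\chi_\lambda)$ and treat the two regions in turn.

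On $\operatorname{supp}(1-\chi_\lambda)$ the claimed expressions are immediate. By construction $u_{1,V}$ and $u_{2,V}$ equal $\rho^{\frac{1-d}{2}}(1-\rho^2)^{\frac{d-2\lambda-3+2n}{4}}$ times $v_{1,V}$, respectively $\widetilde v_{1,V}$, there, and the preceding lemma already records $\rho^{\frac{1-d}{2}}(1-\rho^2)^{\frac{d-2\lambda-3+2n}{4}}v_{1,V}=\rho^{\frac{1-d}{2}}\frac{(1+\rho)^{\frac{d-1}{2}+n-\lambda}}{\sqrt{\alpha_n(\lambda)}}[1+(1-\rho)\O(\rho^{-1}\langle\omega\rangle^{-1})]$, together with the analogue with $(1+\rho)$ replaced by $(1-\rho)$ for $\widetilde v_{1,V}$; these are precisely the stated $(1-\chi_\lambda)$-pieces of $u_{1,V}$ and $u_{2,V}$. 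For $u_{0,V}=u_{2,V}-\tfrac{c_{\widetilde 1,\widetilde 0,V}}{c_{1,\widetilde 0,V}}u_{1,V}$ one additionally needs $c_{\widetilde 1,\widetilde 0,V}=\O(\langle\omega\rangle^0)$ from Lemma~\ref{lem:con_coef} and a uniform lower bound $|c_{1,\widetilde 0,V}(\lambda)|\gtrsim 1$ on $\widetilde S_{n,V}$; the latter I would extract from the explicit leading Wronskian $W(h_1,\widetilde b_0)(\rho_\lambda)$ entering the formula for $c_{1,\widetilde 0,V}$ in Lemma~\ref{lem:con_coef}, together with the fact that $c_{1,\widetilde 0,V}$ cannot vanish since $\lambda\notin\sigma_p(\widehat{\Lf}_{n,V})$. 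This exhibits the $(1-\chi_\lambda)$-part of $u_{0,V}$ as the $\widetilde v_{1,V}$-contribution plus a $\O(\langle\omega\rangle^0)$-weighted $v_{1,V}$-contribution, as asserted.

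The substantive part is $\operatorname{supp}\chi_\lambda$, and the one structural fact I would use throughout is that there $\rho\,|\alpha_n(\lambda)|\lesssim 1$ by the definition of $\widehat\rho_\lambda$, which lets one trade a power of $\langle\omega\rangle$ for a power of $\rho^{-1}$ at will. On this set $u_{1,V}=\rho^{\frac{1-d}{2}}(1-\rho^2)^{\frac{d-2\lambda-3+2n}{4}}[c_{1,0,V}v_{0,V}+c_{1,\widetilde 0,V}\widetilde v_{0,V}]$, and I would note first that $\rho^{\frac{1-d}{2}}(1-\rho^2)^{\frac{d-2\lambda-3+2n}{4}}$ is $\rho^{\frac{1-d}{2}}$ times a symbol of type $\O(\rho^0\langle\omega\rangle^0)$ on $\operatorname{supp}\chi_\lambda$, since $1-\rho^2\simeq 1$ and differentiating the $\lambda$-dependent exponent only produces factors $\tfrac{\lambda\rho}{1-\rho^2}=\O(1)$ and $\ln(1-\rho^2)=\O(\rho^2)$. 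Plugging in $v_{0,V}=\sqrt{1-\rho^2}\sqrt{\varphi}\,J_{\frac d2-1}(i\alpha_n(\lambda)\varphi)+\rho^{\frac{d+3}{2}}\alpha_n(\lambda)^{\frac{d-1}{2}}e_{0,V}$ and using the series for $J_{\frac d2-1}$ to write $\sqrt{\varphi}\,J_{\frac d2-1}(i\alpha_n\varphi)=\alpha_n^{\frac d2-1}\rho^{\frac{d-1}{2}}$ times a function even and smooth in $\rho$ and entire in $\alpha_n(\lambda)^2\varphi(\rho)^2$ (hence $\O(\rho^0\langle\omega\rangle^0)$ on $\operatorname{supp}\chi_\lambda$), one obtains $\rho^{\frac{1-d}{2}}(1-\rho^2)^{\frac{d-2\lambda-3+2n}{4}}v_{0,V}=\O(\rho^0\langle\omega\rangle^{\frac d2-1})$ (the $e_{0,V}$-tail being even better there); the $Y$-expansion of Lemma~\ref{lem:fund0} similarly gives $\rho^{\frac{1-d}{2}}(1-\rho^2)^{\frac{d-2\lambda-3+2n}{4}}\widetilde v_{0,V}=\O(\rho^{2-d}\langle\omega\rangle^{1-\frac d2})$, with the $\O(\rho^2\langle\omega\rangle^0)$ and $\O(\rho^{\frac{7-d}{2}}\langle\omega\rangle^{1-\frac d2})$ tails strictly better. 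Since $c_{1,0,V},c_{1,\widetilde 0,V}=\O(\langle\omega\rangle^0)$ and, on $\operatorname{supp}\chi_\lambda$, $\rho^0\langle\omega\rangle^{\frac d2-1}=\rho^{2-d}(\rho\langle\omega\rangle)^{d-2}\langle\omega\rangle^{1-\frac d2}\lesssim\rho^{2-d}\langle\omega\rangle^{1-\frac d2}$ (using $d\geq 2$), both pieces land in $\O(\rho^{2-d}\langle\omega\rangle^{1-\frac d2})$, which gives the claim for $u_{1,V}$; $u_{2,V}$ is identical. For $u_{0,V}$ the key point is that on $\operatorname{supp}\chi_\lambda$ the $\widetilde v_{0,V}$-terms cancel exactly in $u_{2,V}-\tfrac{c_{\widetilde 1,\widetilde 0,V}}{c_{1,\widetilde 0,V}}u_{1,V}$, leaving $\big(c_{\widetilde 1,0,V}-\tfrac{c_{\widetilde 1,\widetilde 0,V}}{c_{1,\widetilde 0,V}}c_{1,0,V}\big)\rho^{\frac{1-d}{2}}(1-\rho^2)^{\frac{d-2\lambda-3+2n}{4}}v_{0,V}=\O(\langle\omega\rangle^0)\cdot\O(\rho^0\langle\omega\rangle^{\frac d2-1})=\O(\rho^0\langle\omega\rangle^{\frac d2-1})$.

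The main obstacle --- everything else being substitution and power-counting --- is the logarithmic term $\ln[i\alpha_n(\lambda)\varphi(\rho)]\,[\alpha_n(\lambda)\varphi(\rho)]^{\frac d2-1}j(\rho,\lambda)$ in the $Y_{\frac d2-1}$-expansion in even dimensions, since on $\operatorname{supp}\chi_\lambda$ it produces $|\ln(\rho\langle\omega\rangle)|$ in place of a clean power. I would dispose of it exactly as in the proof of Lemma~\ref{lem:fund0}: write $\varphi(\rho)=\rho(1+\widetilde\varphi(\rho))$, use the elementary bound $(\rho\langle\omega\rangle)^{d-2}|\ln(\rho\langle\omega\rangle)|\lesssim 1$ for $d>2$ (and carry the logarithm through the already-established symbol estimates on $j,y_1,y_2$ in the more delicate case $d=2$), so that the log contribution is again controlled by $\O(\rho^{2-d}\langle\omega\rangle^{1-\frac d2})$. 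Finally, the $\rho$- and $\omega$-derivative bounds needed for the symbol property are inherited termwise from the derivative estimates on the Bessel building blocks and on $e_{0,V},j,y_1,y_2$ in Lemmas~\ref{lem:fund0}, \ref{lem:fund1}, together with the same trade $\rho\,|\alpha_n(\lambda)|\lesssim 1$ used above to absorb the extra powers of $\langle\omega\rangle$ generated when differentiating the entire functions of $\alpha_n(\lambda)^2\varphi(\rho)^2$.
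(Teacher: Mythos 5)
Your proposal is correct and follows essentially the same route as the paper, whose entire proof is the remark that the lemma "follows immediately by plugging in the Taylor expansions of the Bessel functions"; you simply carry out that substitution explicitly, using the gluing representation, the connection-coefficient bounds, and the trade $\rho\langle\omega\rangle\lesssim 1$ on $\operatorname{supp}\chi_\lambda$ exactly as the paper implicitly does. The extra care you take (the lower bound on $c_{1,\widetilde 0,V}$ via the leading Wronskian, and the logarithmic term in even dimensions) is consistent with, and if anything more detailed than, the paper's argument.
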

\begin{proof}
This follows immediately by plugging in the Taylor expansions of the Bessel functions.
\end{proof}
\begin{lem}\label{lem:uniformbound1}
The operator $R_{1,V}(\lambda)$ satisfies
\begin{align*}
\|R_{1,V}(\lambda)(g_1,g_2)\|_{H^1(\B^d_1)}&\lesssim  \|(g_1,g_2)\|_{\mathcal{H}},
\end{align*}
for all $\lambda\in \widetilde S_{n,V}\cap (I_1 \times i\R)$ and all $(g_1,g_2)\in\mathcal{H}_{rad}$.
\end{lem}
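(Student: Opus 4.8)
Throughout, let $w_\lambda(s)=s^{d-1}(1-s^2)^{\lambda-n-\frac{d-1}{2}}$ denote the weight appearing in \eqref{def:R1}. The plan is to estimate $\mathcal{R}_{1,V}(G_\lambda)$ directly from \eqref{def:R1}, using Lemma \ref{lem:norm equiv1 } with $\rho_0=r=\tfrac14$ to split the $H^1(\B^d_1)$-norm into a piece over $\B^d_r$ and a piece over $(r,1)$, and, on each, using the symbol representations of Lemmas \ref{lem:fund0}, \ref{lem:fund1} and \ref{lem:symboltype} to track the dependence on $\omega=\Im\lambda$. The only source of growth in $\omega$ is the forcing term: from $G_\lambda=(\lambda-2n+1)g_1+\rho g_1'+g_2$ one only has, by the one-dimensional Sobolev embedding and the high regularity built into $\mathcal{H}$, the bound $\|(\cdot)G_\lambda\|_{L^\infty(0,1)}+\|G_\lambda\|_{H^1(r,1)}\lesssim\langle\omega\rangle\,\|(g_1,g_2)\|_{\mathcal H}$, and the same for $G_\lambda'$. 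The whole point is that this loss — together with the extra power of $\langle\omega\rangle$ incurred when $\partial_\rho$ falls on the oscillatory solutions in $\partial_\rho\mathcal{R}_{1,V}(G_\lambda)=-u_{1,V}'(\rho,\lambda)\int_0^\rho u_{0,V}w_\lambda G_\lambda\,ds-u_{0,V}'(\rho,\lambda)\int_\rho^1 u_{1,V}w_\lambda G_\lambda\,ds$ — is recovered by non-stationary-phase integrations by parts, exploiting the oscillatory factors carried by $u_{0,V},u_{1,V}$ and $w_\lambda$ and the $\alpha_n(\lambda)^{-1/2}$ prefactor of the solutions near $\rho=1$.

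Near $\rho=1$ I would first record the cancellation $u_{1,V}(s,\lambda)w_\lambda(s)=s^{\frac{d-1}{2}}(1-s)^{\lambda-n-\frac{d-1}{2}}\widetilde u_{1,V}(s,\lambda)$, in which the $(1+s)$-powers of $u_{1,V}$ and of $w_\lambda$ cancel exactly and $\widetilde u_{1,V}(\cdot,\lambda)=\alpha_n(\lambda)^{-1/2}[1+(1-\cdot)\,\O(\langle\omega\rangle^{-1})]$ is non-oscillatory near $s=1$; the analogous statements hold for $u_{0,V}w_\lambda$ and $u_{2,V}w_\lambda$. Thus the $\omega$-oscillation of the integrands is entirely carried by the non-integer powers $(1\pm s)^{i\omega}$ near $s=1$ and, after expanding the Bessel series, by $e^{\pm i\alpha_n(\lambda)\varphi(s)}$; writing $(1-s)^{i\omega}=\tfrac{1-s}{-i\omega}\,\partial_s(1-s)^{i\omega}$ and integrating by parts in $\int_\rho^1 u_{1,V}w_\lambda G_\lambda$ and $\int_0^\rho u_{0,V}w_\lambda G_\lambda$ yields, per iteration, a gain of $\langle\omega\rangle^{-1}$; the boundary contributions at $s=1$ vanish since $\mu<1$ on $I_1$, and those at $s=\rho$ are of admissible size. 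Combined with the $\langle\omega\rangle^{-1/2}$ already present in $\widetilde u_{0,V},\widetilde u_{1,V}$ and with $|u_{0,V}'(\rho,\lambda)|+|u_{1,V}'(\rho,\lambda)|\lesssim\langle\omega\rangle^{1/2}\bigl(1+(1-\rho)^{\mu-1}\bigr)$, this gives $\|\mathcal{R}_{1,V}(G_\lambda)(\cdot,\lambda)\|_{H^1(r,1)}\lesssim\|(g_1,g_2)\|_{\mathcal H}$, the outer $\rho$-integration being handled by the Hardy-type inequality $\|(1-\rho)^{-1}\int_\rho^1 f\|_{L^2(r,1)}\lesssim\|f\|_{L^2(r,1)}$ from the proof of Lemma \ref{lem:resbound1} together with $(1-\rho)^{1-\mu}\in L^2(r,1)$ (valid since $\mu<\tfrac32$). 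The term $\widehat c_V(\lambda)\,u_{1,V}(\rho,\lambda)\int_0^1 u_{1,V}w_\lambda G_\lambda$ is bounded exactly as in Lemma \ref{lem:resbound1}, using $\mu<1$.

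On $\B^d_r$ the solutions $u_{0,V},u_{1,V}$ entering \eqref{def:R1} are perturbed Bessel functions (Lemma \ref{lem:fund0}), and the decisive structural point is that on $\operatorname{supp}\chi_\lambda$ the $\langle\omega\rangle$-exponents of $u_{0,V}$ and $u_{1,V}$ in Lemma \ref{lem:symboltype} are opposite ($\langle\omega\rangle^{\frac d2-1}$ versus $\langle\omega\rangle^{1-\frac d2}$), so the kernel carries no net power of $\langle\omega\rangle$ — which reflects the $\lambda$-independence of $W(u_{0,V},u_{1,V})$, absorbed into $w_\lambda$. I would split the $s$-integration at the Bessel transition scale $s\sim|\alpha_n(\lambda)|^{-1}$: on the inner layer the solutions are merely algebraic in $s$, and the smallness of its measure together with the factor $s^{d-1}$ in $w_\lambda$ already absorbs the loss from $G_\lambda$; on the outer region one expands the Bessel functions into their oscillatory asymptotics and applies the same non-stationary-phase integration by parts, now along the phase $\alpha_n(\lambda)\varphi$, using $|\partial_s(\alpha_n(\lambda)\varphi(s))|\gtrsim|\alpha_n(\lambda)|$. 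Since $1-\rho^2\gtrsim1$ on $\B^d_r$, no degenerate weights appear, so the $L^2$-estimate, including the first-derivative part $\partial_\rho\mathcal{R}_{1,V}(G_\lambda)$, follows by the same accounting.

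The main obstacle is the uniform-in-$\omega$ bookkeeping: one has to verify that a single non-stationary-phase integration by parts (granting the $\langle\omega\rangle^{-1/2}$ coming from the $\alpha_n(\lambda)^{-1/2}$ prefactor) defeats both powers of $\langle\omega\rangle$ — the one from $\|G_\lambda\|$ and the one from differentiating the oscillatory solutions in $\partial_\rho\mathcal{R}_{1,V}(G_\lambda)$ — and that every boundary term produced along the way, as well as the inner-layer contribution near the origin, is of admissible size. Once this accounting is set up, no genuinely new ideas are required, and the argument runs in close parallel to the uniform resolvent estimates in \cite{DonWal22a} and \cite{Wal24}.
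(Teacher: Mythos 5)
Your proposal follows the same coarse skeleton as the paper (split $\B^d_1$ at $\rho=r$ via Lemma \ref{lem:norm equiv1 }, work from the representation \eqref{def:R1} with the symbol forms of Lemmas \ref{lem:fund0}, \ref{lem:fund1}, \ref{lem:symboltype}, Hardy-type control of the outer $\rho$-integration, separate treatment of the $\widehat c_V$-term), but the mechanism you use to get uniformity in $\omega$ is genuinely different. The paper does not integrate by parts anywhere in this lemma: it estimates the decomposed kernel pointwise, absorbing the single power of $\langle\omega\rangle$ carried by $G_\lambda$ through (i) the factor $\alpha_n(\lambda)^{-1}$ that the product of the two branch solutions contributes away from the origin, and (ii) the support trades $\chi_\lambda(\rho)\O(\rho^\alpha\langle\omega\rangle^\beta)=\chi_\lambda(\rho)\O(\rho^{\alpha-\gamma}\langle\omega\rangle^{\beta-\gamma})$ (and the converse on $1-\chi_\lambda$), which convert $\langle\omega\rangle|g_1(s)|$ into $s^{-1}|g_1(s)|$ on the support of $\chi_\lambda$, the latter being controlled by the radial Sobolev norm of $g_1$. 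You instead recover the $\omega$-losses by a non-stationary-phase integration by parts in the oscillatory factor $(1\pm s)^{i\omega}$, paying one $s$-derivative of $G_\lambda$, which is affordable because the target norm is only $H^1$ while $(g_1,g_2)\in H^k\times H^{k-1}$. This is a legitimate alternative, closer in spirit to the uniform resolvent estimates in \cite{DonWal22a,Wal24}, and it does real work precisely where the paper's write-up is tersest: in a far-region first-derivative term such as $u_{2,V}'(\rho,\lambda)\int_\rho^1 u_{1,V}w_\lambda G_\lambda\,ds$ the crude count is $\langle\omega\rangle^{1/2}$ (from $u_{2,V}'$) times $\langle\omega\rangle^{-1/2}$ (from $u_{1,V}w_\lambda$) times $\langle\omega\rangle$ (from $(\lambda-2n+1)g_1$), and the cancellation of the last factor is exactly an identity of the type $(\lambda-2n+1)(1-s)^{\lambda-n-\frac{d-1}{2}}=\O(1)\,\partial_s(1-s)^{\lambda-n-\frac{d-3}{2}}$, i.e.\ your integration by parts; the boundary term at $s=1$ vanishes since $\mu<1$ on $I_1$, and the one at $s=\rho$ is of size $\langle\omega\rangle^{-1}(1-\rho)^{1-\mu}|G_\lambda(\rho)|$, which is admissible.

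A few points in your ledger need repair before the argument closes. First, the sentence granting ``the $\langle\omega\rangle^{-1/2}$ coming from the $\alpha_n(\lambda)^{-1/2}$ prefactor'' to the integration by parts conflates two separate gains: one integration by parts in $(1-s)^{i\omega}$ yields a full $\langle\omega\rangle^{-1}$ (since $|\tfrac{d-1}{2}+n-\lambda-1|\gtrsim\langle\omega\rangle$ on $\widetilde S_{n,V}$), while the $\alpha_n(\lambda)^{-1/2}$ factors are already built into $u_{0,V},u_{1,V},u_{2,V}$; with the correct bookkeeping the worst term balances exactly at $\langle\omega\rangle^0$, as above. Second, near the origin your oscillatory-Bessel/non-stationary-phase machinery is heavier than needed and slightly misplaced: on the support of $\chi_\lambda$ the Bessel argument $|\alpha_n(\lambda)|\varphi(\rho)$ is bounded, so the solutions are algebraic there and the loss from $G_\lambda$ is absorbed by the trade $\langle\omega\rangle\lesssim s^{-1}$ together with the weight $s^{d-1}$ (this is what your ``inner layer'' does), whereas on the outer part of $\B^d_r$ the oscillation is carried by the Liouville--Green factors $\big(\tfrac{1-\rho}{1+\rho}\big)^{\pm\alpha_n(\lambda)/4}$ of Lemma \ref{lem:fund1} rather than by Bessel asymptotics; this is presentational, not an error. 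Third, you implicitly use a uniform (in $\omega$) lower bound on $|c_{1,\widetilde 0,V}(\lambda)|$ to control $\widehat c_V(\lambda)$ in the $\int_0^1$-term; this follows from Lemma \ref{lem:con_coef}, whose leading Wronskians are explicit and non-degenerate for large $|\omega|$, but it should be stated. Finally, your derivative bound $|u_{0,V}'|+|u_{1,V}'|\lesssim\langle\omega\rangle^{1/2}(1+(1-\rho)^{\mu-1})$ should distinguish the two branches (the $(1+\rho)$-branch has no singular factor), and the derivative may also hit the symbol errors $[1+(1-\rho)\O(\rho^{-1}\langle\omega\rangle^{-1})]$, which costs $(1-\rho)^{-1}$ but no power of $\omega$; include both in the accounting. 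With these adjustments your route yields the lemma.
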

\begin{remark}
We want to emphasize that the point of Lemma \ref{lem:uniformbound1} is the uniform bound with respect to $\lambda$ on the set $\widetilde S_{n,V} \cap (I_1 \times i\R)$.
\end{remark}

\begin{proof}
By utilizing Lemma \ref{lem:symboltype}, we can decompose $\mathcal{R}_{1,V}(G_\lambda)$ as
\begin{align*}
\mathcal{R}_{1,V}&(G_\lambda)(\rho,\lambda)
\\
&=\chi_\lambda(\rho) \int_0^\rho\frac{\O(\rho^{2-d} s^{d-1} \langle\omega\rangle^{0} )G_\lambda(s)}{(1-s^2)^{\frac{d}2+n-\lambda-\frac12}} ds+ \chi_\lambda(\rho) \O(\rho^{0} \langle\omega\rangle^{0})\int_\rho^1\frac{\chi_\lambda(s)\O(s)G_\lambda(s)}{(1-s^2)^{\frac{d}2+n-\lambda-\frac12}} ds
\\
&\quad +\chi_\lambda(\rho) \O(\rho^{0} \langle\omega\rangle^{\frac{d-3}{2}})\int_\rho^1\frac{(1-\chi_\lambda(s))\O(s^{\frac{d-1}{2}})[1+(1-s)\O(s\langle\omega\rangle^{-1})] G_\lambda(s)}{(1-s)^{\frac{d}2+n-\lambda-\frac12}} ds
\\
&\quad +(1-\chi_\lambda(\rho)) \rho^{\frac{1-d}{2}}(1+\rho)^{\frac{d-1}2+n-\lambda}[1+(1-\rho)\O(\rho^{-1}\langle\omega\rangle^{-1})]
\\
&\qquad \times \int_0^\rho\frac{\chi_\lambda(s)\O(s^{d-1}\langle\omega\rangle^{\frac{d-3}{2}})G_\lambda(s)}{(1-s^2)^{\frac{d}2+n-\lambda-\frac12}} ds
\\
&\quad + (1-\chi_\lambda(\rho)) \rho^{\frac{1-d}{2}}\frac{(1+\rho)^{\frac{d-1}2+n-\lambda}}{\alpha_n(\lambda)}[1+(1-\rho)\O(\rho^{-1}\langle\omega\rangle^{-1})]
\\
&\qquad \times  \int_0^\rho \frac{(1-\chi_\lambda(s))s^{\frac{d-1}{2}}[1+(1-s)\O(s\langle\omega\rangle^{-1})] G_\lambda(s)}{(1+s)^{\frac{d}2+n-\lambda-\frac12}} ds
\\
&\quad +(1-\chi_\lambda(\rho)) \rho^{\frac{1-d}{2}}(1-\rho)^{\frac{d-1}2+n-\lambda}[1+(1-\rho)\O(\rho^{-1}\langle\omega\rangle^{-1})] &
\\
&\qquad \times \int_\rho^1\frac{\chi_\lambda(s)\O(s^{\frac{d-1}{2}}\langle\omega\rangle^{\frac{d-3}{2}})G_\lambda(s)}{(1-s^2)^{\frac{d}2+n-\lambda-\frac12}} ds
\\
&\quad + (1-\chi_\lambda(\rho)) \rho^{\frac{1-d}{2}}\frac{(1-\rho)^{\frac{d-1}2+n-\lambda}}{\alpha_n(\lambda)}[1+(1-\rho)\O(\rho^{-1}\langle\omega\rangle^{-1})]
\\
&\qquad \times  \int_0^\rho \frac{(1-\chi_\lambda(s))s^{\frac{d-1}{2}}[1+(1-s)\O(s\langle\omega\rangle^{-1})] G_\lambda(s)}{(1-s)^{\frac{d}2+n-\lambda-\frac12}} ds
\\
&\quad+(1-\chi_\lambda(\rho)) O(\langle\omega\rangle^{-1})\rho^{\frac{1-d}{2}}(1+\rho)^{\frac{d-1}2+n-\lambda}[1+(1-\rho)\O(\rho^{-1}\langle\omega\rangle^{-1})] 
\\
&\qquad \times\int_0^1\frac{(1-\chi_\lambda(s))s^{\frac{d-1}{2}}[1+(1-s)\O(s\langle\omega\rangle^{-1})] G_\lambda(s)}{(1-s)^{\frac{d}2+n-\lambda-\frac12}} ds.
\end{align*}
We also remark that 
\begin{align*}
 \partial_\rho &\mathcal{R}_{1,V}(G_\lambda)(\rho,\lambda)
\\
&=\chi_\lambda(\rho) \int_0^\rho\frac{\O(\rho^{1-d} s^{d-1} \langle\omega\rangle^{0})G_\lambda(s)}{(1-s^2)^{\frac{d}2+n-\lambda-\frac12}} ds+ \chi_\lambda(\rho) \O(\rho^{-1} \langle\omega\rangle^{0})\int_\rho^1\frac{\chi_\lambda(s)\O(s)G_\lambda(s)}{(1-s^2)^{\frac{d}2+n-\lambda-\frac12}} ds
\\
&\quad +\chi_\lambda(\rho) \O(\rho^{-1} \langle\omega\rangle^{\frac{d-3}{2}})\int_\rho^1\frac{(1-\chi_\lambda(s))\O(s^{\frac{d-1}{2}})[1+(1-s)\O(s\langle\omega\rangle^{-1})] G_\lambda(s)}{(1-s)^{\frac{d}2+n-\lambda-\frac12}} ds
\\
&\quad +(1-\chi_\lambda(\rho))\partial_\rho\left[ \rho^{\frac{1-d}{2}}(1+\rho)^{\frac{d-1}2+n-\lambda}[1+(1-\rho)\O(\rho^{-1}\langle\omega\rangle^{-1})]\right]
\\
&\qquad \times \int_0^\rho\frac{\chi_\lambda(s)\O(s^{d-1}\langle\omega\rangle^{\frac{d-3}{2}})G_\lambda(s)}{(1-s^2)^{\frac{d}2+n-\lambda-\frac12}} ds
\\
&\quad + (1-\chi_\lambda(\rho)) \partial_\rho[\rho^{\frac{1-d}{2}}\frac{(1+\rho)^{\frac{d-1}2+n-\lambda}}{\alpha_n(\lambda)}[1+(1-\rho)\O(\rho^{-1}\langle\omega\rangle^{-1})]]
\\
&\qquad \times  \int_0^\rho \frac{(1-\chi_\lambda(s))s^{\frac{d-1}{2}}[1+(1-s)\O(s\langle\omega\rangle^{-1})] G_\lambda(s)}{(1+s)^{\frac{d}2+n-\lambda-\frac12}} ds
\\
&\quad +(1-\chi_\lambda(\rho)) \partial_\rho\left[\rho^{\frac{1-d}{2}}(1-\rho)^{\frac{d-1}2+n-\lambda}[1+(1-\rho)\O(\rho^{-1}\langle\omega\rangle^{-1})] \right]
\\
&\qquad \times \int_\rho^1\frac{\chi_\lambda(s)\O(s^{\frac{d-1}{2}}\langle\omega\rangle^{\frac{d-3}{2}})G_\lambda(s)}{(1-s^2)^{\frac{d}2+n-\lambda-\frac12}} ds
\\
&\quad + (1-\chi_\lambda(\rho)) \partial_\rho\left[\rho^{\frac{1-d}{2}}\frac{(1-\rho)^{\frac{d-1}2+n-\lambda}}{\alpha_n(\lambda)}[1+(1-\rho)\O(\rho^{-1}\langle\omega\rangle^{-1})]\right]
\\
&\qquad \times  \int_0^\rho \frac{(1-\chi_\lambda(s))s^{\frac{d-1}{2}}[1+(1-s)\O(s\langle\omega\rangle^{-1})] G_\lambda(s)}{(1-s)^{\frac{d}2+n-\lambda-\frac12}} ds
\\
&\quad+(1-\chi_\lambda(\rho)) O(\langle\omega\rangle^{-1})\partial_\rho\left[\rho^{\frac{1-d}{2}}(1+\rho)^{\frac{d-1}2+n-\lambda}[1+(1-\rho)\O(\rho^{-1}\langle\omega\rangle^{-1})] \right]
\\
&\qquad \times\int_0^1\frac{(1-\chi_\lambda(s))s^{\frac{d-1}{2}}[1+(1-s)\O(s\langle\omega\rangle^{-1})] G_\lambda(s)}{(1-s)^{\frac{d}2+n-\lambda-\frac12}} ds.
\end{align*}
We start with estimating the $H^1$-norm of the terms with a $\chi_\lambda(\rho)$ in front of them. From the identity 
\begin{equation}
\chi_\lambda(\rho) O(\rho^\alpha\langle\omega\rangle^\beta)= \chi_\lambda(\rho) O(\rho^{\alpha-\gamma}\langle\omega\rangle^{\beta-\gamma}),
\end{equation}
which is valid for all $\alpha,\beta \in \R$ and $\gamma\geq 0$, one readily notes that
\begin{multline*}
 \left|\chi_\lambda(\rho) \O(\rho^{1-d} \langle\omega\rangle^{0})\int_0^\rho\frac{\O(s^{d-1})G_\lambda(s)}{(1-s^2)^{\frac{d}2+n-\lambda-\frac12}} ds+\chi_\lambda(\rho) \O(\rho^{-1} \langle\omega\rangle^{0})\int_\rho^1\frac{\chi_\lambda(s)\O(s)G_\lambda(s)}{(1-s^2)^{\frac{d}2+n-\lambda-\frac12}} ds\right|
\\
\lesssim \rho^{\frac{1-d}{2}}\int_0^\rho s^{\frac{d-1}{2} }[|s^{-1}g_1(s)|+|g_1'(s)|+|g_2(s)|]ds
 \lesssim  \|(g_1,g_2)\|_{\mathcal{H}},
\end{multline*}
which immediately implies 
the uniform $H^1$-bound on these terms.  Similarly, one estimates 
\begin{multline*}
\left|\chi_\lambda(\rho) \O(\rho^{-1} \langle\omega\rangle^{\frac{d-3}{2}})\int_\rho^1\frac{(1-\chi_\lambda(s))\O(s^{\frac{d-1}{2}})[1+(1-s)\O(s\langle\omega\rangle^{-1})] G_\lambda(s)}{(1-s)^{\frac{d}2+n-\lambda-\frac12}} ds\right|
\\
\lesssim   \rho^{\frac{1-d}{2}}\| s^{\frac{d-1}{2}}[|s^{-1}g_1|+|g_1'|+|g_2|]\|_{L^{\infty}_s(0,1)},
\end{multline*}
which also yields the desired bound. By modifying the arguments slightly, and making use of the identity
\begin{equation}
(1-\chi_\lambda(\rho)) O(\rho^\alpha\langle\omega\rangle^\beta)= (1-\chi_\lambda(\rho)) O(\rho^{\alpha+\gamma}\langle\omega\rangle^{\beta+\gamma}),
\end{equation}
one also bounds the remaining terms, and we conclude the proof. 
\end{proof}
Having established all desired estimates for $\mu \in I_1$, we turn to $I_2$, i.e.,
$
\mu=\frac{d}{2}-\Re \lambda+n-\frac12  \in [\frac34,1+\frac56].$ The difference from the above case is that the denominator $(1-s^2)^{\mu+\Im \lambda}$ might not now be integrable. To circumvent this problem, we start with the ansatz
\begin{equation}
\begin{split}
\mathcal{R}_{2,V}(G_\lambda)(\rho,\lambda)&=-u_{1,V}(\rho,\lambda)\int_0^\rho\frac{s^{d-1} u_{0,V}(s,\lambda)G_\lambda(s)}{(1-s^2)^{\frac{d}{2}-\lambda+n-\frac12}}ds
\\
&\quad -u_{0,V}(\rho,\lambda)\left(c+\int_\rho^{x}\frac{s^{d-1}u_{1,V}(s,\lambda)G_\lambda(s)}{(1-s^2)^{\frac{d}{2}-\lambda+n-\frac12}} ds\right),
\end{split}
\end{equation}
for some $x<1$ and $c\in \C$. Then we integrate by parts
\begin{align*}
u&_{1,V}(\rho,\lambda)\int_0^\rho\frac{s^{d-1} u_{0,V}(s,\lambda)G_\lambda(s)}{(1-s^2)^{\frac{d}{2}-\lambda+n-\frac12}}ds+u_{0,V}(\rho,\lambda)\int_\rho^{x}\frac{s^{d-1}u_{1,V}(s,\lambda)G_\lambda(s)}{(1-s^2)^{\frac{d}{2}-\lambda+n-\frac12}} ds
\\
&=u_{1,V}(\rho,\lambda) G_\lambda(\rho)\int_0^\rho \frac{s^{d-1} u_{0,V}(s,\lambda)}{(1-s^2)^{\frac{d}{2}-\lambda+n-\frac12}}ds- u_{1,V}(\rho,\lambda)\int_0^\rho G_\lambda'(s_1)\int_0^{s_1} \frac{s_2^{d-1} u_{0,V}(s_2,\lambda)}{(1-s_2^2)^{\frac{d}{2}-\lambda+n-\frac12}}ds_2ds_1 
\\
&\quad - u_{0,V}(\rho,\lambda) G_\lambda(\rho)\int_0^\rho\frac{s^{d-1}u_{1,V}(s,\lambda)}{(1-s^2)^{\frac{d}{2}-\lambda+n-\frac12}} ds+  u_{0,V}(\rho,\lambda) G_\lambda(x)\int_0^x\frac{s^{d-1}u_{1,V}(s,\lambda)}{(1-s^2)^{\frac{d}{2}-\lambda+n-\frac12}} ds
\\
&\quad - u_{0,V}(\rho,\lambda) \int_\rho^x G_\lambda'(s_1)\int_0^{s_1}\frac{s_2^{d-1}u_{1,V}(s_2,\lambda)}{(1-s_2^2)^{\frac{d}{2}-\lambda+n-\frac12}} ds_2 ds_1.
\end{align*}
Consequently, upon setting 
$$
c= G_\lambda(x)\int_0^x\frac{s^{d-1}u_{1,V}(s,\lambda)}{(1-s^2)^{\frac{d}{2}-\lambda+n-\frac12}} ds -\int_n^1 G_\lambda'(s_1)\int_0^{s_1}\frac{s_2^{d-1}u_{1,V}(s_2,\lambda)}{(1-s_2^2)^{\frac{d}{2}-\lambda+n-\frac12}} ds_2 ds_1,
$$
we arrive at 
\begin{align*}
 \mathcal{R}&_{2,V}(G_\lambda)(\rho,\lambda)
\\
&=-u_{1,V}(\rho,\lambda) \left[G_\lambda(\rho)\int_0^\rho \frac{s^{d-1} u_{0,V}(s,\lambda)}{(1-s^2)^{\frac{d}{2}-\lambda+n-\frac12}}ds-\int_0^\rho G_\lambda'(s_1)\int_0^{s_1} \frac{s_2^{d-1} u_{0,V}(s_2,\lambda)}{(1-s_2^2)^{\frac{d}{2}-\lambda+n-\frac12}}ds_2ds_1 \right]
\\
&\quad + u_{0,V}(\rho,\lambda)\left[ G_\lambda(\rho)\int_0^\rho\frac{s^{d-1}u_{1,V}(s,\lambda)}{(1-s^2)^{\frac{d}{2}-\lambda+n-\frac12}} ds+ \int_\rho^1 G_\lambda'(s_1)\int_0^{s_1}\frac{s_2^{d-1}u_{1,V}(s_2,\lambda)}{(1-s_2^2)^{\frac{d}{2}-\lambda+n-\frac12}} ds_2 ds_1\right].
\end{align*}
However, this expression is still not regular enough, as 
\begin{align*}
\lim_{\rho \to \infty}u_{0,V}'(\rho,\lambda) G_\lambda(\rho)\int_0^\rho\frac{s^{d-1}u_{1,V}(s,\lambda)}{(1-s^2)^{\frac{d}{2}-\lambda+n-\frac12}} ds
\end{align*} will not be finite in general. Thus, we integrate by parts once more to infer that
\begin{align*}
\int_0^\rho\frac{s^{d-1}u_{1,V}(s,\lambda)}{(1-s^2)^{\frac{d}{2}-\lambda+n-\frac12}} ds&=\frac{\rho^{d-1}u_{1,V}(\rho,\lambda)(1+\rho)^{-\frac{d}{2}-n+\lambda+\frac12}}{(\frac{d}{2}-\lambda+n-\frac32)(1-\rho)^{\frac{d}{2}-\lambda+n-\frac32}}
\\
&\quad - \int_0^\rho\frac{\partial_s[s^{d-1}u_{1,V}(s,\lambda)(1+s)^{-\frac{d}{2}-n+\lambda+\frac12}]}{(\frac{d}{2}-\lambda+n-\frac32)(1-s)^{\frac{d}{2}-\lambda+n-\frac32}} ds.
\end{align*}
This leads to adding the term 
$$
u_{0,V}(\rho,\lambda)G_\lambda(1)\int_0^1\frac{\partial_s[s^{d-1}u_{1,V}(s,\lambda)(1+s)^{-\frac{d}{2}-n+\lambda+\frac12}]}{(\frac{d}{2}-\lambda+n-\frac32)(1-s)^{\frac{d}{2}-\lambda+n-\frac32}} ds
$$ to arrive at the correct formula
\begin{align}\label{Eq:res def 2}
\mathcal{R}_{2,V}&(G_\lambda)(\rho,\lambda)\nonumber
\\
:=&-u_{1,V}(\rho,\lambda) \left[G_\lambda(\rho)\int_0^\rho \frac{s^{d-1} u_{0,V}(s,\lambda)}{(1-s^2)^{\frac{d}{2}-\lambda+n-\frac12}}ds-\int_0^\rho G_\lambda'(s_1)\int_0^{s_1} \frac{s_2^{d-1} u_{0,V}(s_2,\lambda)}{(1-s_2^2)^{\frac{d}{2}-\lambda+n-\frac12}}ds_2ds_1 \right]\nonumber
\\
& + u_{0,V}(\rho,\lambda)\left[ G_\lambda(\rho)\int_0^\rho\frac{s^{d-1}u_{1,V}(s,\lambda)}{(1-s^2)^{\frac{d}{2}-\lambda+n-\frac12}} ds+ \int_\rho^1 G_\lambda'(s_1)\int_0^{s_1}\frac{s_2^{d-1}u_{1,V}(s_2,\lambda)}{(1-s_2^2)^{\frac{d}{2}-\lambda+n-\frac12}} ds_2 ds_1\right]\nonumber
\\
& +u_{0,V}(\rho,\lambda)G_\lambda(1)\int_0^1\frac{\partial_s[s^{d-1}u_{1,V}(s,\lambda)(1+s)^{-\frac{d}{2}-n+\lambda+\frac12}]}{(\frac{d}{2}-\lambda+n-\frac32)(1-s)^{\frac{d}{2}-\lambda+n-\frac32}} ds.
\end{align} 
By undoing the integrations by parts, one readily checks that this definition of $\mathcal{R}_{2,V}(G_\lambda)(\rho,\lambda)$ is consistent with $\mathcal{R}_{1,V}(G_\lambda)(\rho,\lambda)$, in the sense that they agree on $((I_1\cap I_2)\times i \R)\cap S_{n,V}$.
\begin{lem}\label{lem:resbound2}
For $\lambda \in S_{n,V}\cap \{I_2\times i \R\}$ consider the map $R_{2,V}(\lambda)$ defined by  $$(g_1,g_2)\mapsto \mathcal{R}_{2,V}(G_\lambda)(\cdot,\lambda).$$ 
Then $R_{2,V}(\lambda) $ is a bounded linear operator from $\mathcal{H}_{rad}$ into $H^k_{rad}(\B^d_1)$.
\end{lem}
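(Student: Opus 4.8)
The plan is to follow the template of the proof of Lemma \ref{lem:resbound1}, the only new feature being that on $I_2$ the weight $(1-s^2)^{-\mu}$, with $\mu=\frac d2-\Re\lambda+n-\frac12\in I_2\subseteq[\frac34,1+\frac56]$, is no longer integrable near $s=1$ — which is precisely why $\mathcal{R}_{2,V}$ was defined in \eqref{Eq:res def 2} through one integration by parts together with the $G_\lambda(1)$-correction. By the norm equivalence of Lemma \ref{lem:norm equiv1 }, with $\rho_0=r=\tfrac14$, it suffices to bound $\|\mathcal{R}_{2,V}(G_\lambda)(\cdot,\lambda)\|_{H^k_{rad}(\B^d_r)}$ and $\|\mathcal{R}_{2,V}(G_\lambda)(\cdot,\lambda)\|_{H^k(r,1)}$ separately by $\|(g_1,g_2)\|_{\mathcal{H}}$. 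Near the origin nothing is different from Lemma \ref{lem:resbound1}: all integrands are smooth in $s$ on $[0,r]$ (the factor $(1-s^2)^{-\mu}$ is harmless there, $s^{d-1}u_{1,V}(s,\lambda)=\O(s)$, $u_{0,V}(s,\lambda)=\O(1)$), so one first gets the $L^2$ and $H^1$ bounds on $\B^d_r$ by differentiating \eqref{Eq:res def 2} once, and then bootstraps to $H^k(\B^d_r)$ using that $\mathcal{R}_{2,V}(G_\lambda)$ solves \eqref{Eq:inhom}, which is uniformly elliptic on $\B^d_r$, together with the bound $\|G_\lambda\|_{H^{k-1}(\B^d_1)}\lesssim\|(g_1,g_2)\|_{\mathcal{H}}$.

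The substance lies in the estimate on $[r,1)$. I would decompose \eqref{Eq:res def 2} into its constituent terms and, near $\rho=1$, insert the representations $u_{1,V}(\rho,\lambda)=\rho^{\frac{1-d}{2}}(1+\rho)^{\frac{d-1}2+n-\lambda}\widetilde u_{1,V}(\rho,\lambda)$, $u_{2,V}(\rho,\lambda)=\rho^{\frac{1-d}{2}}(1-\rho)^{\frac{d-1}2+n-\lambda}\widetilde u_{2,V}(\rho,\lambda)$ with $\widetilde u_{1,V}(\cdot,\lambda),\widetilde u_{2,V}(\cdot,\lambda)\in C^\infty((0,1])$, together with $u_{0,V}=u_{2,V}-\frac{c_{\widetilde1,\widetilde0,V}}{c_{1,\widetilde0,V}}u_{1,V}$ (whose boundedness in $\lambda$ on compact sets comes from Lemmas \ref{lem:fund1}, \ref{lem:fund0}, \ref{lem:con_coef}). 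The design of \eqref{Eq:res def 2} is such that: (i) every remaining $s$-integral now carries the weight $(1-s)^{-(\mu-1)}$, whose real part is $\le\frac56<1$ on $I_2$, so the integrals converge — at the cost of $G_\lambda'$ in place of $G_\lambda$, which is permissible since $G_\lambda\in H^{k-1}$ affords the one-derivative loss; and (ii) the boundary term $u_{0,V}(\rho,\lambda)G_\lambda(1)\int_0^1\frac{\partial_s[s^{d-1}u_{1,V}(s,\lambda)(1+s)^{-\frac d2-n+\lambda+\frac12}]}{(\frac d2-\lambda+n-\frac32)(1-s)^{\frac d2-\lambda+n-\frac32}}\,ds$ is exactly what makes the coefficient of the singular Frobenius solution $(1-\rho)^{\frac{d-1}2+n-\lambda}$ in $\mathcal{R}_{2,V}(G_\lambda)$ cancel near $\rho=1$; thus $\mathcal{R}_{2,V}(G_\lambda)$ agrees, near $\rho=1$, with a bounded multiple of the analytic solution $u_{1,V}$ plus the genuinely smooth particular solution of \eqref{Eq:inhom}, rather than retaining a $(1-\rho)^{\mu}$-type contribution that would lie only in $H^{\lceil\mu+1/2\rceil}$ and not in $H^k$. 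With this cancellation established, the $H^k(r,1)$ bound follows by the same machinery as in Lemma \ref{lem:resbound1}: the variant of Hardy's inequality $\|\tfrac1{1-\rho}\int_\rho^1 f\,ds\|_{L^2(r,1)}\lesssim\|f\|_{L^2(r,1)}$ and its $j$-times-differentiated version (after the substitution $s\mapsto s(1-\rho)+1$ that recasts the inner integrals as convolution-type kernels), the embedding $\|\,|\cdot|\,G_\lambda\|_{L^\infty(0,1)}\lesssim\|(g_1,g_2)\|_{\mathcal{H}}$, and the trace bound $|G_\lambda(1)|\lesssim\|G_\lambda\|_{H^1(\B^d_1)}\lesssim\|(g_1,g_2)\|_{\mathcal{H}}$ from Lemma \ref{lem:f(1)}. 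Since \eqref{Eq:res def 2} is already known to agree with \eqref{def:R1} on $(I_1\cap I_2)\times i\R$, no independent check that $\mathcal{R}_{2,V}(G_\lambda)$ solves \eqref{Eq:inhom} is needed.

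I expect the main obstacle to be twofold: (a) rigorously verifying the cancellation in (ii), i.e.\ extracting from \eqref{Eq:res def 2} the precise coefficient multiplying $(1-\rho)^{\frac{d-1}2+n-\lambda}$ near $\rho=1$ and confirming that it vanishes identically for $\lambda\in S_{n,V}$; and (b) the bookkeeping for the nested integrals $\int_\rho^1 G_\lambda'(s_1)\int_0^{s_1}\frac{s_2^{d-1}u_{1,V}(s_2,\lambda)}{(1-s_2^2)^{\frac d2-\lambda+n-\frac12}}\,ds_2\,ds_1$, where one must track the $(1-s_1)^{2-\mu}$ growth of the inner integral against the $u_{1,V}$ (resp.\ $(1-\rho)^{\frac{d-1}2+n-\lambda}$) prefactor to see that the net power lands in $H^k(r,1)$. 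These estimates are routine but error-prone. Note that, as in the $I_1$ case, uniformity in $\lambda$ over $S_{n,V}\cap(I_2\times i\R)$ is \emph{not} asserted here (it will be handled in a separate lemma), so all implicit constants may depend on $\lambda$; this considerably simplifies the argument, reducing it to boundedness of the $c_{\cdot,\cdot,V}(\lambda)$, the $\widetilde u_{i,V}(\cdot,\lambda)$, and $e_{0,V}(\cdot,\lambda)$ on compact $\lambda$-sets.
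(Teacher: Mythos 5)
Your outline does track the paper's proof: the interior bound on $\B^d_{1/4}$ via the low-order estimate plus elliptic regularity (with Lemma \ref{lem:f(1)} controlling $G_\lambda(1)$), and, on $[\tfrac14,1)$, the rewriting of \eqref{Eq:res def 2} through $u_{0,V}=u_{2,V}+\widehat{c}_V(\lambda)u_{1,V}$, the factorizations $u_{1,V}=\rho^{\frac{1-d}{2}}(1+\rho)^{\frac{d-1}{2}+n-\lambda}\widetilde u_{1,V}$, $u_{2,V}=\rho^{\frac{1-d}{2}}(1-\rho)^{\frac{d-1}{2}+n-\lambda}\widetilde u_{2,V}$, and the substitution/Hardy machinery of Lemma \ref{lem:resbound1}, all with $\lambda$-dependent constants.

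However, the step you flag as obstacle (a) is not merely unverified; as formulated it would fail, and it is not how the regularity is actually obtained. There is no constant ``coefficient of the singular Frobenius solution'' in \eqref{Eq:res def 2} to show vanishes identically: the factor multiplying $u_{2,V}$ is the $\rho$-dependent bracket $G_\lambda(\rho)\int_0^\rho\frac{s^{d-1}u_{1,V}}{(1-s^2)^{\frac d2-\lambda+n-\frac12}}ds+\int_\rho^1 G_\lambda'(s_1)\int_0^{s_1}(\cdots)\,ds_1$ plus the $G_\lambda(1)$-term, which in general diverges as $\rho\to1$ (balanced by the vanishing of $(1-\rho)^{\frac{d-1}{2}+n-\lambda}$); the correction term only forces a residual bracket to vanish at the single point $\rho=1$, and a one-point cancellation cannot give $H^k$ with $k=2n+\lceil\frac d2\rceil+1\geq 4$. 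Nor does the solution become ``a bounded multiple of $u_{1,V}$ plus a genuinely smooth particular solution'': terms of the schematic form $u_{2,V}(\rho)\int_\rho^1 G_\lambda'(s)(\cdots)\,ds$ survive and are exactly as regular as the Hardy-type estimates allow, no more. What actually yields the $H^k(\tfrac14,1)$ bound, and what the paper does, is a derivative-level rearrangement: compute $\partial_\rho\mathcal{R}_{2,V}(G_\lambda)$ (the fundamental-theorem terms cancel, so only $u_{1,V}'$, $u_{2,V}'$ appear as prefactors), then integrate by parts once more in $u_{2,V}'(\rho)G_\lambda(\rho)\int_0^\rho\frac{s^{d-1}u_{1,V}}{(1-s^2)^{\frac d2-\lambda+n-\frac12}}ds$ so that it recombines with the double integral and the $G_\lambda(1)$-correction; the boundary contributions cancel and the singular powers cancel \emph{algebraically}, since $u_{2,V}'\simeq(1-\rho)^{\frac{d-3}{2}+n-\lambda}$ meets the explicit weight $(1-\rho)^{-(\frac d2-\lambda+n-\frac32)}$. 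After this, every term has the structure already estimated in Lemma \ref{lem:resbound1} (weights of real part $\mu-1\leq\frac56<1$, at most $G_\lambda'$ appearing, and never more than $k-1$ derivatives landing on $G_\lambda$), so the remaining derivatives are handled by the same substitution/Hardy argument. This rearrangement is the actual content of the proof and is missing from your plan. A minor further point: agreement of \eqref{Eq:res def 2} with \eqref{def:R1} on the overlap $(I_1\cap I_2)\times i\R$ does not by itself show that $\mathcal{R}_{2,V}(G_\lambda)$ solves \eqref{Eq:inhom} for $\Re\lambda\in I_2\setminus I_1$; you need either to undo the integrations by parts or to invoke holomorphy in $\lambda$ (Lemma \ref{lem:analyticity}) together with the identity theorem.
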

\begin{proof}
By arguing as in the proof of Lemma \ref{lem:resbound1} and employing Lemma \ref{lem:f(1)} one readily establishes that
\begin{align*}
\|\mathcal{R}_{2,V}(G_\lambda)(\cdot,\lambda)\|_{H^k(\B^d_r)}&\lesssim \|(g_1,g_2)\|_{\mathcal{H}}
\end{align*}
for $r=\tfrac14$.
To estimate 
$$
\|\mathcal{R}_{2,V}(G_\lambda)(\cdot,\lambda)\|_{H^k([r,1])},
$$
we  first rewrite 
\begin{align*}
\mathcal{R}&_{2,V}(G_\lambda)(\cdot,\lambda)
\\
&= -u_{1,V}(\rho,\lambda) \left[ G_\lambda(\rho)\int_0^\rho \frac{s^{d-1} u_{2,V}(s,\lambda)}{(1-s^2)^{\frac{d}{2}-\lambda+n-\frac12}}ds-\int_0^\rho G_\lambda'(s_1)\int_0^{s_1} \frac{s_2^{d-1} u_{2,V}(s_2,\lambda)}{(1-s_2^2)^{\frac{d}{2}-\lambda+n-\frac12}}ds_2ds_1 \right] \nonumber
\\
&\quad + u_{2,V}(\rho,\lambda) \left[G_\lambda(\rho)\int_0^\rho\frac{s^{d-1}u_{1,V}(s,\lambda)}{(1-s^2)^{\frac{d}{2}-\lambda+n-\frac12}} ds+\int_\rho^1 G_\lambda'(s_1)\int_0^{s_1}\frac{s_2^{d-1}u_{1,V}(s_2,\lambda)}{(1-s_2^2)^{\frac{d}{2}-\lambda+n-\frac12}} ds_2 ds_1\right]\nonumber
\\
&\quad +u_{2,V}(\rho,\lambda)G_\lambda(1)\int_0^1\frac{\partial_s[s^{d-1}u_{1,V}(s,\lambda)(1+s)^{-\frac{d}{2}-n+\lambda+\frac12}]}{(\frac{d}{2}-\lambda+n-\frac32)(1-s)^{\frac{d}{2}-\lambda+n-\frac32}} ds
\\
&\quad +\widehat{c}_V(\lambda)u_{1,V}(\rho,\lambda)\int_0^1 G_\lambda'(s_1)\int_0^{s_1}\frac{s_2^{d-1}u_{1,V}(s_2,\lambda)}{(1-s_2^2)^{\frac{d}{2}-\lambda+n-\frac12}} ds_2 ds_1
\\
&\quad+\widehat{c}_V(\lambda)u_{1,V}(\rho,\lambda)\int_0^1\frac{\partial_s[s^{d-1}u_{1,V}(s,\lambda)(1+s)^{-\frac{d}{2}-n+\lambda+\frac12}]}{(\frac{d}{2}-\lambda+n-\frac32)(1-s)^{\frac{d}{2}-\lambda+n-\frac32}} ds,
\end{align*}
and note that one readily concludes 
\begin{align*}
\|\mathcal{R}_{2,V}(G_\lambda)(\cdot,\lambda)\|_{L^2(r,1))}&\lesssim\| (g_1,g_2)\|_{\mathcal{H}}.
\end{align*}
To estimate derivatives, we calculate
\begin{align*}
 \partial&_\rho\mathcal{R}_{2,V}(G_\lambda)(\rho,\lambda)
\\
&=  -u_{1,V}'(\rho,\lambda) \left[ G_\lambda(\rho)\int_0^\rho \frac{s^{d-1} u_{2,V}(s,\lambda)}{(1-s^2)^{\frac{d}{2}-\lambda+n-\frac12}}ds-\int_0^\rho G_\lambda'(s_1)\int_0^{s_1} \frac{s_2^{d-1} u_{2,V}(s_2,\lambda)}{(1-s_2^2)^{\frac{d}{2}-\lambda+n-\frac12}}ds_2ds_1 \right]
\\
&\quad + u_{2,V}'(\rho,\lambda) \left[G_\lambda(\rho)\int_0^\rho\frac{s^{d-1}u_{1,V}(s,\lambda)}{(1-s^2)^{\frac{d}{2}-\lambda+n-\frac12}} ds+\int_\rho^1 G_\lambda'(s_1)\int_0^{s_1}\frac{s_2^{d-1}u_{1,V}(s_2,\lambda)}{(1-s_2^2)^{\frac{d}{2}-\lambda+n-\frac12}} ds_2 ds_1\right]
\\
&\quad +u_{2,V}'(\rho,\lambda)G_\lambda(1)\int_0^1\frac{\partial_s[s^{d-1}u_{1,V}(s,\lambda)(1+s)^{-\frac{d}{2}-n+\lambda+\frac12}]}{(\frac{d}{2}-\lambda+n-\frac32)(1-s)^{\frac{d}{2}-\lambda+n-\frac32}} ds
\\
&\quad +\widehat{c}_V(\lambda)u_{1,V}'(\rho,\lambda)\int_0^1 G_\lambda'(s_1)\int_0^{s_1}\frac{s_2^{d-1}u_{1,V}(s_2,\lambda)}{(1-s_2^2)^{\frac{d}{2}-\lambda+n-\frac12}} ds_2 ds_1
\\
&\quad+\widehat{c}_V(\lambda)u_{1,V}'(\rho,\lambda)\int_0^1\frac{\partial_s[s^{d-1}u_{1,V}(s,\lambda)(1+s)^{-\frac{d}{2}-n+\lambda+\frac12}]}{(\frac{d}{2}-\lambda+n-\frac32)(1-s)^{\frac{d}{2}-\lambda+n-\frac32}} ds.
\end{align*}
Furthermore, integrating by parts yields
\begin{align*}
&u_{2,V}'(\rho,\lambda) G_\lambda(\rho)\int_0^\rho\frac{s^{d-1}u_{1,V}(s,\lambda)}{(1-s^2)^{\frac{d}{2}-\lambda+n-\frac12}} ds
\\
&\quad + u_{2,V}(\rho,\lambda) \int_\rho^1 G_\lambda'(s_1)\int_0^{s_1}\frac{s_2^{d-1}u_{1,V}(s_2,\lambda)}{(1-s_2^2)^{\frac{d}{2}-\lambda+n-\frac12}} ds_2 ds_1\nonumber
\\
&\quad +u_{2,V}'(\rho,\lambda)G_\lambda(1)\int_0^1\frac{\partial_s[s^{d-1}u_{1,V}(s,\lambda)(1+s)^{-\frac{d}{2}-n+\lambda+\frac12}]}{(\frac{d}{2}-\lambda+n-\frac32)(1-s)^{\frac{d}{2}-\lambda+n-\frac32}} ds
\\   
&= \frac{u_{2,V}'(\rho,\lambda) G_\lambda(\rho)\rho^{d-1}u_{1,V}(\rho,\lambda)(1+\rho)^{-\frac{d}{2}-n+\lambda+\frac12}}{(\frac{d}{2}-\lambda+n-\frac32)(1-\rho)^{\frac{d}{2}-\lambda+n-\frac32}} 
\\
&\quad-u_{2,V}(\rho,\lambda) G_\lambda(\rho)\int_0^\rho\frac{\partial_s[ s^{d-1}u_{1,V}(s,\lambda)(1+s)^{-\frac{d}{2}-n+\lambda+\frac12}]}{(\frac{d}{2}-\lambda+n-\frac32)(1-s)^{\frac{d}{2}-\lambda+n-\frac32}} ds
\\
&\quad + u_{2,V}'(\rho,\lambda) \int_\rho^1 G_\lambda'(s_1)\frac{ s_1^{d-1}u_{1,V}(s_1,\lambda)(1+s_1)^{-\frac{d}{2}-n+\lambda+\frac12}}{(\frac{d}{2}-\lambda+n-\frac32)(1-s_1)^{\frac{d}{2}-\lambda+n-\frac32}} ds_1
\\
&\quad- u_{2,V}'(\rho,\lambda) \int_\rho^1 G_\lambda'(s_1)\int_0^{s_1}\frac{\partial_{s_2}[ s_2^{d-1}u_{1,V}(s_2,\lambda)(1+s_2)^{-\frac{d}{2}-n+\lambda+\frac12}]}{(\frac{d}{2}-\lambda+n-\frac32)(1-s_2)^{\frac{d}{2}-\lambda+n-\frac32}} ds_2 ds_1
\\
&\quad +u_{2,V}'(\rho,\lambda)G_\lambda(1)\int_0^1\frac{\partial_s[s^{d-1}u_{1,V}(s,\lambda)(1+s)^{-\frac{d}{2}-n+\lambda+\frac12}]}{(\frac{d}{2}-\lambda+n-\frac32)(1-s)^{\frac{d}{2}-\lambda+n-\frac32}} ds
\\
&=
\frac{u_{2,V}'(\rho,\lambda) G_\lambda(\rho)\rho^{d-1}u_{1,V}(\rho,\lambda)(1+\rho)^{-\frac{d}{2}-n+\lambda+\frac12}}{(\frac{d}{2}-\lambda+n-\frac32)(1-\rho)^{\frac{d}{2}-\lambda+n-\frac32}} 
\\
&\quad + u_{2,V}'(\rho,\lambda) \int_\rho^1 G_\lambda'(s_1)\frac{ s_1^{d-1}u_{1,V}(s_1,\lambda)(1+s_2)^{-\frac{d}{2}-n+\lambda+\frac12}}{(\frac{d}{2}-\lambda+n-\frac32)(1-s_1)^{\frac{d}{2}-\lambda+n-\frac32}} ds_1
\\
&\quad+u_{2,V}'(\rho,\lambda) \int_\rho^1 G_\lambda(s_1)\frac{\partial_{s_1}[ s_1^{d-1}u_{1,V}(s_1,\lambda)(1+s_1)^{-\frac{d}{2}-n+\lambda+\frac12}]}{(\frac{d}{2}-\lambda+n-\frac32)(1-s_1)^{\frac{d}{2}-\lambda+n-\frac32}}ds_1.
\end{align*}
Therefore, one concludes that
\begin{align*}
\partial&_\rho\mathcal{R}_{2,V}(G_\lambda)(\rho,\lambda)
\\
&=- u_{1,V}'(\rho,\lambda)\left[ G_\lambda(\rho)\int_0^\rho \frac{s^{d-1} u_{2,V}(s,\lambda)}{(1-s^2)^{\frac{d}{2}-\lambda+n-\frac12}}ds+\int_0^\rho G_\lambda'(s_1)\int_0^{s_1} \frac{s_2^{d-1} u_{2,V}(s_2,\lambda)}{(1-s_2^2)^{\frac{d}{2}-\lambda+n-\frac12}}ds_2ds_1 \right]
\\
&\quad +\widehat{c}_V(\lambda)u_{1,V}'(\rho,\lambda) \int_0^1 G_\lambda'(s_1)\int_0^{s_1}\frac{s_2^{d-1}u_{1,V}(s_2,\lambda)}{(1-s_2^2)^{\frac{d}{2}-\lambda+n-\frac12}} ds_2 ds_1
\\
&\quad +\widehat{c}_V(\lambda)u_{1,V}'(\rho,\lambda)\int_0^1\frac{\partial_s[s^{d-1}u_{1,V}(s,\lambda)(1+s)^{-\frac{d}{2}-n+\lambda+\frac12}]}{(\frac{d}{2}-\lambda+n-\frac32)(1-s)^{\frac{d}{2}-\lambda+n-\frac32}} ds
\\
&\quad+\frac{u_{2,V}'(\rho,\lambda) G_\lambda(\rho)\rho^{d-1}u_{1,V}(\rho,\lambda)(1+\rho)^{-\frac{d}{2}-n+\lambda+\frac12}}{(\frac{d}{2}-\lambda+n-\frac32)(1-\rho)^{\frac{d}{2}-\lambda+n-\frac32}} 
\\
&\quad + u_{2,V}'(\rho,\lambda) \int_\rho^1 G_\lambda'(s_1)\frac{ s_1^{d-1}u_{1,V}(s_1,\lambda)(1+s_1)^{-\frac{d}{2}-n+\lambda+\frac12}}{(\frac{d}{2}-\lambda+n-\frac32)(1-s_1)^{\frac{d}{2}-\lambda+n-\frac32}} ds_1
\\
&\quad- u_{2,V}'(\rho,\lambda) \int_\rho^1 G_\lambda(s_1)\frac{\partial_{s_1}[ s_1^{d-1}u_{1,V}(s_1,\lambda)(1+s_1)^{-\frac{d}{2}-n+\lambda+\frac12}]}{(\frac{d}{2}-\lambda+n-\frac32)(1-s_1)^{\frac{d}{2}-\lambda+n-\frac32}}ds_1,
\end{align*}
and, by using this form of $\partial_\rho\mathcal{R}_{2,V}(G_\lambda)(\rho,\lambda)$, one readily derives the desired estimate. 
\end{proof}
\begin{lem}\label{lem:uniformbound2}
The operator $R_{2,V}(\lambda)$ satisfies
\begin{align*}
\|R_{2,V}(\lambda)(g_1,g_2)\|_{H^1(\B^d_1)}&\lesssim  \|(g_1,g_2)\|_{\mathcal{H}}
\end{align*}
for all $\lambda\in \widetilde S_{n,V} \cap (I_2 \times i\R)$ and all $(g_1,g_2)\in\mathcal{H}_{rad}$.
\end{lem}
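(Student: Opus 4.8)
The plan is to repeat, with only minor changes, the argument of Lemma~\ref{lem:uniformbound1}, now with $\mathcal{R}_{2,V}(G_\lambda)$ in place of $\mathcal{R}_{1,V}(G_\lambda)$. One starts from the representation \eqref{Eq:res def 2}, together with the rewritten form of $\partial_\rho\mathcal{R}_{2,V}(G_\lambda)$ derived at the end of the proof of Lemma~\ref{lem:resbound2}, and substitutes into every occurrence of $u_{0,V},u_{1,V},u_{2,V}$ (and their $\rho$-derivatives) the symbol-type expressions provided by Lemma~\ref{lem:symboltype}. This decomposes both $\mathcal{R}_{2,V}(G_\lambda)$ and $\partial_\rho\mathcal{R}_{2,V}(G_\lambda)$ into a sum of terms, each carrying either a factor $\chi_\lambda(\rho)$ (the region $\rho\lesssim\langle\omega\rangle^{-1}$) or a factor $1-\chi_\lambda(\rho)$ (the region $\rho\gtrsim\langle\omega\rangle^{-1}$), and one then repeatedly uses the trade-off identities
\begin{align*}
\chi_\lambda(\rho)\,\O(\rho^\alpha\langle\omega\rangle^\beta)&=\chi_\lambda(\rho)\,\O(\rho^{\alpha-\gamma}\langle\omega\rangle^{\beta-\gamma}),
\\
(1-\chi_\lambda(\rho))\,\O(\rho^\alpha\langle\omega\rangle^\beta)&=(1-\chi_\lambda(\rho))\,\O(\rho^{\alpha+\gamma}\langle\omega\rangle^{\beta+\gamma}),
\end{align*}
valid for all $\gamma\geq 0$, to trade powers of $\langle\omega\rangle$ for powers of $\rho$ wherever convenient.

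For the terms carrying $\chi_\lambda(\rho)$ the argument is essentially identical to that in Lemma~\ref{lem:uniformbound1}: the cutoff converts every surviving positive power of $\langle\omega\rangle$ into a harmless power of $\rho$, after which the inner integrals are bounded pointwise by $\rho^{\frac{1-d}{2}}\int_0^\rho s^{\frac{d-1}{2}}\big(|s^{-1}g_1(s)|+|g_1'(s)|+|g_2(s)|\big)\,ds$ (or its $L^\infty$ analogue), which is square-integrable on $(0,1)$ with norm $\lesssim\|(g_1,g_2)\|_{\mathcal{H}}$, uniformly in $\lambda$; the double-integral and boundary terms in \eqref{Eq:res def 2} are treated the same way, using Lemma~\ref{lem:f(1)} to control $G_\lambda(1)$. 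For the terms carrying $1-\chi_\lambda(\rho)$, i.e.\ those supported near $\rho=1$, the new feature relative to $I_1$ is that the undifferentiated weight $(1-s)^{-\mu-i\omega}$, with $\mu=\tfrac d2-\Re\lambda+n-\tfrac12\in I_2$, is no longer integrable; but the integration by parts built into \eqref{Eq:res def 2} replaces it by $(1-s)^{-(\mu-1)-i\omega}$ with $\mu-1<1$, hence integrable near $s=1$, and one then reduces the single and double integrals in $G_\lambda'$ (and the $G_\lambda(1)$ boundary term) to $\|(g_1,g_2)\|_{\mathcal{H}}$ via the Hardy-type inequality $\big\|(1-\rho)^{-1}\int_\rho^1 f\big\|_{L^2(r,1)}\lesssim\|f\|_{L^2(r,1)}$ (already invoked in the proof of Lemma~\ref{lem:resbound1}) and Lemma~\ref{lem:f(1)}. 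Assembling the two regions and invoking Lemma~\ref{lem:norm equiv1 } then gives the claimed $H^1(\B^d_1)$-bound.

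The step I expect to be the main obstacle is the bookkeeping of powers of $\langle\omega\rangle$ in the region near $\rho=1$. The integration by parts in \eqref{Eq:res def 2} introduces the factor $(\tfrac d2-\lambda+n-\tfrac32)^{-1}=\O(\langle\omega\rangle^{-1})$, each of $u_{0,V},u_{1,V},u_{2,V}$ carries near $\rho=1$ a prefactor $\alpha_n(\lambda)^{-1/2}=\O(\langle\omega\rangle^{-1/2})$, each $\rho$-derivative landing on a weight $(1\pm\rho)^{\frac{d-1}{2}+n-\lambda}$ costs a power $\O(\langle\omega\rangle)$, while the corrections $[1+(1-\rho)\O(\rho^{-1}\langle\omega\rangle^{-1})]$ and the integrability gain from $(1-s)^{-(\mu-1)}$ supply partial decay, and $G_\lambda$ and $G_\lambda(1)$ each contribute at most one power $\O(\langle\omega\rangle)$ (from the $\lambda g_1$ piece). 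One must check, term by term and over all of $\widetilde S_{n,V}\cap(I_2\times i\R)$, that these competing contributions always net out to a nonpositive power of $\langle\omega\rangle$, so that no loss in $\langle\omega\rangle$ survives; everything else is a routine repetition of the computations in the proofs of Lemmas~\ref{lem:resbound1}--\ref{lem:uniformbound1}.
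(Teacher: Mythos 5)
Your proposal is correct and coincides with the paper's approach: the paper's proof of this lemma is literally the one-line remark that it follows from the same arguments as Lemma \ref{lem:uniformbound1}, i.e.\ exactly the repetition you describe, applied to the integrated-by-parts representation \eqref{Eq:res def 2} with the symbol-type forms of Lemma \ref{lem:symboltype}, the $\chi_\lambda$/$(1-\chi_\lambda)$ trade-off identities, the Hardy-type inequality, and Lemma \ref{lem:f(1)} for the $G_\lambda(1)$ boundary term. Your closing observation about the $\langle\omega\rangle$-bookkeeping (the $\O(\langle\omega\rangle^{-1})$ gain from the factor $(\tfrac d2-\lambda+n-\tfrac32)^{-1}$ offsetting the cost of the extra derivative) is precisely the balance that makes the paper's ``same arguments'' claim go through.
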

\begin{proof}
This follows from the same arguments as Lemma \ref{lem:uniformbound1}.
\end{proof}
The construction of $\mathcal{R}_{2,V}$ illustrates how we need to proceed in the general case, and we move to constructing  $\mathcal{R}_{j,V}$ for $\mu\in I_j$. 
For this, we will need to perform $j-1$ integrations by parts. Thus, upon iterating the scheme used to construct $\mathcal{R}_{2,V}$, i.e., by integrating by parts $(j-1)$-times in both integrals and discarding the boundary parts, to arrive at the expression
\begin{align*}
 \mathcal{R}&_{j,V}(G_\lambda)(\rho,\lambda)
\\
&=
u_{1,V}(\rho,\lambda)\sum_{\ell=1}^{j-1} (-1)^{\ell}G_\lambda^{(\ell-1)}(\rho)U_{0,\ell}(\rho,\lambda)
\\
&\quad+(-1)^{j}u_{1,V}(\rho,\lambda)\int_0^\rho G_\lambda^{(j-1)}(s_1)\int_0^{s_1} \int_0^{s_2}\dots \int_0^{s_{j-1} }\frac{s_j^{d-1} u_{0,V}(s_j,\lambda)}{(1-s_j^2)^{\frac{d}{2}-\lambda+n-\frac12}}ds_j \dots ds_2ds_1 
\\
&\quad - u_{0,V}(\rho,\lambda) \sum_{\ell=1}^{j-1} (-1)^{\ell}G_\lambda^{(\ell-1)}(\rho)U_{1,\ell}(\rho,\lambda)
\\
&\quad +(-1)^{j} u_{0,V}(\rho,\lambda) \int_\rho^1 G_\lambda^{(j-1)}(s_1)\int_0^{s_1} \int_0^{s_2}\dots \int_0^{s_{j-1} }\frac{s_j^{d-1} u_{1,V}(s_j,\lambda)}{(1-s_j^2)^{\frac{d}{2}-\lambda+n-\frac12}}ds_j \dots ds_2ds_1,
\end{align*}
where 
\begin{align*}
U_{i,\ell}(\rho,\lambda)=\int_0^\rho \int_0^{s_1}\dots\int_0^{s_{\ell-1}}\frac{s_\ell^{d-1}u_{i,V}(s_\ell,\lambda)}{(1-s_\ell^2)^{\frac{d}{2}-\lambda+n-\frac12}} ds_\ell \dots ds_1,
\end{align*}
for $i=0,1,2$. Now, as before we will have to add $c\, u_{0,V}$ for an appropriately chosen $c$ to that expression to ensure that this is a smooth function. For this, we integrate by parts a number of times to conclude that there exist smooth functions $h_\ell$ such that
\begin{align*}
 (&-1)^{\ell}U_{1,\ell}(\rho,\lambda)
 \\
 &=  (-1)^{\ell}\int_0^\rho \int_0^{s_1}\dots\int_0^{s_{\ell-1}}\frac{s^{d-1}u_{1,V}(s_\ell,\lambda)}{(1-s_\ell^2)^{\frac{d}{2}-\lambda+n-\frac12}}ds_\ell \dots ds_1
\\
 &=\sum_{i=1}^{\ell}
 (-1)^{j+i-1}\begin{pmatrix}
 j-1\\ \ell-i
 \end{pmatrix}
 \\
 &\qquad \times\int_0^\rho \int_0^{s_1}\dots\int_0^{s_{i-1}}\frac{\partial_{s_i}^{j-1-\ell+i}[s_i^{d-1}u_{1,V}(s_i,\lambda)(1+s_i)^{-\frac d2 +\lambda-n+\frac 12}]}{\prod_{r=0}^{j-2}(\frac{d}{2}-\lambda+n-\frac32-r)(1-s_i)^{\frac{d}{2}-\lambda+n+\frac12-j}}ds_i \dots ds_1
 \\
 &\quad +\sum_{m=0}^{\ell-1}\frac{\rho^{\ell-1-m}}{(\ell-m-1)!}\sum_{i=m+1}^{j-1} (-1)^{\ell+i-m}\begin{pmatrix}
 i-1
 \\
 m
\end{pmatrix}  \lim_{t\to 0}\frac{ \partial_t^{i-m-1}[t^{d-1}u_{1,V}(t,\lambda)(1+t)^{-\frac d2 +\lambda-n+\frac 12}]}{\prod_{r=0}^{i-1}(\frac{d}{2}-\lambda+n-\frac32-r)(1-t)^{\frac{d}{2}-\lambda+n-\frac12-i}}
 \\
 &\quad + (1-\rho)^{-\frac{d}{2}-n+\lambda+\frac12+\ell}h_\ell(\rho,\lambda).
\end{align*}
To be exact, we have that
\begin{align*}
h_\ell(\rho,\lambda)=\sum_{i=0}^{j-\ell}
\begin{pmatrix}
i+\ell-1
\\
\ell-1
\end{pmatrix} (-1)^{\ell-1-i}\frac{\partial_{\rho}^{i}[\rho^{d-1}u_{1,V}(\rho,\lambda)(1+\rho)^{-\frac d2 +\lambda-n+\frac 12}]}{\prod_{r=0}^{\ell+i-1}(\frac{d}{2}-\lambda+n-\frac32-r)(1-\rho)^{\frac{d}{2}-\lambda+n-\frac12-\ell-i}}.
\end{align*}
Consequently, the correction term is given by
\begin{align*}
K_j(G_\lambda)(\lambda)&=\sum_{\ell=1}^{j-1} G_\lambda^{(\ell-1)}(1) \bigg[\sum_{i=1}^{\ell}
 (-1)^{j+i}\begin{pmatrix}
 j-1\\ \ell-i
 \end{pmatrix}
 \\
 &\qquad \times \int_0^1 \int_0^{s_1}\dots\int_0^{s_{i-1}}\frac{\partial_{s_i}^{j-1-\ell+i}[s_i^{d-1}u_{1,V}(s_i,\lambda)(1+s_i)^{-\frac d2 +\lambda-n+\frac 12}]}{\prod_{r=0}^{j-2}(\frac{d}{2}-\lambda+n-\frac32-r)(1-s_i)^{\frac{d}{2}-\lambda+n+\frac12-j}}ds_i \dots ds_1
 \\
 &\quad +\sum_{m=0}^{\ell-1}\frac{1}{(\ell-m-1)!}\sum_{i=m+1}^{j-1} (-1)^{\ell-1+i-m}
 \\
 &\qquad \times\begin{pmatrix}
 i-1
 \\
 m
\end{pmatrix}  \lim_{t\to 0}\frac{ \partial_t^{i-m-1}[t^{d-1}u_{1,V}(t,\lambda)(1+t)^{-\frac d2 +\lambda-n+\frac 12}]}{\prod_{r=0}^{i-1}(\frac{d}{2}-\lambda+n-\frac32-r)(1-t)^{\frac{d}{2}-\lambda+n-\frac12-i}}\bigg],
\end{align*}
which yields the correct formula
\begin{equation}\label{Eq: Rj}
\begin{split}
 \mathcal{R}&_{j,V}(G_\lambda)(\rho,\lambda)
\\
&=u_{1,V}(\rho,\lambda)\sum_{\ell=1}^{j-1} (-1)^{\ell}G_\lambda^{(\ell-1)}(\rho)U_{0,\ell}(\rho,\lambda)
\\
&\quad+(-1)^{j}u_{1,V}(\rho,\lambda)\int_0^\rho G_\lambda^{(j-1)}(s_1)\int_0^{s_1} \int_0^{s_2}\dots \int_0^{s_{j-1} }\frac{s_j^{d-1} u_{0,V}(s_j,\lambda)}{(1-s_j^2)^{\frac{d}{2}-\lambda+n-\frac12}}ds_j \dots ds_2ds_1 
\\
&\quad - u_{0,V}(\rho,\lambda) \sum_{\ell=1}^{j-1} (-1)^{\ell}G_\lambda^{(\ell-1)}(\rho)U_{1,\ell}(\rho,\lambda)
\\
&\quad +(-1)^{j} u_{0,V}(\rho,\lambda) \int_\rho^1 G_\lambda^{(j-1)}(s_1)\int_0^{s_1} \int_0^{s_2}\dots \int_0^{s_{j-1} }\frac{s_j^{d-1} u_{1,V}(s_j,\lambda)}{(1-s_j^2)^{\frac{d}{2}-\lambda+n-\frac12}}ds_j \dots ds_2ds_1
\\
&\quad - u_{0,V}(\rho,\lambda) \sum_{\ell=1}^{j-1}\bigg[G_\lambda^{(\ell-1)}(1)\bigg[\sum_{i=1}^{\ell}
 (-1)^{j+i}\begin{pmatrix}
 j-1\\ \ell-i
 \end{pmatrix}
 \\
 &\qquad \times \int_0^1 \int_0^{s_1}\dots\int_0^{s_{i-1}}\frac{\partial_{s_i}^{j-1-\ell+i}[s_i^{d-1}u_{1,V}(s_i,\lambda)(1+s_i)^{-\frac d2 +\lambda-n+\frac 12}]}{\prod_{r=0}^{j-2}(\frac{d}{2}-\lambda+n-\frac32-r)(1-s_i)^{\frac{d}{2}-\lambda+n+\frac12-j}}ds_i \dots ds_1
 \\
 &\quad +\sum_{m=0}^{\ell-1}\frac{1}{(\ell-m-1)!}\sum_{i=m+1}^{j-1} (-1)^{\ell-1+i-m}\begin{pmatrix}
 i-1
 \\
 m
\end{pmatrix}  
\\
&\qquad \times \lim_{t\to 0}\frac{ \partial_t^{i-m-1}[t^{d-1}u_{1,V}(t,\lambda)(1+t)^{-\frac d2 +\lambda-n+\frac 12}]}{\prod_{r=0}^{i-1}(\frac{d}{2}-\lambda+n-\frac32-r)(1-t)^{\frac{d}{2}-\lambda+n-\frac12-i}}\bigg]\bigg].
\end{split}
\end{equation}
To estimate this expression, further manipulations are required. To that end, we compute
\begin{align*}
(-1)^{j}& \int_\rho^1 G_\lambda^{(j-1)}(s_1)\int_0^{s_1} \int_0^{s_2}\dots \int_0^{s_{j-1} }\frac{s_j^{d-1} u_{1,V}(s_j,\lambda)}{(1-s_j^2)^{\frac{d}{2}-\lambda+n-\frac12}}ds_j \dots ds_2ds_1
\\
&= 
(-1)^{j}\int_\rho^1 \frac{G_\lambda^{(j-1)}(s_1) s_1^{d-1} u_{1,V}(s_1,\lambda)(1+s_1)^{-\frac{d}{2}-n+\lambda+\frac12}}{\prod_{r=0}^{j-2}(\frac{d}{2}-\lambda+n-\frac32-r)(1-s_1)^{\frac{d}{2}-\lambda+n+\frac12-j}}ds_1
\\
&\quad + \sum_{i=1}^{j-1}(-1)^{j+i}\begin{pmatrix}
j-1
\\
j-1-i
\end{pmatrix} \int_\rho^1 G_\lambda^{(j-1)}(s_1)
\\
&\qquad \times \int_0^{s_1} \int_0^{s_2}\dots \int_0^{s_{i} }\frac{\partial^{i}_{s_{i+1}}[ s_{i+1}^{d-1} u_{1,V}(s_{i+1},\lambda)(1+s_{i+1})^{-\frac{d}{2}+\lambda-n+\frac12}]}{\prod_{r=0}^{j-2}(\frac{d}{2}-\lambda+n-\frac32-r)(1-s_{i+1})^{\frac{d}{2}-\lambda+n+\frac12-j}}ds_{i+1} \dots ds_1
\\
&\quad +\sum_{m=0}^{j-2}\sum_{i=m+1}^{j-1}(-1)^{j+i-m}\begin{pmatrix}
i-1
\\
m
\end{pmatrix}
\\
&\qquad \times 
\lim_{t\to 0}  \frac{\partial_t^{i-m-1}[ t^{d-1} u_{1,V}(t,\lambda)(1+t)^{-\frac{d}{2}+\lambda-n+\frac12}]}{\prod_{r=0}^{i-1}(\frac{d}{2}-\lambda+n-\frac32-r)(1-t)^{\frac{d}{2}-\lambda+n-\frac12-i}}\int_\rho^1 G_\lambda^{(j-1)}(s_1)\frac{s_1^{j-2-m}}{(j-2-m)!}ds_1.
\end{align*}
and 
\begin{align*}
-&\sum_{\ell=0}^{j-1}(-1)^{\ell}U_{1,\ell}(\rho,\lambda)G_\lambda^{(\ell-1)}(\rho)+K_j(G_\lambda)(\lambda)
\\
 &=
 \sum_{\ell=0}^{j-1}
 \bigg[-G_\lambda^{(\ell-1)}(\rho)\sum_{i=1}^{\ell}
 (-1)^{j+i-1}\begin{pmatrix}
 j-1\\ \ell-i
 \end{pmatrix}
 \\
 &\qquad \times \int_0^\rho \int_0^{s_1}\dots\int_0^{s_{i-1}}\frac{\partial_{s_i}^{j-1-\ell+i}[s_i^{d-1}u_{1,V}(s_i,\lambda)(1+s_i)^{-\frac d2 +\lambda-n+\frac 12}]}{\prod_{r=0}^{j-2}(\frac{d}{2}-\lambda+n-\frac32-r)(1-s_i)^{\frac{d}{2}-\lambda+n+\frac12-j}}ds_i \dots ds_1
 \\
 &\quad - G_\lambda^{(\ell-1)}(\rho)(1-\rho)^{-\frac{d}{2}-n+\lambda+\frac12+\ell}h(\rho,\lambda)
  \\
 &\quad +
G_\lambda^{(\ell-1)}(1)\sum_{i=1}^{\ell}
 (-1)^{j+i-1}\begin{pmatrix}
 j-1\\ \ell-i
 \end{pmatrix}
 \\
 &\qquad \times \int_0^1 \int_0^{s_1}\dots\int_0^{s_{i-1}}\frac{\partial_{j-i}[s_i^{d-1}u_{1,V}(s_i,\lambda)(1+s_i)^{-\frac d2 +\lambda-n+\frac 12}]}{\prod_{r=0}^{\ell}(\frac{d}{2}-\lambda+n-\frac32-r)(1-s_i)^{\frac{d}{2}+n-\lambda-\frac12-j}}ds_i \dots ds_1
 \\
 &\quad + \sum_{m=0}^{\ell-1}\sum_{i=m+1}^{j-1} (-1)^{\ell+i-m}\begin{pmatrix}
 i-1
 \\
 m
\end{pmatrix}  \lim_{t\to 0}\frac{ \partial_t^{i-m-1}[t^{d-1}u_{1,V}(t,\lambda)(1+t)^{-\frac d2 +\lambda-n+\frac 12}]}{\prod_{r=0}^{i-1}(\frac{d}{2}-\lambda+n-\frac32-r)(1-t)^{\frac{d}{2}-\lambda+n-\frac12-i}}
 \\
 &\qquad
 \times \int_\rho^1\partial_s[G_\lambda^{(\ell-1)}(s)\sum_{m=0}^{\ell-1}\frac{s^{\ell-1-m}}{(\ell-m-1)!}]ds\bigg].
\end{align*}
To proceed, we compare these different terms that we just computed, starting with the expressions that contain an evaluation at zero, i.e.,
\begin{align*}
&\quad  \sum_{\ell=1}^{j-1}\sum_{m=0}^{\ell-1}\sum_{i=m+1}^{j-1} (-1)^{\ell+i-m}\begin{pmatrix}
 i-1
 \\
 m
\end{pmatrix}  \lim_{t\to 0}\frac{ \partial_t^{i-m-1}[t^{d-1}u_{1,V}(t,\lambda)(1+t)^{-\frac d2 +\lambda-n+\frac 12}]}{\prod_{r=0}^{i-1}(\frac{d}{2}-\lambda+n-\frac32-r)(1-t)^{\frac{d}{2}-\lambda+n-\frac12-i}}
 \\
 &\qquad
 \times \int_\rho^1\partial_s[G_\lambda^{(\ell-1)}(s)\sum_{m=0}^{\ell-1}\frac{s^{\ell-1-m}}{(\ell-m-1)!}]ds 
 \\
 &\quad+\sum_{m=0}^{j-2}\sum_{i=m+1}^{j-1}(-1)^{j+i-m}\begin{pmatrix}
i-1
\\
m
\end{pmatrix}
\lim_{t\to 0}  \frac{\partial_t^{i-m-1}[ t^{d-1} u_{1,V}(t,\lambda)(1+t)^{-\frac{d}{2}+\lambda-n+\frac12}]}{\prod_{r=0}^{i-1}(\frac{d}{2}-\lambda+n-\frac32-r)(1-t)^{\frac{d}{2}-\lambda+n-\frac12-i}}
 \\
 &\qquad \times
 \int_\rho^1 G_\lambda^{(j-1)}(s_1)\frac{s_1^{j-2-m}}{(j-2-m)!}ds_1.
\end{align*}
We start with the terms that contain the two highest order derivatives,
\begin{align*}
&\quad\sum_{m=0}^{j-2}\sum_{i=m+1}^{j-1} (-1)^{j-1+i-m}\begin{pmatrix}
i-1
\\
m
\end{pmatrix} \lim_{t\to 0}\frac{ \partial_t^{i-m-1}[t^{d-1}u_{1,V}(t,\lambda)(1+t)^{-\frac d2 +\lambda-n+\frac 12}]}{\prod_{r=0}^{i-1}(\frac{d}{2}-\lambda+n-\frac32-r)(1-t)^{\frac{d}{2}-\lambda+n-\frac12-i}}
 \\
 &\qquad
 \times \int_\rho^1\partial_s[G_\lambda^{(j-2)}(s)\frac{s^{j-2-m}}{(j-m-2)!}]ds
\\
&\quad+\sum_{m=0}^{j-2}\sum_{i=m+1}^{j-1}(-1)^{j+i-m}\begin{pmatrix}
i-1
\\
m
\end{pmatrix}
\lim_{t\to 0}  \frac{\partial_t^{i-m-1}[ t^{d-1} u_{1,V}(t,\lambda)(1+t)^{\frac{d}{2}-\lambda+n-\frac12}]}{\prod_{r=0}^{i-1}(\frac{d}{2}-\lambda+n-\frac32-r)(1-t)^{\frac{d}{2}-\lambda+n-\frac12-i}}\\
&\qquad \times\int_\rho^1 G_\lambda^{(j-1)}(s_1)\frac{s_1^{j-2-m}}{(j-2-m)!}ds_1.
\\
&=\sum_{m=0}^{j-2}\sum_{i=m+1}^{j-1} (-1)^{j-1+i-m}\begin{pmatrix}
i-1
\\
m
\end{pmatrix}\lim_{t\to 0}\frac{ \partial_t^{i-m-1}[t^{d-1}u_{1,V}(t,\lambda)(1+t)^{-\frac d2 +\lambda-n+\frac 12}]}{\prod_{r=0}^{i-1}(\frac{d}{2}-\lambda+n-\frac32-r)(1-t)^{\frac{d}{2}-\lambda+n-\frac12-i}}
 \\
 &\qquad
 \times \int_\rho^1G_\lambda^{(j-2)}(s)\partial_s[\frac{s^{j-2-m}}{(j-m-2)!}]ds
 \\
 &=\sum_{m=0}^{j-3}\sum_{i=m+1}^{j-1} (-1)^{j-1+i-m}\begin{pmatrix}
i-1
\\
m
\end{pmatrix}\lim_{t\to 0}\frac{ \partial_t^{i-m-1}[t^{d-1}u_{1,V}(t,\lambda)(1+t)^{-\frac d2 +\lambda-n+\frac 12}]}{\prod_{r=0}^{i-1}(\frac{d}{2}-\lambda+n-\frac32-r)(1-t)^{\frac{d}{2}-\lambda+n-\frac12-i}}
 \\
 &\qquad
 \times \int_\rho^1G_\lambda^{(j-2)}(s)\frac{s^{j-3-m}}{(j-m-3)!}ds.
\end{align*}
Consequently, one progresses inductively in $\ell$, to conclude that 
\begin{align*}
  \sum_{\ell=1}^{j-1}&\sum_{m=0}^{\ell-1}\sum_{i=m+1}^{j-1} (-1)^{\ell+i-m}\begin{pmatrix}
 i-1
 \\
 m
\end{pmatrix}  \lim_{t\to 0}\frac{ \partial_t^{i-m-1}[t^{d-1}u_{1,V}(t,\lambda)(1+t)^{-\frac d2 +\lambda-n+\frac 12}]}{\prod_{r=0}^{i-1}(\frac{d}{2}-\lambda+n-\frac32-r)(1-t)^{\frac{d}{2}-\lambda+n-\frac12-i}}
 \\
 &\qquad
 \times \int_\rho^1\partial_s[G_\lambda^{(\ell-1)}(s)\sum_{m=0}^{\ell-1}\frac{s^{\ell-1-m}}{(\ell-m-1)!}]ds 
 \\
 &\quad+\sum_{m=0}^{j-2}\sum_{i=m+1}^{j-1}(-1)^{j+i-m}\begin{pmatrix}
i-1
\\
m
\end{pmatrix}
\lim_{t\to 0}  \frac{\partial_t^{i-m-1}[ t^{d-1} u_{1,V}(t,\lambda)(1+t)^{-\frac{d}{2}+\lambda-n+\frac12}]}{\prod_{r=0}^{i-1}(\frac{d}{2}-\lambda+n-\frac32-r)(1-t)^{\frac{d}{2}-\lambda+n-\frac12-i}}
 \\
 &\qquad \times
 \int_\rho^1 G_\lambda^{(j-1)}(s_1)\frac{s_1^{j-2-m}}{(j-2-m)!}ds_1
 \\ 
 &\quad =0.
\end{align*}
Therefore, the only terms that require some further manipulation to yield something regular are
\begin{align*}
&
\sum_{i=1}^{j-1}(-1)^{j+i}\begin{pmatrix}
j-1
\\
j-1-i
\end{pmatrix} \int_\rho^1 G_\lambda^{(j-1)}(s_1)
\\
&\qquad \times\int_0^{s_1} \int_0^{s_2}\dots \int_0^{s_{i} }\frac{\partial^{i}_{s_{i+1}}[ s_{i+1}^{d-1} u_{1,V}(s_{i+1},\lambda)(1+s_{i+1})^{-\frac{d}{2}+\lambda-n+\frac12}]}{\prod_{r=0}^{j-2}(\frac{d}{2}-\lambda+n-\frac32-r)(1-s_{i+1})^{\frac{d}{2}-\lambda+n+\frac12-j}}ds_{i+1} \dots ds_1
  \\
  &\quad+ \sum_{\ell=1}^{j-1}\bigg[
  -G_\lambda^{(\ell-1)}(\rho)\sum_{i=1}^{\ell}
 (-1)^{j-1+i}\begin{pmatrix}
 j-1\\ \ell-i
 \end{pmatrix}
 \\ 
 &\qquad \times \int_0^\rho \int_0^{s_1}\dots\int_0^{s_{i-1}}\frac{\partial_{s_i}^{j-1-\ell+i}[s_i^{d-1}u_{1,V}(s_i,\lambda)(1+s_i)^{-\frac d2 +\lambda-n+\frac 12}]}{\prod_{r=0}^{j-2}(\frac{d}{2}-\lambda+n-\frac32-r)(1-s_i)^{\frac{d}{2}-\lambda+n+\frac12-j}}ds_i \dots ds_1
 \\
 &\quad +
G_\lambda^{(\ell-1)}(1)\sum_{i=1}^{\ell}
 (-1)^{j-1+i}\begin{pmatrix}
 j-1\\ \ell-i
 \end{pmatrix}
 \\
 &\qquad \times \int_0^1 \int_0^{s_1}\dots\int_0^{s_{i-1}}\frac{\partial_{s_i}^{j-1-\ell+i}[s_i^{d-1}u_{1,V}(s_i,\lambda)(1+s_i)^{-\frac d2 +\lambda-n+\frac 12}]}{\prod_{r=0}^{j-2}(\frac{d}{2}-\lambda+n-\frac32-r)(1-s_i)^{\frac{d}{2}-\lambda+n+\frac12-j}}ds_i \dots ds_1\bigg].
 \end{align*}
 To deal with these terms, we integrate by parts once in each one of them to conclude that
 \begin{align*}
&\sum_{i=1}^{j-1}(-1)^{j+i}
\begin{pmatrix}
j-1
\\
j-1-i
\end{pmatrix} \int_\rho^1 G_\lambda^{(j-1)}(s_1)
\\
&\quad \times \int_0^{s_1} \int_0^{s_2}\dots \int_0^{s_{i} }\frac{\partial^{i}_{s_{i+1}}[ s_{i+1}^{d-1} u_{1,V}(s_{i+1},\lambda)(1+s_{i+1})^{-\frac{d}{2}+\lambda-n+\frac12}]}{\prod_{r=0}^{j-2}(\frac{d}{2}-\lambda+n-\frac32-r)(1-s_{i+1})^{\frac{d}{2}-\lambda+n+\frac12-j}}ds_{i+1} \dots ds_1
  \\
 &=
(-1)^{j}\begin{pmatrix}
j-1
\\
j-2
\end{pmatrix} \int_\rho^1 G_\lambda^{(j-2)}(s_1)\frac{\partial_{s_{1}}[ s_{1}^{d-1} u_{1,V}(s_{1},\lambda)(1+s_{1})^{-\frac{d}{2}+\lambda-n+\frac12}]}{\prod_{r=0}^{j-2}(\frac{d}{2}-\lambda+n-\frac32-r)(1-s_{1})^{\frac{d}{2}-\lambda+n+\frac12-j}} ds_1
\\
&\quad+\sum_{i=1}^{j-2}(-1)^{j+i}\begin{pmatrix}
j-1
\\
j-2-i
\end{pmatrix} \int_\rho^1 G_\lambda^{(j-2)}(s_1)
\\
&\qquad \times \int_0^{s_1} \int_0^{s_2}\dots \int_0^{s_{i} }\frac{\partial^{i+1}_{s_{i+1}}[ s_{i+1}^{d-1} u_{1,V}(s_{i+1},\lambda)(1+s_{i+1})^{-\frac{d}{2}+\lambda-n+\frac12}]}{\prod_{r=0}^{j-2}(\frac{d}{2}-\lambda+n-\frac32-r)(1-s_{i+1})^{\frac{d}{2}-\lambda+n+\frac12-j}}ds_{i+1} \dots ds_1
\\
&\quad +\sum_{i=1}^{j-1}(-1)^{j+i}\begin{pmatrix}
j-1
\\
j-1-i
\end{pmatrix} G_\lambda^{(j-2)}(1)
\\
&\qquad \times 
\int_0^{1}
\int_0^{s_1}\dots \int_0^{s_{i-1} }\frac{\partial^{i}_{s_{i}}[ s_{i}^{d-1} u_{1,V}(s_{i},\lambda)(1+s_{i})^{-\frac{d}{2}+\lambda-n+\frac12}]}{\prod_{r=0}^{j-2}(\frac{d}{2}-\lambda+n-\frac32-r)(1-s_{i})^{\frac{d}{2}-\lambda+n+\frac12-j}}ds_{i} \dots ds_1
\\
&\quad -\sum_{i=1}^{j-1}(-1)^{j+i}\begin{pmatrix}
j-1
\\
j-1-i
\end{pmatrix}  G_\lambda^{(j-2)}(\rho)
\\
&\qquad \times \int_0^{\rho} \int_0^{s_1}\dots \int_0^{s_{i-1} }\frac{\partial^{i}_{s_{i}}[ s_{i}^{d-1} u_{1,V}(s_{i},\lambda)(1+s_{i})^{-\frac{d}{2}+\lambda-n+\frac12}]}{\prod_{r=0}^{j-2}(\frac{d}{2}-\lambda+n-\frac32-r)(1-s_{i})^{\frac{d}{2}-\lambda+n+\frac12-j}}ds_{i} \dots ds_1,
 \end{align*}
 which implies
 \begin{align*}
&
\sum_{i=1}^{j-1}(-1)^{j+i}\begin{pmatrix}
j-1
\\
j-1-i
\end{pmatrix} \int_\rho^1 G_\lambda^{(j-1)}(s_1)
\\
&\qquad \times \int_0^{s_1} \int_0^{s_2}\dots \int_0^{s_{i} }\frac{\partial^{i}_{s_{i+1}}[ s_{i+1}^{d-1} u_{1,V}(s_{i+1},\lambda)(1+s_{i+1})^{-\frac{d}{2}+\lambda-n+\frac12}]}{\prod_{r=0}^{j-2}(\frac{d}{2}-\lambda+n-\frac32-r)(1-s_{i+1})^{\frac{d}{2}-\lambda+n+\frac12-j}}ds_{i+1} \dots ds_1
  \\
  & \quad + \sum_{\ell=1}^{j-1}\bigg[
  -G_\lambda^{(\ell-1)}(\rho)\sum_{i=1}^{\ell}
 (-1)^{j-1+i}\begin{pmatrix}
 j-1\\ \ell-i
 \end{pmatrix}
 \\
 &\qquad \times \int_0^\rho \int_0^{s_1}\dots\int_0^{s_{i-1}}\frac{\partial_{s_i}^{j-1-\ell+i}[s_i^{d-1}u_{1,V}(s_i,\lambda)(1+s_i)^{-\frac d2 +\lambda-n+\frac 12}]}{\prod_{r=0}^{j-2}(\frac{d}{2}-\lambda+n-\frac32-r)(1-s_i)^{\frac{d}{2}-\lambda+n+\frac12-j}}ds_i \dots ds_1
 \\
 & \quad +
G_\lambda^{(\ell-1)}(1)\sum_{i=1}^{\ell}
 (-1)^{j-1+i}\begin{pmatrix}
 j-1\\ \ell-i
 \end{pmatrix}
 \\
 &\qquad \times \int_0^1 \int_0^{s_1}\dots\int_0^{s_{i-1}}\frac{\partial_{s_i}^{j-1-\ell+i}[s_i^{d-1}u_{1,V}(s_i,\lambda)(1+s_i)^{-\frac d2 +\lambda-n+\frac 12}]}{\prod_{r=0}^{j-2}(\frac{d}{2}-\lambda+n-\frac32-r)(1-s_i)^{\frac{d}{2}-\lambda+n+\frac12-j}}ds_i \dots ds_1\bigg]
 \\
 &=
  \sum_{i=1}^{j-1}(-1)^{j}\begin{pmatrix}
j-1
\\
j-2
\end{pmatrix} \int_\rho^1 G_\lambda^{(j-2)}(s_1)\frac{\partial_{s_{1}}[ s_{1}^{d-1} u_{1,V}(s_{1},\lambda)(1+s_{1})^{-\frac{d}{2}+\lambda-n+\frac12}]}{\prod_{r=0}^{j-2}(\frac{d}{2}-\lambda+n-\frac32-r)(1-s_{1})^{\frac{d}{2}-\lambda+n+\frac12-j}} ds_1
\\
& \quad +\sum_{i=1}^{j-2}(-1)^{j+i}\begin{pmatrix}
j-1
\\
j-2-i
\end{pmatrix} \int_\rho^1 G_\lambda^{(j-2)}(s_1)
\\
&\qquad \times \int_0^{s_1} \int_0^{s_2}\dots \int_0^{s_{i} }\frac{\partial^{i+1}_{s_{i+1}}[ s_{i+1}^{d-1} u_{1,V}(s_{i+1},\lambda)(1+s_{i+1})^{-\frac{d}{2}+\lambda-n+\frac12}]}{\prod_{r=0}^{j-2}(\frac{d}{2}-\lambda+n-\frac32-r)(1-s_{i+1})^{\frac{d}{2}-\lambda+n+\frac12-j}}ds_{i+1} \dots ds_1
\\
  & \quad + \sum_{\ell=1}^{j-2}\bigg[
  -G_\lambda^{(\ell-1)}(\rho)\sum_{i=1}^{\ell}
 (-1)^{j-1+i}\begin{pmatrix}
 j-1\\ \ell-i
 \end{pmatrix}
 \\
 &\qquad \times \int_0^\rho \int_0^{s_1}\dots\int_0^{s_{i-1}}\frac{\partial_{s_i}^{j-1-\ell+i}[s_i^{d-1}u_{1,V}(s_i,\lambda)(1+s_i)^{-\frac d2 +\lambda-n+\frac 12}]}{\prod_{r=0}^{j-2}(\frac{d}{2}-\lambda+n-\frac32-r)(1-s_i)^{\frac{d}{2}-\lambda+n+\frac12-j}}ds_i \dots ds_1
 \\
 & \quad +
G_\lambda^{(\ell-1)}(1)\sum_{i=1}^{\ell}
 (-1)^{j-1+i}\begin{pmatrix}
 j-1\\ \ell-i
 \end{pmatrix}
 \\
 &\qquad \times \int_0^1 \int_0^{s_1}\dots\int_0^{s_{i-1}}\frac{\partial_{s_i}^{j-1-\ell+i}[s_i^{d-1}u_{1,V}(s_i,\lambda)(1+s_i)^{-\frac d2 +\lambda-n+\frac 12}]}{\prod_{r=0}^{j-2}(\frac{d}{2}-\lambda+n-\frac32-r)(1-s_i)^{\frac{d}{2}-\lambda+n+\frac12-j}}ds_i \dots ds_1\bigg],
 \end{align*}
 i.e., terms containing $G^{(j-2)}(\rho)$ as well as $G^{(j-2)}(1)$ cancel each other out.
By iterating this procedure, one arrives at the representations
\begin{align}\label{R_j}
 \mathcal R&_{j,V}(G_\lambda)(\rho,\lambda)
\nonumber \\ 
&=
u_{1,V}(\rho,\lambda)\sum_{\ell=1}^{j-1} (-1)^{\ell}G_\lambda^{(\ell-1)}(\rho)U_{2,\ell}(\rho,\lambda)
\nonumber \\
&\quad+(-1)^{j}u_{1,V}(\rho,\lambda)\int_0^\rho G_\lambda^{(j-1)}(s_1)\int_0^{s_1} \int_0^{s_2}\dots \int_0^{s_{j-1} }\frac{s_j^{d-1} u_{2,V}(s_j,\lambda)}{(1-s_j^2)^{\frac{d}{2}-\lambda+n-\frac12}}ds_j \dots ds_2ds_1 
\nonumber \\
&\quad+(-1)^{j} u_{2,V}(\rho,\lambda)\sum_{i=0}^{j-1} \begin{pmatrix}
j-1
\\
j-1-i
\end{pmatrix}\int_\rho^1\frac{ G_\lambda^{(j-1-i)}(s) \partial_s^i [ s^{d-1} u_{1,V}(s,\lambda)(1+s)^{-\frac{d}{2}+\lambda-n+\frac12}]}{\prod_{r=0}^{j-2}(\frac{d}{2}-\lambda+n-\frac32-r)(1-s)^{\frac{d}{2}-\lambda+n+\frac12-j}}ds
\nonumber \\
&\quad +\sum_{\ell=1}^{j-1}u_{2,V}(\rho,\lambda)\sum_{i=0}^{j-1-\ell}
\begin{pmatrix}
i+\ell-1
\nonumber \\
\ell-1
\end{pmatrix} (-1)^{\ell-1-i}\frac{G_\lambda^{(\ell-1)}(\rho)\partial_{\rho}^{i}[\rho^{d-1}u_{1,V}(\rho,\lambda)(1+\rho)^{-\frac d2 +\lambda-n+\frac 12}]}{\prod_{r=0}^{\ell+i-1}(\frac{d}{2}-\lambda+n-\frac32-r)(1-\rho)^{\frac{d}{2}-\lambda+n-\frac12-\ell-i}}.
\nonumber \\
&\quad+(-1)^{j}\widehat{c}_V u_{1,V}(\rho,\lambda)\int_0^1 G_\lambda^{(j-1)}(s_1)\int_0^{s_1} \int_0^{s_2}\dots \int_0^{s_{j-1} }\frac{s_j^{d-1} u_{1,V}(s_j,\lambda)}{(1-s_j^2)^{\frac{d}{2}-\lambda+n-\frac12}}ds_j \dots ds_2ds_1 
\end{align}
and
\begin{align*}
 \partial_\rho& \Rm_{j,V}(G_\lambda)(\rho,\lambda)
\\
&=
u_{1,V}'(\rho,\lambda)\sum_{\ell=1}^{j-1} (-1)^{\ell}G_\lambda^{(\ell-1)}(\rho)U_{2,\ell}(\rho,\lambda)
\\
&\quad+(-1)^{j}u_{1,V}'(\rho,\lambda)\int_0^\rho G_\lambda^{(j-1)}(s_1)\int_0^{s_1} \int_0^{s_2}\dots \int_0^{s_{j-1} }\frac{s_j^{d-1} u_{2,V}(s_j,\lambda)}{(1-s_j^2)^{\frac{d}{2}-\lambda+n-\frac12}}ds_j \dots ds_2ds_1 
\\
&\quad+(-1)^{j} u_{2,V}'(\rho,\lambda)\sum_{i=0}^{j-1} \begin{pmatrix}
j-1
\\
j-1-i
\end{pmatrix}\int_\rho^1\frac{ G_\lambda^{(j-1-i)}(s) \partial_s^i [ s^{d-1} u_{1,V}(s,\lambda)(1+s)^{-\frac{d}{2}+\lambda-n+\frac12}]}{\prod_{r=0}^{j-2}(\frac{d}{2}-\lambda+n-\frac32-r)(1-s)^{\frac{d}{2}-\lambda+n+\frac12-j}}ds
\\
&\quad +\sum_{\ell=1}^{j-1}u_{2,V}'(\rho,\lambda)\sum_{i=0}^{j-1-\ell}
\begin{pmatrix}
i+\ell-1
\\
\ell-1
\end{pmatrix} (-1)^{\ell-1-i}\frac{G_\lambda^{(\ell-1)}(\rho)\partial_{\rho}^{i}[\rho^{d-1}u_{1,V}(\rho,\lambda)(1+\rho)^{-\frac d2 +\lambda-n+\frac 12}]}{\prod_{r=0}^{\ell+i-1}(\frac{d}{2}-\lambda+n-\frac32-r)(1-\rho)^{\frac{d}{2}-\lambda+n-\frac12-\ell-i}}
\\
&\quad+(-1)^{j}\widehat{c}_V u_{1,V}'(\rho,\lambda)\int_0^1 G_\lambda^{(j-1)}(s_1)\int_0^{s_1} \int_0^{s_2}\dots \int_0^{s_{j-1} }\frac{s_j^{d-1} u_{1,V}(s_j,\lambda)}{(1-s_j^2)^{\frac{d}{2}-\lambda+n-\frac12}}ds_j \dots ds_2ds_1.
\end{align*}
To derive an $H^k$-bound of this expression, we also need to make sure that at most $k-1$ derivatives fall on $G_\lambda$. 
Therefore, we note that 
\begin{align*}
 \partial_\rho\bigg[\sum_{\ell=1}^{j-1}& (-1)^{\ell}G_\lambda^{(\ell-1)}(\rho)U_{w,\ell}(\rho,\lambda)
\\
&\quad+(-1)^{j}\int_0^\rho G_\lambda^{(j-1)}(s_1)\int_0^{s_1} \int_0^{s_2}\dots \int_0^{s_{j-1} }\frac{s_j^{d-1} u_{w,V}(s_j,\lambda)}{(1-s_j^2)^{\frac{d}{2}-\lambda+n-\frac12}}ds_j \dots ds_2ds_1 \bigg]
\\
&= -G_\lambda(\rho)\partial_\rho U_{w,1}(\rho),
\end{align*}
for $w=0,2$.
To derive a similar identity for
\begin{multline*}
\partial_\rho\bigg[(-1)^{j}\sum_{i=0}^{j-1} \begin{pmatrix}
j-1
\\
j-1-i
\end{pmatrix}\int_\rho^1\frac{ G_\lambda^{(j-1-i)}(s) \partial_s^i [ s^{d-1} u_{1,V}(s,\lambda)(1+s)^{-\frac{d}{2}+\lambda-n+\frac12}]}{\prod_{r=0}^{j-2}(\frac{d}{2}-\lambda+n-\frac32-r)(1-s)^{\frac{d}{2}-\lambda+n+\frac12-j}}ds
\\
+\sum_{\ell=1}^{j-1}\sum_{i=0}^{j-1-\ell}
\begin{pmatrix}
i+\ell-1
\\
\ell-1
\end{pmatrix} (-1)^{\ell-1-i}\frac{G_\lambda^{(\ell-1)}(\rho)\partial_{\rho}^{i}[\rho^{d-1}u_{1,V}(\rho,\lambda)(1+\rho)^{-\frac d2 +\lambda-n+\frac 12}]}{\prod_{r=0}^{\ell+i-1}(\frac{d}{2}-\lambda+n-\frac32-r)(1-\rho)^{\frac{d}{2}-\lambda+n-\frac12-\ell-i}}\bigg],
\end{multline*}
  for convenience, we set
\begin{gather*}
f_i(\rho,\lambda)= \partial_{\rho}^{i}[\rho^{d-1}u_{1,V}(\rho,\lambda)(1+\rho)^{-\frac d2 +\lambda-n+\frac 12}] \implies \partial_\rho f_i(\rho,\lambda)=f_{i+1}(\rho,\lambda),
\\
 h_{i+\ell}(\rho,\lambda)=\frac 1{\prod_{r=0}^{\ell+i-1}(\frac{d}{2}-\lambda+n-\frac32-r)(1-\rho)^{\frac{d}{2}-\lambda+n-\frac12-\ell-i}}\implies \partial_\rho  h_{i+\ell}(\rho,\lambda)= h_{i+\ell-1}(\rho,\lambda).
\end{gather*}
Then, since
\begin{multline*}
(-1)^{j}\sum_{i=0}^{j-1} \begin{pmatrix}
j-1
\\
j-1-i
\end{pmatrix}\int_\rho^1\frac{ G_\lambda^{(j-1-i)}(s) \partial_s^i [ s^{d-1} u_{1,V}(s,\lambda)(1+s)^{-\frac{d}{2}+\lambda-n+\frac12}]}{\prod_{r=0}^{j-2}(\frac{d}{2}-\lambda+n-\frac32-r)(1-s)^{\frac{d}{2}-\lambda+n+\frac12-j}}ds
\\
= (-1)^{j}\sum_{\ell=0}^{j-1} \begin{pmatrix}
j-1
\\
\ell
\end{pmatrix}\int_\rho^1\frac{ G_\lambda^{(\ell)}(s) \partial_s^{j-1-\ell} [ s^{d-1} u_{1,V}(s,\lambda)(1+s)^{-\frac{d}{2}+\lambda-n+\frac12}]}{\prod_{r=0}^{j-2}(\frac{d}{2}-\lambda+n-\frac32-r)(1-s)^{\frac{d}{2}-\lambda+n+\frac12-j}}ds,
\end{multline*}
one computes that 
\begin{align*}
\partial_\rho &\bigg[(-1)^{j}\sum_{\ell=0}^{j-1} \begin{pmatrix}
j-1
\\
\ell
\end{pmatrix}\int_\rho^1\frac{ G_\lambda^{(\ell)}(s) \partial_s^{j-1-\ell} [ s^{d-1} u_{1,V}(s,\lambda)(1+s)^{-\frac{d}{2}+\lambda-n+\frac12}]}{\prod_{r=0}^{j-2}(\frac{d}{2}-\lambda+n-\frac32-r)(1-s)^{\frac{d}{2}-\lambda+n+\frac12-j}}ds
\\
&\quad +\sum_{\ell=1}^{j-1}\sum_{i=0}^{j-1-\ell}
\begin{pmatrix}
i+\ell-1
\\
\ell-1
\end{pmatrix} (-1)^{\ell-1-i}\frac{G_\lambda^{(\ell-1)}(\rho)\partial_{\rho}^{i}[\rho^{d-1}u_{1,V}(\rho,\lambda)(1+\rho)^{-\frac d2 +\lambda-n+\frac 12}]}{\prod_{r=0}^{\ell+i-1}(\frac{d}{2}-\lambda+n-\frac32-r)(1-\rho)^{\frac{d}{2}-\lambda+n-\frac12-\ell-i}}\bigg]
\\
&= 
(-1)^{j-1}\sum_{\ell=0}^{j-1} \begin{pmatrix}
j-1
\\
\ell
\end{pmatrix}G_\lambda^{(\ell)}(s)f_{j-1-\ell}(\rho,\lambda) h_{j-1}(\rho,\lambda)
\\
&\quad +\sum_{\ell=1}^{j-1}\sum_{i=0}^{j-1-\ell}
\begin{pmatrix}
i+\ell-1
\\
\ell-1
\end{pmatrix} (-1)^{\ell-1-i}G_\lambda^{(\ell)}(\rho)f_i(\rho,\lambda)h_{\ell+i}(\rho,\lambda)
\\
&\quad +\sum_{\ell=1}^{j-1}\sum_{i=0}^{j-1-\ell}
\begin{pmatrix}
i+\ell-1
\\
\ell-1
\end{pmatrix} (-1)^{\ell-1-i}G_\lambda^{(\ell-1)}(\rho)[f_i(\rho,\lambda)h_{\ell+i-1}(\rho,\lambda)+f_{i+1}(\rho,\lambda)h_{\ell+i}(\rho,\lambda)].
\end{align*}
Note, furthermore, that
\begin{align*}
&(-1)^{j-1}\sum_{\ell=1}^{j-1} \begin{pmatrix}
j-1
\\
\ell
\end{pmatrix}G_\lambda^{(\ell)}(s)f_{j-1-\ell}(\rho,\lambda) h_{j-1}(\rho,\lambda)
\\
&\quad +\sum_{\ell=1}^{j-1}\sum_{i=0}^{j-1-\ell}
\begin{pmatrix}
i+\ell-1
\\
\ell-1
\end{pmatrix} (-1)^{\ell-1-i}G_\lambda^{(\ell)}(\rho)f_i(\rho,\lambda)h_{\ell+i}(\rho,\lambda)
\\
&=
(-1)^{j-1}\sum_{\ell=1}^{j-2} \begin{pmatrix}
j-1
\\
\ell
\end{pmatrix}G_\lambda^{(\ell)}(s)f_{j-1-\ell}(\rho,\lambda) h_{j-1}(\rho,\lambda)
\\
&\quad +\sum_{\ell=1}^{j-2}\sum_{i=0}^{j-1-\ell}
\begin{pmatrix}
i+\ell-1
\\
\ell-1
\end{pmatrix} (-1)^{\ell-1-i}G_\lambda^{(\ell)}(\rho)f_i(\rho,\lambda)h_{\ell+i}(\rho,\lambda)
\\
&= (-1)^{j-1}\sum_{\ell=2}^{j-1} \begin{pmatrix}
j-1
\\
\ell-1
\end{pmatrix}G_\lambda^{(\ell-1)}(s)f_{j-\ell}(\rho,\lambda) h_{j-1}(\rho,\lambda)
\\
&\quad +\sum_{\ell=2}^{j-1}\sum_{i=0}^{j-\ell}
\begin{pmatrix}
i+\ell-2
\\
\ell-2
\end{pmatrix} (-1)^{\ell-i}G_\lambda^{(\ell-1)}(\rho)f_i(\rho,\lambda)h_{\ell+i-1}(\rho,\lambda).
\end{align*}
Therefore,
\begin{align*}
\partial_\rho& \bigg[(-1)^{j}\sum_{\ell=0}^{j-1} \begin{pmatrix}
j-1
\\
\ell
\end{pmatrix}\int_\rho^1\frac{ G_\lambda^{(\ell)}(s) \partial_s^{j-1-\ell} [ s^{d-1} u_{1,V}(s,\lambda)(1+s)^{-\frac{d}{2}+\lambda-n+\frac12}]}{\prod_{r=0}^{j-2}(\frac{d}{2}-\lambda+n-\frac32-r)(1-s)^{\frac{d}{2}-\lambda+n+\frac12-j}}ds
\\
&\quad +\sum_{\ell=1}^{j-1}\sum_{i=0}^{j-1-\ell}
\begin{pmatrix}
i+\ell-1
\\
\ell-1
\end{pmatrix} (-1)^{\ell-1-i}\frac{G_\lambda^{(\ell-1)}(\rho)\partial_{\rho}^{i}[\rho^{d-1}u_{1,V}(\rho,\lambda)(1+\rho)^{-\frac d2 +\lambda-n+\frac 12}]}{\prod_{r=0}^{\ell+i-1}(\frac{d}{2}-\lambda+n-\frac32-r)(1-\rho)^{\frac{d}{2}-\lambda+n-\frac12-\ell-i}}\bigg]
\\
&=  G_\lambda(\rho)[(-1)^{j}f_{j-1}(\rho)h_{j-1}(\rho)+\sum_{i=0}^{j-2}(-1)^{i-1}
\partial_\rho[f_i(\rho,\lambda)h_{1+i}(\rho,\lambda)]
\\
&\quad +(-1)^{j-1}\sum_{\ell=2}^{j-1} \begin{pmatrix}
j-1
\\
\ell-1
\end{pmatrix}G_\lambda^{(\ell-1)}(s)f_{j-\ell}(\rho,\lambda) h_{j-1}(\rho,\lambda)
\\
&\quad +\sum_{\ell=2}^{j-1}\sum_{i=0}^{j-\ell}
\begin{pmatrix}
i+\ell-2
\\
\ell-2
\end{pmatrix} (-1)^{\ell-i}G_\lambda^{(\ell-1)}(\rho)f_i(\rho,\lambda)h_{\ell+i-1}(\rho,\lambda)
\\
&\quad +\sum_{\ell=2}^{j-1}\sum_{i=0}^{j-1-\ell}
\begin{pmatrix}
i+\ell-1
\\
\ell-1
\end{pmatrix} (-1)^{\ell-1-i}G_\lambda^{(\ell-1)}(\rho)f_i(\rho,\lambda)h_{\ell+i-1}(\rho,\lambda)
\\
&\quad
+\sum_{\ell=2}^{j-1}\sum_{i=0}^{j-1-\ell}
\begin{pmatrix}
i+\ell-1
\\
\ell-1
\end{pmatrix} (-1)^{\ell-1-i}G_\lambda^{(\ell-1)}(\rho)f_{i+1}(\rho,\lambda)h_{\ell+i}(\rho,\lambda),
\end{align*}

\begin{align*}
&=
G_\lambda(\rho)[(-1)^{j}f_{j-1}(\rho)h_{j-1}(\rho)+ \sum_{i=0}^{j-2}(-1)^{i-1}
\partial_\rho[f_i(\rho,\lambda)h_{1+i}(\rho,\lambda)]
\\
&\quad +(-1)^{j-1}\sum_{\ell=2}^{j-1} \begin{pmatrix}
j-1
\\
\ell-1
\end{pmatrix}G_\lambda^{(\ell-1)}(s)f_{j-\ell}(\rho,\lambda) h_{j-1}(\rho,\lambda)
\\
&\quad +\sum_{\ell=2}^{j-1}\sum_{i=0}^{j-\ell}
\begin{pmatrix}
i+\ell-2
\\
\ell-2
\end{pmatrix} (-1)^{\ell-i}G_\lambda^{(\ell-1)}(\rho)f_i(\rho,\lambda)h_{\ell+i-1}(\rho,\lambda)
\\
&\quad +\sum_{\ell=2}^{j-1}\sum_{i=0}^{j-1-\ell}
\begin{pmatrix}
i+\ell-1
\\
\ell-1
\end{pmatrix} (-1)^{\ell-1-i}G_\lambda^{(\ell-1)}(\rho)f_i(\rho,\lambda)h_{\ell+i-1}(\rho,\lambda)
\\
&\quad
+\sum_{\ell=2}^{j-1}\sum_{i=1}^{j-\ell}
\begin{pmatrix}
i+\ell-2
\\
\ell-1
\end{pmatrix} (-1)^{\ell-i}G_\lambda^{(\ell-1)}(\rho)f_{i}(\rho,\lambda)h_{\ell+i-1}(\rho,\lambda)
\\
&=G_\lambda(\rho)\left[(-1)^{j}f_{j-1}(\rho)h_{j-1}(\rho)+ \sum_{i=0}^{j-2}(-1)^{i-1}
\partial_\rho[f_i(\rho,\lambda)h_{1+i}(\rho,\lambda)]\right]
\\
&\quad +\sum_{\ell=2}^{j-1}\sum_{i=0}^{j-\ell}
\begin{pmatrix}
i+\ell-2
\\
\ell-2
\end{pmatrix} (-1)^{\ell-i}G_\lambda^{(\ell-1)}(\rho)f_i(\rho,\lambda)h_{\ell+i-1}(\rho,\lambda)
\\
&\quad +\sum_{\ell=2}^{j-1}\sum_{i=0}^{j-\ell}
\begin{pmatrix}
i+\ell-1
\\
\ell-1
\end{pmatrix} (-1)^{\ell-1-i}G_\lambda^{(\ell-1)}(\rho)f_i(\rho,\lambda)h_{\ell+i-1}(\rho,\lambda)
\\
&\quad
+\sum_{\ell=2}^{j-1}\sum_{i=1}^{j-\ell}
\begin{pmatrix}
i+\ell-2
\\
\ell-1
\end{pmatrix} (-1)^{\ell-i}G_\lambda^{(\ell-1)}(\rho)f_{i}(\rho,\lambda)h_{\ell+i-1}(\rho,\lambda).
\end{align*}
Next, due to the recurrence relation for the binomial coefficients, and the fact that the $i=0$ terms cancel, the only terms left in the above sums are given by
\begin{align*}
G_\lambda(\rho)\left[(-1)^{j}f_{j-1}(\rho)h_{j-1}(\rho)+\sum_{i=0}^{j-2}(-1)^{i-1}
\partial_\rho[f_i(\rho,\lambda)h_{1+i}(\rho,\lambda)]\right]. 
\end{align*}
Thus, one computes that
\begin{align*}
\partial_\rho& \bigg[u_{2,V}'(\rho,\lambda)(-1)^{j}\sum_{\ell=0}^{j-1} \begin{pmatrix}
j-1
\\
\ell
\end{pmatrix}\int_\rho^1\frac{ G_\lambda^{(\ell)}(s) \partial_s^{j-1-\ell} [ s^{d-1} u_{1,V}(s,\lambda)(1+s)^{-\frac{d}{2}+\lambda-n+\frac12}]}{\prod_{r=0}^{j-2}(\frac{d}{2}-\lambda+n-\frac32-r)(1-s)^{\frac{d}{2}-\lambda+n+\frac12-j}}ds
\\
&\quad +u_{2,V}'(\rho,\lambda)\sum_{\ell=1}^{j-1}\sum_{i=0}^{j-1-\ell}
\begin{pmatrix}
i+\ell-1
\\
\ell-1
\end{pmatrix} (-1)^{\ell-1-i}\frac{G_\lambda^{(\ell-1)}(\rho)\partial_{\rho}^{i}[\rho^{d-1}u_{1,V}(\rho,\lambda)(1+\rho)^{-\frac d2 +\lambda-n+\frac 12}]}{\prod_{r=0}^{\ell+i-1}(\frac{d}{2}-\lambda+n-\frac32-r)(1-\rho)^{\frac{d}{2}-\lambda+n-\frac12-\ell-i}}\bigg]
\\
&= u_{2,V}''(\rho,\lambda) \bigg[(-1)^{j}\sum_{\ell=0}^{j-1} \begin{pmatrix}
j-1
\\
\ell
\end{pmatrix}\int_\rho^1\frac{ G_\lambda^{(\ell)}(s) \partial_s^{j-1-\ell} [ s^{d-1} u_{1,V}(s,\lambda)(1+s)^{-\frac{d}{2}+\lambda-n+\frac12}]}{\prod_{r=0}^{j-2}(\frac{d}{2}-\lambda+n-\frac32-r)(1-s)^{\frac{d}{2}-\lambda+n+\frac12-j}}ds
\\
&\quad+\sum_{\ell=2}^{j-1}\sum_{i=0}^{j-1-\ell}
\begin{pmatrix}
i+\ell-1
\\
\ell-1
\end{pmatrix} (-1)^{\ell-1-i}\frac{G_\lambda^{(\ell-1)}(\rho)\partial_{\rho}^{i}[\rho^{d-1}u_{1,V}(\rho,\lambda)(1+\rho)^{-\frac d2 +\lambda-n+\frac 12}]}{\prod_{r=0}^{\ell+i-1}(\frac{d}{2}-\lambda+n-\frac32-r)(1-\rho)^{\frac{d}{2}-\lambda+n-\frac12-\ell-i}} \bigg]
\\
&\quad+ u_{2,V}'(\rho,\lambda) G_\lambda(\rho)\left[(-1)^{j}f_{j-1}(\rho)h_{j-1}(\rho)+ \sum_{i=0}^{j-2}(-1)^{i-1}
\partial_\rho[f_i(\rho,\lambda)h_{1+i}(\rho,\lambda)]\right].
\end{align*}
Next, one computes 
\begin{align*}
\partial_\rho^2& \bigg[u_{2,V}'(\rho,\lambda)(-1)^{j}\sum_{\ell=0}^{j-1} \begin{pmatrix}
j-1
\\
\ell
\end{pmatrix}\int_\rho^1\frac{ G_\lambda^{(\ell)}(s) \partial_s^{j-1-\ell} [ s^{d-1} u_{1,V}(s,\lambda)(1+s)^{-\frac{d}{2}+\lambda-n+\frac12}]}{\prod_{r=0}^{j-2}(\frac{d}{2}-\lambda+n-\frac32-r)(1-s)^{\frac{d}{2}-\lambda+n+\frac12-j}}ds
\\
&\quad +u_{2,V}'(\rho,\lambda)\sum_{\ell=1}^{j-1}\sum_{i=0}^{j-1-\ell}
\begin{pmatrix}
i+\ell-1
\\
\ell-1
\end{pmatrix} (-1)^{\ell-1-i}\frac{G_\lambda^{(\ell-1)}(\rho)\partial_{\rho}^{i}[\rho^{d-1}u_{1,V}(\rho,\lambda)(1+\rho)^{-\frac d2 +\lambda-n+\frac 12}]}{\prod_{r=0}^{\ell+i-1}(\frac{d}{2}-\lambda+n-\frac32-r)(1-\rho)^{\frac{d}{2}-\lambda+n-\frac12-\ell-i}}\bigg]
\\
&= u_{2,V}'''(\rho,\lambda) \bigg[(-1)^{j}\sum_{\ell=0}^{j-1} \begin{pmatrix}
j-1
\\
\ell
\end{pmatrix}\int_\rho^1\frac{ G_\lambda^{(\ell)}(s) \partial_s^{j-1-\ell} [ s^{d-1} u_{1,V}(s,\lambda)(1+s)^{-\frac{d}{2}+\lambda-n+\frac12}]}{\prod_{r=0}^{j-2}(\frac{d}{2}-\lambda+n-\frac32-r)(1-s)^{\frac{d}{2}-\lambda+n+\frac12-j}}ds
\\
&\quad+\sum_{\ell=3}^{j-1}\sum_{i=0}^{j-1-\ell}
\begin{pmatrix}
i+\ell-1
\\
\ell-1
\end{pmatrix} (-1)^{\ell-1-i}\frac{G_\lambda^{(\ell-1)}(\rho)\partial_{\rho}^{i}[\rho^{d-1}u_{1,V}(\rho,\lambda)(1+\rho)^{-\frac d2 +\lambda-n+\frac 12}]}{\prod_{r=0}^{\ell+i-1}(\frac{d}{2}-\lambda+n-\frac32-r)(1-\rho)^{\frac{d}{2}-\lambda+n-\frac12-\ell-i}} \bigg]
\\
&\quad+ \partial_\rho\left(  G_\lambda(\rho)u_{2,V}'(\rho,\lambda)\left[(-1)^{j}f_{j-1}(\rho)h_{j-1}(\rho)+\sum_{i=0}^{j-2}(-1)^{i-1}
\partial_\rho[(\rho,\lambda)f_i(\rho,\lambda)h_{1+i}(\rho,\lambda)]\right]\right)
\\
&\quad-(-1)^j u_{2,V}''(\rho,\lambda)\sum_{\ell=0}^{1} G_\lambda^{(\ell)}(\rho)
\begin{pmatrix}
j-1
\\
\ell
\end{pmatrix} f_{j-1}(\rho)h_{j-1}(\rho)
\\
&\quad-
G'_\lambda(\rho) \sum_{i=0}^{j-3}\begin{pmatrix}
i+\ell-1
\\
\ell-1
\end{pmatrix} (-1)^{i-1}\partial_\rho[u_{2,V}''(\rho,\lambda)f_i(\rho,\lambda)h_{2+i}(\rho,\lambda)].
\end{align*}
Thus, proceeding inductively one concludes
\begin{align*}
 &\partial_\rho^{j-1} \bigg[u_{2,V}'(\rho,\lambda)(-1)^{j}\sum_{\ell=0}^{j-1} \begin{pmatrix}
j-1
\\
\ell
\end{pmatrix}\int_\rho^1\frac{ G_\lambda^{(\ell)}(s) \partial_s^{j-1-\ell} [ s^{d-1} u_{1,V}(s,\lambda)(1+s)^{-\frac{d}{2}+\lambda-n+\frac12}]}{\prod_{r=0}^{j-2}(\frac{d}{2}-\lambda+n-\frac32-r)(1-s)^{\frac{d}{2}-\lambda+n+\frac12-j}}ds
\\
&\quad +u_{2,V}'(\rho,\lambda)\sum_{\ell=1}^{j-1}\sum_{i=0}^{j-1-\ell}
\begin{pmatrix}
i+\ell-1
\\
\ell-1
\end{pmatrix} (-1)^{\ell-1-i}\frac{G_\lambda^{(\ell-1)}(\rho)\partial_{\rho}^{i}[\rho^{d-1}u_{1,V}(\rho,\lambda)(1+\rho)^{-\frac d2 +\lambda-n+\frac 12}]}{\prod_{r=0}^{\ell+i-1}(\frac{d}{2}-\lambda+n-\frac32-r)(1-\rho)^{\frac{d}{2}-\lambda+n-\frac12-\ell-i}}\bigg]
\\
&= u_{2,V}^{(j)}(\rho,\lambda)(-1)^{j}\sum_{\ell=0}^{j-1} \begin{pmatrix}
j-1
\\
\ell
\end{pmatrix}\int_\rho^1\frac{ G_\lambda^{(\ell)}(s) \partial_s^{j-1-\ell} [ s^{d-1} u_{1,V}(s,\lambda)(1+s)^{-\frac{d}{2}+\lambda-n+\frac12}]}{\prod_{r=0}^{j-2}(\frac{d}{2}-\lambda+n-\frac32-r)(1-s)^{\frac{d}{2}-\lambda+n+\frac12-j}}ds
\\
&\quad +(-1)^{j-1}\sum_{i=1}^{j-1}\partial_\rho^{j-i}\left[f_i(\rho,\lambda) h_{j-1}(\rho,\lambda)u_{2,V}^{(i)}(\rho,\lambda)\sum_{\ell=0}^{i-1} G_\lambda^{(\ell)}(\rho,\lambda)\begin{pmatrix}
j-1
\\
\ell
\end{pmatrix}\right]
\\
&\quad+\sum_{\ell=0}^{j-1}\partial_\rho^{j-\ell-1}\left[G_\lambda^{(\ell-1)}(\rho)\sum_{i=0}^{j-1-\ell}(-1)^{\ell-1-i}\begin{pmatrix}
i+\ell-1
\\
\ell-1
\end{pmatrix} \partial_\rho[u_{2,V}^{(\ell)}(\rho,\lambda)f_i(\rho,\lambda)h_{\ell+i}(\rho,\lambda)]\right].
\end{align*}
Note that, by construction, 
$$
u_{2,V}^{(\ell)}(\rho,\lambda)h_{\ell+i}(\rho,\lambda)\in C^{\infty}((0,1]),
$$
for all $i\in \mathbb{N}_0$. Therefore, there exists a smooth function $s_\ell$ such that we can schematically rewrite $\partial_\rho^{j}\Rm_{j,V}(G_\lambda)(\rho,\lambda)$ on the interval $[\frac14,1)$ as
\begin{equation}\label{Eq:Rj deriv rewritten}
\begin{split}
& \partial_\rho^{j}\Rm_{j,V}(G_\lambda)(\rho,\lambda)
\\
&~= u_{1,V}^{(j)}(\rho,\lambda)\bigg[\sum_{\ell=1}^{j-1} (-1)^{\ell}G_\lambda^{(\ell-1)}(\rho)U_{2,\ell}(\rho,\lambda)
\\
&~\quad+(-1)^{j}\int_0^\rho G_\lambda^{(j-1)}(s_1)\int_0^{s_1} \int_0^{s_2}\dots \int_0^{s_{j-1} }\frac{s_j^{d-1} u_{2,V}(s_j,\lambda)}{(1-s_j^2)^{\frac{d}{2}-\lambda+n-\frac12}}ds_j \dots ds_2ds_1 \bigg]
\\
&~\quad +u_{2,V}^{(j)}(\rho,\lambda)(-1)^{j}\sum_{\ell=0}^{j-1} \begin{pmatrix}
j-1
\\
\ell
\end{pmatrix}\int_\rho^1\frac{ G_\lambda^{(\ell)}(s) \partial_s^{j-1-\ell} [ s^{d-1} u_{1,V}(s,\lambda)(1+s)^{-\frac{d}{2}+\lambda-n+\frac12}]}{\prod_{r=0}^{j-2}(\frac{d}{2}-\lambda+n-\frac32-r)(1-s)^{\frac{d}{2}-\lambda+n+\frac12-j}}ds
\\
&\quad+(-1)^{j}\widehat{c}_V u_{1,V}^{(j)}(\rho,\lambda)\int_0^1 G_\lambda^{(j-1)}(s_1)\int_0^{s_1} \int_0^{s_2}\dots \int_0^{s_{j-1} }\frac{s_j^{d-1} u_{1,V}(s_j,\lambda)}{(1-s_j^2)^{\frac{d}{2}-\lambda+n-\frac12}}ds_j \dots ds_2ds_1
\\
&\quad+ \sum_{\ell=1}^{j-1}G_\lambda^{(\ell-1)}(\rho)s_\ell(\rho,\lambda).
\end{split}
\end{equation}

\begin{lem}\label{lem:resboundj}
For $\lambda \in S_{n,V}\cap \{I_j\times i \R\}$ consider the map $R_{j,V}(\lambda)$ defined by $$(g_1,g_2)\mapsto \mathcal{R}_{j,V}(G_\lambda)(\cdot,\la).$$ Then $R_{j,V}(\lambda) $ is a bounded linear operator from $\mathcal{H}_{rad}$ to $H^k_{rad}(\B^d_1)$.
\end{lem}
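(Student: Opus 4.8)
The plan is to adapt the strategy of Lemmas~\ref{lem:resbound1} and~\ref{lem:resbound2}, now carrying out the bookkeeping for a general number $j-1$ of integrations by parts. By Lemma~\ref{lem:norm equiv1 } it suffices to estimate separately $\|\Rm_{j,V}(G_\lambda)(\cdot,\lambda)\|_{H^k_{rad}(\B^d_r)}$ and $\|\Rm_{j,V}(G_\lambda)(\cdot,\lambda)\|_{H^k([r,1))}$ for $r=\tfrac14$, each by $\|(g_1,g_2)\|_{\mathcal H}$. For the interior piece one proceeds exactly as in Lemma~\ref{lem:resbound1}: on $\B^d_r$ all the functions $u_{0,V},u_{1,V},u_{2,V}$ are smooth and the denominators $(1-s^2)^{\cdots}$ are bounded below, so the explicit kernel representation \eqref{Eq: Rj} and its first derivative give the $L^2$ and $H^1$ bounds directly; since $\Rm_{j,V}(G_\lambda)(\cdot,\lambda)$ solves the elliptic equation
$$(\xi^j\xi^i-\delta^{ij})\partial_j\partial_i u(\xi)+(2\lambda+2)\xi^i\partial_i u(\xi)+(\lambda^2+\lambda)u(\xi)=G_\lambda(\xi)$$
on $\B^d_r$, elliptic regularity upgrades these to the full $H^k(\B^d_r)$ bound. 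The boundary contributions $G_\lambda^{(\ell-1)}(1)$ and the $\lim_{t\to0}$ terms appearing in \eqref{Eq: Rj} are controlled via Lemma~\ref{lem:f(1)} by $\|(g_1,g_2)\|_{\mathcal H}$, which is legitimate because only derivatives of $G_\lambda$ of order $\le j-1 \le \lfloor d/2\rfloor+n \le k-1$ occur.

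For the exterior piece $[r,1)$ the starting point is the already-simplified representation \eqref{R_j}, in which $u_{0,V}$ has been replaced by $u_{2,V}$ up to the $\widehat c_V$ multiple of $u_{1,V}$, and — crucially — all the boundary evaluations at $\rho=1$ that would fail to be finite have cancelled in the preceding computation. I would then use the factorizations $u_{1,V}(\rho)=\rho^{\frac{1-d}2}(1+\rho)^{\frac{d-1}2+n-\lambda}\widetilde u_{1,V}(\rho,\lambda)$ and $u_{2,V}(\rho)=\rho^{\frac{1-d}2}(1-\rho)^{\frac{d-1}2+n-\lambda}\widetilde u_{2,V}(\rho,\lambda)$, with $\widetilde u_{i,V}(\cdot,\lambda)\in C^\infty((0,1])$, to observe that the constraint $\mu:=\frac d2-\Re\lambda+n-\frac12\in I_j$, i.e.\ $\mu\le(j-1)+\tfrac56$, makes every surviving kernel $(1-s)^{-(\frac d2-\lambda+n+\frac12-j)}$ integrable; hence the iterated integrals $U_{2,\ell}$ and the inner $(j-1)$-fold integrals in \eqref{R_j} are finite and continuous up to $\rho=1$, bounded pointwise by $\|(g_1,g_2)\|_{\mathcal H}$ times the bounded prefactors $u_{1,V},u_{2,V}$, which gives the $L^2([r,1))$ bound. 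For the derivatives one uses the rewritten form \eqref{Eq:Rj deriv rewritten} of $\partial_\rho^j\Rm_{j,V}$, in which at most $j-1\le k-1$ derivatives ever fall on $G_\lambda$, so all occurring $G_\lambda^{(\ell-1)}$ are controlled in $L^\infty$ or $L^2$ by $\|G_\lambda\|_{H^{k-1}(r,1)}\lesssim\|(g_1,g_2)\|_{\mathcal H}$; the coefficients $s_\ell(\cdot,\lambda)$, $f_i h_{1+i}$, and $u_{2,V}^{(\ell)}h_{\ell+i}$ are smooth on $[r,1)$ (this is the observation $u_{2,V}^{(\ell)}h_{\ell+i}\in C^\infty((0,1])$ already recorded in the text), and the ``transport''-type integrals $\int_\rho^1(\cdots)\,ds$ against the remaining integrable kernel are handled, after the change of variables $s\mapsto s(1-\rho)+1$, by the Hardy-type inequality $\|(1-\rho)^{-1}\int_\rho^1 f\,ds\|_{L^2(r,1)}\lesssim\|f\|_{L^2(r,1)}$ from the proof of Lemma~\ref{lem:resbound1}.

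The main obstacle I anticipate is this last step: verifying that \emph{after} the delicate combinatorial cancellations producing \eqref{R_j} and \eqref{Eq:Rj deriv rewritten}, every remaining term genuinely has the structure (coefficient smooth on $[r,1)$) $\times$ (derivative of $G_\lambda$ of order $\le k-1$) $\times$ (integrable kernel amenable to Hardy), with no leftover non-integrable power of $(1-\rho)$ and no boundary evaluation of $G_\lambda$ of order $j$ or higher. In particular one must check that the negative powers of $(1-\rho)$ packaged in the $h_{\ell+i}$ are always matched either by the factor $(1-\rho)^{\frac{d-1}2+n-\lambda}$ coming from $u_{2,V}$ or by the integration variable, and that the index ranges in \eqref{Eq:Rj deriv rewritten} never produce a $G_\lambda^{(j)}$. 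Since the identities underlying these cancellations have already been established in the computations preceding \eqref{R_j}, the remaining work is the same type of bookkeeping as in Lemma~\ref{lem:resbound2}, now iterated $j-1$ times; the uniform-in-$\lambda$ refinement is deferred to the subsequent lemma.
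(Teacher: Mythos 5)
Your proposal follows essentially the same route as the paper's proof: the interior bound on $\B^d_{1/4}$ via a direct $H^1$ estimate upgraded by elliptic regularity, and the exterior bound on $[\tfrac14,1)$ via the rewritten derivative formula \eqref{Eq:Rj deriv rewritten} (so that at most $k-1$ derivatives fall on $G_\lambda$), the norm splitting of Lemma \ref{lem:norm equiv1 }, and the Hardy-type/factorization arguments already used in Lemma \ref{lem:resbound1}, with the boundary evaluations handled by Lemma \ref{lem:f(1)}. The cancellation bookkeeping you flag as the main obstacle is exactly what the paper disposes of in the computations preceding \eqref{R_j} and \eqref{Eq:Rj deriv rewritten}, which its proof then simply invokes.
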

\begin{proof}
We again work with the norm
\begin{align*}
\|\cdot\|_{H^k(\B^d_{r})}+\|\cdot\|_{\dot{H}^k(r,1)}
\end{align*}
for $r=\frac14$.
Furthermore, as the estimate 
$$
\|\mathcal{R}_{j,V}(G_\lambda)(\cdot,\lambda)\|_{H^1(\B^d_{r})}\lesssim \|(g_1,g_2)\|_{H^k(\B^d_1)}
$$
is straightforward, the bound
$$\|\mathcal{R}_{j,V}(G_\lambda)(\cdot,\lambda)\|_{H^k(\B^d_{r})}\lesssim \|(g_1,g_2)\|_{H^k(\B^d_1)}$$
is once more a consequence of elliptic regularity. The estimate 
$$
\|\mathcal{R}_{j,V}(G_\lambda)(\cdot,\lambda)\|_{H^k(r,1)}\lesssim \|(g_1,g_2)\|_{H^k(\B^d_1)}
$$
follows from using the form of $ \partial_\rho^j \mathcal{R}_{j,V}(G_\lambda)(\rho,\lambda)$ provided by Eq.~\eqref{Eq:Rj deriv rewritten} and the equivalence of norms
$$
H^k(r,1)\simeq L^2(r,1)+\dot H^k(r,1),
$$
by the arguments already exhibited in the proof of Lemma \ref{lem:resbound1}.
\end{proof}

\begin{lem}\label{lem:uniformboundj}
For every $\lambda \in \widetilde S_{n,V} \cap (I_j + i\mathbb{R})$, the operator 
$R_{j,V}(\lambda)$ is uniformly bounded from $\mathcal{H}_{rad}$ to 
$H^1_{rad}(\mathbb{B}^d_1)$.
\end{lem}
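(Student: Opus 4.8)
The plan is to argue exactly as in the proof of Lemma~\ref{lem:uniformbound1} (and its carbon copy Lemma~\ref{lem:uniformbound2}), now starting from the closed-form representation \eqref{R_j} for $\mathcal{R}_{j,V}(G_\lambda)$ and from the formula for $\partial_\rho\mathcal{R}_{j,V}(G_\lambda)$ displayed immediately after it. As in those proofs, one splits the target norm as $\|\cdot\|_{H^1(\B^d_1)}\simeq\|\cdot\|_{H^1(\B^d_r)}+\|\cdot\|_{\dot H^1(r,1)}$ with $r=\tfrac14$. On the inner ball the estimate is handled precisely as in the proof of Lemma~\ref{lem:resboundj}: away from the origin every symbol factor occurring in \eqref{R_j} is bounded, and the weights $s^{d-1}$ and $(1-s)^{-(\frac{d}{2}-\lambda+n+\frac12-j)}$, whose real part of the exponent is $-(\mu-(j-1))>-1$ on $I_j$, are integrable on $(0,1)$; hence $\|\mathcal{R}_{j,V}(G_\lambda)(\cdot,\lambda)\|_{H^1(\B^d_r)}\lesssim\|(g_1,g_2)\|_{\mathcal{H}}$ with an implicit constant independent of $\lambda$. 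Thus the whole content is the uniform bound on $(r,1)$.

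On $(r,1)$ one first restricts $\lambda$ to $\widetilde S_{n,V}$ and inserts the symbol-type forms of $u_{1,V},u_{2,V},u_{0,V}$ from Lemma~\ref{lem:symboltype}, so that $\mathcal{R}_{j,V}(G_\lambda)$ and $\partial_\rho\mathcal{R}_{j,V}(G_\lambda)$ become a finite sum of terms, each of the schematic shape: a symbol prefactor $\O(\rho^{\alpha}\langle\omega\rangle^{\beta})$ times (iterated) integrals of $G_\lambda$ and its derivatives against kernels assembled from $s^{d-1}$, powers of $(1\pm s)$, and further symbol factors. Each such term is decomposed into its $\chi_\lambda$-supported and $(1-\chi_\lambda)$-supported parts and estimated with the two rescaling identities $\chi_\lambda(\rho)\,\O(\rho^{\alpha}\langle\omega\rangle^{\beta})=\chi_\lambda(\rho)\,\O(\rho^{\alpha-\gamma}\langle\omega\rangle^{\beta-\gamma})$ and $(1-\chi_\lambda(\rho))\,\O(\rho^{\alpha}\langle\omega\rangle^{\beta})=(1-\chi_\lambda(\rho))\,\O(\rho^{\alpha+\gamma}\langle\omega\rangle^{\beta+\gamma})$ for $\gamma\ge0$, together with the Hardy-type and embedding estimates exactly as in the proofs of Lemmas~\ref{lem:resbound1} and~\ref{lem:uniformbound1}. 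The $\langle\omega\rangle$-bookkeeping is identical to the cases $j=1,2$: the products $u_{1,V}u_{0,V}$ and $u_{2,V}u_{0,V}$ carry total weight $\O(\langle\omega\rangle^0)$; the $\chi_\lambda$-supported integrals run over $\rho\lesssim|\omega|^{-1}$, where the $s^{d-1}$ weight absorbs any positive power of $\langle\omega\rangle$; the rescaling identities let one trade powers of $\rho$ for powers of $\langle\omega\rangle^{-1}$, which in particular absorbs the factor $\lambda-2n+1=\O(\langle\omega\rangle)$ sitting inside $G_\lambda$; and the $\widehat{c}_V$-term, together with the pieces originating from the expansion of the kernel at $\rho=1$, each carry an explicit $\alpha_n(\lambda)^{-1}$ (resp.\ $\alpha_n(\lambda)^{-\frac12}$) supplying the remaining decay. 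The pairwise cancellations among the boundary terms established in the derivation of \eqref{R_j} are what guarantee that no term with a net positive $\langle\omega\rangle$-power survives.

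The main obstacle is purely combinatorial: organizing the many terms in \eqref{R_j} and in $\partial_\rho\mathcal{R}_{j,V}(G_\lambda)$ — the iterated integrals $U_{2,\ell}$, the boundary sums involving $G_\lambda^{(\ell-1)}(\rho)$ and $G_\lambda^{(\ell-1)}(1)$, and the $\widehat{c}_V$ contribution — and checking term by term that the net $\langle\omega\rangle$-weight is $\le0$ and that at most one derivative ever lands on $G_\lambda$, so that the pairing against $\|(g_1,g_2)\|_{\mathcal{H}}$ is legitimate (here $k\ge2$ is all that is needed). Since the target is only $H^1$ rather than $H^k$, a single $\rho$-derivative suffices, which makes the bookkeeping considerably lighter than in Lemma~\ref{lem:resboundj}: the extra derivative either falls on a symbol factor, producing at worst $\O(\rho^{-1}\langle\omega\rangle)$ — matched against a $\langle\omega\rangle^{-1}$-gain available from the rescaling identities — or on $G_\lambda$, contributing one derivative controlled by $\|(g_1,g_2)\|_{\mathcal{H}}$. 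Finally, since $\mathcal{R}_{j,V}$ and $\mathcal{R}_{j-1,V}$ agree on $(I_{j-1}\cap I_j)\times i\R$ inside $S_{n,V}$, the bounds obtained on the individual strips are mutually consistent, which is what the subsequent patching of the resolvent requires.
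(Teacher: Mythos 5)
Your proposal is correct and is essentially the argument the paper has in mind: its proof of this lemma is just a pointer to the ``previous computations,'' i.e.\ exactly the combination you spell out of the representation \eqref{R_j} and its $\rho$-derivative, the symbol-type forms of $u_{0,V},u_{1,V},u_{2,V}$ from Lemma \ref{lem:symboltype}, the two $\chi_\lambda$/$(1-\chi_\lambda)$ rescaling identities, and the Hardy-type estimates already used for $j=1,2$ in Lemmas \ref{lem:uniformbound1} and \ref{lem:uniformbound2}. One small inaccuracy: since \eqref{R_j} already contains $G_\lambda^{(j-1)}$ (hence up to $j$ derivatives of $g_1$), it is not true that ``at most one derivative lands on $G_\lambda$'' or that $k\ge 2$ suffices — one needs $k\ge j$, which the fixed choice $k=2n+\lceil d/2\rceil+1$ always provides, so the estimate against $\|(g_1,g_2)\|_{\mathcal{H}}$ goes through unchanged.
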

\begin{proof}
This follows immediately from previous computations. 
\end{proof}
Next, we make the following definition:
\begin{defi}
For $f\in H^{k-1}_{rad}(\B^d_1)$, $(g_1,g_2)\in \mathcal{H}_{rad}$, $\rho \in (0,1)$, and $\lambda \in S_{n,V}$, we define 
$$
\Rm_Vf(\rho,\lambda):= \sum_{j=1}^{\floor{\frac{d}{2}}+n+1} 1_{(I_j)}(\Re \lambda) \Rm_{j,V}(f)(\rho,\lambda),
$$ 
and  
$$
R_V(\lambda)(g_1,g_2)(\rho):= \sum_{j=1}^{\floor{\frac{d}{2}}+n+1} 1_{(I_j)}(\Re \lambda) R_{j,V}(\lambda)(g_1,g_2)(\rho).
$$
\end{defi}
\begin{lem}
For any fixed $f\in H^{k-1}_{rad}(\B^d_1)$ and $\rho \in (0,1)$ the function $\la \mapsto \Rm_V f(\rho,\lambda)$ is holomorphic on $S_{n,V}$.
\end{lem}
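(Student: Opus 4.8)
The plan is to verify holomorphy in a relative neighbourhood of each point of $S_{n,V}$; since holomorphy is a local property, this suffices. The function $\Rm_V f(\rho,\cdot)$ is defined piecewise along the decomposition $I=\bigcup_j I_j$ of the $\Re\la$-interval: for $\la$ with $\Re\la$ in the interior of $I_j$ it equals $\Rm_{j,V}(f)(\rho,\cdot)$, and on the overlap strips $(I_j\cap I_{j+1})\times i\R$ intersected with $S_{n,V}$ the two representatives $\Rm_{j,V}(f)$ and $\Rm_{j+1,V}(f)$ coincide — this is exactly the consistency that was checked above by undoing the integrations by parts. Hence it is enough to prove: (i) each $\Rm_{j,V}(f)(\rho,\cdot)$ is holomorphic on the open strip $\{\la\in S_{n,V}:\Re\la\in\operatorname{int}I_j\}$; and (ii) the pieces glue, i.e.\ near a line $\{\Re\la=\text{endpoint of }I_j\}$ a relative neighbourhood in $S_{n,V}$ is covered by the two adjacent open strips, on each of which $\Rm_V f(\rho,\cdot)$ is holomorphic by (i) and agrees with the other on the nonempty open overlap, so by the identity theorem the two are restrictions of a single function holomorphic across that line. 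Running over the finitely many $j$'s then gives holomorphy on all of $S_{n,V}$.

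For (i) I would start from the closed form \eqref{R_j} of $\Rm_{j,V}$ with $f$ in the role of $G_\la$; note that then $G_\la^{(\ell-1)}(\rho)=f^{(\ell-1)}(\rho)$ carries \emph{no} $\la$-dependence, so all $\la$-dependence of $\Rm_{j,V}(f)(\rho,\la)$ sits in the solutions $u_{1,V},u_{2,V}$, in the powers of $(1-s^2)$, $(1+s)$ and $(1-s)$ occurring in \eqref{R_j} (each entire in $\la$ for fixed $s\in(0,1)$), in the rational factor $\bigl(\prod_{r}(\tfrac d2-\la+n-\tfrac32-r)\bigr)^{-1}$, and in the constant $\widehat c_V(\la)$. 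Each of $u_{1,V}(\rho,\cdot)$, $u_{2,V}(\rho,\cdot)$ and $\widehat c_V$ is holomorphic on $S_{n,V}$: for the solutions this follows from Lemma~\ref{lem:analyticity} on the near-$1$ representation and from Lemma~\ref{lem:con_coef} together with the Bessel expansions on the near-$0$ representation, the two representations being consistent since they describe the same solution of \eqref{Eq:hom}, and the removal from $S_{n,V}$ of the resonant set $\{z\ge 0:\tfrac{d-1}{2}+n-z\in\mathbb N\}$ (and, where needed, of $\la=\tfrac{d-1}{2}+n$) being precisely what rules out logarithmic resonances and branch points at $\rho=1$; $\widehat c_V=c_{\widetilde 1,0,V}-c_{\widetilde 1,\widetilde 0,V}/c_{1,\widetilde 0,V}$ is holomorphic on $S_{n,V}$ because the $c$-coefficients are (Lemma~\ref{lem:con_coef}) and $c_{1,\widetilde 0,V}$ has no zero there; and the rational prefactor is holomorphic there because its poles lie in the excluded set. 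Each iterated integral in \eqref{R_j} is over a \emph{fixed} simplicial domain in $[0,1]$, with an integrand that is jointly continuous in the integration variables and $\la$, holomorphic in $\la$ for fixed integration variables, and — this is the point of having performed exactly $j-1$ integrations by parts on $I_j$, so that the surviving singular weight has exponent of absolute value $<1$ — absolutely integrable with a bound locally uniform in $\la$ over $I_j\times i\R$; by Fubini and Morera's theorem (equivalently, by differentiation under the integral sign) each such integral is holomorphic, hence so is the finite sum $\Rm_{j,V}(f)(\rho,\cdot)$.

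The only genuinely delicate ingredient is this last local uniform integrability: it is not automatic, and is the very reason the intervals $I_j$ were cut as they were. However it requires no new work — the estimates carried out in the proofs of Lemmas~\ref{lem:resbound1}, \ref{lem:resbound2} and \ref{lem:resboundj} (or, if one prefers to argue through $\partial_\rho^{j}\Rm_{j,V}$ via \eqref{Eq:Rj deriv rewritten}, the same bounds there) control precisely these iterated integrals, with constants that depend on $\la$ only through the fixed data $f$ and continuously in $\la$; extracting the pointwise-in-$\la$ version of those bounds provides the domination needed to apply Morera. Everything else — the gluing of step (ii) and the holomorphy of the individual building blocks — is routine given Lemmas~\ref{lem:analyticity} and \ref{lem:con_coef}.
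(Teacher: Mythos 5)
Your proposal is correct and follows essentially the same route as the paper, which disposes of the lemma in one line: each building block $\Rm_{j,V}$ is holomorphic by construction (via Lemmas \ref{lem:analyticity} and \ref{lem:con_coef} and the explicit formula \eqref{R_j}), and the pieces agree on the overlaps, so they glue. Your write-up merely makes explicit the Morera/domination step and the identity-theorem gluing that the paper leaves implicit.
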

\begin{proof}
Each of the building blocks is holomorphic by construction and, as they agree on their respective overlaps, the claim follows.
\end{proof}
\begin{lem}
For all $(g_1,g_2) \in \mathcal H_{rad}$ and $\lambda \in S_{n,V}$, the function 
$
R_V(\lambda)(g_1,g_2)
$
is the unique solution in $H^k_{rad}(\B^d_1)$ to the ODE
\begin{align*}
(\rho^2-1)f_1''(\rho)+\left( -\frac{d-1}{\rho}+2(\lambda-n+1)\rho \right)f_1'(\rho)+ \Big(\lambda(\lambda-2n+1)-V(\rho)\Big)f_1(\rho)&=G_\lambda(\rho),
\end{align*}
where
$$
G_\lambda(\rho)= (\lambda-2n+1)g_1(\rho)+\rho g_1'(\rho)+g_2(\rho).
$$
Moreover, if $V $ is such that 
$$
\left|\sigma_p(\widehat \Lf_{n,V})\cap \{z\in \C:\Re z\geq -\frac34\}\right|<\infty,
$$
then
\begin{align}\label{Eq:rightinverse}
\Rm_V(\lambda)\left[\big((\cdot)^2-1\big)f''+\left( -\frac{d-1}{(\cdot)}+2(\lambda-n+1)(\cdot) \right)f'+ \Big(\lambda(\lambda-2n+1)-Vf\Big)\right]= f
\end{align}
for all $f\in H^{k-1}_{rad}(\B^d_1)$ and all $\lambda\in S_{n,V}$.
\end{lem}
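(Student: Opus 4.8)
The plan is to prove the two assertions separately; throughout I abbreviate the ODE operator of \eqref{Eq:generalised specODE} by $\mathcal{L}_{\lambda}u:=(\rho^2-1)u''+\bigl(-\tfrac{d-1}{\rho}+2(\lambda-n+1)\rho\bigr)u'+\bigl(\lambda(\lambda-2n+1)-V\bigr)u$, which depends polynomially on $\lambda$. \emph{Existence.} For $\lambda$ in the part of $S_{n,V}$ where $\mathcal{R}_{1,V}$ is defined (i.e.\ where the weight $(1-s^2)^{-\frac{d}{2}+\lambda-n+\frac12}$ is integrable), the function $\mathcal{R}_{1,V}(G_\lambda)(\cdot,\lambda)$ in \eqref{def:R1} is exactly the variation-of-constants solution of $\mathcal{L}_\lambda u=G_\lambda$ built from the fundamental system $\{u_{0,V},u_{1,V}\}$, whose Wronskian is $W(u_{0,V}(\cdot,\lambda),u_{1,V}(\cdot,\lambda))(\rho)=\rho^{1-d}(1-\rho^2)^{\frac{d-3}{2}-\lambda+n}$; hence it solves that ODE there. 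Each $\mathcal{R}_{j,V}(G_\lambda)$ is obtained from $\mathcal{R}_{1,V}(G_\lambda)$ by repeated integration by parts together with the addition of multiples of the homogeneous solution $u_{0,V}$ --- neither operation destroying the property of solving $\mathcal{L}_\lambda u=G_\lambda$ --- and the consistency computations recorded after \eqref{Eq:res def 2}, and their iterates, show that the $\mathcal{R}_{j,V}$ agree on the overlaps $(I_j\cap I_{j\pm1})\times i\R$ inside $S_{n,V}$. Combining this with the holomorphy in $\lambda$ of $\mathcal{R}_V(G_\lambda)(\rho,\cdot)$ (established earlier) and analytic continuation, one gets that $\mathcal{R}_V(G_\lambda)(\cdot,\lambda)$ is a well-defined solution of $\mathcal{L}_\lambda u=G_\lambda$ for every $\lambda\in S_{n,V}$, and by Lemmas \ref{lem:resbound1}, \ref{lem:resbound2} and \ref{lem:resboundj} it lies in $H^k_{rad}(\B^d_1)$. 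This is precisely the claim that $R_V(\lambda)(g_1,g_2)$ solves the displayed ODE with right-hand side $G_\lambda$.

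\emph{Uniqueness.} Next I would show that any $w\in H^k_{rad}(\B^d_1)$ solving the homogeneous equation \eqref{Eq:hom} vanishes. A Frobenius analysis at the endpoints --- the indicial exponents being $\{0,2-d\}$ (or $\{0,0\}$ when $d=2$) at $\rho=0$ and $\{0,\tfrac{d-1}{2}+n-\lambda\}$ at $\rho=1$, with $\Re\lambda\ge-\tfrac34$ and $\lambda\notin\{z\ge0:\tfrac{d-1}{2}+n-z\in\mathbb{N}\}$ --- shows that the non-analytic branch behaves like $\rho^{2-d}$ (or $\ln\rho$) near $\rho=0$, which is not in $H^1_{rad}$ there, and like $(1-\rho)^{\frac{d-1}{2}+n-\lambda}$ near $\rho=1$, which is not in $H^k_{rad}$ there because $\Re\bigl(\tfrac{d-1}{2}+n-\lambda\bigr)\le\tfrac{d}{2}+n+\tfrac14<k-\tfrac12$ for every $n\in\mathbb{N}$. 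Hence $w$ is regular, even and analytic at both endpoints, so the pair $(w,\lambda w+\rho\partial_\rho w)$ lies in the radial test space and is an eigenfunction of $\widehat\Lf_{n,V}$ at $\lambda$; since $\lambda\in S_{n,V}$ excludes $\sigma_p(\widehat\Lf_{n,V})$, this forces $w=0$ --- equivalently, $w$ would have to be a common scalar multiple of $u_{0,V}$ and $u_{1,V}$, which are linearly independent because $c_{1,\widetilde0,V}(\lambda)\neq0$ on $S_{n,V}$. Together with Existence, this establishes the first assertion.

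\emph{The identity \eqref{Eq:rightinverse}.} Let $\widetilde R_V(\lambda):\mathcal{H}_{rad}\to\mathcal{H}_{rad}$ denote the full solution operator $(g_1,g_2)\mapsto\bigl(f_1,\lambda f_1+\rho\partial_\rho f_1-g_1\bigr)$, $f_1=R_V(\lambda)(g_1,g_2)$. Unwinding the equivalence between the spectral equation for $\widehat\Lf_{n,V}$ and the ODE system, the Existence step yields $(\lambda-\widehat\Lf_{n,V})\widetilde R_V(\lambda)=\I$ on $\mathcal{H}_{rad}$ --- once one verifies that the constructed pair lies in $\mathcal{D}(\widehat\Lf_{n,V})$ --- while the Uniqueness step gives injectivity of $\lambda-\widehat\Lf_{n,V}$. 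A bijective operator admitting a one-sided inverse has it as a two-sided inverse, so $\widetilde R_V(\lambda)=(\lambda-\widehat\Lf_{n,V})^{-1}$, which is bounded since $\widehat\Lf_{n,V}$ is closed; this gives $S_{n,V}\subseteq\varrho(\widehat\Lf_{n,V})$, and it is here that the hypothesis $|\sigma_p(\widehat\Lf_{n,V})\cap\{\Re z\ge-\tfrac34\}|<\infty$ is used, to guarantee that $S_{n,V}$ covers the relevant region up to finitely many removed points (which also yields \eqref{eq:spec_3/4}). Applying the two-sided identity $\widetilde R_V(\lambda)(\lambda-\widehat\Lf_{n,V})=\I$ to a pair of the form $(f,\lambda f+\rho\partial_\rho f)$ --- i.e.\ taking $g_1=0$ and $g_2=\mathcal{L}_\lambda f$ --- and reading off the first component yields $\mathcal{R}_V(\lambda)[\mathcal{L}_\lambda f]=f$. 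I would establish this first for $f$ in a dense class of smooth functions, where $\mathcal{L}_\lambda f$ belongs to the regularity class on which the $\mathcal{R}_{j,V}$ were constructed and where the Uniqueness argument applies verbatim, and then extend it to all $f\in H^{k-1}_{rad}(\B^d_1)$ by density, using the boundedness of the resolvent just obtained.

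The hard part will not be the algebra but two finer points. First, one must check that $\widetilde R_V(\lambda)(g_1,g_2)$ genuinely belongs to the domain of the \emph{closed} operator $\widehat\Lf_{n,V}$: this rests on the Frobenius and Bessel asymptotics of $u_{0,V},u_{1,V},u_{2,V}$, which force every $H^k_{rad}$ solution onto the regular branch at $\rho=0$ and at $\rho=1$, so that it is approximable by radial test functions and no spurious boundary condition survives along the characteristic surface $\rho=1$. Second, there is the function-space bookkeeping in \eqref{Eq:rightinverse}: $\mathcal{L}_\lambda$ drops two derivatives while $\mathcal{R}_V(\lambda)$ was built on $\mathcal{H}_{rad}$-type data, which is exactly why the identity is proved on a dense smoother subspace first and the finiteness of the point spectrum is then invoked to pass to the limit.
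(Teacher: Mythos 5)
Your handling of the first claim is essentially the paper's (the paper dismisses it as immediate): existence because each $\Rm_{j,V}$ is built from the variation-of-constants solution by integrations by parts and addition of multiples of the homogeneous solution $u_{0,V}$, uniqueness because an $H^k_{rad}$ solution of the homogeneous equation would, by the Frobenius exponents at $\rho=0$ and $\rho=1$, be forced onto the analytic branches and hence give an eigenvalue, which $S_{n,V}$ excludes by definition. Your route to \eqref{Eq:rightinverse} is genuinely different: instead of the paper's direct verification, you prove the right-inverse identity $(\lambda-\widehat\Lf_{n,V})\widetilde R_V(\lambda)=\I$ (modulo the domain-membership check, which is exactly what the paper does "by construction" in the following lemma), combine it with injectivity to get a two-sided inverse, and then specialize to pairs $(f,\lambda f+\rho\partial_\rho f)$; that algebra is correct, and the specialization does produce $\Rm_V(\lambda)[\mathcal{L}_\lambda f]=f$ whenever $(0,\mathcal{L}_\lambda f)\in\mathcal{H}_{rad}$, i.e.\ for sufficiently smooth $f$, pointwise in $\lambda\in S_{n,V}$ and without any analytic continuation.

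The gap is the final density step. The bound you invoke is a bound on $\widetilde R_V(\lambda)$ acting on $\mathcal{H}_{rad}$, so it requires $\mathcal{L}_\lambda f\in H^{k-1}_{rad}(\B^d_1)$ — precisely the regularity of your dense class, not of the limit — and it gives no continuity of the composite $f\mapsto\Rm_V(\lambda)[\mathcal{L}_\lambda f]$ with respect to the $H^{k-1}$ topology. Worse, for $\Re\lambda$ in the higher intervals $I_j$ the formula for $\Rm_{j,V}$ consumes up to $j-1$ derivatives of its argument together with boundary traces of those derivatives, while $\mathcal{L}_\lambda f$ lies only in $H^{k-3}$ for $f\in H^{k-1}_{rad}$; so at the limiting regularity the left-hand side of \eqref{Eq:rightinverse} is not even under termwise control, and no mapping bound for $\Rm_{j,V}$ on such rough inputs has been established anywhere. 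The paper closes exactly this hole by a different mechanism: it verifies the identity by two integrations by parts for $\Re\lambda\in I_1$, where $\Rm_V=\Rm_{1,V}$ takes no derivatives of its argument and rough $G_\lambda$ is harmless, and then extends in $\lambda$ (at fixed $f$) by holomorphy; the hypothesis $|\sigma_p(\widehat\Lf_{n,V})\cap\{\Re z\ge-\frac34\}|<\infty$ is what legitimizes that continuation across $S_{n,V}$ — not, as you suggest, a device for counting removed points in the resolvent-set statement. To repair your proof you must either adopt that continuation-in-$\lambda$ argument for the rough-$f$ case, or prove separate boundedness of $\Rm_{j,V}$ on spaces with $j-1$ fewer derivatives; as written, the extension to all $f\in H^{k-1}_{rad}(\B^d_1)$ does not follow.
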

\begin{proof}
The first claim is immediate and only the equality \eqref{Eq:rightinverse} has to been shown. For this, it suffices to show the claim for $\Re \lambda\in I_1$, due to the holomorphicity of \eqref{Eq:rightinverse} for $$\lambda \in S_{n,V}\cap \{z\in \C: \Re z <\frac{d-1}{2}+n\}$$ and the finiteness assumption on $$\sigma_p(\widehat \Lf_{n,V}) \cap \{z\in \C:\Re z\geq -\frac34\}.$$
Thus, we compute
\begin{multline*}
\Rm_V(\lambda)[((\cdot)^2-1)f''+\left( -\frac{d-1}{(\cdot)}+2(\lambda-n+1)(\cdot) \right)f'+ [\lambda(\lambda-2n+1)-V]f](\rho)
\\
=u_{1,V}(\rho,\lambda)\int_0^\rho I_1(s,\lambda) ds+ u_{0,V}(\rho,\lambda)\int_\rho^1 I_0(s,\lambda) ds,
\end{multline*}
with
\begin{align*}
I_j(s,\lambda)&= s^{d-1} u_j(s,\lambda) \frac{ (s^2-1)f''(s)+\left( -\frac{d-1}{s}+2(\lambda-n+1)s \right)f'(s)}{(1-s^2)^{\frac{d}{2}-\lambda+n-\frac12}}
\\
&\quad + s^{d-1} u_j(s,\lambda) \frac{ [\lambda(\lambda-2n+1)-V(s)]f(s)}{(1-s^2)^{\frac{d}{2}-\lambda+n-\frac12}},
\end{align*}
for $j=1,2$.
Now, given that
\begin{align*}
\partial_s\left(\frac{s^{d-1}}{(1-s^2)^{\frac{d}{2}-\lambda+n-\frac32}}\right)&=\frac{(d-1)s^{d-2}}{(1-s^2)^{\frac{d}{2}-\lambda+n-\frac32}}+ \frac{(d-2\lambda+2n-3)s^{d}}{(1-s^2)^{\frac{d}{2}-\lambda+n-\frac12}}
\\
&= \frac{(d-1)s^{d-2}}{(1-s^2)^{\frac{d}{2}-\lambda+n-\frac12}}+ \frac{(-2\lambda+2n-2)s^{d}}{(1-s^2)^{\frac{d}{2}-\lambda+n-\frac12}},
\end{align*}
one integrates by parts to compute that
\begin{align*}
 u_{1,V}(\rho,\lambda)&\int_0^\rho I_1(s,\lambda) ds+ u_{0,V}(\rho,\lambda)\int_\rho^1 I_0(s,\lambda) ds
\\
&= u_{1,V}(\rho,\lambda)\int_0^\rho \widehat I_1(s,\lambda) ds+ u_{0,V}(\rho,\lambda)\int_\rho^1 \widehat I_0(s,\lambda) ds,
\end{align*}
where
\begin{align*}
\widehat I_j(s,\lambda)&=\frac{f'(s) s^{d-1}  \left[u_j'(s,\lambda)(1-s^2)\right]+[\lambda(\lambda-2n+1)-V(s)]f(s)u_j(s,\lambda)}{(1-s^2)^{\frac{d}{2}-\lambda+n-\frac12}}.
\end{align*}
Therefore, a second integration by parts shows
\begin{align*}
u_{1,V}(\rho,\lambda)&\int_0^\rho I_1(s,\lambda) ds+ u_{0,V}(\rho,\lambda)\int_\rho^1 I_0(s,\lambda) ds
\\
&= f(\rho)+u_{1,V}(\rho,\lambda)\int_0^\rho f(s)\widetilde I_1(s,\lambda) ds+ u_{0,V}(\rho,\lambda)\int_\rho^1 f(s)\widetilde I_0(s,\lambda) ds,
\end{align*}
where
\begin{align*}
\widetilde  I_j(s,\lambda)&=-\partial_s\left[ \frac{ u_j'(s,\lambda)(1-s^2)} {(1-s^2)^{\frac{d}{2}-\lambda+n-\frac12}}\right] -\frac{(\lambda(\lambda-2n+1)-V(s))u_j(s,\lambda)f(s)}{(1-s^2)^{\frac{d}{2}-\lambda+n-\frac12}}
\\
&=\frac{(s^2-1)u_j''(s,\lambda)+\left( -\frac{d-1}{s}+2(\lambda+1)s \right)u_j'(s,\lambda)}{(1-s^2)^{\frac{d}{2}-\lambda+n-\frac12}}
\\
&\quad +\frac{ [(\lambda(\lambda-2n+1)-V(s))]u_j(s,\lambda)}{(1-s^2)^{\frac{d}{2}-\lambda+n-\frac12}}
\\
&=0.
\end{align*}
\end{proof}
Finally, we are in the position to define the resolvent of the general operator $\widehat \Lf_{n,V}$, defined at the beginning of this section. For convenience of the reader, we recall that for smooth $\ff$
\begin{equation}
\widehat \Lf_{n,V}\ff(\rho)= \big(\widehat \Lf_n+\Lf_V' \big)\ff(\rho)=\begin{pmatrix}
f_2(\rho)-\rho f_1'(\rho)
\\
(2n-1)f_2(\rho) -\rho f_2'(\rho)+ f_1''(\rho)+\frac{d-1}{\rho}f_1'(\rho)
\end{pmatrix}
+
\begin{pmatrix}
0
\\
V(\rho)f_1(\rho)
\end{pmatrix}.
\end{equation}
\begin{defi}
For $\lambda \in {S}_{n,V}$ we define $\widehat{\Rf}_{n,V}(\lambda):\mathcal{H}_{rad} \rightarrow \mathcal{H}_{rad}$ by
$$
\widehat{\Rf}_{n,V}(\lambda) (g_1,g_2)(\rho):=\begin{pmatrix}
R_V(\lambda)(g_1,g_2)(\rho)
\\
\lambda R_V(\lambda)(g_1,g_2)(\rho)+\rho \partial_\rho R_V(\lambda)(g_1,g_2)(\rho)-g_1(\rho)
\end{pmatrix}.
$$
\end{defi}

\begin{lem}
 Let $\lambda \in  S_{n,V}$. Then $\widehat{\Rf}_{n,V}(\lambda)$ is a bounded linear operator on $\mathcal{H}_{rad}$ that satisfies
 $$
 \rg \widehat{\Rf}_{n,V}(\lambda) \subseteq \mathcal{D}(\widehat{\Lf}_n).
 $$ 
 Furthermore, if $V$ is such that 
 $$\Big|\sigma_p(\widehat \Lf_{n,V})\cap \{z\in \C:\Re z\geq -\frac34\}\Big|<\infty,
 $$
 then
 \begin{align*}
 \big(\lambda -\widehat \Lf_{n,V} \big)\widehat{\Rf}_{n,V}(\lambda)= \widehat{\Rf}_{n,V}(\lambda)\big(\lambda-\widehat \Lf_{n,V}\big)=\I
 \end{align*}
 on $\mathcal{D}(\widehat \Lf_{n.V})$. Consequently, $S_{n,V}\subseteq \varrho (\widehat \Lf_{n,V})$, and $\widehat{\Rf}_{n,V}(\lambda)$ is the resolvent of $\widehat \Lf_{n.V}$.
\end{lem}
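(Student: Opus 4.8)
The plan is to show that $\widehat{\Rf}_{n,V}(\lambda)$ is a genuine two-sided inverse of $\lambda-\widehat{\Lf}_{n,V}$, building directly on the ODE results established just above. \textbf{Step 1 (boundedness).} Fix $\lambda\in S_{n,V}$ and $(g_1,g_2)\in\mathcal H_{rad}$, and set $f_1:=R_V(\lambda)(g_1,g_2)$. By Lemmas \ref{lem:resbound1}, \ref{lem:resbound2}, \ref{lem:resboundj}, assembled through the definition of $R_V(\lambda)$, one has $f_1\in H^k_{rad}(\B^d_1)$ with $\|f_1\|_{H^k}\lesssim\|(g_1,g_2)\|_{\mathcal H}$. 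Since $k=\lceil d/2\rceil+2n+1$, the first component of $\widehat{\Rf}_{n,V}(\lambda)(g_1,g_2)$ lies in $H^k_{rad}$ and the second component $\lambda f_1+\rho\partial_\rho f_1-g_1$ lies in $H^{k-1}_{rad}$, each with the corresponding bound (differentiation costs one derivative and $g_1\in H^k\subseteq H^{k-1}$); hence $\widehat{\Rf}_{n,V}(\lambda)$ is bounded on $\mathcal H_{rad}$, which is the first assertion.

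\textbf{Step 2 ($\rg\widehat{\Rf}_{n,V}(\lambda)\subseteq\mathcal D(\widehat\Lf_n)$ and the right inverse).} First I would treat smooth data $(g_1,g_2)\in C^\infty_{rad}\times C^\infty_{rad}(\overline{\B^d_1})$. Then $G_\lambda$ is smooth, and — thanks to the boundary-term cancellations built into the construction of the $\mathcal R_{j,V}$ (and to $V\in C^\infty_e([0,1])$, which produces an even perturbed-Bessel solution at $\rho=0$) — the solution $f_1=R_V(\lambda)(g_1,g_2)$ of \eqref{Eq:generalised specODE} is in fact smooth and even on $[0,1]$, so that $\widehat{\Rf}_{n,V}(\lambda)(g_1,g_2)\in C^\infty_{rad}\times C^\infty_{rad}(\overline{\B^d_1})\subseteq\mathcal D(\widehat\Lf_n)$. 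A direct componentwise computation — using $f_2=\lambda f_1+\rho f_1'-g_1$ in the first slot and the ODE solved by $f_1$ in the second — then gives $(\lambda-\widehat\Lf_{n,V})\widehat{\Rf}_{n,V}(\lambda)(g_1,g_2)=(g_1,g_2)$. For general $(g_1,g_2)\in\mathcal H_{rad}$ I would approximate by smooth data in $\mathcal H$, apply Step 1 and the closedness of $\widehat\Lf_{n,V}$ (a bounded perturbation of the closed operator $\widehat\Lf_n$) to pass to the limit, and conclude $\widehat{\Rf}_{n,V}(\lambda)(g_1,g_2)\in\mathcal D(\widehat\Lf_{n,V})=\mathcal D(\widehat\Lf_n)$ together with $(\lambda-\widehat\Lf_{n,V})\widehat{\Rf}_{n,V}(\lambda)=\I$ on $\mathcal H_{rad}$.

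\textbf{Step 3 (left inverse and conclusion).} For $\ff\in C^\infty_{rad}\times C^\infty_{rad}(\overline{\B^d_1})$, set $(g_1,g_2):=(\lambda-\widehat\Lf_{n,V})\ff$; unwinding the two components shows $f_1$ solves \eqref{Eq:generalised specODE} with the associated $G_\lambda$, and the uniqueness of the $H^k_{rad}$-solution recorded in the preceding lemma forces $R_V(\lambda)(g_1,g_2)=f_1$, whence $\widehat{\Rf}_{n,V}(\lambda)(\lambda-\widehat\Lf_{n,V})\ff=\ff$. Density of $C^\infty_{rad}\times C^\infty_{rad}$ in the graph norm of $\widehat\Lf_n$, together with boundedness of $\Lf_V'$ and of $\widehat{\Rf}_{n,V}(\lambda)$, extends this to all $\ff\in\mathcal D(\widehat\Lf_n)$. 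Here the hypothesis $|\sigma_p(\widehat\Lf_{n,V})\cap\{z:\Re z\geq -\tfrac{3}{4}\}|<\infty$ enters through the preceding lemma: it is precisely what makes the right-inverse identity \eqref{Eq:rightinverse}, proved directly only for $\Re\lambda\in I_1$, propagate across the whole strip by holomorphicity. Combining Steps 2 and 3, $\lambda-\widehat\Lf_{n,V}$ is boundedly invertible with inverse $\widehat{\Rf}_{n,V}(\lambda)$, and since $\lambda\in S_{n,V}$ was arbitrary, $S_{n,V}\subseteq\varrho(\widehat\Lf_{n,V})$.

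\textbf{Main obstacle.} The only non-routine point is the smoothness of $R_V(\lambda)(g_1,g_2)$ up to and including $\rho=1$ for smooth data: the representations \eqref{R_j} and \eqref{Eq: Rj} carry prefactors $(1-\rho)^{\frac{d}{2}-\lambda+n-\frac12-\ell-i}$ whose exponents may have negative real part, so one must appeal to the precise cancellations engineered in the construction of the $\mathcal R_{j,V}$ to see that the apparent singularities are removable. Everything else is a soft density-and-closedness argument.
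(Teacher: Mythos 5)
Your proposal is correct in substance and reaches the same conclusion, but it takes a genuinely different route in two places. For the right inverse and the range containment, the paper simply asserts $\rg\widehat{\Rf}_{n,V}(\lambda)\subseteq\mathcal D(\widehat\Lf_n)$ ``by construction'' and then verifies $(\lambda-\widehat\Lf_{n,V})\widehat{\Rf}_{n,V}(\lambda)=\I$ by a direct componentwise computation valid for \emph{all} $\gf\in\mathcal H_{rad}$ (using only that $R_V(\lambda)(g_1,g_2)$ solves \eqref{Eq:generalised specODE}); you instead argue on smooth data first and then pass to the limit via boundedness and closedness. For the left inverse, the paper invokes the composition identity \eqref{Eq:rightinverse}, whose proof is exactly where the finiteness hypothesis on $\sigma_p(\widehat\Lf_{n,V})\cap\{\Re z\geq-\tfrac34\}$ is consumed (holomorphic propagation from $I_1$ across the strip); you replace this by the unconditional uniqueness of the $H^k_{rad}$ solution of \eqref{Eq:generalised specODE} applied to smooth $\ff$, followed by graph-norm density of the test space in $\mathcal D(\widehat\Lf_n)$. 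Both of your substitutions are legitimate, and your left-inverse argument is arguably cleaner; note, however, that your closing attribution is then inconsistent with your own proof: as written, your argument never actually uses \eqref{Eq:rightinverse}, so the finiteness hypothesis is not invoked anywhere in your route (it is a hypothesis of the lemma because of how the paper proves \eqref{Eq:rightinverse}, not because your argument needs it).

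The one point you flag but do not prove, smoothness of $R_V(\lambda)(g_1,g_2)$ up to $\rho=0$ and $\rho=1$ for smooth data, is genuinely needed for your Step 2, since membership in $\mathcal D(\widehat\Lf_n)$ (the domain of a closure) does not follow from $H^k\times H^{k-1}$ regularity alone. This claim is true and can be discharged without re-examining the cancellations in $\mathcal R_{j,V}$: both endpoints are regular singular points of \eqref{Eq:generalised specODE} with indices $\{0,2-d\}$ at $\rho=0$ and $\{0,\tfrac{d-1}{2}+n-\lambda\}$ at $\rho=1$, and for $\lambda\in S_{n,V}$ (resonant real values excluded) a smooth forcing admits a particular solution smooth at each endpoint, so any $H^k_{rad}$ solution must have vanishing coefficient on the singular Frobenius branch at each endpoint (its presence would cap the Sobolev regularity strictly below $k$) and is therefore smooth and even on $[0,1]$. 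With that observation supplied, your argument closes the gap at the same level of rigor as, indeed slightly above, the paper's ``by construction''.
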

\begin{proof}
By construction, one has that 
$$
\gf\in \mathcal{H}_{rad}\implies \widehat{\Rf}_{n,V}(\lambda)\gf \in \mathcal{D}(\widehat \Lf_n).
$$
Moreover, one computes 
\begin{align*}
[(\lambda-&\widehat \Lf_{n,V})\widehat{\Rf}_{n,V}(\lambda)\gf]_1(\rho)
\\
&=
\lambda \Rm(\lambda)(g_1,g_2)(\rho)-\lambda\Rm(\lambda)(g_1,g_2)(\rho)-\rho \partial_\rho \Rm(\lambda)(g_1,g_2)(\rho)+g_1(\rho)+ \rho\partial_\rho \Rm(\lambda)(g_1,g_2)(\rho)
\\
&=
g_1(\rho),
\end{align*}
and
\begin{align*}
[(\lambda-\widehat \Lf_{n,V})\widehat{\Rf}_{n,V}(\lambda)\gf]_2(\rho)
&=
(\lambda-2n+1)\left[\lambda \Rm(\lambda)(g_1,g_2)(\rho)+\rho \partial_\rho \Rm(\lambda)(g_1,g_2)(\rho)-g_1(\rho)\right]
\\
&\quad+\rho \partial_\rho\left[\lambda \Rm(\lambda)(g_1,g_2)(\rho)+\rho \partial_\rho \Rm(\lambda)(g_1,g_2)(\rho)-g_1(\rho)\right]
\\
&\quad-\Delta_{d,rad} \Rm(\lambda)(g_1,g_2)(\rho)-V(\rho)\Rm(\lambda)(g_1,g_2)(\rho)
\\
&=G_\lambda(\rho)-(\lambda-2n+1)g_1(\rho)-\rho \partial_\rho g_1(\rho)=g_2(\rho).
\end{align*}
Similarly, we compute that
\begin{align*}
[\widehat{\Rf}_{n,V}&(\lambda)(\lambda-\widehat \Lf_{n,V})\ff]_1(\rho)
\\
&=R_V[\lambda f_1 -f_2+ (\cdot)f_1', (\lambda-2n+1) f_2 +(\cdot) f_2'- f_1''-\frac{d-1}{(\cdot)}f_1'-Vf_1](\rho,\lambda)
\\
&=\Rm_V(\lambda)[(1-(\cdot)^2)f_1''-\left( -\frac{d-1}{(\cdot)}+2(\lambda-n+1)(\cdot)) \right)f_1'- (\lambda(\lambda-2n+1)-V)f_1](\rho)
\\
&=f_1(\rho),
\end{align*}
and 
\begin{align*}
&\quad[\widehat{\Rf}_{n,V}(\lambda)(\lambda-\widehat \Lf_{n,V})\ff]_2(\rho)=\lambda f_1(\rho)+\rho \partial_\rho f_1(\rho)-\lambda f_1(\rho)-\rho \partial_\rho f_1(\rho)+f_2(\rho)=f_2(\rho).
\end{align*}
Consequently, the claim follows.
\end{proof}
Having established the resolvent construction for $\widehat{\Lf}_{n,V}$, we turn to the unperturbed case $V=0$, corresponding to the operator $\widehat{\Lf}_n$, and argue that the unstable eigenvalues determined in Lemma \ref{lem: specl0} are simple.
\begin{lem}\label{lem: multi free}
Each eigenvalue $\la \in\{0,1,3,5,\dots, 2n-1\}$ of $\widehat{\Lf}_n$ is simple.
\end{lem}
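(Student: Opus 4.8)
The plan is to leverage the geometric simplicity already obtained in Lemma~\ref{lem: specl0} and upgrade it to algebraic simplicity. Each $\lambda_0\in\{0,1,3,\dots,2n-1\}$ is a normal eigenvalue of $\widehat{\Lf}_n$, i.e.\ isolated in the spectrum with finite algebraic multiplicity — this follows from the explicit resolvent of $\widehat{\Lf}_{n,0}$ built above, whose only non-holomorphic ingredient near $\lambda_0$ is the scalar factor $1/c_{1,\widetilde 0,0}(\lambda)$, so the associated Riesz projection is finite rank. Since its eigenspace is one-dimensional, $\lambda_0$ is simple precisely when there is no $\ff\in\mathcal{D}(\widehat{\Lf}_n)$ with $(\widehat{\Lf}_n-\lambda_0)\ff=\hf_{\lambda_0}$, where $\hf_{\lambda_0}$ is the eigenfunction from \eqref{eigenf}. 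Reducing to the first component exactly as in the proof of Lemma~\ref{lem: specl0}, now with inhomogeneity $\gf=-\hf_{\lambda_0}$, the existence of such an $\ff$ is equivalent to the existence of an $f_1$ analytic on all of $[0,1]$ solving $(\rho^2-1)f_1''(\rho)+\big(-\tfrac{d-1}{\rho}+2(\lambda_0-n+1)\rho\big)f_1'(\rho)+\lambda_0(\lambda_0-2n+1)f_1(\rho)=G_{\lambda_0}(\rho)$, where $G_{\lambda_0}(\rho)=-\big(2\rho\partial_\rho+2\lambda_0-2n+1\big)h_{\lambda_0}(\rho)$ and $h_{\lambda_0}(\rho):=\,_2F_1\!\big(\tfrac{\lambda_0}{2},-n+\tfrac{\lambda_0+1}{2};\tfrac{d}{2};\rho^2\big)$. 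Thus the whole question becomes a solvability problem for a scalar hypergeometric-type ODE.

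The crucial observation I would use next is that $h_\lambda(\rho):=\,_2F_1\!\big(\tfrac{\lambda}{2},-n+\tfrac{\lambda+1}{2};\tfrac{d}{2};\rho^2\big)$ solves the \emph{homogeneous} equation $L_\lambda h_\lambda=0$ (with $L_\lambda$ the operator on the left-hand side above) for \emph{every} $\lambda$, being the hypergeometric branch analytic at $\rho=0$. Since $\partial_\lambda L_\lambda=2\rho\partial_\rho+(2\lambda-2n+1)$, differentiating $L_\lambda h_\lambda\equiv0$ in $\lambda$ and evaluating at $\lambda_0$ yields $L_{\lambda_0}\big(\partial_\lambda h_\lambda|_{\lambda_0}\big)=G_{\lambda_0}$; hence $\partial_\lambda h_\lambda|_{\lambda_0}$ is an explicit particular solution of the inhomogeneous ODE. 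It is automatically analytic at $\rho=0$ (the power-series coefficients of $h_\lambda$ in $\rho^2$ are polynomials in $\lambda$, and the series converges locally uniformly in $(\rho,\lambda)$). Because the Frobenius exponents of $L_{\lambda_0}$ at $\rho=0$ are $\{0,2-d\}$, the homogeneous solutions analytic at $\rho=0$ are exactly the multiples of $h_{\lambda_0}$, which is analytic at $\rho=1$ as well (it is the eigenfunction). Therefore an analytic-on-$[0,1]$ solution of the inhomogeneous ODE exists iff $\partial_\lambda h_\lambda|_{\lambda_0}$ is analytic at $\rho=1$, and everything reduces to proving that it is \emph{not}.

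To finish, I would expand near $\lambda_0$. By the eigenvalue characterization in Lemma~\ref{lem: specl0}, exactly one of $\alpha(\lambda):=\tfrac{\lambda}{2}$ and $\beta(\lambda):=-n+\tfrac{\lambda+1}{2}$ equals a nonpositive integer $-p$ at $\lambda_0$ ($\alpha$ if $\lambda_0=0$, $\beta$ if $\lambda_0\in\{1,3,\dots,2n-1\}$), so the series for $h_{\lambda_0}$ terminates and $h_{\lambda_0}$ is a polynomial of degree $2p$. Splitting $\partial_\lambda h_\lambda|_{\lambda_0}=P(\rho)+T(\rho)$, with $P$ the sum of the terms of order $\le2p$ (a polynomial, hence analytic at $\rho=1$) and $T$ the tail, only the single Pochhammer factor that vanishes at $\lambda_0$ survives differentiation in $T$, and a short computation identifies $T$ as $c_0\sum_{l\ge0}d_l\,\rho^{2(p+1+l)}$, where $d_l$ is an explicit ratio of Pochhammer symbols and $c_0$ is a product of Gamma values at arguments that are never poles, hence nonzero. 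The coefficients $d_l$ decay only polynomially and are eventually of one sign, so $T$ is a power series in $\rho^2$ of radius of convergence exactly $1$; by Pringsheim's theorem it has a singularity at $\rho=1$, and being non-polynomial it cannot be made analytic there by adding any multiple of the polynomial $h_{\lambda_0}$. Thus $\partial_\lambda h_\lambda|_{\lambda_0}$ is not analytic at $\rho=1$, no length-two Jordan chain exists, and $\lambda_0$ is simple. The steps I expect to need the most care are purely computational: justifying the term-by-term $\lambda$-differentiation and, above all, verifying that the leading coefficient $c_0$ of $T$ is nonzero for every $d\ge2$, $n\in\mathbb{N}$, together with the eventual-one-sign property of the $d_l$ needed to invoke Pringsheim — both following from elementary but careful Pochhammer/Gamma bookkeeping, and both must be checked separately for the two families $\lambda_0=0$ and $\lambda_0\in\{1,3,\dots,2n-1\}$.
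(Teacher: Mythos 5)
Your proposal is correct in substance, but it takes a genuinely different route from the paper. The paper proves simplicity the same way it later handles $\widetilde{\Lf}_n$ (Lemmas \ref{lem: spectrally simple} and \ref{lem:modemulti}): it computes the connection coefficient $c_{1,\widetilde 0}$ explicitly as a ratio of Gamma functions via the hypergeometric Wronskian, observes that its zeros at the unstable eigenvalues are at most simple, and then reads off from the explicit resolvent formula that $(\la-\la_u)\Rf(\la)$ has a removable singularity, so the resolvent has a first-order pole and the eigenvalue is semi-simple; combined with geometric multiplicity one this gives simplicity. You instead exclude a length-two Jordan chain directly: reducing $(\widehat{\Lf}_n-\la_0)\ff=\hf_{\la_0}$ to the scalar ODE, you produce the explicit candidate $\partial_\la h_\la\vert_{\la_0}$ via $\partial_\la L_\la=2\rho\partial_\rho+(2\la-2n+1)$, and show it is genuinely singular at $\rho=1$ because the tail of the $\la$-differentiated hypergeometric series (only the vanishing Pochhammer factor survives) has radius of convergence exactly $1$, nonzero leading coefficient and eventually constant sign, so Pringsheim applies; adding multiples of the polynomial $h_{\la_0}$ cannot remove the singularity. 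I checked the key computations ($G_{\la_0}=-(2\rho\partial_\rho+2\la_0-2n+1)h_{\la_0}$, the surviving-factor formula for $\partial_\la a_j\vert_{\la_0}$, the sign and ratio asymptotics in both families $\la_0=0$ and $\la_0\in\{1,3,\dots,2n-1\}$) and they are right. What each approach buys: the paper's argument slots into the resolvent machinery it needs anyway and gets the pole order (hence algebraic multiplicity) for free, while yours is more elementary at the ODE level and avoids the ${}_2F_1$ connection formula entirely. Two points you should shore up: (i) the passage from ``no $H^k$ Jordan vector'' to ``$f_1$ would have to be analytic at both endpoints'' needs the observation that every admissible singular behavior at $\rho=1$ (the branch $(1-\rho)^{\frac{d-1}{2}+n-\la_0}$, possibly with logarithms) fails to lie in $H^k$ since $\frac{d-1}{2}+n-\la_0<k-\frac12$ — the same reduction the paper uses implicitly in Lemma \ref{lem: specl0}; and (ii) your opening claim that $\la_0$ is isolated with finite-rank Riesz projection, which you justify through the constructed resolvent, is not literally covered by Lemma \ref{lem:analyticity} when $\la_0\ge\frac{d-1}{2}+n$ (possible for $\la_0$ up to $2n-1$ in low dimensions); this is easily repaired, e.g.\ by noting that the hypergeometric fundamental system depends meromorphically on $\la$ near each such real $\la_0$, but it deserves a sentence.
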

\begin{proof}
To prove this result, one argues as in the proof of Lemma \ref{lem:modemulti}, which appears below but is independent from this result.
\end{proof}
Recall that in our definition of $S_{n,V}$, we not only removed the eigenvalues, but also the set 
$$
R_n:=\Big\{z \in \mathbb{C}:-\frac{3}4\leq  \Re z \leq 2n, \ \frac{d-1}{2}+n-z\in \mathbb{N}_0\Big\}\setminus \sigma_p(\widehat{\Lf}_n).
$$ Hence, we still need to consider these points in case $V=0$.
\begin{lem}
We have that $R_n \subseteq \varrho(\widehat \Lf_n)$.
\end{lem}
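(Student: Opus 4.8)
The plan is to show that each $\lambda_0\in R_n$ is, at worst, an isolated point of $\sigma(\widehat\Lf_n)$, and then to rule out that it actually belongs to the spectrum by combining $\lambda_0\notin\sigma_p(\widehat\Lf_n)$ with the explicit resolvent formula built above. First I would record the structure of $R_n$: it is a \emph{finite} set of real points in the strip $\{-\tfrac34\le\Re z\le 2n\}$, disjoint from $\sigma_p(\widehat\Lf_n)$, and by Lemma~\ref{lem: specl0} one has $\sigma_p(\widehat\Lf_n)\cap\{\Re z\ge-\tfrac34\}=\{0,1,3,\dots,2n-1\}$. Fix $\lambda_0\in R_n$. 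The exceptional points removed when passing from the strip to $S_{n,0}$ lie on the arithmetic progression $\{\tfrac{d-1}{2}+n-j\}_j$, which has spacing $1$; moreover $\lambda_0$ is at positive distance from $\sigma_p(\widehat\Lf_n)$ and, being real with $\lambda_0\ge-\tfrac34$ and $\lambda_0\ne-\tfrac34$ (the latter since $d\in\mathbb N$), also at positive distance from the line $\Re z=-\tfrac34$. Hence one may choose $r\in(0,1)$ so that the punctured disk $D^\ast:=\{\lambda\in\C:0<|\lambda-\lambda_0|<r\}$ is contained in $S_{n,0}\cup\{\Re z>2n\}$. By the resolvent construction of this section applied with $V=0$ (legitimate since $\sigma_p(\widehat\Lf_n)\cap\{\Re z\ge-\tfrac34\}$ is finite) one has $S_{n,0}\subseteq\varrho(\widehat\Lf_n)$, while $\{\Re z>2n\}\subseteq\varrho(\widehat\Lf_n)$ by~\eqref{spec_n}. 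Therefore $D^\ast\subseteq\varrho(\widehat\Lf_n)$, so $\lambda_0$ is at worst an isolated point of $\sigma(\widehat\Lf_n)$ and $\lambda\mapsto\widehat{\Rf}_{n,0}(\lambda)=(\lambda-\widehat\Lf_n)^{-1}$ is a holomorphic $B(\mathcal H_{rad})$-valued function on $D^\ast$.

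Next I would show this family has at most polynomial growth as $\lambda\to\lambda_0$, hence at most a pole there. Going through the formula~\eqref{R_j} for $\Rm_{j,V}(G_\lambda)$ with $V=0$ (and the definitions of $R_0(\lambda)$ and of $\widehat{\Rf}_{n,0}(\lambda)$ assembled from it), the only $\lambda$-dependent \emph{scalar} factors that can fail to be holomorphic on a full neighborhood of $\lambda_0$ are: the normalizations $\alpha_n(\lambda)^{\mp1/2}$, $\alpha_n(\lambda)^{(d-1)/2}$ entering $u_{0,0},u_{1,0},u_{2,0}$ (relevant only if $\lambda_0=\tfrac{d-1}{2}+n$, where they are $\O(|\lambda-\lambda_0|^{-1/2})$); the products $\prod_{r=0}^{j-2}\bigl(\tfrac d2-\lambda+n-\tfrac32-r\bigr)^{-1}$, each factor vanishing to first order and contributing at most $\O(|\lambda-\lambda_0|^{-1})$; and the connection coefficient $\widehat c_0(\lambda)=c_{\widetilde 1,0,0}(\lambda)-c_{\widetilde 1,\widetilde 0,0}(\lambda)/c_{1,\widetilde 0,0}(\lambda)$, which by Lemmas~\ref{lem:con_coef} and~\ref{lem:analyticity} is a ratio of Wronskians of solutions depending holomorphically on $\lambda$, hence meromorphic near $\lambda_0$ with at most a pole. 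Every other ingredient---the Bessel-type solutions, the iterated integrals, and the map $(g_1,g_2)\mapsto(\,\cdot\,)$---is holomorphic in $\lambda$ near $\lambda_0$ and, by the estimates proven in Lemmas~\ref{lem:resbound1}--\ref{lem:resboundj}, maps $\mathcal H_{rad}\to H^1_{rad}(\B^d_1)$ with operator norm that is polynomially bounded in $\lambda$ on the compact set $\overline{D(\lambda_0,r/2)}$. Combining these gives $\|\widehat{\Rf}_{n,0}(\lambda)\|_{\mathcal H_{rad}\to\mathcal H_{rad}}\lesssim|\lambda-\lambda_0|^{-N}$ on $D^\ast\cap D(\lambda_0,r/2)$ for some $N=N(d,n)\in\mathbb N$; by Riemann's removable singularity theorem for Banach-space-valued holomorphic maps, $(\lambda-\lambda_0)^N\widehat{\Rf}_{n,0}(\lambda)$ then extends holomorphically across $\lambda_0$, i.e.\ $\widehat{\Rf}_{n,0}$ has at most a pole of some order $m\le N$ at $\lambda_0$.

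Finally I would upgrade ``at most a pole'' to ``no singularity''. Writing the Laurent expansion $\widehat{\Rf}_{n,0}(\lambda)=\sum_{k\ge-m}(\lambda-\lambda_0)^k S_k$ and substituting into $(\lambda-\widehat\Lf_n)\widehat{\Rf}_{n,0}(\lambda)=\I$ after expanding $\lambda-\widehat\Lf_n=(\lambda_0-\widehat\Lf_n)+(\lambda-\lambda_0)$, the coefficient of $(\lambda-\lambda_0)^{-m}$ gives $(\lambda_0-\widehat\Lf_n)S_{-m}=0$ (each $S_k$ maps into $\mathcal D(\widehat\Lf_n)$ because $\widehat\Lf_n$ is closed). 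If $m\ge1$, then $S_{-m}\ne0$ would force $\ker(\lambda_0-\widehat\Lf_n)\ne\{0\}$, i.e.\ $\lambda_0\in\sigma_p(\widehat\Lf_n)$, contradicting $\lambda_0\in R_n$. Hence $m=0$: $\widehat{\Rf}_{n,0}$ extends holomorphically to $\lambda_0$, its value there is a bounded operator, and by continuity together with closedness of $\widehat\Lf_n$ the identities $(\lambda_0-\widehat\Lf_n)\widehat{\Rf}_{n,0}(\lambda_0)=\I=\widehat{\Rf}_{n,0}(\lambda_0)(\lambda_0-\widehat\Lf_n)$ hold on $\mathcal D(\widehat\Lf_n)$, so $\lambda_0\in\varrho(\widehat\Lf_n)$. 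Since $\lambda_0\in R_n$ was arbitrary, $R_n\subseteq\varrho(\widehat\Lf_n)$. The only non-routine point is the polynomial bound of the previous paragraph: one must check, term by term in~\eqref{R_j}, that none of the degenerating factors conspire to produce anything worse than a pole---in particular no essential singularity---which is straightforward once the finitely many suspicious scalar factors are separated from the uniformly controlled operator part.
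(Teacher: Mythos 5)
Your step 1 (a small punctured disk around $\lambda_0\in R_n$ lies in $S_{n,0}\cup\{\Re z>2n\}\subseteq\varrho(\widehat\Lf_n)$, so $\lambda_0$ is at worst an isolated spectral point) and your final Laurent-coefficient argument (a pole of order $m\ge1$ would force $\lambda_0\in\sigma_p(\widehat\Lf_n)$) are both correct. The gap is exactly the step you flag as ``the only non-routine point'', and it is not routine: nothing in the paper gives you polynomial growth of $\widehat{\Rf}_{n}(\lambda)$ as $\lambda\to\lambda_0$. The building blocks of \eqref{R_j} --- $u_{1,V},u_{2,V},u_{0,V}$ and the connection coefficients, hence $\widehat c_V$ --- are constructed in Lemma \ref{lem:fund1} and shown holomorphic in Lemmas \ref{lem:analyticity}--\ref{lem:con_coef} only for $\lambda\in S_{n,V}$, which excludes $\lambda_0$ by design: at $\frac{d-1}{2}+n-\lambda\in\mathbb{N}_0$ the Frobenius indices at $\rho=1$ resonate, logarithms appear, and the Volterra construction of the index-$0$ solution degenerates. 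So the degeneration is not confined to the explicit scalar prefactors $\prod_r(\frac d2-\lambda+n-\frac32-r)^{-1}$ and $\alpha_n(\lambda)^{\pm1/2}$; it sits inside the ``operator part'' you declare uniformly controlled. The quantitative bounds you invoke (Lemmas \ref{lem:resbound1}--\ref{lem:resboundj}) hold for each fixed $\lambda\in S_{n,V}$ with constants that may depend on $\lambda$, and the uniform bounds (Lemmas \ref{lem:uniformbound1}, \ref{lem:uniformbound2}, \ref{lem:uniformboundj}) are only on $\widetilde S_{n,V}$, i.e.\ for $|\Im\lambda|$ large --- they say nothing about a real neighborhood of $\lambda_0$. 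Likewise, ``$\widehat c_0$ is a ratio of Wronskians of holomorphic solutions, hence meromorphic with at most a pole'' does not follow: the numerator and denominator are only known to be holomorphic on the punctured disk, and a ratio of such functions need not be meromorphic at the center. Without a growth bound you cannot exclude an essential singularity (isolated spectral points of a non-self-adjoint operator can be essential singularities of the resolvent), so the Laurent argument cannot start. Closing this would require redoing the ODE analysis uniformly up to the resonant points (e.g.\ a meromorphic-in-$\lambda$ Volterra construction), which is substantial extra work and precisely the complication the definition of $S_{n,V}$ was introduced to avoid.

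The paper's proof takes a different, soft route that bypasses resolvent asymptotics at $\lambda_0$ entirely: it chooses the constant potential $V_c$ with $c=-n+\lambda_0$, for which the spectral problem is exactly hypergeometric and $\lambda_0$ becomes a simple, isolated eigenvalue of the compactly perturbed operator $\widehat\Lf_{n,V_c}$; projecting this eigenvalue away yields an operator that differs from $\widehat\Lf_n$ by a compact operator and whose resolvent set contains a full disk around $\lambda_0$. The analytic Fredholm theorem then forces every spectral point of $\widehat\Lf_n$ in that disk to be an eigenvalue, and since $\lambda_0\notin\sigma_p(\widehat\Lf_n)$ by the definition of $R_n$, it lies in $\varrho(\widehat\Lf_n)$. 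If you want to keep your strategy, you would have to supply the missing uniform (or at least polynomially bounded) control of $u_{1,V}(\cdot,\lambda)$, $u_{2,V}(\cdot,\lambda)$ and $\widehat c_V(\lambda)$ as $\lambda$ approaches the resonance; otherwise, the compact-perturbation/Fredholm argument is the efficient way to handle these finitely many exceptional points.
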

\begin{proof}
For $c \in \R$, consider the constant potential
\begin{equation*}
    V_c= n-n^2-\frac14+\Big(\frac 12+c\Big)^2.
\end{equation*}
One readily computes that any $\lambda\in\{  z \in \mathbb{C}:-\frac{3}4\leq  \Re z \leq 2n\}$ is an eigenvalue of the operator
\begin{equation}\label{V_c}
\widehat{\Lf}_{n,V_c}=    \widehat{\Lf}_n+ \Lf'_{V_c}
\end{equation}
if and only if there exists a solution $f\in C^\infty([0,1])$ to the hypergeometric equation
\begin{align}\label{Eq:perturbed problem}
z(1-z)v''(z)+[\gamma-(\alpha_c+\beta_c+1)z]v'(z)-\alpha_c \beta_c v(z)=0,
\end{align}
 with
\begin{align*}
\alpha_c=\frac{\lambda-n-c}{2},\quad \beta= \frac{1+\lambda-n+c}{2}, \quad\gamma=\frac{d}{2}.
\end{align*}
From this, one consequently infers that for any $c\in \R$ there exist only finitely many eigenvalues of \eqref{V_c} in the set $\{z\in \mathbb{C}:\Re z\geq -\frac34\}$ and that they are all real. For $\lambda_i\in R_n$ fixed, we now set $c= -n+\lambda_i$. For this choice of $c$, Eq.~\eqref{Eq:perturbed problem} takes for $\lambda=\lambda_i$  the form
\begin{align*}
z(1-z)v''(z)+\Big[\frac{d}{2}-(2\lambda_i-2n+ \frac32)z\Big]v'(z)=0,
\end{align*}
which has a fundamental system of solutions given by
\begin{align*}
f_1(\rho)&=1,
\\
f_2(\rho)&=z^{1-\frac d2} \, _2F_1(\frac{4(\lambda_i-n)-d+3}{2}, 1-\frac d2 ;2- \frac{d}{2},z).
\end{align*}
Consequently, $\lambda_i$ is an eigenvalue of the operator $\widehat\Lf_n+\Lf_{V_{-n+\lambda_i}}$, of geometric multiplicity 1.
To check that this is also an eigenvalue of algebraic multiplicity $1$, one readily adapts the method used in the proof of Lemma \ref{lem:modemulti}.
Then, after projecting this eigenvalue away, we have constructed an operator $\widehat{\Lf}_{n,\lambda_i}$ which differs from $\widehat{\Lf}_n$ by a compact operator and satisfies 
\begin{align*}
\mathbb{D}_{\frac{1}{2}}(\lambda_i):=\Big\{z\in \C: |\lambda_i-z|< \frac12\Big\} \subseteq\varrho(\widehat{\Lf}_{n,\lambda_i} ).
\end{align*}
Therefore, by the analytic Fredholm theorem, $\lambda_i$ must either be an eigenvalue or an element of the resolvent set. As it is not an eigenvalue by assumption, the conclusion of the lemma follows.
\end{proof}
Now, note that according to the notation \eqref{M_V}, we have that $M_{n,0}=0$. Next, we prove that the resolvent of $\widehat{\Lf}_n $ is in fact a uniformly bounded operator for $\lambda$ in $\widehat{S}_{n,0}$; recall the definition of  $\widehat{S}_{n,V}$ in \ref{hat_S_n}. To that end, we define an auxiliary norm.
\begin{defi}
For $\ff\in C^\infty\times C^{\infty}(\overline{\B^d_1})$, we define 
$$
\|\ff\|_{\widetilde{\mathcal{H}}}:=\|\ff\|_{H^1\times L^2(\B^d_1)}+\|f_1\|_{\dot{H}^k(\B^d_1)}+ \|f_1\|_{\dot{H}^{k-1}(\mathbb S^{d-1}_1)}+\|f_2\|_{\dot{H}^{k-1}(\B^d_1)}.
$$
\end{defi}
\begin{lem}\label{lem:normequi2}
The norm $\|\cdot\|_{\widetilde{\mathcal{H}}}$ admits a continuous extension to $\mathcal{H}$, and the extended norm is equivalent to $\|\cdot\|_{\mathcal{H}}$.
\end{lem}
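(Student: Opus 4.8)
The plan is to establish the two-sided comparison $\|\ff\|_{\mathcal H}\simeq\|\ff\|_{\widetilde{\mathcal H}}$ on the dense test space $C^\infty\times C^\infty(\overline{\B^d_1})$ and then obtain the continuous extension by density. For the extension, observe that each summand defining $\|\cdot\|_{\widetilde{\mathcal H}}$ is $\|\ff\|$ post-composed with a bounded linear map out of $\mathcal H$: the inclusions $H^k(\B^d_1)\hookrightarrow H^1(\B^d_1)$ and $H^{k-1}(\B^d_1)\hookrightarrow L^2(\B^d_1)$, the obvious maps onto $\dot H^k(\B^d_1)$ and $\dot H^{k-1}(\B^d_1)$, and the boundary-trace map $H^k(\B^d_1)\to\dot H^{k-1}(\mathbb S^{d-1}_1)$; the last is bounded because for radial $f_1$ it involves only the values $f_1^{(j)}(1)$, $1\le j\le k-1$, each dominated by $\|f_1\|_{H^k(\B^d_1)}$ via Lemma~\ref{lem:f(1)}. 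Consequently $\|\cdot\|_{\widetilde{\mathcal H}}$ is Lipschitz with respect to $\|\cdot\|_{\mathcal H}$ on the test space, hence extends uniquely to a continuous seminorm on all of $\mathcal H$, and the two-sided comparison established on the test space passes to the closure; in particular the extension is positive definite, so it is an equivalent norm.

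The bound $\|\ff\|_{\widetilde{\mathcal H}}\lesssim\|\ff\|_{\mathcal H}$ is then immediate term by term: $\|f_1\|_{H^1}\le\|f_1\|_{H^k}$ and $\|f_2\|_{L^2}\le\|f_2\|_{H^{k-1}}$ (as $k\ge1$), the two homogeneous seminorms are dominated by the corresponding full norms, and the boundary term is handled by the radial trace inequality of Lemma~\ref{lem:f(1)}.

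The content is therefore the reverse inequality $\|f_1\|_{H^k(\B^d_1)}+\|f_2\|_{H^{k-1}(\B^d_1)}\lesssim\|\ff\|_{\widetilde{\mathcal H}}$. Since the boundary term only helps, this reduces to the two interpolation estimates $\|f_1\|_{H^k(\B^d_1)}\lesssim\|f_1\|_{H^1(\B^d_1)}+\|f_1\|_{\dot H^k(\B^d_1)}$ and $\|f_2\|_{H^{k-1}(\B^d_1)}\lesssim\|f_2\|_{L^2(\B^d_1)}+\|f_2\|_{\dot H^{k-1}(\B^d_1)}$, i.e.\ to the fact that on the bounded Lipschitz domain $\B^d_1$ the $L^2$-norm together with the top-order homogeneous seminorm controls the full Sobolev norm. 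I would prove this by the standard Rellich compactness argument (equivalently, via the Ehrling/Gagliardo--Nirenberg interpolation lemma, or by the inductive integration-by-parts scheme used in the proof of Lemma~\ref{lem:norm equiv1 }). Since we work in the radial category, one may alternatively pass to the weighted one-dimensional picture $\|g\|_{H^k(\B^d_1)}\simeq\sum_{j=0}^k\|(\cdot)^{\frac{d-1}{2}+j}D^j g\|_{L^2(0,1)}$ (the radial norm equivalence from \cite{Ost25}, together with Lemma~\ref{lem:norm equiv1 }) and run the same compactness argument there; the $d$-dimensional formulation already suffices. I expect no genuine obstacle here — this lemma is essentially routine; the only points needing a little care are the weight bookkeeping near $\rho=0$ if one chooses the one-dimensional route, and the continuity of the extension, which, as explained above, reduces to boundedness of the trace operator and hence again to Lemma~\ref{lem:f(1)}.
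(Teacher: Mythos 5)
Your argument is correct and takes essentially the same route as the paper, whose proof consists precisely of the two ingredients you isolate: the standard equivalence $\|\cdot\|_{H^k(\B^d_1)}\simeq\|\cdot\|_{L^2(\B^d_1)}+\|\cdot\|_{\dot H^k(\B^d_1)}$ (which you justify via Ehrling/Rellich or integration by parts) and boundedness of the boundary trace, for which the paper invokes the Trace Lemma while you use the radial pointwise bound of Lemma~\ref{lem:f(1)}. The only cosmetic caveat is that your trace justification is phrased for radial $f_1$; if one wants the statement on all of $\mathcal{H}$ rather than just the radial subspace, one should cite the standard trace theorem, exactly as the paper does.
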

\begin{proof}
This is a direct consequence of the equivalence 
$$
\|\cdot\|_{H^k(\B^d_1)}\simeq \|\cdot\|_{\dot H^k(\B^d_1)}+\|\cdot\|_{L^2(\B^d_1)}
$$
and the Trace Lemma.
\end{proof}
Now, we are finally in the position to prove uniform boundedness of the resolvent of $\widehat{\Lf}_n $ on $\widehat{S}_{n,0}$.
\begin{lem}\label{lem:uniformbound}
The resolvent of $\widehat{\Lf}_n$, denoted by $\widehat \Rf_n(\lambda)$, is uniformly bounded as an operator on $\mathcal{H}_{{rad}}$, for all $\lambda \in \widehat{S}_{n,0}$.
\end{lem}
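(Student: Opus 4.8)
The plan is to bootstrap from the uniform $H^1$-bounds on the ODE solution operator, established in Lemmas \ref{lem:uniformbound1}, \ref{lem:uniformbound2} and \ref{lem:uniformboundj}, up to a uniform bound in the full norm of $\mathcal H_{rad}$, keeping explicit track of all powers of $\langle\omega\rangle$. Recall that, for $V=0$, the resolvent acts as
\begin{align*}
\widehat\Rf_{n}(\lambda)(g_1,g_2)=\begin{pmatrix} R_0(\lambda)(g_1,g_2)\\ \lambda R_0(\lambda)(g_1,g_2)+\rho\partial_\rho R_0(\lambda)(g_1,g_2)-g_1\end{pmatrix},
\end{align*}
where $R_0(\lambda):=R_V(\lambda)|_{V=0}=\sum_{j}1_{(I_j)}(\Re\lambda)R_{j,0}(\lambda)$, and that, since $M_{n,0}=0$, the set $\widehat S_{n,0}=\{z\in S_{n,0}:|\Im z|\ge\tfrac12\}$ is contained in the large-$|\Im\lambda|$ subsets of $S_{n,0}$ on which the three lemmas above give $\|R_0(\lambda)\gf\|_{H^1_{rad}(\B^d_1)}\lesssim\|\gf\|_{\mathcal H}$ (the constraint $|\Im z|\ge\tfrac12$ in particular keeps $\lambda$ away from the exceptional real point $\tfrac{d-1}{2}+n$). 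Writing $\ff=(f_1,f_2)=\widehat\Rf_n(\lambda)\gf$, the pair $(f_1,f_2)$ solves the two scalar identities $\lambda f_1+\rho f_1'-f_2=g_1$ and $f_1''+\tfrac{d-1}{\rho}f_1'=(\lambda-2n+1)f_2+\rho f_2'-g_2$; I would use these, rather than differentiating the integral formulas for $R_0(\lambda)$ directly, to trade derivatives of $f_1$ against derivatives of $f_2$ and of the data.

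First I would record the refinement $\|R_0(\lambda)\gf\|_{L^2(\B^d_1)}\lesssim\langle\omega\rangle^{-1}\|\gf\|_{\mathcal H}$, which is already implicit in the construction: in each of the representations \eqref{def:R1}, \eqref{Eq:res def 2}, \eqref{R_j} the integrations by parts introduced to render the kernels integrable simultaneously move the $\lambda$ contained in $G_\lambda=(\lambda-2n+1)g_1+\rho g_1'+g_2$ onto $g_1$-derivatives, while the amplitudes of $u_{1,0},u_{2,0},u_{0,0}$ carry the factor $\alpha_n(\lambda)^{-1/2}\sim\langle\omega\rangle^{-1/2}$ and the oscillatory factors $(1-\rho)^{\pm\alpha_n(\lambda)/2}$ yield a further $\langle\omega\rangle^{-1/2}$ after one more integration by parts (cf.\ the symbol bookkeeping of Lemmas \ref{lem:fund1}, \ref{lem:fund0} and \ref{lem:symboltype}). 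Granting this, the identity $f_2=\lambda f_1+\rho f_1'-g_1$ gives $\|f_2\|_{L^2}\lesssim\langle\omega\rangle^{-1}|\lambda|\,\|\gf\|+\|f_1\|_{H^1}+\|g_1\|_{L^2}\lesssim\|\gf\|$, hence $\|\ff\|_{H^1\times L^2(\B^d_1)}\lesssim\|\gf\|$. Feeding this into $f_1''+\tfrac{d-1}{\rho}f_1'=(\lambda-2n+1)f_2+\rho f_2'-g_2$ and iterating — using ellipticity of the equation on $\B^d_r$ and its Fuchsian structure at $\rho=1$ exactly as in the proofs of Lemmas \ref{lem:resbound1}--\ref{lem:resboundj}, together with the $\langle\omega\rangle^{-1}$-gains on the low-order norms — one propagates the uniform control to $\|f_1\|_{\dot H^k(\B^d_1)}$, $\|f_2\|_{\dot H^{k-1}(\B^d_1)}$ and the trace $\|f_1\|_{\dot H^{k-1}(\mathbb{S}^{d-1}_1)}$. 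By the norm equivalence of Lemma \ref{lem:normequi2} this is precisely $\|\widehat\Rf_n(\lambda)\gf\|_{\mathcal H}\lesssim\|\gf\|_{\mathcal H}$, uniformly for $\lambda\in\widehat S_{n,0}$.

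The main obstacle is the last-mentioned $\langle\omega\rangle$-bookkeeping. A priori both $G_\lambda$ and the formula $f_2=\lambda f_1+\rho f_1'-g_1$ contribute a factor $\lambda$, so a crude estimate yields a resolvent bound growing like $|\Im\lambda|$; the content of the lemma is that these factors are illusory. Making this precise amounts to rerunning the estimates of Lemmas \ref{lem:uniformbound1}--\ref{lem:uniformboundj} with every power of $\langle\omega\rangle$ kept explicit, and checking that after forming the combination defining $f_2$ and its first $k-1$ derivatives each surviving term is $O(\langle\omega\rangle^{0})$ in $L^2$ — that is, that the $\langle\omega\rangle^{-1/2}$-gain carried by each amplitude and by each extra integration by parts always matches the $\langle\omega\rangle^{+1}$ coming from the two $\lambda$-factors. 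Once this accounting is carried out, no ideas beyond those already developed in Section \ref{sec:resolvent} are required.
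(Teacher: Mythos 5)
Your proposal has a genuine gap at the top-order step. The bootstrap you describe does not close: the identities $f_2=\lambda f_1+\rho f_1'-g_1$ and $f_1''+\tfrac{d-1}{\rho}f_1'=(\lambda-2n+1)f_2+\rho f_2'-g_2$ each cost a factor $\langle\omega\rangle$ per rung of the ladder, so passing from the uniform $H^1\times L^2$ bound to $\dot H^{k}\times\dot H^{k-1}$ in this way would require a whole hierarchy of improved estimates $\|R_0(\lambda)\gf\|_{\dot H^{\,k-j}}\lesssim\langle\omega\rangle^{-j}\|\gf\|_{\mathcal H}$, not just the single gain $\|R_0(\lambda)\gf\|_{L^2}\lesssim\langle\omega\rangle^{-1}\|\gf\|_{\mathcal H}$ that you assert is ``implicit in the construction'' (it is neither stated nor proved in Lemmas \ref{lem:uniformbound1}--\ref{lem:uniformboundj}, and as an a priori input it is close to circular, since it is equivalent to information one usually extracts \emph{after} the uniform resolvent bound via $\lambda\widehat{\Rf}_n(\lambda)=\I+\widehat{\Lf}_n\widehat{\Rf}_n(\lambda)$). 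Your fallback --- rerunning the estimates of Lemmas \ref{lem:resbound1}--\ref{lem:resboundj} at all orders up to $k$ with explicit $\langle\omega\rangle$ tracking --- is not a bookkeeping exercise but precisely the difficulty the paper avoids: the top-order bounds in those lemmas have $\lambda$-dependent constants (each derivative falling on $(1\pm s)^{\frac{d-1}{2}+n-\lambda}$ or on $\chi_\lambda$ produces a factor of size $|\lambda|$ which the amplitude gains $\alpha_n(\lambda)^{-1/2}$ do not absorb), and establishing the cancellations needed for uniformity at order $k$ from the explicit representation \eqref{R_j} is a substantial new argument that you have deferred rather than supplied.

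The paper's proof takes a different route for exactly this reason. After switching to the equivalent norm of Lemma \ref{lem:normequi2}, only the $H^1\times L^2$ part is taken from the ODE construction; the top-order seminorms are controlled by a direct coercivity estimate: integrating by parts in the $\dot H^k\times\dot H^{k-1}$ pairings and using the divergence theorem, one finds
\begin{align*}
\Re\big((\lambda-\widehat{\Lf}_n)\ff,\ff\big)_{\dot{\mathcal{H}}^k}\geq \tfrac14\,\|\ff\|_{\dot{\mathcal{H}}^k}^2
\end{align*}
for all $\Re\lambda\geq-\tfrac34$, where the choice $k=2n+1+\lceil\tfrac d2\rceil$ makes the bulk term $(\Re\lambda-2n+k-\tfrac d2)$ positive and the sphere boundary terms are handled by the elementary inequality $\tfrac12(a^2+b^2+c^2)-\Re(a\bar b+a\bar c-b\bar c)\geq0$. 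Cauchy--Schwarz then yields the uniform top-order bound with no $\langle\omega\rangle$ bookkeeping whatsoever. Some such mechanism (or a genuinely carried-out uniform oscillatory analysis at order $k$) is the missing ingredient in your argument.
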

\begin{proof}
By virtue of Lemma \ref{lem:normequi2}, it suffices to prove the claim for the norm $\|\cdot\|_{\widetilde{\mathcal{H}}}$. Furthermore, from previous lemmas, we already know that 
$
\|\widehat \Rf_n(\lambda)\ff\|_{H^1\times L^2(\B^d_1)}\lesssim \|\ff\|_{\widetilde{\mathcal{H}}}
$ uniformly on $\widehat{S}_{n,0}$. Thus, we only need to control the top order terms. For this, we recall that
\begin{equation}
(\lambda-\widehat \Lf_{n})\ff(\xi)=\begin{pmatrix}
\lambda f_1(\xi)-f_2(\xi)+\xi_j \partial^j f_1(\xi)
\\
(\lambda-2n+1)f_2(\xi) +\xi_j \partial^j f_2(\xi)-\Delta f_1(\xi)
\end{pmatrix},
\end{equation}
which implies
\begin{align*}
([(\lambda-\widehat{\Lf}_n) \ff]_1, f_1)_{\dot H^k(\B^d_1)}=\sum_{|\alpha|=k} (\lambda+k) |\partial^\alpha f_1|^2 - \partial^\alpha f_2 \partial^\alpha \overline{f_1}+  \xi^i \partial_i \partial^\alpha f_1 \partial^\alpha \overline{f_1} ,
\end{align*}
and
\begin{align*}
([(\lambda-\widehat \Lf_n) \ff]_2, f_2)_{\dot H^{k-1}(\B^d_1)}=\sum_{|\beta|=k-1} (\lambda-2n+k) |\partial^\beta f_2|^2 - \partial^\beta\Delta f_1 \partial^\beta \overline{f_2}+ \xi^i \partial_i \partial^\beta f_2 \partial^\beta \overline{f_2}  .
\end{align*}
Now, one notes that 
\begin{align*}
\Re( \partial^\alpha \overline{f_1} \xi^i \partial_i \partial^\alpha f_1)= \frac{1}2 \Re\left(\partial_i[  \xi^i  \partial^\alpha f_1 \partial^\alpha \overline{f_1}]- d|\partial^\alpha f_1(\xi)|^2\right)
\end{align*}
and, by the divergence theorem,
\begin{align*}
\int_{B^d}\partial_i[  \xi^i \partial^\alpha \overline{f_1} \partial^\alpha f_1]=\int_{\mathbb{S}^d}  \partial^\alpha \overline{f_1} \partial^\alpha f_1.
\end{align*} 
Similarly, one has that
\begin{align*}
\Re\left(\sum_{|\beta|=k-1} -\partial^\beta \Delta f_1 \partial^\beta \overline{f_2}+ \sum_{|\alpha|=k} - \partial^\alpha f_2 \partial^\alpha \overline{f_1}\right)= \sum_{|\beta|=k-1}- \Re\left(\nabla \cdot(  \nabla (\partial^\beta f_1) \partial^\beta \overline{f_2} )\right),
\end{align*}
from which we infer that
\begin{align*}
\Re &\left( ([\lambda-\widehat \Lf_{n} \ff]_1, f_1 )_{\dot H^k(\B^d_1)}+ ([\lambda-\widehat \Lf_{n} \ff]_2, f_2 )_{\dot H^{k-1}(\B^d_1)}\right)
\\
&\geq  (\lambda-2n+k -\frac{d}{2} ) \| \ff\|_{\dot{H}^k\times \dot H^{k-1}(\B^d_1)}^2 - \Re\left( \sum_{|\beta|=k-1} \int_{\mathbb{S}^d} \sigma^j \partial_j\partial^\beta f_1(\sigma) \partial_j\partial^\beta \overline{f_2}(\sigma) dS(\sigma)\right)
\\
&\quad +\frac{1}{2 }\sum_{|\alpha|=k}\int_{\mathbb{S}^d}  \partial^\alpha f_1\partial^\alpha \overline{f_1} + \frac12 \sum_{|\beta|=k-1} \int_{\mathbb{S}^d}   \partial^\beta f_2 \partial^\beta \overline{f_2}. 
\end{align*}
Therefore, upon setting
$$
\|\ff \|_{\dot{\mathcal{H}}^k}=\|f_1\|_{\dot{H}^k(\B^d_1)}+ \|f_1\|_{\dot{H}^{k-1}(\mathbb S^{d-1}_1)}+\|f_2\|_{\dot{H}^{k-1}(\B^d_1)},
$$
and,
as $\Re \lambda\geq-\frac34$ and $k=2n+1+\lceil\frac{d}{2}\rceil$, one sees that
\begin{align*}
\Re ((\lambda-\widehat \Lf_{n} )\ff, \ff)_{\dot{\mathcal{H}}^k}\geq  \frac14 \|f_1 \|_{\dot{H}^k}+\frac14 \|f_2 \|_{\dot{H}^{k-1}}+ B(\ff),
\end{align*}
where 
\begin{align*}
B(\ff)&= \frac 12 \sum_{|\alpha|=k}\int_{\mathbb{S}^d}  \partial^\alpha \overline{f_1} \partial^\alpha f_1+ \frac12 \sum_{|\beta|=k-1} \int_{\mathbb{S}^d}  \partial^\beta \overline{f_2} \partial^\beta f_2
\\
&\quad +(k-\lambda)\sum_{|\beta|=k-1}\int_{\mathbb{S}^d}  \partial^\beta \overline{f_1} \partial^\beta f_1
\\
&\quad- \Re\bigg( \sum_{|\beta|=k-1}| \int_{\mathbb{S}^d} \sigma^j \partial_j\partial^\beta f_1(\sigma) \partial^\beta \overline{f_2}(\sigma) dS(\sigma)-\sum_{|\beta|=k-1} \int_{\mathbb{S}^d}\partial^\beta f_1(\sigma)\partial^\beta\overline{f_2}(\sigma) dS(\sigma)
\\
&\quad +  \sum_{|\beta|=k-1} \int_{\mathbb{S}^d} \xi^j \partial_j\partial^\beta f_1(\sigma)\partial^\beta \overline{f_1}(\sigma) dS(\sigma)\bigg).
\end{align*}
Finally, by virtue of the inequality
$$
\frac12(n^2+b^2+c^2)-\Re (n\overline{b}+n\overline{c}-b\overline{c})\geq 0,
$$
we derive 
\begin{align*}
B(\ff)&\geq \frac14 \|f_1 \|_{\dot{H}^{k-1}(\mathbb{S}^d)},
\end{align*}
hence, 
\begin{align*}
\Re( (\lambda-\widehat \Lf_{n}) \ff, \ff)_{\dot{\mathcal{H}}^k}\geq  \frac14 \|\ff\|_{\dot{\mathcal{H}}^k}^2.
\end{align*}
So, we conclude that
\begin{align*}
\frac14 \|\ff\|_{\dot{\mathcal{H}}^k}^2&\leq \left| ((\lambda-\widehat \Lf_{n} )\ff, \ff)_{\dot{\mathcal{H}}^k} \right|\leq \|\ff\|_{\dot{\mathcal{H}}^k}\|(\lambda-\widehat \Lf_{n})\ff\|_{\dot{\mathcal{H}}^k},
\end{align*}
and the claimed uniform boundedness follows.
\end{proof}
Consequently, we obtain the following semigroup-generation result for $\widehat{\Lf}_n$.
\begin{proposition}\label{generation_hat}
Let $d\geq 2$ and $n \in \mathbb{N}$. Then the operator $\widehat{\Lf}_n$ generates a $C_0$-semigroup $(\widehat \Sf_n(\tau))_{\tau \geq 0}$ of bounded operators on $\mathcal{H}_{rad}$, and there exists a finite rank spectral projection $\widehat{\Pf}_n:\mathcal{H}_{rad} \rightarrow \mathcal{H}_{rad}$ such that
$$
\|\widehat{\Sf}_n(\tau)(\I-\widehat{\Pf}_n)\ff\|_\mathcal{H} \lesssim e^{-\frac{3}{4}\tau}\|\ff \|_{\mathcal{H}},
$$ 
for all $\ff\in \mathcal{H}_{rad}$ and all $\tau \geq 0$.
\end{proposition}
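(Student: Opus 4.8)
The plan is to apply the Gearhart--Pr\"uss--Greiner theorem to the part of the semigroup transverse to the finitely many unstable modes, so I first pin down the spectrum of $\widehat{\Lf}_n$ in the closed half-plane $\{\Re z\geq-\tfrac34\}$. By the corollary to Lemma \ref{lem:simplegen} one has $\sigma(\widehat{\Lf}_n)\subseteq\{\Re z\leq 2n\}$, and $\widehat{\Lf}_n$ already generates a $C_0$-semigroup $(\widehat{\Sf}_n(\tau))_{\tau\geq0}$ with $\omega_0(\widehat\Sf_n)\leq 2n$. The resolvent construction of this section shows $S_{n,0}\subseteq\varrho(\widehat{\Lf}_n)$, the lemma on the log-exceptional points gives $R_n\subseteq\varrho(\widehat{\Lf}_n)$, while Lemmas \ref{lem: specl0} and \ref{lem: multi free} show that the point spectrum in $\{\Re z\geq-\tfrac34\}$ is exactly $\{0,1,3,\dots,2n-1\}$, each eigenvalue being simple. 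Combining these, $\sigma(\widehat{\Lf}_n)\cap\{\Re z\geq-\tfrac34\}=\{0,1,3,\dots,2n-1\}$, a finite set of isolated, algebraically simple eigenvalues. I then take $\widehat{\Pf}_n$ to be the Riesz projection associated with this set (a contour integral of $\widehat{\Rf}_n(\lambda)$ around the points), which is of finite rank $n+1$; writing $Q:=\I-\widehat{\Pf}_n$, the space $\mathcal{H}_{rad}$ splits into $\widehat{\Lf}_n$-invariant subspaces, the restriction $A_1:=\widehat{\Lf}_n|_{\ran Q}$ generates the $C_0$-semigroup $\widehat{\Sf}_n(\cdot)Q$ on $\ran Q$, and $\sigma(A_1)=\sigma(\widehat{\Lf}_n)\setminus\{0,1,3,\dots,2n-1\}\subseteq\{\Re z<-\tfrac34\}$.

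The core step is then to show that $(z-A_1)^{-1}$ is uniformly bounded on $\{\Re z\geq-\tfrac34\}$. I would split this region into three parts. For $\Re z$ large, the bound $\|(z-\widehat{\Lf}_n)^{-1}\|\leq C/(\Re z-2n)$, valid for $\Re z>2n$ and coming from the growth bound, together with $(z-A_1)^{-1}=(z-\widehat{\Lf}_n)^{-1}|_{\ran Q}$ there, handles $\Re z\geq 2n+\tfrac14$. For the strip $-\tfrac34\leq\Re z\leq 2n+\tfrac14$ with $|\Im z|\geq\tfrac12$ I use that $M_{n,0}=0$, so $\widehat{S}_{n,0}$ coincides with $\{-\tfrac34\leq\Re z\leq2n,\,|\Im z|\geq\tfrac12\}$ (every point deleted in forming $S_{n,0}$ lies on the real axis); Lemma \ref{lem:uniformbound} then gives the required uniform bound there, and a short Neumann-series perturbation off the line $\Re z=2n$ extends it to $2n\leq\Re z\leq2n+\tfrac14$. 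For the remaining compact box $\{-\tfrac34\leq\Re z\leq2n+\tfrac14,\,|\Im z|\leq\tfrac12\}$: away from small disks around $0,1,3,\dots,2n-1$ (and the finitely many real log-exceptional points, which lie in $\varrho(\widehat{\Lf}_n)$) the resolvent is continuous on a compact set, hence bounded; inside those disks the singular part of $(z-\widehat{\Lf}_n)^{-1}$ is carried entirely by $\widehat{\Pf}_n$, so $(z-A_1)^{-1}$ extends holomorphically across them and is bounded on the closed disks. Patching the three pieces gives $\sup_{\Re z\geq-\tfrac34}\|(z-A_1)^{-1}\|<\infty$.

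With $\ran Q$ a Hilbert space, $A_1$ a generator, $\{\Re z>-\tfrac34\}\subseteq\varrho(A_1)$, and the resolvent uniformly bounded there, the Gearhart--Pr\"uss--Greiner theorem yields $\|\widehat{\Sf}_n(\tau)(\I-\widehat{\Pf}_n)\|=\|e^{\tau A_1}\|\lesssim e^{-\frac34\tau}$ for all $\tau\geq0$, which is exactly the assertion (and, combined with Lemma \ref{lem:simplegen}, also gives the generation statement). I expect the only genuinely hard part to be the uniform resolvent bound on the high-frequency region $\widehat{S}_{n,0}$, but that has already been established in Lemma \ref{lem:uniformbound}; the remaining work is the routine assembly of a spectral decomposition together with Gearhart--Pr\"uss, where the one point deserving care is the transition to the restricted operator $A_1$ near the projected-out eigenvalues and the check that the log-exceptional points of $R_n$ do not obstruct the compactness argument.
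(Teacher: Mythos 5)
Your proposal is correct and follows essentially the same route as the paper: project out the finitely many simple unstable eigenvalues $\{0,1,3,\dots,2n-1\}$, combine the uniform resolvent bound of Lemma \ref{lem:uniformbound} on $\widehat S_{n,0}$ with the growth bound of Lemma \ref{lem:simplegen} for large $\Re z$, and conclude via Gearhart--Pr\"uss--Greiner. The only difference is that you spell out the assembly of the uniform bound (compactness on the low-frequency box, holomorphic extension of $(z-A_1)^{-1}$ across the projected-out eigenvalues and exceptional points, Neumann-series patching near $\Re z=2n$), which the paper compresses into ``by construction''.
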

\begin{proof}
By construction, there exists a finite rank projection $\widehat{\Pf}_n$ such that
$\widehat{\Rf}_n(\la)(\I-\widehat{\Pf}_n)$ is uniformly bounded on the set $ \{\la \in \C: -\frac34 \leq \Re \la\leq 2n+1\}$. Moreover, as a consequence of Lemma \ref{lem:simplegen}, the operator $\widehat{\Rf}_n(\la)(\I-\widehat{\Pf}_n)$ is also uniformly bounded on the set  $\{ \la \in\C: \Re \la \geq 2n+1 \}$. Hence, the claim follows from the Gearhart-Pr\"uss-Greiner Theorem.
\end{proof}
We finally consider the operator $\widetilde{\Lf}_n$, which is boundedly similar to the operator $\Lf_n$ (see Lemma \ref{lem:conjugation}) which generates the linear dynamics of the evolution equation \eqref{eq:Phi}. In order to obtain a good semigroup-generation theorem for $\widetilde{\Lf}_n$, we need a precise description of its unstable spectrum. In what follows, we identify all unstable eigenvalues of $\widetilde{\Lf}_n$, and show that they are semi-simple. In view of the resolvent construction for a general potential $V$ developed earlier in this section, we first exhibit a general class of potentials $V$ ensuring semi-simplicity of unstable eigenvalues of $\widehat{\Lf}_{n,V}$. For the statement, we make one more definition.
\begin{defi}\label{def:spec_semi_simp}
We call a potential $V\in C^\infty_{rad}(\overline{\B^d_1})$ \emph{spectrally semi-simple} if the following two conditions hold.
\begin{enumerate}\setlength{\itemsep}{1mm}
    \item All unstable eigenvalues $\lambda$ of the operator $\widehat{\Lf}_{n,V}$ satisfy $\Re \la <\frac{d-1}{2}+n$. 
    \item The connection coefficient $c_{1,\widetilde{0},V}$, defined in Lemma \ref{lem:con_coef}, has at most a simple zero at each unstable eigenvalue. 
\end{enumerate}
\end{defi}
The following lemma justifies the terminology.
\begin{lem} \label{lem: spectrally simple}
    Let $V$ be  spectrally semi-simple. Then all unstable eigenvalues of the operator $\widehat{\Lf}_{n,V}$ are semi-simple. 
\end{lem}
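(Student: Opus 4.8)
The plan is to read off the order of the pole of the resolvent of $\widehat{\Lf}_{n,V}$ at an unstable eigenvalue $\lambda_0$ directly from the explicit formulas built in this section, and to show it is at most one; since, at an isolated spectral point, the order of the pole of the resolvent equals the Riesz index of $\lambda_0$ (so that the pole is simple precisely when the associated spectral projection maps onto $\ker(\lambda_0-\widehat{\Lf}_{n,V})$), this gives semi-simplicity. So fix an unstable eigenvalue $\lambda_0$; by condition (1) in Definition \ref{def:spec_semi_simp} we have $\Re\lambda_0<\frac{d-1}{2}+n$, in particular $\alpha_n(\lambda_0)\neq0$.

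First I would justify that $\lambda_0$ is isolated in $\sigma(\widehat{\Lf}_{n,V})$ and that on a small punctured disc around it the true resolvent coincides with $\widehat{\Rf}_{n,V}(\lambda)$. The connection coefficients $c_{1,\widetilde0,V}$ and $c_{\widetilde1,\widetilde0,V}$ extend holomorphically to a neighbourhood of $\lambda_0$ (being Wronskians of the building blocks, which are holomorphic there by Lemma \ref{lem:analyticity}, using that $\alpha_n(\lambda_0)\neq0$), $\lambda_0$ is an eigenvalue if and only if $c_{1,\widetilde0,V}(\lambda_0)=0$ (this is the only obstruction to defining $u_{0,V}$, and it forces $u_{1,V}(\cdot,\lambda_0)$ to be analytic at both endpoints, hence an eigenfunction), and $c_{1,\widetilde0,V}$ is not identically zero since it is nonvanishing on $S_{n,V}$. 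Hence its zeros are isolated, and together with the discreteness of the finitely many exceptional points removed in the definition of $S_{n,V}$ this yields $\mathbb{D}_\varepsilon(\lambda_0)\setminus\{\lambda_0\}\subseteq S_{n,V}\subseteq\varrho(\widehat{\Lf}_{n,V})$ for $\varepsilon$ small. I would also record that $c_{\widetilde1,\widetilde0,V}(\lambda_0)\neq0$: otherwise $v_{1,V}(\cdot,\lambda_0)$ and $\widetilde v_{1,V}(\cdot,\lambda_0)$ would both be proportional to $v_{0,V}(\cdot,\lambda_0)$, contradicting their linear independence.

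Next I would extract the pole structure. The formulas \eqref{def:R1} and \eqref{R_j} express $\mathcal{R}_{j,V}(G_\lambda)(\cdot,\lambda)$ entirely through $u_{1,V}$, $u_{2,V}$ and the scalar $\widehat{c}_V=c_{\widetilde1,\widetilde0,V}/c_{1,\widetilde0,V}$, with the singular factor $\widehat{c}_V$ entering only linearly. Hence on the punctured disc
\begin{equation*}
\widehat{\Rf}_{n,V}(\lambda)=H_1(\lambda)+\widehat{c}_V(\lambda)\,H_2(\lambda),
\end{equation*}
where $H_1,H_2$ are bounded operators on $\mathcal{H}_{rad}$ depending holomorphically on $\lambda$ near $\lambda_0$: the kernels of $H_1,H_2$ are assembled by the same integral formulas from the holomorphic building blocks $u_{1,V}(\rho,\cdot)$, $u_{2,V}(\rho,\cdot)$ (and their $\rho$-derivatives, and $1/\sqrt{\alpha_n}$), while the locally uniform $\mathcal{H}_{rad}\to H^k_{rad}(\B^d_1)$ bounds of Lemmas \ref{lem:resbound1}--\ref{lem:resboundj} upgrade the pointwise-in-$\rho$ holomorphy of Lemma \ref{lem:analyticity} to operator-valued holomorphy. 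By condition (2), $c_{1,\widetilde0,V}$ has at most a simple zero at $\lambda_0$, and since $c_{\widetilde1,\widetilde0,V}(\lambda_0)\neq0$, the function $\widehat{c}_V$ has exactly a simple pole there; therefore $\widehat{\Rf}_{n,V}$, and hence the resolvent, has a pole of order at most one at $\lambda_0$. Thus $\lambda_0$ is semi-simple, and since it was arbitrary the lemma follows.

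The main obstacle will be the bookkeeping in the last step: checking that in the long formula \eqref{R_j} every $\lambda$-singular contribution is carried by the single scalar $\widehat{c}_V(\lambda)$, and that what remains is genuinely holomorphic and locally operator-bounded near $\lambda_0$ — this is exactly where Lemma \ref{lem:analyticity} must be combined with the resolvent-bound lemmas. A minor technical point, to be dispatched by a soft argument as in the proof that $R_n\subseteq\varrho(\widehat{\Lf}_n)$, is the possibility that $\lambda_0$ coincides with one of the exceptional points excluded from $S_{n,V}$: one then projects $\lambda_0$ away using a suitable constant potential and invokes the analytic Fredholm theorem, noting that a simple zero of the relevant connection coefficient still produces only a first-order pole.
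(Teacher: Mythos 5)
Your proposal is correct and follows essentially the same route as the paper: on a punctured disc around the unstable eigenvalue the explicit representation of $R_{j,V}(\lambda)$ shows that the only $\lambda$-singularity enters through the scalar $\widehat{c}_V$, whose pole is at most simple by the simple-zero hypothesis on $c_{1,\widetilde 0,V}$, so the resolvent $\widehat{\Rf}_{n,V}$ has a pole of order one there and the eigenvalue is semi-simple. Your extra verifications (isolation of the eigenvalue, the equivalence with $c_{1,\widetilde 0,V}(\lambda_0)=0$, nonvanishing of $c_{\widetilde 1,\widetilde 0,V}(\lambda_0)$, operator-valued holomorphy of the regular part, and the treatment of the exceptional points with $\tfrac{d-1}{2}+n-\lambda\in\mathbb{N}_0$) are reasonable elaborations of details the paper leaves implicit.
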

\begin{proof}
    Assume $\la_u$ is an eigenvalue of $\widehat{\Lf}_{n,V}$ with $0 \leq \Re \la_u < \frac{d-1}{2}+n$. Then there exists a positive integer $j \leq  \lfloor{\frac{d}{2}\rfloor}+n+1$ such that 
    $$
    \mu:=\frac{d}{2}-\Re \lambda_u-\frac12+n \in {I}_j^\circ,
    $$
where
\begin{align*}
I_1&=[\frac d2-n-\frac32,\frac{5}{6}], 
\\
I_j&=[(j-2)+\frac{3}{4}, (j-1)+\frac{5}{6}] , \text{ for } j=2,3,\dots, \floor{\frac{d}{2}}+n,
\\
I_{\floor{\frac{d}{2}}+n+1}&=[d+2n-2+\frac{3}{4},\frac{d}{2}+n+\frac14],
\end{align*} 
and ${I}_j^\circ$ denotes the interior of $I_j$. 
If $\mu$ is such that it lies in the interior of two intervals, then we let $j$ be the larger index of the two. In this way we assign a unique interval $I_j$ to every unstable eigenvalue.
Consider now a punctured disk
\begin{align*}
 \widehat D (\lambda_u):=D_{r}(\lambda_u)\setminus \{\lambda_u\}=\{z\in \C: |\lambda_u-z|< r\}\setminus\{\lambda_u\} \subseteq\varrho(\widehat{\Lf}_n),
\end{align*}
 where $ r$ is chosen such that $\Re z\in I^\circ_j$ for all $z\in  \widehat D (\lambda_u)$.
Then, on $ \widehat D (\lambda_u)$, the resolvent of $\widehat{\Lf}_{n,V}$ is given by
$$
\widehat{\Rf}_{n,V}(\lambda) (g_1,g_2)(\rho):=\begin{pmatrix}
R_{j,V}(\lambda)(g_1,g_2)(\rho)
\\
\lambda R_{j,V}(\lambda)(g_1,g_2)(\rho)+\rho \partial_\rho R_{j,V}(\lambda)(g_1,g_2)(\rho)-g_1(\rho)
\end{pmatrix},
$$
where, according to Lemma \ref{lem:resboundj} and \eqref{R_j}
\begin{align*}
R_{j,V}&(\la)(g_1,g_2)(\rho)
\\
&=
u_{1}(\rho,\lambda)\sum_{\ell=1}^{j-1} (-1)^{\ell}G_\lambda^{(\ell-1)}(\rho)U_{2,\ell}(\rho,\lambda)
\\
&\quad+(-1)^{j}u_{1}(\rho,\lambda)\int_0^\rho G_\lambda^{(j-1)}(s_1)\int_0^{s_1} \int_0^{s_2}\dots \int_0^{s_{j-1} }\frac{s_j^{d-1} u_{2}(s_j,\lambda)}{(1-s_j^2)^{\frac{d}{2}-\lambda+n-\frac12}}ds_j \dots ds_2ds_1 
\\
&\quad+(-1)^{j} u_{2}(\rho,\lambda)\sum_{i=0}^{j-1} \begin{pmatrix}
j-1
\\
j-1-i
\end{pmatrix}\int_\rho^1\frac{ G_\lambda^{(j-1-i)}(s) \partial_s^i [ s^{d-1} u_{1}(s,\lambda)(1+s)^{-\frac{d}{2}+\lambda-n+\frac12}]}{\prod_{r=0}^{j-2}(\frac{d}{2}-\lambda+n-\frac32-r)(1-s)^{\frac{d}{2}-\lambda+n+\frac12-j}}ds
\\
&\quad +\sum_{\ell=1}^{j-1}u_{2}(\rho,\lambda)\sum_{i=0}^{j-1-\ell}
\begin{pmatrix}
i+\ell-1
\\
\ell-1
\end{pmatrix} (-1)^{\ell-1-i}\frac{G_\lambda^{(\ell-1)}(\rho)\partial_{\rho}^{i}[\rho^{d-1}u_{1}(\rho,\lambda)(1+\rho)^{-\frac d2 +\lambda-n+\frac 12}]}{\prod_{r=0}^{\ell+i-1}(\frac{d}{2}-\lambda+n-\frac32-r)(1-\rho)^{\frac{d}{2}-\lambda+n-\frac12-\ell-i}}.
\\
&\quad+(-1)^{j}\widehat{c} u_{1}(\rho,\lambda)\int_0^1 G_\lambda^{(j-1)}(s_1)\int_0^{s_1} \int_0^{s_2}\dots \int_0^{s_{j-1} }\frac{s_j^{d-1} u_{1}(s_j,\lambda)}{(1-s_j^2)^{\frac{d}{2}-\lambda+n-\frac12}}ds_j \dots ds_2ds_1,
\end{align*}
where 
$$
\widehat{c}=\widehat{c}(\lambda)=\left[c_{\widetilde 1,0,V}(\lambda)-\frac{c_{\widetilde 1,\widetilde 1,V}(\lambda)}{c_{1,\widetilde 0,V}(\lambda)}\right].
$$
Now, if $\lambda_u$ is a simple zero of $c_{1,\widetilde 0,V}(\lambda)$ then one easily checks that $(\lambda-\lambda_u)R_{j,V}(\lambda)(g_1,g_2)(\rho)$ has a removable singularity at $\lambda_u$. Thus, one can analytically extend $(\lambda-\lambda_u)R_{j,V}(\lambda)$ to $\lambda_u$ as a bounded linear operator from $\mathcal{H}_{rad}$ to $H^k_{rad}(\B^d_1).$ From this, one readily infers that $\lambda_u$ is a simple pole of $\widehat{\Rf}_{n,V}$, hence a semi-simple eigenvalue of $\widehat{\Lf}_{n,V}$. We note that we used the assumption that $\Re \la < \frac{d-1}{2}+n$ in order to ensure that, according to Lemma \ref{lem:analyticity}, the map $\la \mapsto R_{j,V}(\lambda)$ is analytic in a neighborhood of $\la_u$.
\end{proof}
With this, we can completely describe the unstable spectrum of $\widetilde{\Lf}_n$. 

\begin{lem}\label{lem:unstable eig}
We have that 
$$
\sigma(\widetilde{\Lf}_n) \cap \big\{ z\in \C: \Re z\geq -\frac34\big\}=\{0,1,\dots,n\} \subseteq \sigma_p( \Lf_n).
$$
Furthermore, each eigenvalue   $\lambda\in \{0,1,\dots,n\}$ has geometric multiplicity 1, with a corresponding eigenfunction
\begin{align*}
\widehat \hf_{\lambda}(\rho)=\begin{pmatrix}
_2F_1\left[\frac{\lambda-n}{2},\frac{\lambda-n+1}{2};\frac{d}{2};\rho^2\right]
\\
(\lambda+\rho \partial_\rho)_2F_1\left[\frac{\lambda-n}{2},\frac{\lambda-n+1}{2};\frac{d}{2};\rho^2\right]
\end{pmatrix}.
\end{align*}
\end{lem}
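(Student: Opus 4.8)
The plan is to mirror the proof of Lemma~\ref{lem: specl0}, now carrying the (constant) potential along. Since $\Lf_{V_n}'$ is multiplication by the constant matrix with lower-left entry $n(1-n)$, we have $\widetilde{\Lf}_n=\widehat{\Lf}_{n,V_n}$ for the constant potential $V_n(\rho)\equiv n(1-n)$, so the entire resolvent machinery of Section~\ref{sec:resolvent} applies. First I would reduce the eigenvalue equation $(\lambda-\widetilde{\Lf}_n)\ff=0$: its first component forces $f_2=(\lambda+\rho\partial_\rho)f_1$, and the second gives, weakly on $(0,1)$,
\begin{equation*}
(\rho^2-1)f_1''(\rho)+\left(-\tfrac{d-1}{\rho}+2(\lambda-n+1)\rho\right)f_1'(\rho)+\big[\lambda(\lambda-2n+1)-n(1-n)\big]f_1(\rho)=0.
\end{equation*}
The crucial algebraic observation is the factorization $\lambda(\lambda-2n+1)-n(1-n)=(\lambda-n)(\lambda-n+1)$, so that the substitution $z=\rho^2$, $g(z)=f_1(\rho)$, turns this into the canonical hypergeometric equation with parameters $a=\tfrac{\lambda-n}{2}$, $b=\tfrac{\lambda-n+1}{2}$, $c=\tfrac d2$. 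Exactly as in Lemma~\ref{lem: specl0}, the Sobolev embedding of $\mathcal{H}_{rad}$ together with inspection of the Frobenius indices forces an eigenfunction in $H^k_{rad}(\B^d_1)$, $k=\lceil d/2\rceil+2n+1$, to be analytic at $\rho=0$ — hence $f_1$ equals, up to a scalar, the $\,_2F_1(a,b;c;\cdot)$ branch — and the same considerations, using $\Re\lambda\ge-\tfrac34$, force analyticity at $\rho=1$ as well. In particular the geometric eigenspace is at most one-dimensional.

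This reduces the spectral determination to the classical connection problem for the hypergeometric equation: the solution analytic at $z=0$ extends analytically across $z=1$ precisely when it terminates, i.e.\ when $a\in\mathbb{Z}_{\le 0}$ or $b\in\mathbb{Z}_{\le 0}$ — the coefficient $\Gamma(c)\Gamma(a+b-c)/(\Gamma(a)\Gamma(b))$ of the singular branch $(1-z)^{c-a-b}$, respectively of the logarithmic branch in the exceptional case $c-a-b\in\mathbb{Z}$, vanishes only then. Translating back, this means $\tfrac{\lambda-n}{2}\in\{0,-1,-2,\dots\}$ or $\tfrac{\lambda-n+1}{2}\in\{0,-1,-2,\dots\}$, i.e.\ $\lambda$ is an integer $\le n$; imposing $\Re\lambda\ge-\tfrac34$ leaves exactly $\lambda\in\{0,1,\dots,n\}$. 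For each such $\lambda$ one of the two upper parameters is a non-positive integer, so $\,_2F_1\!\left(\tfrac{\lambda-n}{2},\tfrac{\lambda-n+1}{2};\tfrac d2;\rho^2\right)$ is an even polynomial in $\rho$; hence $\widehat\hf_\lambda=\big(f_1,(\lambda+\rho\partial_\rho)f_1\big)$ lies in the test space $\mathcal{D}(\widehat{\Lf}_n)$ and is a genuine eigenfunction, and by the previous paragraph the geometric multiplicity is $1$.

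It remains to rule out any spectrum of $\widetilde{\Lf}_n$ in $\{\Re z\ge-\tfrac34\}$ other than $\{0,\dots,n\}$. The resolvent construction of Section~\ref{sec:resolvent} already yields $S_{n,V_n}\subseteq\varrho(\widetilde{\Lf}_n)$, so only the finitely many exceptional (logarithmic) points $\{z:\ -\tfrac34\le\Re z\le 2n,\ \tfrac{d-1}{2}+n-z\in\mathbb{N}_0\}$ remain; those that are not among $\{0,\dots,n\}$ I would handle exactly as in the argument establishing $R_n\subseteq\varrho(\widehat{\Lf}_n)$ — pass to a suitable constant-potential perturbation for which the point in question is cleanly an eigenvalue or a regular point, observe that the resulting operator differs from $\widetilde{\Lf}_n$ by a compact operator, and invoke the analytic Fredholm theorem to place it in the resolvent set. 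Finally, Lemma~\ref{lem:corr_spectra} transfers $\sigma(\widetilde{\Lf}_n)=\sigma(\Lf_n)$ and the correspondence of eigenfunctions, giving $\{0,\dots,n\}\subseteq\sigma_p(\Lf_n)$. I expect the main obstacle to be not the hypergeometric connection analysis, which is classical and parallels Lemma~\ref{lem: specl0}, but the careful bookkeeping at the degenerate/logarithmic exceptional points: one must ensure that none of the points excised in the definition of $S_{n,V_n}$ secretly carries an eigenvalue or continuous spectrum lying outside $\{0,\dots,n\}$.
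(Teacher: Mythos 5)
Your argument is correct and takes essentially the same route as the paper: the reduction $f_2=(\lambda+\rho\partial_\rho)f_1$, the substitution $z=\rho^2$ yielding the hypergeometric equation with parameters $\tfrac{\lambda-n}{2},\tfrac{\lambda-n+1}{2},\tfrac d2$, the connection-problem criterion (analyticity at both endpoints iff an upper parameter is a non-positive integer, hence $\lambda\in\{0,1,\dots,n\}$ with one-dimensional eigenspaces and the stated polynomial eigenfunctions), and the use of the Section~\ref{sec:resolvent} resolvent on $S_{n,V_n}$, the exceptional-point/analytic-Fredholm argument, and Lemma~\ref{lem:corr_spectra} are exactly the ingredients the paper uses (the paper's written proof only spells out the eigenvalue classification, so your explicit handling of the non-eigenvalue points is, if anything, more complete). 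The only cosmetic omission is the region $\Re z>2n$, which is settled by the paper's lemma that spectrum there consists of eigenvalues only (compactness of $\Lf'_{V_n}$ plus the growth bound for $\widehat{\Lf}_n$), combined with your connection analysis showing no eigenvalues exist there.
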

\begin{proof}
First, we note that
\begin{align*}\big(\lambda-\widetilde{\Lf}_n)\ff(\rho)=0
\end{align*}
is equivalent to the system of two equations,
\begin{align*}
f_2(\rho)=\lambda f_1(\rho)+\rho f_1'(\rho),
\end{align*}
and
\begin{align}\label{Eq:reduced spec eq1}
(\rho^2-1) u''(\rho)+\left[-\frac{d-1}{\rho}+(2\lambda+ 2-2n )\rho\right]u'(\rho)+[ \lambda^2+\lambda-2n\lambda -n(1-n)] u(\rho)=0.
\end{align}
The Frobenius indices of the second equation are given by $\{0,2-d\}$ at $\rho=0$ and $\{0,\frac{d-1}{2}+n-\lambda\}$ at $\rho=1$. Consequently, a complex number $\lambda$ with $\Re \lambda \geq -\frac34$ is an eigenvalue of $\widetilde{\Lf}_n$ precisely when there exists a solution to \eqref{Eq:reduced spec eq1} which lies in $C^\infty([0,1])$. To find all $\lambda$ for which such a solution exists, we set $z=\rho^2$ and $v(z)= u(\rho)$. This leads to the equation
\begin{equation}\label{Eq: Hypergeo}
z(1-z)v''(z)+\frac12[d-(2\lambda+3-2n)z]v'(z)
\\
-\frac14[ \lambda^2+\lambda-2n\lambda -n(1-n)]  v(z)=0,
\end{equation}
which is in the canonical hypergeometric form
\begin{align*}
z(1-z)v''(z)+[\gamma-(\alpha+\beta+1)z]v'(z)-\alpha \beta v(z)=0,
\end{align*}
 with
\begin{align*}
\alpha=\frac{\lambda-n}{2},\quad \beta= \frac{\lambda-n+1}{2}, \quad\gamma=\frac{d}{2}.
\end{align*}
From this, in view of the underlying connection problem, one readily infers that $\lambda$ is an eigenvalue with $\Re \la \geq -\frac 34$ only if 
$$\lambda-n\in -2\mathbb{N}_0 \quad \text{or} \quad \lambda-n\in -2\mathbb{N}_0-1.
$$
Hence, the set of eigenvalues $\la$ with $\Re \la \geq -\frac 34$ consists of
$$\{0,1,2,\dots, n\}.$$
Furthermore, an application of the theory of hypergeometric equations shows that each of the eigenvalues listed above has geometric multiplicity 1, with corresponding eigenfunction $\widehat{\hf}_\lambda$.
\end{proof}

\begin{lem}\label{lem:modemulti}
Every eigenvalue $\la \in \{ 0,1,\dots,n\}$ of the operator $\widetilde{\Lf}_n$ is simple. 
\end{lem}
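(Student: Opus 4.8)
\textbf{Proof proposal for Lemma~\ref{lem:modemulti}.}

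Since Lemma~\ref{lem:unstable eig} already establishes that each $\lambda_0\in\{0,1,\dots,n\}$ is an eigenvalue of $\widetilde{\Lf}_n$ of geometric multiplicity one, it suffices to prove that every such $\lambda_0$ is \emph{semi-simple}, i.e.\ that $\widetilde{\Lf}_n$ admits no generalized eigenfunction at $\lambda_0$. The plan is to deduce this from Lemma~\ref{lem: spectrally simple}. Observe first that $\widetilde{\Lf}_n=\widehat{\Lf}_{n,V_n}$ for the \emph{constant} potential $V_n\equiv n(1-n)$, so the statement reduces to checking that $V_n$ is spectrally semi-simple in the sense of Definition~\ref{def:spec_semi_simp}. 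Condition~(1) is immediate: by Lemma~\ref{lem:unstable eig} the unstable eigenvalues are exactly $\{0,\dots,n\}$, all real, and $n<\tfrac{d-1}{2}+n$ for every $d\ge 2$. The substance therefore lies in condition~(2): the connection coefficient $c_{1,\widetilde 0,V_n}$ from Lemma~\ref{lem:con_coef} must have at most a simple zero at each $\lambda_0\in\{0,\dots,n\}$.

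To verify condition~(2), I would exploit that for the constant potential $V_n$ the homogeneous equation~\eqref{Eq:hom} becomes, after the substitution $z=\rho^2$ together with the prefactor transformations used throughout Section~\ref{sec:resolvent}, exactly the Gauss hypergeometric equation with parameters $\alpha=\tfrac{\lambda-n}{2}$, $\beta=\tfrac{\lambda-n+1}{2}$, $\gamma=\tfrac{d}{2}$ (precisely as in the proof of Lemma~\ref{lem:unstable eig}). Consequently the fundamental systems $v_{0,V_n},\widetilde v_{0,V_n}$ near $\rho=0$ and $v_{1,V_n},\widetilde v_{1,V_n}$ near $\rho=1$ are, up to explicit nowhere-vanishing analytic prefactors, the classical hypergeometric solutions at the singular points $z=0$ and $z=1$, and the gluing coefficients of Lemma~\ref{lem:con_coef} are then given by Gauss's connection formulae. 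Tracking the prefactors $\rho^{(1-d)/2}$, $(1-\rho^2)^{(d-2\lambda-3+2n)/4}$, $\sqrt{\alpha_n(\lambda)}$ and the Bessel normalizations back to the standard hypergeometric normalization, one finds that on the region of interest
\[
c_{1,\widetilde 0,V_n}(\lambda)=E(\lambda)\,\frac{1}{\Gamma\!\left(\tfrac{\lambda-n}{2}\right)\Gamma\!\left(\tfrac{\lambda-n+1}{2}\right)},
\]
where $E$ is holomorphic and zero-free there. This is consistent with Lemma~\ref{lem:con_coef} (holomorphicity) and with Lemma~\ref{lem:unstable eig}, since the right-hand side vanishes precisely when $\tfrac{\lambda-n}{2}\in-\mathbb{N}_0$ or $\tfrac{\lambda-n+1}{2}\in-\mathbb{N}_0$, i.e.\ for $\lambda\in\{0,1,\dots,n\}$ in the strip $\{\Re\lambda\ge-\tfrac34\}$.

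Given this formula the conclusion follows at once: $\Gamma$ has only simple poles, and the two arguments $\tfrac{\lambda-n}{2}$, $\tfrac{\lambda-n+1}{2}$ differ by $\tfrac12$, so they are never simultaneously non-positive integers; hence each zero of $c_{1,\widetilde 0,V_n}$ in $\{\Re\lambda\ge-\tfrac34\}$ is simple. Thus $V_n$ is spectrally semi-simple, Lemma~\ref{lem: spectrally simple} gives that every $\lambda_0\in\{0,\dots,n\}$ is a semi-simple eigenvalue of $\widetilde{\Lf}_n$, and combining with geometric multiplicity one (Lemma~\ref{lem:unstable eig}) yields simplicity. Equivalently, and this is the form that transfers to Lemma~\ref{lem: multi free} and to the finiteness step in the proof that $R_n\subseteq\varrho(\widehat{\Lf}_n)$: in the representation~\eqref{R_j} the only singular factor at $\lambda_0$ is $\widehat c_{V}(\lambda)=c_{\widetilde 1,0,V}(\lambda)-c_{\widetilde 1,\widetilde 0,V}(\lambda)/c_{1,\widetilde 0,V}(\lambda)$ (the remaining building blocks being analytic near $\lambda_0$ by Lemma~\ref{lem:analyticity}, since $\Re\lambda_0<\tfrac{d-1}{2}+n$), so the pole order of the resolvent $\widehat{\Rf}_{n,V_n}$ at $\lambda_0$ equals the zero order of $c_{1,\widetilde 0,V_n}$, which is $1$.

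I expect the main obstacle to be exactly the prefactor bookkeeping in the middle step --- pinning down which ratio of Gamma functions appears and confirming that the complementary factor $E(\lambda)$ is genuinely zero-free on the relevant region --- rather than anything conceptual. A convenient shortcut avoids the full connection formula: since the Wronskian $W(u_{0,V_n}(\cdot,\lambda),u_{1,V_n}(\cdot,\lambda))(\rho)=\rho^{1-d}(1-\rho^2)^{(d-3)/2-\lambda+n}$ is already known explicitly from Section~\ref{sec:resolvent}, and in the constant-potential case $u_{0,V_n}$ and $u_{1,V_n}$ have closed-form hypergeometric expressions, one may instead read off the vanishing order of the $\widetilde v_{0,V_n}$-component of $v_{1,V_n}$ directly, which is manifestly simple by the same $\Gamma$-pole considerations.
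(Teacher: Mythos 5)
Your proposal is correct and follows essentially the same route as the paper: reduction via Lemma \ref{lem: spectrally simple} to spectral semi-simplicity of the constant potential $V_n=n(1-n)$, condition (1) from Lemma \ref{lem:unstable eig}, and condition (2) by expressing the connection coefficient through hypergeometric/Gamma-function formulas so that its zeros are simple because $\tfrac{\lambda-n}{2}$ and $\tfrac{\lambda-n+1}{2}$ are never simultaneously non-positive integers. In fact, the "Wronskian shortcut" you mention at the end is precisely how the paper carries out the computation, obtaining $c(\lambda)=\Gamma(\tfrac{d}{2})\Gamma(\lambda-n-\tfrac{d-3}{2})\big/\bigl[\Gamma(\tfrac{\lambda-n}{2})\Gamma(\tfrac{\lambda-n+1}{2})\alpha_n(\lambda)\bigr]$ from the known Wronskian of the two hypergeometric solutions.
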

\begin{proof}
According to Lemma \ref{lem: spectrally simple}, it is enough to show that the potential $V_n=n(1-n)$ is spectrally semi-simple. The first condition in Definition \ref{def:spec_semi_simp} is satisfied in view of Lemma \ref{lem:unstable eig}. To show that the second condition also holds, we explicitly compute the coefficient $ c_{1,\widetilde 0,V_n}(\la)$.
For this, we start by recording that the functions 
\begin{equation*}
\begin{split}
v_0(\rho,\lambda)&=\,_2F_1(\frac{\lambda-n}{2},\frac{\lambda-n+1}{2};\frac{d}{2};\rho^2),
\\
v_1(\rho,\lambda)&=  \,_2F_1(\frac{\lambda-n}{2},\frac{\lambda-n+1}{2};\lambda-n+1-\frac{d}{2};1-\rho^2),
\\
\widetilde v_1(\rho,\lambda)&=(1-\rho^2)^{\frac{d-1}{2}+n-\lambda} \,_2F_1(\frac{d-\lambda+n}{2},\frac{d-\lambda+n-1}{2};\frac{d}{2}-\lambda+n+1;1-\rho^2),
\end{split}
\end{equation*}
are solutions to the equation
\begin{align}
(\rho^2-1) u''(\rho)+\left[-\frac{d-1}{\rho}+(2\lambda+ 2-2n )\rho\right]u'(\rho)+[ \lambda^2+\lambda-2n\lambda -n(1-n)] u(\rho)=0.
\end{align}
Additionally, there exists a fourth solution $\widetilde v_0(\rho,\lambda)$, which is singular at $\rho=0$ and satisfies
$$
W(v_0(\cdot,\lambda),\widetilde v_0(\cdot,\lambda))=2\rho^{1-d} (1-\rho^2)^{\frac{d-3}2+n-\lambda}.
$$
Consequently, 
\begin{equation}
\begin{split}
u_{0,V_n}(\rho,\lambda)&=\,_2F_1(\frac{\lambda-n}{2},\frac{\lambda-n+1}{2};\frac{d}{2};\rho^2),
\\
\widetilde u_{0,V_n}(\rho,\lambda)&=\frac{\widetilde v_0(\rho,\lambda)}{\phi_n(\rho)}+ r(\lambda) u_{0,V_n}(\rho,\lambda),
\\
u_{1,V_n}(\rho,\lambda)&=  \frac{\,_2F_1(\frac{\lambda-n}{2},\frac{\lambda-n+1}{2};\lambda-n+1-\frac{d}{2};1-\rho^2)}{\sqrt{\alpha_n(\lambda)}},
\\
\widetilde u_{1,V_n}(\rho,\lambda)&= \frac{(1-\rho^2)^{\frac{d-1}{2}+n-\lambda} \,_2F_1(\frac{d-\lambda+n}{2},\frac{d-\lambda+n-1}{2};\frac{d}{2}-\lambda+n+1;1-\rho^2)}{\sqrt{\alpha_n(\lambda)}}.
\end{split}
\end{equation}
for some continuous function $r(\lambda)$.
Now, since 
\begin{multline*}
W\Big({}_2F_1(\frac{\lambda-n}{2},\frac{\lambda-n+1}{2};\frac{d}{2};z),\,_2F_1(\frac{\lambda-n}{2},\frac{\lambda-n+1}{2};\lambda-n+1-\frac{d}{2};1-z)\Big)
\\
= z^{-\frac d2} (1-z)^{\frac{d-3}2+n-\lambda}\frac{\Gamma(\frac{d}{2}-1)\Gamma(\lambda-n-\frac{d-3}{2})}{\Gamma(\frac{\lambda-n}{2})\Gamma(\frac{\lambda-n+1}{2})},
\end{multline*} 
(see \cite{Luk69}, p.~84)
and
$$
W(u_0(\cdot,\lambda),\widetilde u_0(\cdot,\lambda))=2\rho^{1-d} (1-\rho^2)^{\frac{d-3}2+n-\lambda},
$$ 
we infer that the connection coefficient $c_{\widetilde 1,0,V_n}(\lambda)$ is given by
$$
c_{\widetilde 1,0,V_n}(\lambda)=\frac{W( u_0(\cdot,\lambda),u_1(\cdot,\lambda))(\rho)}{W( u_0(\cdot,\lambda),\widetilde u_0(\cdot,\lambda))(\rho)}=\frac{\Gamma(\frac{d}{2})\Gamma(\lambda-n-\frac{d-3}{2})}{\Gamma(\frac{\lambda-n}{2})\Gamma(\frac{\lambda-n+1}{2})\alpha_n(\lambda)}.
$$
The Gamma function has a first order pole at the non-positive integers, but is holomorphic and non-vanishing on the rest of the complex plane. Consequently, $V_n$ is spectrally semi-simple and the claim follows from Lemma \ref{lem: spectrally simple}.
\end{proof}

\begin{remark}
    We note that $\la =0$ and $\la=1$ correspond to the constant-shift and time-translation symmetries of Equation \eqref{Eq:theta}. The rest of the eigenvalues $\la \in \{ 2,\dots, n\}$ are  genuine spectral instabilities.
\end{remark}

With these results on the unstable spectrum of $\widetilde{\Lf}_n$ (and thus $\Lf_n $ by Lemma \ref{lem:corr_spectra}), we finally arrive at our main semigroup-generation theorem, which concerns the operator $\Lf_n$ determining the linearized flow. For clarity and completeness, we include in the statement the definitions of the function spaces and operators involved.
\begin{theorem}\label{thm:semigroup}
Let  $d \geq 2$. Fix $n \in \mathbb{N}$, and set $k=2n+\lceil \frac d2\rceil+1$. Furthermore, denote
$$
\mathcal{H}_{rad}:=H^k_{rad}\times H^{k-1}_{rad}(\B^d_1).
$$
Then the operator
$
\Lf_{n}:\mathcal{D}(\Lf_{n})\subseteq \mathcal{H}_{rad} \rightarrow \mathcal{H}_{rad},
$
with $\mathcal{D}(\Lf_n)=C^\infty_{rad}\times C^\infty_{rad}(\overline{\B^d_1})$,
defined by
\begin{align*}
\Lf_n \ff(\xi)=\begin{pmatrix}
f_2(\xi)-\xi^i \partial_{\xi_i} f_1(\xi)
\\
-f_2(\xi) -\xi^i \partial_{\xi_i}f_2(\xi)+ \Delta f_1(\xi)
\end{pmatrix}+\begin{pmatrix}
0
\\
2\frac{\partial_i\phi_n(|\xi|)}{\phi_n(|\xi|)} \partial_\xi^i f_1(\xi)+\left(2n-2\frac{\xi^i \partial_i\phi_n(|\xi|)}{\phi_n(|\xi|)} \right) f_2(\xi)
\end{pmatrix},
\end{align*}
is closable, and its closure, which we also denote by $(\Lf_n,\mathcal{D}(\Lf_n))$, generates a $C_0$-semigroup $(\Sf_n(\tau))_{\tau \geq 0}$ of bounded linear operators on $\mathcal{H}_{rad}$. Furthermore, we have that
\begin{equation*}
    \sigma(\Lf_n) \cap \big\{ z\in \C : \Re z \geq -\frac 34 \big\} = \{ 0,1,\dots,n \},
\end{equation*}
with each $\la \in \{ 0,1,\dots,n \}$ being a simple eigenvalue. Finally, for the corresponding spectral projection $\Pf_n: \mathcal{H}_{rad} \rightarrow \mathcal{H}_{rad}$ of rank $n+1$, the following holds
\begin{align*}
\|\Sf_n(\tau)(\I-\Pf_n)\ff\|_{H^k\times H^{k-1}(\B^d_1)}&\lesssim e^{-\frac{\tau}{2}}\|\ff\|_{H^k\times H^{k-1}(\B^d_1)},
\end{align*}
for all $\ff\in \mathcal{H}_{rad}$ and all $\tau \geq 0$. 
\end{theorem}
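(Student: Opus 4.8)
The plan is to first prove the statement for the boundedly similar operator $\widetilde{\Lf}_n=\widehat{\Lf}_n+\Lf'_{V_n}$ with $V_n=n(1-n)$ (which is $\widehat{\Lf}_{n,V_n}$ in the notation of Section~\ref{sec:resolvent}), and then transport everything to $\Lf_n$ via Lemmas~\ref{lem:conjugation}, \ref{lem:corr_spectra} and, crucially, \ref{lem:boundcarriesover}. Closability of $\Lf_n$ follows from the standard arguments recorded in Section~\ref{sec:funct_setup}, so the real content is to verify, for $\widetilde{\Lf}_n$, the hypotheses of Lemma~\ref{lem:boundcarriesover}: generation of a $C_0$-semigroup, a finite set of isolated semi-simple eigenvalues in the unstable region, and the decay estimate $\|\widetilde{\Sf}_n(\tau)(\I-\widetilde{\Pf}_n)\ff\|_{\mathcal H}\lesssim e^{-\tau/2}\|\ff\|_{\mathcal H}$ for the complementary semigroup.

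Generation is immediate: $\widehat{\Lf}_n$ generates a $C_0$-semigroup on $\mathcal H_{rad}$ by Lemma~\ref{lem:simplegen}, and $\Lf'_{V_n}$ is bounded (indeed compact), so the Bounded Perturbation Theorem applies to $\widetilde{\Lf}_n=\widehat{\Lf}_n+\Lf'_{V_n}$. The unstable spectrum is pinned down by Lemmas~\ref{lem:unstable eig} and~\ref{lem:modemulti}: one has $\sigma(\widetilde{\Lf}_n)\cap\{\Re z\geq-\tfrac34\}=\{0,1,\dots,n\}$ with every such eigenvalue simple, hence isolated and semi-simple, so the Riesz projection $\widetilde{\Pf}_n$ onto $\{0,\dots,n\}$ has rank $n+1$; moreover $M_{n,V_n}$ is finite and the remaining spectrum lies in $\{\Re z<-\tfrac34\}$, so $\{\Re z\geq-\tfrac34\}\subseteq\varrho(\widetilde{\Lf}_n|_{\ker\widetilde{\Pf}_n})$. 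By Lemma~\ref{lem:corr_spectra} this also yields the asserted spectrum and simplicity for $\Lf_n$.

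The heart of the proof is a uniform bound for $(\I-\widetilde{\Pf}_n)\widehat{\Rf}_{n,V_n}(\lambda)$ on the half-plane $\{\Re z\geq-\tfrac12\}$. For $\Re z>2n$ this follows from the growth bound $\omega_0\leq 2n$ of Lemma~\ref{lem:simplegen} together with the compactness of $\Lf'_{V_n}$. On the bounded low-frequency part of the strip $-\tfrac12\leq\Re z\leq2n$ (with $|\Im z|$ bounded) the resolvent $\widehat{\Rf}_{n,V_n}(\lambda)$ constructed in Section~\ref{sec:resolvent} is analytic away from the finitely many eigenvalues, hence bounded there by continuity; after removing the eigenvalues via $\widetilde{\Pf}_n$ this part causes no trouble. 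On the high-frequency part $\widehat{S}_{n,V_n}$ one combines the uniform resolvent bound for $\widehat{\Lf}_n$ on $\widehat{S}_{n,0}$ (Lemma~\ref{lem:uniformbound}) with a Neumann-series argument: writing $\lambda-\widetilde{\Lf}_n=(\I-\widehat{\Rf}_n(\lambda)\Lf'_{V_n})(\lambda-\widehat{\Lf}_n)$, the compactness of $\Lf'_{V_n}$ (whose range sits inside the more regular space $\{0\}\times H^k_{rad}(\B^d_1)$) together with the decay, as $|\Im z|\to\infty$, of $\widehat{\Rf}_n(\lambda)$ applied to functions of that regularity forces $\|\widehat{\Rf}_n(\lambda)\Lf'_{V_n}\|_{\mathcal H_{rad}}\to0$ on $\widehat{S}_{n,0}$, so that $\I-\widehat{\Rf}_n(\lambda)\Lf'_{V_n}$ is uniformly invertible there and $\widehat{\Rf}_{n,V_n}(\lambda)$ is uniformly bounded. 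Equivalently, one may re-run the energy identity from the proof of Lemma~\ref{lem:uniformbound} for $\widetilde{\Lf}_n$, using the regularity threshold $k=2n+\lceil d/2\rceil+1$ to absorb the bounded reaction term into the lower-order contributions. This is the step that requires care, since, as stressed in the introduction, no soft compactness argument produces this bound directly; all the genuinely hard input is the constructive resolvent calculus already carried out in Section~\ref{sec:resolvent}.

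Finally, the Gearhart--Pr\"uss--Greiner theorem, applied to the Hilbert-space semigroup $(\widetilde{\Sf}_n(\tau))_{\tau\geq0}$ restricted to $\ker\widetilde{\Pf}_n$, converts the uniform resolvent bound on $\{\Re z\geq-\tfrac12\}$ into $\|\widetilde{\Sf}_n(\tau)(\I-\widetilde{\Pf}_n)\ff\|_{\mathcal H}\lesssim e^{-\tau/2}\|\ff\|_{\mathcal H}$. Lemma~\ref{lem:boundcarriesover} then transfers generation, the rank-$(n+1)$ spectral projection $\Pf_n$, and this decay estimate to $\Lf_n$, completing the proof.
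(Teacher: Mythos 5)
Your proposal is correct and follows essentially the same route as the paper: establish everything for the conjugated operator $\widetilde{\Lf}_n=\widehat{\Lf}_n+\Lf'_{V_n}$ using the constructive resolvent bounds of Section \ref{sec:resolvent}, the unstable-spectrum Lemmas \ref{lem:unstable eig} and \ref{lem:modemulti}, and the Gearhart--Pr\"uss--Greiner theorem, and then transfer to $\Lf_n$ via bounded similarity and Lemma \ref{lem:boundcarriesover}. The only difference is that you spell out (via the Neumann-series/compact-perturbation step, or equivalently by re-running the energy identity of Lemma \ref{lem:uniformbound}) the uniform high-frequency bound for the perturbed resolvent, which the paper's own proof leaves implicit by citing Proposition \ref{generation_hat}; your argument for this step is sound, since the range of $\Lf'_{V_n}$ lies in $\mathcal{D}(\widehat{\Lf}_n)$, so $\|\widehat{\Rf}_n(\lambda)\Lf'_{V_n}\|\to 0$ as $|\Im\lambda|\to\infty$ on the region where $\widehat{\Rf}_n$ is uniformly bounded.
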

\begin{proof}
Thanks to Proposition \ref{generation_hat}, and Lemmas \ref{lem:unstable eig} and \ref{lem:modemulti}, the stated result holds verbatim for $\widetilde{\Lf}_{n}= \widehat{\Lf}_n + \Lf'_{V_n}$, and the semigroup $(\widetilde{\Sf}_n(\tau))_{\tau \geq 0}$ it generates. Since the operator $\widetilde{\Lf}_n$ and $\Lf_{n}$ are boundedly similar, the theorem is a direct consequence of Lemma \ref{lem:boundcarriesover}.
\end{proof}
We also note that the unstable eigenfunctions corresponding to the eigenvalues $\{0,1,\dots, n\}$ are given by
\begin{align*}
\hf_{\lambda}(\rho)=\begin{pmatrix}
\frac{_2F_1\left[\frac{\lambda-n}{2},\frac{\lambda-n+1}{2};\frac{d}{2};\rho^2\right]}{\phi_n(\rho)}
\\
(\lambda+\rho \partial_\rho)\frac{_2F_1\left[\frac{\lambda-n}{2},\frac{\lambda-n+1}{2};\frac{d}{2};\rho^2\right]}{\phi_n(\rho)}
\end{pmatrix}.
\end{align*}
Now that we established the main result of the linear theory, we turn to the non-linear analysis.

\section{Nonlinear theory}\label{sec:nonlin}
\noindent We start by recalling the definition of the nonlinear operator $\Nf$ from the central evolution equation \eqref{eq:Phi}
\begin{align*}
\Nf(\uf)=
\begin{pmatrix}
0
\\
\partial^i u_1(\xi) \partial_i u_1(\xi)-u_{2}(\xi)^2
\end{pmatrix}.
\end{align*}
We proceed by establishing the local Lipschitz continuity of $\Nf$. 
\begin{lem}
The nonlinear operator $\Nf:\mathcal{H}_{rad}\rightarrow \mathcal{H}_{rad}$ obeys the local Lipschitz estimate
\begin{align*}
\|\Nf(\uf)-\Nf(\vf)\|_{\mathcal{H}}&\lesssim \left(\|\uf\|_{\mathcal{H}}+\|\vf\|_{\mathcal{H}}\right) \|\uf-\vf\|_{\mathcal{H}}
\end{align*}
for all $\uf,\vf \in \mathcal{H}_{rad}$.
\end{lem}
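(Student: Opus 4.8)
The plan is to prove the bilinear estimate by exploiting that $\Nf(\uf)$ is, up to its trivial first component, a quadratic form in first derivatives of $u_1$ and in $u_2$ itself. Since $\mathcal{H}_{rad}=H^k_{rad}\times H^{k-1}_{rad}(\B^d_1)$ with $k=2n+\lceil d/2\rceil+1 > \frac{d}{2}+1$, both $H^k$ and $H^{k-1}$ are Banach algebras (being embedded in $L^\infty$ and closed under multiplication by the usual Kato--Ponce / Moser-type product estimates), and moreover $\partial_i u_1 \in H^{k-1}_{rad}$ whenever $u_1 \in H^k_{rad}$. First I would write the standard polarization identity
\begin{align*}
\Nf(\uf)-\Nf(\vf)=
\begin{pmatrix}
0\\
\partial^i(u_1-v_1)\,\partial_i(u_1+v_1)-(u_2-v_2)(u_2+v_2)
\end{pmatrix},
\end{align*}
so that it suffices to bound the $H^{k-1}_{rad}(\B^d_1)$-norm of the second component, since the first component vanishes identically.

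Next I would apply the product (algebra) estimate in $H^{k-1}(\B^d_1)$ to each of the two terms. For the first term,
\[
\|\partial^i(u_1-v_1)\,\partial_i(u_1+v_1)\|_{H^{k-1}(\B^d_1)}\lesssim \|\nabla(u_1-v_1)\|_{H^{k-1}(\B^d_1)}\,\|\nabla(u_1+v_1)\|_{H^{k-1}(\B^d_1)}\lesssim \|u_1-v_1\|_{H^{k}(\B^d_1)}\,\|u_1+v_1\|_{H^{k}(\B^d_1)},
\]
and for the second term,
\[
\|(u_2-v_2)(u_2+v_2)\|_{H^{k-1}(\B^d_1)}\lesssim \|u_2-v_2\|_{H^{k-1}(\B^d_1)}\,\|u_2+v_2\|_{H^{k-1}(\B^d_1)}.
\]
Summing these and using the triangle inequality $\|\uf+\vf\|_{\mathcal{H}}\leq \|\uf\|_{\mathcal{H}}+\|\vf\|_{\mathcal{H}}$ gives exactly the claimed bound $\|\Nf(\uf)-\Nf(\vf)\|_{\mathcal{H}}\lesssim(\|\uf\|_{\mathcal{H}}+\|\vf\|_{\mathcal{H}})\|\uf-\vf\|_{\mathcal{H}}$. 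Taking $\vf=0$ in particular yields $\Nf(\uf)\in\mathcal{H}_{rad}$, so that $\Nf$ indeed maps $\mathcal{H}_{rad}$ into itself as asserted; one also checks that radial symmetry is preserved, since products and derivatives of radial functions on $\B^d_1$ remain radial (interpreting $\partial^i u_1\partial_i u_1 = (u_1')^2$ in the radial variable).

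I do not anticipate a genuine obstacle here; the only point requiring mild care is that the $H^s(\B^d_1)$ spaces on the bounded domain $\B^d_1$ are used rather than $H^s(\R^d)$, so the product estimates should be invoked in the form valid on bounded Lipschitz (in fact smooth) domains — this is standard and follows, e.g., by extension to $\R^d$, applying the fractional Leibniz rule there, and restricting back, the extension operator being bounded on all the relevant Sobolev scales. Alternatively, since $k-1\geq \lceil d/2\rceil \geq \lceil d/2 \rceil$ and all indices here are integers, one can simply use the Leibniz rule together with Hölder and the Sobolev embeddings $H^{k-1}(\B^d_1)\hookrightarrow L^\infty(\B^d_1)$ and $H^j(\B^d_1)\hookrightarrow L^{p_j}(\B^d_1)$ for appropriate $p_j$, distributing derivatives in the Leibniz expansion so that at most one factor carries more than $d/2$ derivatives — an entirely routine computation that I would not spell out in detail.
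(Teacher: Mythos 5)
Your argument is correct and is essentially the paper's proof: the paper simply notes that the estimate "follows immediately from the Banach algebra property of $\mathcal{H}_{rad}$", which is exactly the polarization-plus-product-estimate computation you spell out (and indeed $k-1=2n+\lceil d/2\rceil>\tfrac d2$, so $H^{k-1}(\B^d_1)$ is an algebra). Your additional remarks on bounded-domain product estimates and preservation of radiality are fine but not needed beyond the one-line justification the paper gives.
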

\begin{proof}
This follows immediately from the Banach algebra property of $\mathcal{H}_{rad}$.
\end{proof}
Note that, as we assume our initial data to be radial, the only symmetries inherent to Eq.~\eqref{Eq:theta} that show up in the radial case are the time-translation symmetry and the invariance under constant shifts
$$
\theta\mapsto \theta+c, \quad \text{for} \quad c\in \R.
$$ Thus, under the assumption of radial symmetry, the full family of blowup solutions to \eqref{Eq:abstract evolution} we consider is given by 
\begin{align}\label{Psi_nc}
\Psi^T_{n,c}(\xi)= \begin{pmatrix}
\ln( T^ae^{-n\tau}\phi_n(\rho) )+c
\\
-n+\frac{\xi^i \partial_i\phi_n(\rho)}{\phi_n(\rho)} 
\end{pmatrix}\in C^\infty \times C^\infty(\overline{\B^d_1}), \quad n \in \mathbb{N},\ T>0, \ c \in \R.
\end{align}
Note, however, that both the nonlinearity and the linearized operator in \eqref{eq:Phi} are independent of both symmetry parameters $T,c$; they appear only in the initial data. Next, for $j \in \{0,1,\dots, n\}$, we let $\Pf_{n,j}$  be the spectral projection onto the one-dimensional eigenspace associated to $\la=j$, i.e.,
$$
\Pf_{n,j}:\mathcal{H}_{rad} \mapsto \text{span} \{\hf_j\},
$$
with
\begin{align}\label{def:h_la}
 \hf_{j}(\rho)=\begin{pmatrix}
\frac{_2F_1\left[\frac{j-n}{2},\frac{j-n+1}{2};\frac{d}{2};\rho^2\right]}{\phi_n(\rho)}
\\
(j+\rho \partial_\rho)\frac{_2F_1\left[\frac{j-n}{2},\frac{j-n+1}{2};\frac{d}{2};\rho^2\right]}{\phi_n(\rho)}\end{pmatrix}.
\end{align}
Furthermore, we define the Banach space $(\mathcal{X},\| \cdot \|_{\mathcal{X}})$ by
\begin{align*}
\mathcal{X}&:=\{\Phi\in C([0,\infty),\mathcal{H}_{rad}):\|\Phi(\tau)\|_\mathcal{H}\lesssim e^{-\frac{\tau}{2}} \text{ for all } \tau \geq 0\},
\end{align*}
\begin{equation*}
    \|\Phi\|_\mathcal{X}:=\sup_{\tau\geq 0} \left[e^{\frac{\tau}{2}}\|\Phi(\tau)\|_\mathcal{H}\right].
\end{equation*}
Now, given initial data $\ff \in \mathcal H_{rad}$, our aim is to construct small-data global and decaying solutions to the integral reformulation of \eqref{eq:Phi}
\begin{align}\label{int_eq_phi}
\Phi(\tau)=\Sf_n(\tau)\ff+\int_0^\tau \Sf_n(\tau-\sigma)\Nf(\Phi(\sigma)) d\sigma.
\end{align}
However, due to the presence of growing modes of $\Sf_n(\tau)$, this can at best be possible only for a finite-codimensional subset of small data.
To this end, for $\ff \in \mathcal{H}_{rad}$ and $\Phi \in \mathcal{X}$ we define the correction terms as follows
\begin{align*}
\Cf_{j}(\ff,\Phi)&:= \left[ \Pf_{n,j} \ff+\Pf_{n,j} \int_0^\infty e^{-j\sigma}\Nf(\Phi(\sigma)) d\sigma\right], \quad j=0,1,\dots,n,
\\
\Cf(\ff,\Phi)&:=\sum_{j=0}^n \Cf_{j}(\ff,\Phi),
\end{align*}
and introduce the stabilization operator 
$$
\Kf: \mathcal{H}_{rad}\times \mathcal{X} \mapsto C([0,\infty),\mathcal{H}_{rad}),
$$ 
by
\begin{align}\label{Eq:stabilized ev}
\Kf(\ff,\Phi)(\tau):=\Sf_n(\tau)[\ff-\Cf(\ff,\Phi)]+\int_0^\tau \Sf_n(\tau-\sigma)\Nf(\Phi(\sigma)) d\sigma.
\end{align}
Lastly, for $\delta>0$, we denote by $\X_\delta$ the closed ball of size $\delta$ centered at 0 in $\mathcal{X}$, i.e.,
\begin{equation*}
    \X_\delta:= \{ \Phi \in \mathcal{X} : \| \Phi \|_\mathcal{X} \leq \delta \}.
\end{equation*}
Now we show that for all small enough data $\ff$, the map $\Phi \mapsto \Kf(\ff,\Phi)$ has a fixed point.
\begin{lem}\label{lem:fixed point}
Let $d\geq 2$ and $n \in \mathbb{N}$. Then there exist constants $\delta_0>0$ and $C>1$ such that the following holds: For any $0<\delta\leq \delta_0$ and any $\ff\in \mathcal H_{rad}$ with 
$$
\|\ff\|_{\mathcal{H}}\leq\frac{\delta}{C},
$$
there exists a unique $\Phi \in \mathcal{X}_\delta$ such that
\begin{align*}
\Phi=\Kf(\ff,\Phi).
\end{align*} 
\end{lem}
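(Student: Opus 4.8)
The plan is to realize $\Phi$ as the unique fixed point of the map $\Phi\mapsto\Kf(\ff,\Phi)$ on the closed ball $\mathcal{X}_\delta\subseteq\mathcal{X}$ by the Banach fixed point theorem. The entire argument rests on the sharp semigroup bound of Theorem \ref{thm:semigroup} combined with the two quadratic estimates on $\Nf$ that follow from the Lipschitz estimate for $\Nf$ established above (together with $\Nf(0)=0$): namely $\|\Nf(\uf)\|_{\mathcal{H}}\lesssim\|\uf\|_{\mathcal{H}}^2$ and $\|\Nf(\uf)-\Nf(\vf)\|_{\mathcal{H}}\lesssim(\|\uf\|_{\mathcal{H}}+\|\vf\|_{\mathcal{H}})\|\uf-\vf\|_{\mathcal{H}}$. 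First I would record that for $\Phi,\Psi\in\mathcal{X}_\delta$ these give $\|\Nf(\Phi(\sigma))\|_{\mathcal{H}}\lesssim\delta^2e^{-\sigma}$ and $\|\Nf(\Phi(\sigma))-\Nf(\Psi(\sigma))\|_{\mathcal{H}}\lesssim\delta e^{-\sigma}\|\Phi-\Psi\|_{\mathcal{X}}$, so that all time integrals appearing in $\Cf$ converge absolutely, $\sigma\mapsto\Nf(\Phi(\sigma))$ is continuous, radiality is preserved by $\Sf_n(\tau)$, $\Nf$ and each $\Pf_{n,j}$, and hence $\Kf(\ff,\Phi)\in C([0,\infty),\mathcal{H}_{rad})$.

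To show that $\Kf(\ff,\cdot)$ maps $\mathcal{X}_\delta$ into itself, I would split $\I=\Pf_n+(\I-\Pf_n)=\sum_{j=0}^n\Pf_{n,j}+(\I-\Pf_n)$. Since $\ran\Cf(\ff,\Phi)\subseteq\ran\Pf_n$, the stable part is simply $(\I-\Pf_n)\Kf(\ff,\Phi)(\tau)=\Sf_n(\tau)(\I-\Pf_n)\ff+\int_0^\tau\Sf_n(\tau-\sigma)(\I-\Pf_n)\Nf(\Phi(\sigma))\,d\sigma$, and Theorem \ref{thm:semigroup} bounds its $\mathcal{H}$-norm by $e^{-\tau/2}\|\ff\|_{\mathcal{H}}+\int_0^\tau e^{-(\tau-\sigma)/2}\delta^2e^{-\sigma}\,d\sigma\lesssim(\delta/C+\delta^2)e^{-\tau/2}$. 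On the $j$-th unstable eigenspace one uses $\Sf_n(\tau)\Pf_{n,j}=e^{j\tau}\Pf_{n,j}$; the correction term $\Cf_j$ is tailored precisely so that the data contribution and the growing part of the Duhamel integral cancel exactly, leaving the decaying tail
\[
\Pf_{n,j}\Kf(\ff,\Phi)(\tau)=-e^{j\tau}\int_\tau^\infty e^{-j\sigma}\Pf_{n,j}\Nf(\Phi(\sigma))\,d\sigma,
\]
whose norm is $\lesssim e^{j\tau}\int_\tau^\infty e^{-j\sigma}\delta^2e^{-\sigma}\,d\sigma\lesssim\delta^2e^{-\tau}\le\delta^2e^{-\tau/2}$. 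Adding the $n+1$ pieces yields $\|\Kf(\ff,\Phi)\|_{\mathcal{X}}\le C_1(\delta/C+\delta^2)$ for an absolute constant $C_1$, so picking $C\ge2C_1$ and $\delta_0$ with $C_1\delta_0\le\tfrac12$ gives $\Kf(\ff,\cdot):\mathcal{X}_\delta\to\mathcal{X}_\delta$.

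For the contraction estimate I would repeat the same two computations with $\Nf(\Phi(\sigma))$ replaced by $\Nf(\Phi(\sigma))-\Nf(\Psi(\sigma))$ — the data term drops out of the difference $\Kf(\ff,\Phi)-\Kf(\ff,\Psi)$ — and apply the Lipschitz bound above; this produces $\|\Kf(\ff,\Phi)-\Kf(\ff,\Psi)\|_{\mathcal{X}}\le C_2\delta\|\Phi-\Psi\|_{\mathcal{X}}$, and after shrinking $\delta_0$ so that $C_2\delta_0\le\tfrac12$ the map becomes a contraction on the complete metric space $\mathcal{X}_\delta$; the Banach fixed point theorem then delivers the unique $\Phi\in\mathcal{X}_\delta$ with $\Phi=\Kf(\ff,\Phi)$. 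I do not anticipate a genuine obstacle here, as all the substantive work has already gone into Theorem \ref{thm:semigroup} and the Banach-algebra Lipschitz estimate. The only delicate point is the bookkeeping of the $n+1$ distinct exponential rates $e^{j\tau}$ and the fact that $\mathcal{X}_\delta$ is defined with decay rate $e^{-\tau/2}$, which is strictly slower than the $e^{-\tau}$ coming from the quadratic nonlinearity — this is exactly what makes every estimate close with room to spare.
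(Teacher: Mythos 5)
Your argument is correct and is precisely the standard Lyapunov--Perron fixed-point scheme that the paper invokes by reference (its proof just points to Proposition 4.1 of \cite{Ost24}): stable part controlled by the semigroup bound of Theorem \ref{thm:semigroup}, exact cancellation of the growing modes by the tailored corrections $\Cf_j$ using $\Sf_n(\tau)\Pf_{n,j}=e^{j\tau}\Pf_{n,j}$, and a contraction estimate from the quadratic Lipschitz bound on $\Nf$. No discrepancy with the paper's route; your closing of the estimates at rate $e^{-\tau/2}$ against the quadratic gain $e^{-\sigma}$ is exactly how the cited standard argument proceeds.
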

\begin{proof}
This follows from standard fixed point arguments. See, for instance, the proof of Proposition 4.1 in \cite{Ost24}.
\end{proof}
To deal with the unstable directions introduced by the symmetries, we recall that the prescribed initial data are of the form 
\begin{align*}
\Phi(0)=\Psi(0)-\Psi^T_{n,c}(0)
=
\begin{pmatrix}
f(T\rho)
\\
T g(T\rho)
\end{pmatrix}
-\begin{pmatrix}
\ln\left(T^n\phi_n(\rho)\right)+c
\\
-n+\frac{\rho \partial_\rho\phi_n(\rho)}{\phi_n(\rho)} 
\end{pmatrix},
\end{align*}
with $(f,g)$ close to $\theta_{n}^1[0]$, which equals 
$$
\Psi_{n,0}^1(0)=\begin{pmatrix}
\ln\left(\phi_n(\rho)\right)
\\
-n+\frac{\rho \partial_\rho\phi_n(\rho)}{\phi_n(\rho)}. 
\end{pmatrix}.
$$
In view of this, we introduce the initial data operator
$$
\Uf:[1-\delta,1+\delta]\times[-\delta,\delta]\times H^k_{rad}\times H^{k-1}_{rad}(\B^d_{1+\delta})\to \mathcal{H}_{rad},
$$
as
\begin{align*}
\Uf(T,c,\vf)(\rho):=\begin{pmatrix}
v_1(T\rho)
\\
T v_2(T\rho)
\end{pmatrix}+
\begin{pmatrix}
\ln\left[\phi_n(T\rho)\right]
\\
-nT+\frac{T\rho\partial_\rho\phi_n(T\rho)}{\phi_n(T\rho)}
\end{pmatrix}
-
\begin{pmatrix}
\ln\left(T^n\phi_n(\rho)\right)+c
\\
-n+\frac{\rho\partial_\rho\phi_n(\rho)}{\phi_n(\rho)} 
\end{pmatrix}.
\end{align*}
Now we show that for all small enough $\vf$, there is a choice of $T$ and $c$ close to 1 and 0 respectively, and a modification of $\vf$ along the (genuinely) unstable modes of the semigroup $\Sf_n(\tau)$, yielding a small-data global decaying solution  to the integral equation \eqref{int_eq_phi}, with $\ff$ replaced by the initial data operator.
\begin{lem}\label{lem: fixed point with vanishing correction}
Let $d \geq 2$ and fix $n \in \mathbb{N}$. Then there exist constants $\delta_0>0$ and $C>1$ such that the following holds: For any $0<\delta \leq \delta_0$ and any $\vf\in H^k_{rad}\times H^{k-1}_{rad}(\B^d_{1+\delta})$ with 
\begin{align*}
\|\vf\|_{H^k\times H^{k-1}(\B^d_{1+\delta})}&\leq \frac{\delta}{C^2},
\end{align*}
there exists a unique pair $(T^*,c^*)\in \big[1-\tfrac{\delta}{C}, 1+\tfrac{\delta}{C}\big]\times \big[-\tfrac{\delta}{C},\tfrac{\delta}{C}\big]$ and, in the case $n\geq 2$, unique constants $\eta_j\in \big[-\tfrac{\delta}{C},\tfrac{\delta}{C}\big]$ for $j=2,\dots n$, such that there exists a  unique $\Phi \in \mathcal{X}_\delta$ for which
\begin{align*}
\Phi=\Kf\big(\Uf(T^*,c^*,\widetilde\vf),\Phi\big) \quad \text{and} \quad C\big(\Uf(T^*,c^*,\widetilde\vf),\Phi\big)=0,
\end{align*}
where  
$$
\widetilde \vf=\vf-\sum_{j=1}^n \eta_j \hf_j,
$$
with $\hf_j$ given in \eqref{def:h_la}.
\end{lem}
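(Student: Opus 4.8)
\emph{Proof sketch (proposal).} The plan is a Lyapunov--Perron argument: treat $(T,c)$ together with the unstable corrections $\eta_j$ as finitely many real parameters, feed the resulting initial datum into the inner fixed point of Lemma~\ref{lem:fixed point}, and then choose the parameters so that the correction term $\Cf$ vanishes. To set it up, collect the free parameters into $\mathbf p=(T,c,\eta_2,\dots,\eta_n)$ (only $(T,c)$ when $n=1$), ranging over the box $B$ of points with $|T-1|,|c|,|\eta_j|\le \delta/C$, and put $\widetilde\vf_{\mathbf p}:=\vf-\sum_{j\ge 2}\eta_j\hf_j$ and $\ff_{\mathbf p}:=\Uf(T,c,\widetilde\vf_{\mathbf p})$. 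Since $\Uf$ is affine in its last slot and smooth in $(T,c)$ with $\Uf(1,0,0)=0$, one gets $\|\ff_{\mathbf p}\|_{\mathcal H}\lesssim \|\vf\|_{H^k\times H^{k-1}}+|\mathbf p-(1,0,\dots,0)|\lesssim \delta/C$; enlarging $C$ if needed, Lemma~\ref{lem:fixed point} yields a unique $\Phi_{\mathbf p}\in\X_\delta$ with $\Phi_{\mathbf p}=\Kf(\ff_{\mathbf p},\Phi_{\mathbf p})$, and the same contraction estimate shows that $\mathbf p\mapsto\Phi_{\mathbf p}$ is Lipschitz from $B$ into $\mathcal X$ with an $O(1)$ constant.

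Next, rewrite the vanishing of the correction as a finite system. By construction $\Cf(\ff,\Phi)=\sum_{j=0}^{n}\Cf_j(\ff,\Phi)$ with $\Cf_j(\ff,\Phi)\in\operatorname{span}\{\hf_j\}$, and the $\hf_j$ are linearly independent by Lemma~\ref{lem:modemulti}, so $\Cf(\ff_{\mathbf p},\Phi_{\mathbf p})=0$ is equivalent to $\Cf_j(\ff_{\mathbf p},\Phi_{\mathbf p})=0$ for every $j\in\{0,\dots,n\}$. Writing $\Cf_j(\ff_{\mathbf p},\Phi_{\mathbf p})=F_j(\mathbf p)\hf_j$, we must solve $F(\mathbf p)=(F_0,\dots,F_n)(\mathbf p)=0$, an $(n+1)\times(n+1)$ system. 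The key is that $F$ is a small Lipschitz perturbation of a diagonal isomorphism: at $\vf=0$ one has $\Phi_{\mathbf p}\equiv 0$ (the blowup profile solves \eqref{eq:Phi} exactly), so $\Cf_j(\ff,0)=\Pf_{n,j}\ff$; moreover $\partial_{\eta_j}\ff_{\mathbf p}\big|_{T=1}=-\hf_j$ and $\Pf_{n,i}\hf_j=0$ for $i\ne j$, so $\partial_{\eta_j}F_i=-\delta_{ij}$ at $T=1$, $\vf=0$; one has $\partial_c\Uf(1,c,0)=-\hf_0$ since the first component of $\hf_0$ equals $\phi_n/\phi_n\equiv 1$ and its second is $(0+\rho\partial_\rho)1=0$, hence lies in $\operatorname{span}\{\hf_0\}$ and in $\ker\Pf_{n,i}$ for $i\ne 0$; and $\partial_T\Uf(1,0,0)$ is a nonzero multiple of $\hf_1$, this being the infinitesimal form of the time--translation symmetry carrying the eigenvalue $\la=1$ (cf.\ Remark~\ref{rem:unst_spect} and \eqref{Def:v_n}). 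Thus $DF$ at $(1,0,\dots,0)$, $\vf=0$, is diagonal with nonzero entries, and the remaining $\mathbf p$--dependence is controlled: the $\Phi_{\mathbf p}$--contribution to $\Cf_j$ is $O(\delta^2)$ by the Lipschitz continuity of $\Nf$ and $\Phi_{\mathbf p}\in\X_\delta$, while the $O(|T-1|)$ gap between $\bigl(h_{j,1}(T\,\cdot),Th_{j,2}(T\,\cdot)\bigr)$ and $\hf_j$ is $O(\delta)$.

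It remains to solve and conclude. For $\delta_0$ small enough, $\mathbf p\mapsto \mathbf p-D^{-1}F(\mathbf p)$, with $D$ the diagonal isomorphism above, is a contraction on $B$, producing a unique zero $\mathbf p^{*}=(T^{*},c^{*},\eta_2^{*},\dots,\eta_n^{*})$ of $F$ in $B$, with $|\mathbf p^{*}-(1,0,\dots,0)|\lesssim\|\vf\|_{H^k\times H^{k-1}}\le\delta/C$, so in fact $\mathbf p^{*}$ lies in the smaller box $[-\delta/C,\delta/C]$ around $(1,0,\dots,0)$. For this choice $\Cf\bigl(\Uf(T^{*},c^{*},\widetilde\vf),\Phi_{\mathbf p^{*}}\bigr)=0$, whence $\Kf(\ff_{\mathbf p^{*}},\cdot)$ reduces to the Duhamel operator of \eqref{int_eq_phi}, and $\Phi:=\Phi_{\mathbf p^{*}}$ is the desired global, exponentially decaying solution of \eqref{int_eq_phi} with datum $\Uf(T^{*},c^{*},\widetilde\vf)$. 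Uniqueness of the full configuration $(T^{*},c^{*},\eta^{*},\Phi)$ follows from uniqueness in Lemma~\ref{lem:fixed point} together with uniqueness of the zero of $F$.

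The main obstacle is the middle step: one must verify that $(T,c)$ act \emph{non-degenerately} on the $\la=0$ and $\la=1$ eigenspaces and that the $\eta_j$ act as (essentially) the identity on the $\la=j$ eigenspaces, all with uniform quantitative bounds, so that the combined Jacobian stays boundedly invertible throughout $B$. The only genuinely model-specific input is that $\partial_T\Uf$ lands on $\hf_1$ rather than on a mixture of the $\hf_j$; this is read off from the explicit hypergeometric form of the eigenfunctions in Lemma~\ref{lem:unstable eig} together with the self-similar structure of $v_n$ in \eqref{Def:v_n}. Everything else---the smallness bookkeeping, the Lipschitz dependence $\mathbf p\mapsto\Phi_{\mathbf p}$, and the linear independence of the $\hf_j$---is routine.
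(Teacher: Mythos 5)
Your proposal is correct and follows essentially the same route as the paper: an inner fixed point from Lemma \ref{lem:fixed point} parametrized by $(T,c,\eta)$, reduction of $\Cf=0$ to an $(n+1)$-dimensional system via the spectral projections and a dual basis, the key observation that $\partial_c\Uf$ and $\partial_T\Uf$ at the base point are (multiples of) $\hf_0$ and $\hf_1$ while $\partial_{\eta_j}$ acts as minus the identity on the $\la=j$ directions, and a contraction on the parameter box. Your quasi-Newton map $\mathbf p\mapsto \mathbf p-D^{-1}F(\mathbf p)$ is just a repackaging of the paper's fixed-point map (which normalizes the factor $n$ by taking $\gf_1=\tfrac1n\widehat\gf_1$), so the two arguments coincide in substance.
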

\begin{proof}
To prove this lemma, we use the proof of Proposition 4.2 in \cite{Ost24} as a convenient template. We start by computing that
\begin{align*}
\rho\partial_\rho\phi_n(\rho)&=\rho \partial_\rho \sum_{j=0}^{\lfloor \frac{n}{2} \rfloor} \frac{(-\frac{n}{2})_j(\frac{1-n}{2})_j}{(\frac{d}{2})_j j!} \rho^{2j}=2\sum_{j=1}^{\lfloor \frac{n}{2} \rfloor} \frac{(-\frac{n}{2})_j(\frac{1-n}{2})_j}{(\frac{d}{2})_j (j-1)!} \rho^{2j}.
\end{align*}

This implies that
\begin{align*}
n\phi_n(\rho)- \rho\partial_\rho\phi_n(\rho)&=n+\sum_{j=1}^{\lfloor \frac{n}{2} \rfloor}
\left[n\frac{(-\frac{n}{2})_j(\frac{1-n}{2})_j}{(\frac{d}{2})_j j!}-2\frac{(-\frac{n}{2})_j(\frac{1-n}{2})_j}{(\frac{d}{2})_j (j-1)!}
\right]\rho^{2j}
\\
&= n+\sum_{j=1}^{\lfloor \frac{n}{2} \rfloor}
\left[n-2j\right]\frac{(-\frac{n}{2})_j(\frac{1-n}{2})_j}{(\frac{d}{2})_j j!}\rho^{2j}
\\
&=n+n\sum_{j=1}^{\lfloor \frac{n}{2} \rfloor}
\left[1-\frac{2j}n\right]\frac{(-\frac{n}{2})_j(\frac{1-n}{2})_j}{(\frac{d}{2})_j j!}\rho^{2j}
\\
&=n\sum_{j=0}^{\lfloor \frac{n}{2} \rfloor}
\frac{(1-\frac{n}{2})_j(\frac{1-n}{2})_j}{(\frac{d}{2})_j j!}\rho^{2j} 
=n \,_2F_1\left[\frac{1-n}{2},\frac{2-n}{2};\frac{d}{2};\rho^2\right].
\end{align*}
Similarly,
\begin{align*}
\partial_T\left(aT-\frac{T\rho \partial_\rho \phi_n(T\rho)}{\phi_n(T\rho)}\right)\Big\vert_{T=0}&=n-\frac{\rho\partial_\rho\phi_n(\rho)-\rho\partial_\rho[\rho\partial_\rho\phi_n(\rho)]}{\phi_n(\rho)}-\frac{[\rho\partial_\rho\phi_n(\rho)]^2}{\phi_n(\rho)^2}
\\
&=n-\frac{\rho \partial_\rho \phi_n(\rho)}{\phi_n(\rho)}-\rho\partial_\rho\left[\frac{\rho\partial_\rho\phi_n(\rho)}{\phi_n(\rho)}\right]
\\
&=n-\frac{\rho\partial_\rho\phi_n(\rho)}{\phi_n(\rho)}+\rho\partial_\rho\left[n-\frac{\rho\partial_\rho\phi_n(\rho)}{\phi_n(\rho)}\right]
\\
&=[n+n \rho\partial_\rho] \,_2F_1\left[\frac{1-n}{2},\frac{2-n}{2};\frac{d}{2},\rho^2\right].
\end{align*}
Therefore, given that the eigenfunctions $\hf_0$ and $\hf_1$, corresponding to the symmetry eigenvalues $\lambda=0$ and $\lambda=1$, are given by
$$
\hf_0= \begin{pmatrix}
1\\0
\end{pmatrix}, \qquad \hf_1(\xi)=\begin{pmatrix}
\frac{_2F_1\left[\frac{1-n}{2},\frac{2-n}{2};\frac{d}{2};\rho^2\right]}{\phi_n(\rho)}
\\
(1+\rho\partial_\rho)\left[\frac{_2F_1\left[\frac{1-n}{2},\frac{2-n}{2};\frac{d}{2};\rho^2\right]}{\phi_n(\rho)}\right]
\end{pmatrix},
$$ a Taylor expansion shows that
\begin{align*}
\Uf(T,c,\vf)(\rho)=\begin{pmatrix}
v_1(T\rho)
\\
T v_2(T\rho)
\end{pmatrix}
+c \hf_0(\rho)+(T-1) n\hf_1(\rho)+ (T-1)^2 \rf(T,\rho),
\end{align*}
where $\rf$ is a smooth function. 
Next, for $n\geq 2$, we set
$$
\eta= (\eta_2,\dots,\eta_{n-1},\eta_n)\in \B^{n-1}_{\delta/C},
$$
and define
$$
\Hf(\vf,\eta)=\vf-\sum_{j=2}^n \eta_j \hf_{j}.
$$
Additionally, for notational convenience, we set
$$
\Af_{T} \vf(\xi)= \begin{pmatrix}
v_1(T\xi)
\\
T v_2(T\xi)\end{pmatrix}.
$$
Thanks to Lemma \ref{lem:fixed point}, we know that there exists a unique solution to the modified equation
\begin{equation}
\begin{split}
\Phi(\tau)&= \Sf_{n}(\tau)\left[\Uf(T,c,\Hf(\vf,\eta))- \Cf(\Uf(T,c,\Hf(\vf,\eta)),\Phi)\right]+ \int_0^\tau \Sf_{n}(\tau-\sigma)\Nf(\Phi(\sigma)) d\sigma 
\end{split}
\end{equation}
in $\X_\delta$ under the assumption of the lemma. To then prove that a correct choice of parameters leads to the vanishing of the correction terms, we argue as in the proof of Proposition 4.2 in \cite{Ost24}. To that end, we note that $\ran(\Pf_n)$ is a Hilbert space of dimension $n+1$ with a basis given by $\{\hf_{0},  \hf_{1}, \dots, \hf_{n}\}$.
Consider now the functional 
$$
\ell_{T,c,\eta}:\ran(\Pf_n)\to \R, \qquad \gf \mapsto \Big(\Cf\big(\Uf(T,c,\Hf(\vf,\eta),\Phi )\big),\gf\Big)_{\mathcal{H}},
$$ 
which satisfies
\begin{align*}
\ell_{T,c,\eta}(\gf)&= (\Pf_n\Hf(\vf,\eta),\gf)_\mathcal{H}+ (\Pf_n (\Hf-\Af_{T}\Hf)(\vf,\eta),\gf)_\mathcal{H}
\\
&\quad+\left(\Pf_n \rf(T,\rho) +\sum_{j=0}^n\Pf_n \int_0^\infty e^{-j\sigma}\Nf(\Phi(\sigma)) d\sigma ,\gf\right)_\mathcal{H}
\\
&\quad + c(\hf_0,\gf)_\mathcal{H}+n(T-1)(\hf_1,\gf)_\mathcal{H}.
\end{align*}
The goal now is to show that for any $\vf$ fixed as stated, we can find $T,\gamma,x_0,\eta$ such that $\ell_{T,c,\eta}$ is identically zero on $\ran(\Pf_n)$. To show this, we argue as in Proposition 4.2 in \cite{Ost24} to construct elements $\widehat\gf_{0},\widehat \gf_{1} \dots, \widehat\gf_n\in \ran(\Pf_n)$ with the properties
\begin{align*}
&(\hf_j,\widehat \gf_i)_\mathcal{H}=\delta_{ji}.
\end{align*}
We define  the map $F=(F_0,\dots,F_{n-1},F_n)$, where
\begin{align*}
F_{j}(T,c,\eta)&= -(\Pf_n (\Hf-\Af_{T}\Hf)(\vf,\eta),\gf_j)_\mathcal{H}
-(\Pf_n \rf(T, \rho),\gf_j)_\mathcal{H}
\\
&\quad -\sum_{j=0}^n\Pf_n \left(\int_0^\infty e^{-j\sigma}\Nf(\Phi(\sigma)) d\sigma ,\gf_j\right)_\mathcal{H},
\end{align*}
and
$$
\gf_j=\widehat{\gf}_j \quad \text{for} \quad j \neq 1, \quad \text{and} \quad \gf_1=\frac{1}{n}\widehat{\gf}.
$$ 
Then one readily shows that $F$ is a continuous map from $\overline{\B^1_{\delta/C}}(1)\times \overline{\B^n_{\delta/C}}(0)$ onto itself, provided that $C$ is chosen sufficiently large and $\delta_0$ sufficiently small. Hence, there exists a unique fixed point $(T^*,c^*,\eta^*)\in \overline{\B^1_{\delta/C}}(1)\times \overline{\B^n_{\delta/C}}(0)$. Furthermore, one has 
\begin{align*}
c^*&=F_0(T^*,c^*,\eta^*)=c^*-\ell_{T^*,c^*,\eta^*}(\gf_0),
\\
T^*&=1+F_1(T^*,c^*,\eta^*)=T^*-\ell_{T^*,c^*,\eta^*}(\gf_1),
\\
\eta_j^*&=F_j(T^*,c^*,\eta^*)= eta_j^*-\ell_{T^*,c^*,\eta^*}(\gf_j),
\end{align*}
for $j=2,\dots,n$ and the claim follows.
\end{proof} 
Finally, we are in the position to fromulate and prove the nonlinear stability theorem in the context of the $\theta$-equation \eqref{Eq:theta}. For the statement, we recall the explicit blowup solutions $\theta_n^T$ given in \eqref{Eq:theta_n} for profiles $\phi_n$ given in \eqref{Def:phi_n_intro}. Furhtermore, we use the compact denotation
\begin{equation*}
    \theta[t]:=(\theta(t,\cdot),\partial_t \theta (t,\cdot)),
\end{equation*}
and we recall the truncated backward lightcone notation
$$
\Gamma^T(0):=\{(t,x)\in \R^{1+d}: 0 \leq t < T, \ |x|\leq T-t\}.
$$
\begin{theorem}\label{prop:theta exist} 
Let $d\geq 2$, fix $n \in \mathbb{N}$, and set $k=2n+\ceil{\frac{d}{2}}+1$. Then there exist constants $\delta_0>0$ and $C>1$ such that for all $0 <\delta\leq \delta_0$ and all $(f,g)\in C^\infty_{rad}\times C^\infty_{rad}(\overline{\B^d_1})$ with 
\begin{equation}\label{eq:small}
    \|(f,g)-\theta_n^1[0]\|_{H^k \times H^{k-1}(\B^d_{1+\delta})} \leq \frac{\delta}{C^2},
\end{equation}
the following holds: 
There exists a unique pair $(T,c)\in [1-\delta,1+\delta]\times[-\delta,\delta]$, and, in the case $n\geq 2$, unique constants $\eta_j \in [-\delta,\delta]$ for $j=2,\dots n$, such that the corrected initial data
$$
\theta[0]=\widetilde{(f,g)}:=(f,g) -\sum_{j=2}^n \eta_j \hf_j,
$$
with $\hf_j$ given in \eqref{def:h_la}, yield for the equation
\begin{equation*}
	\partial^\alpha \partial_\alpha \theta + \partial^{\alpha} \theta \partial_{\alpha} \theta=0
\end{equation*}
a unique smooth solution $\theta:\Gamma^T(0)\to \R$ which blows up at $(T,0)$, and furthermore satisfies
\begin{align}\label{est1}
\|\theta(t,\cdot)-\theta_n^T(t,\cdot)-c\|_{\dot H^{s}(\B^d_{T-t})}&\lesssim(T-t)^{\frac{d}{2}-s+\frac{1}{2}},
\end{align}
and
\begin{align}\label{est2}
\|\partial_t\theta(t,\cdot)-\partial_t\theta_n^T(t,\cdot)\|_{\dot H^{s-1}(\B^d_{T-t})}&\lesssim(T-t)^{\frac{d}{2}-s+\frac{1}{2}},
\end{align}
for all $s \in \mathbb{N}$ with $s \leq k$, and all $t \in [0,T)$.
\end{theorem}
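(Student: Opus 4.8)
The proof is essentially an assembly of the linear and nonlinear machinery built above; the plan is to reformulate the Cauchy problem for \eqref{Eq:theta} on the truncated lightcone in similarity variables, invoke Lemma~\ref{lem: fixed point with vanishing correction} to produce a global exponentially decaying mild solution of \eqref{eq:Phi} with vanishing correction, upgrade it to a classical smooth solution, and transcribe the decay back to physical coordinates. Concretely, I would fix $\delta_0>0$ and $C>1$ as in Lemma~\ref{lem: fixed point with vanishing correction} (shrinking $\delta_0$ so that $\phi_n>0$ on $\overline{\B^d_{1+\delta_0}}$ and the initial-data operator $\Uf$ is well defined there), and given radial smooth $(f,g)$ with $\|(f,g)-\theta_n^1[0]\|_{H^k\times H^{k-1}(\B^d_{1+\delta})}\leq\delta/C^2$, set $\vf:=(f,g)-\theta_n^1[0]$. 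Recall that under \eqref{def:sv} a classical solution $\theta$ of \eqref{Eq:theta} on $\Gamma^T(0)$ corresponds to $\psi(\tau,\xi)=\theta(T-Te^{-\tau},Te^{-\tau}\xi)$ solving \eqref{eq:theta_sim_var}, that in the first-order formulation \eqref{Eq:abstract evolution} the DSS solution $\theta_n^T$ becomes $\Psi^T_{n,c}$, and that via the ansatz \eqref{pert_ansatz} the prescribed data $\theta[0]$ on $\B^d_T$ translate into $\Uf(T,c,\vf)\in\mathcal{H}_{rad}$. Since $T\leq1+\delta$ forces $\B^d_T\subseteq\B^d_{1+\delta}$, finite speed of propagation guarantees that the evolution on $\Gamma^T(0)$ uses only the prescribed data, and conversely $\Gamma^T(0)$ is precisely the preimage of the infinite cylinder $[0,\infty)\times\overline{\B^d_1}$; thus the theorem reduces to producing $T,c,\{\eta_j\}$ and a solution $\Phi$ of the Duhamel equation \eqref{int_eq_phi} with datum $\Uf(T,c,\widetilde\vf)$, where $\widetilde\vf:=\vf-\sum_{j=2}^n\eta_j\hf_j$.

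This is exactly the content of Lemma~\ref{lem: fixed point with vanishing correction}, which rests on the sharp semigroup bounds of Theorem~\ref{thm:semigroup} and the local Lipschitz property of $\Nf$ on the Banach algebra $\mathcal{H}_{rad}$: it furnishes unique $(T,c)\in[1-\delta,1+\delta]\times[-\delta,\delta]$ and, for $n\geq2$, unique $\eta_j\in[-\delta,\delta]$, together with a unique $\Phi\in\mathcal{X}_\delta$ satisfying $\Phi=\Kf(\Uf(T,c,\widetilde\vf),\Phi)$ and $\Cf(\Uf(T,c,\widetilde\vf),\Phi)=0$. The vanishing of the correction collapses \eqref{Eq:stabilized ev} to \eqref{int_eq_phi}, so $\Phi$ is a genuine mild solution of \eqref{eq:Phi} with datum $\Uf(T,c,\widetilde\vf)$, and in particular $\|\Phi(\tau)\|_{\mathcal{H}}\lesssim\delta\,e^{-\tau/2}$ for all $\tau\geq0$. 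Since the datum and the profile $\theta_n^T$ are smooth, standard persistence of regularity for the semilinear equation \eqref{Eq:theta} shows that $\Psi=\Psi^T_{n,c}+\Phi$ comes from a genuine solution $\psi\in C^\infty([0,\infty)\times\overline{\B^d_1})$, equivalently $\theta\in C^\infty(\Gamma^T(0))$; uniqueness of $\theta$ then follows from the uniqueness part of the local Cauchy theory, and uniqueness of $(T,c,\{\eta_j\})$ from Lemma~\ref{lem: fixed point with vanishing correction}.

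It remains to undo \eqref{def:sv}. Writing $e^{-\tau}=(T-t)/T$, $x=(T-t)\xi$, using $\partial_{x^i}=\tfrac{e^\tau}{T}\partial_{\xi^i}$ and $\partial_t=\tfrac{e^\tau}{T}(\partial_\tau+\xi^i\partial_{\xi^i})$, and noting $[\Psi^T_{n,c}]_1=\theta_n^T+c$ and $\partial_t\theta_n^T=\tfrac{e^\tau}{T}[\Psi^T_n]_2$, a direct scaling computation gives
\begin{align*}
\|\theta(t,\cdot)-\theta_n^T(t,\cdot)-c\|_{\dot H^{s}(\B^d_{T-t})}&\simeq(T-t)^{\frac d2-s}\,\|\Phi_1(\tau)\|_{\dot H^{s}(\B^d_1)},\\
\|\partial_t\theta(t,\cdot)-\partial_t\theta_n^T(t,\cdot)\|_{\dot H^{s-1}(\B^d_{T-t})}&\simeq(T-t)^{\frac d2-s}\,\|\Phi_2(\tau)\|_{\dot H^{s-1}(\B^d_1)}.
\end{align*}
Since $\|\Phi_1(\tau)\|_{\dot H^{s}(\B^d_1)}+\|\Phi_2(\tau)\|_{\dot H^{s-1}(\B^d_1)}\lesssim\|\Phi(\tau)\|_{\mathcal{H}}\lesssim\delta\,e^{-\tau/2}\simeq\delta\,(T-t)^{1/2}$ for all $s\in\mathbb{N}$ with $s\leq k$, estimates \eqref{est1} and \eqref{est2} follow at once. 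Finally, $\partial_t\theta(t,0)=\tfrac{e^\tau}{T}\big([\Psi^T_n]_2(\tau,0)+\Phi_2(\tau,0)\big)$ with $[\Psi^T_n]_2(\tau,0)=-n$ and $|\Phi_2(\tau,0)|\lesssim\|\Phi(\tau)\|_{\mathcal{H}}\to0$ (Sobolev embedding, $k-1>d/2$), so $\partial_t\theta(t,0)\sim -n/(T-t)\to-\infty$ as $t\to T^-$, confirming blowup at $(T,0)$.

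All the substantive analysis --- the Liouville-Green and Volterra resolvent construction, the spectral identification via the hypergeometric connection problem, the Gearhart-Pr\"uss-Greiner semigroup bounds, and the Lyapunov-Perron fixed point with vanishing correction --- has already been carried out in Sections~\ref{sec:resolvent}--\ref{sec:nonlin}, so the present argument is mostly bookkeeping. The step I expect to require the most care is the dictionary between the two formulations: verifying that $\Uf$ faithfully encodes $\theta[0]$, that finite speed of propagation really confines the evolution to $\Gamma^T(0)$, and that the mild solution $\Phi\in\mathcal{X}$ is in fact the \emph{classical} perturbation of $\theta_n^T$, i.e.\ running the regularity bootstrap from the base Sobolev level $k$ up to $C^\infty$. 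This translation, rather than any individual estimate, is the likely source of subtleties.
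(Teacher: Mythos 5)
Your proposal is correct and follows essentially the same route as the paper's proof: it invokes Lemma \ref{lem: fixed point with vanishing correction} to obtain the parameters $(T,c,\eta_j)$ and the decaying solution $\Phi$ of \eqref{int_eq_phi}, upgrades it to a classical smooth solution by persistence of regularity, and recovers \eqref{est1}--\eqref{est2} by the same scaling computation relating $\|\Phi(\tau)\|_{\mathcal H}\lesssim e^{-\tau/2}$ to the rescaled Sobolev norms on $\B^d_{T-t}$. The only addition is your explicit verification of velocity blowup at $(T,0)$ via $\partial_t\theta(t,0)\sim -n/(T-t)$, which the paper leaves implicit and which is a harmless (correct) supplement.
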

\begin{proof}
By choosing $C$ as in Lemma \ref{lem: fixed point with vanishing correction}, and setting $\delta_0$ to be $\delta_0/C$ (for $\delta_0$ from the same lemma), we conclude that given $0<\delta \leq \delta_0$, if $(f,g)$ satisfy \eqref{eq:small}, then there exist $T\in [1-\delta,1+\delta]$, $c \in [-\delta,\delta]$, $\eta_j \in [-\delta,\delta]$ with $j=2,\dots n$, and a unique $\Phi \in \mathcal{X}_{\delta C}$ that solves
\begin{align*}
\Phi(\tau)= \Sf_n(\tau)\Uf(T,c,\widetilde{(f,g)})+ \int_0^\tau \Sf_n(\tau-\sigma)\Nf(\Phi(\sigma)) d\sigma.
\end{align*} 
By standard regularity arguments, one shows that $\Phi(\tau)$ for positive times $\tau$ inherits the regularity of the prescribed initial data, and is therefore smooth. Moreover, $\Phi$ is not only the unique solution in $\X_{\delta C},$ but in all of $\X$. Consequently, in view of \eqref{pert_ansatz} and \eqref{Psi_nc} we observe that
$$\Psi(\tau)=\Psi_{n,c}^{T}+\begin{pmatrix}
c
\\
0
\end{pmatrix}+\Phi(\tau)
$$
is a classical solution to the equation 
\begin{align*}
\partial_\tau \Psi(\tau)= \Lf_0 \Psi(\tau)+\Nf(\Psi(\tau)),
\end{align*}
and consequently $ [\Psi(\tau)]_1(\xi)=\psi_1(\tau,\xi) \in C^{\infty}([0,\infty)\times \R^d)$ solves \eqref{eq:theta_sim_var} classically. Thus, 
$$
\theta(t,x)=\psi_1\Big(\ln T-\ln(T-t),\frac{x}{T-t}\Big)
$$
is a smooth solution to \eqref{Eq:theta} on the lightcone $\Gamma^T(0)$. Furthermore, for $s \in \{0,1,\dots, k\}$ 
\begin{align*}
\delta&\geq \|\Phi\|_{\mathcal{X}}^2\geq e^{\frac{1}{2}\tau}\|\psi_1(\tau,\cdot)-\psi_n^T(\tau,\cdot)-c\|_{\dot H^{s}(\B^d_1)}
\\
&= (T-t)^{-\frac12}\|\psi_1(\ln T-\ln(T-t),\cdot)-\psi_n^T(\ln T-\ln(T-t),\cdot)-c\|_{\dot H^s(\B^d_1)}
\\
&= (T-t)^{-\frac12-\frac{d}{2}+s}\Big\|\psi_1\Big(\ln T-\ln(T-t),\frac{\cdot}{T-t}\Big)-\psi_n^T\Big(\ln T-\ln(T-t),\frac{\cdot}{T-t}\Big)-c\Big\|_{\dot H^{s}_x(\B^d_{T-t})}
\\
&=(T-t)^{-\frac12-\frac{d}{2}+s}\|\theta(t,\cdot)-\theta_n^T(t,\cdot)-c\|_{\dot H^{s}(\B^d_{T-t})}.
\end{align*}
Hence,
\begin{align*}
\|\theta(t,.)-\theta_n^T(t,.)-c\|_{\dot H^{s}(\B^d_{T-t})}&\lesssim (T-t)^{\frac{d}{2}-s+\frac12}
\end{align*}
for all $x \in [0,T)$. The estimate for the speed component $\partial_t\theta$ follows in the same way by considering $\psi_2$.
\end{proof}

\begin{remark}
    By interpolation, the estimates \eqref{est1} and \eqref{est2} hold for all $1 \leq s \leq k$ and not just for integer regularities.
\end{remark}

It remains to translate the above result into Theorem \ref{thm:stabdgeq2}. This will follow from the two lemmas stated below. First, we establish that the relative size of the initial data and of the solutions to equation \eqref{Eq:WM_system} are retained under the transformation to equation \eqref{Eq:theta}. To do this in a concise manner, we recall that solutions of our original equation \eqref{Eq:WM_system} are confined to  $\mathbb S^1$. This constraint forces the prescribed initial data $(F,G)$ to satisfy $F\cdot G=0$.
Thus, in view of the polar coordinates on $\mathbb{S}^1$, we seek to represent $F,G$ as
\begin{equation}\label{Eq:relation_initial}
F(x)=\begin{pmatrix}
\sin f(x)\\
\cos f(x)
\end{pmatrix},
\quad G(x)=\begin{pmatrix}
g(x)\cos f(x)\\
-g(x)\sin f(x)
\end{pmatrix},
\end{equation}
for a convenient choice of real functions  $f,g$. Also, for the statement, let us recall the blowup solutions $U^T_{n,0}$ to \eqref{Eq:WM_system} given in \eqref{def:blowupsolution}.
\begin{lem} \label{lem:initial data reduction}
Let $d \geq 2$, fix $n \in \mathbb{N}$, and set $k=2n + \lceil \tfrac{d}{2}\rceil + 1$. For any $\delta_0 \geq 0$ and $C>1$, there exists $M>1$ such that for all $0<\delta\leq \delta_0$ the following holds: For 
$$
F:\mathbb{R}^{d}\to \mathbb{S}^1\subseteq \R^2 \quad \textup{and} \quad G:\mathbb{R}^{d} \to \mathbb{R}^{2},
$$
with $F \in H^k(\B^d_{1+\delta_0})$ and $G \in H^{k-1}(\B^d_{1+\delta_0})$ that satisfy
$F(x)\cdot G(x)=0$ for $x \in \B^d_{1+\delta_0}$,
as well as
$$
\|(F,G)-U_{n}^1[0]\|_{H^k\times H^{k-1}(\B^d_{1+\delta_0})}\leq \frac{\delta}{M},
$$
there exists a pair of real functions $(f,g) \in H^k\times H^{k-1}(\B^d_{1+\delta_0})$ such that \eqref{Eq:relation_initial} holds, and
\begin{align*}
\|(f,g)-\theta_{n}^1[0]\|_{H^k\times H^{k-1}(\B^d_{1+\delta_0})}\leq \frac{\delta}{C^2}.
\end{align*}
\end{lem}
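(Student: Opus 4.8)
The plan is to recover the polar angle $f$ and the speed $g$ from the constrained data $(F,G)$ by composing with a smooth inverse of the chart $\theta \mapsto (\sin\theta,\cos\theta)$, and then estimate the resulting Sobolev norms by a Moser-type (Banach algebra plus composition) argument, exploiting that $(F,G)$ is close to the smooth reference data $U_{n}^1[0]$. First I would fix the reference profile: since $\theta_n^1(0,x)=\ln\phi_n(|x|)$ with $\phi_n$ a positive even polynomial, the map $x\mapsto \theta_n^1[0](x)$ is smooth on the closed ball $\overline{\B^d_{1+\delta_0}}$, and its image under the chart is exactly $U_n^1[0]$; in particular $U_n^1[0]$ takes values in a compact arc $A\subset \mathbb{S}^1$ bounded away from the ``seam'' of any global-ish chart. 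Choose a smooth function $\Theta:\mathcal{N}\to\R$ defined on an open neighborhood $\mathcal{N}\subseteq\R^2$ of $A$ with $\Theta(\sin\theta,\cos\theta)=\theta$ for all $(\sin\theta,\cos\theta)\in A$ (this is just a local inverse of an immersion, made global on a tubular neighborhood). Provided $M$ is large enough, the smallness hypothesis together with the Sobolev embedding $H^k\hookrightarrow C^0$ for $k>\tfrac d2$ (which holds since $k=2n+\lceil\tfrac d2\rceil+1$) forces $F(x)\in\mathcal{N}$ for all $x\in\overline{\B^d_{1+\delta_0}}$, so we may set $f:=\Theta\circ F$.

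Next I would define $g$. Differentiating the constraint $F\cdot G=0$ and the identity $|F|=1$ is not needed directly; instead, since $F=(\sin f,\cos f)$ and $F\cdot G=0$, the vector $G$ is a scalar multiple of $\partial_\theta(\sin f,\cos f)=(\cos f,-\sin f)$, so $g:=F^{\perp}\cdot G$ where $F^\perp=(\cos f,-\sin f)$; equivalently $g=G_1\cos f-G_2\sin f$. This is an explicit algebraic expression in $F$ (through $f=\Theta\circ F$) and $G$. One checks directly that \eqref{Eq:relation_initial} holds, using $|F|=1$ to express $G$ back in terms of $g$ and $f$.

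For the estimates, the key step is the composition estimate: for a smooth $\Phi$ and $u$ with $\|u\|_{L^\infty}\le R$, one has $\|\Phi\circ u - \Phi\circ u_0\|_{H^k}\lesssim_{R,\Phi}\|u-u_0\|_{H^k}(1+\|u\|_{H^k}^{k}+\|u_0\|_{H^k}^k)$, valid on a bounded domain with $k$ an integer; this is a standard Moser/Schauder-type fact (see, e.g., the tame estimates in the literature on quasilinear equations). Applying it with $\Phi=\Theta$, $u=F$, $u_0=U_n^1[0]$ gives $\|f-\theta_n^1[0]\|_{H^k(\B^d_{1+\delta_0})}\lesssim \|F-U_n^1[0]\|_{H^k}\le \tfrac{\delta}{M}$, with an implied constant depending only on $\delta_0$, $n$, $d$ and the fixed smooth function $\Theta$ (hence absorbable into the choice of $M$, yielding the bound $\tfrac{\delta}{C^2}$). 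For $g$ one combines the Banach algebra property of $H^{k-1}(\B^d_{1+\delta_0})$ (again $k-1>\tfrac d2$) with the just-proved control on $f$ and the elementary Lipschitz bounds for $\sin,\cos$: writing $g-\partial_t\theta_n^1(0,\cdot)$ as a sum of products each containing one factor that is small in $H^{k-1}$ (either $G-\partial_t U_n^1[0]$, or $f-\theta_n^1[0]$ composed into $\sin$ or $\cos$ via the composition estimate) times bounded factors, one again gets $\|g-\partial_t\theta_n^1(0,\cdot)\|_{H^{k-1}}\lesssim \tfrac{\delta}{M}$, and a large enough $M$ finishes the proof.

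The main obstacle is purely bookkeeping: ensuring that $M$ can be chosen \emph{uniformly} — i.e., the constants in the composition and product estimates depend only on $d,n,\delta_0$ (and the once-and-for-all chosen $\Theta$ and the radius of the tubular neighborhood $\mathcal{N}$), and not on the particular $(F,G)$ — which is exactly why one first fixes $\delta_0, C$ and only then extracts $M$, as the statement demands. A minor subtlety is that the composition estimate in $H^k$ requires $k$ to be a nonnegative integer and the inner function to lie in a fixed compact set where $\Theta$ is smooth; both are guaranteed here, the former by the definition $k=2n+\lceil\tfrac d2\rceil+1\in\mathbb{N}$ and the latter by the $C^0$ closeness coming from Sobolev embedding.
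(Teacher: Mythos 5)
There is a genuine gap in your construction of $f$. You set $f:=\Theta\circ F$ for a single smooth map $\Theta$ defined on a neighborhood $\mathcal{N}\subseteq\R^2$ of the image $A$ of $U_n^1[0]$, justified by the claim that $A$ is a compact \emph{proper} arc of $\mathbb{S}^1$ away from the seam of a chart. But the reference angle is $\theta_n^1[0](x)=\ln\phi_n(|x|)$, and $\ln\phi_n$ ranges from $\ln\phi_n(0)=0$ to $\ln\phi_n(1)\approx n\ln 2$ (e.g.\ in $d=3$ one computes $\phi_n(1)=2^n/(n+1)$), so for all sufficiently large $n$ the oscillation of the reference angle over $\overline{\B^d_{1+\delta_0}}$ exceeds $2\pi$ and the image $A$ wraps around the whole circle. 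On a neighborhood of such an $A$ (an annulus in $\R^2$) no continuous single-valued $\Theta$ with $(\sin\Theta(p),\cos\Theta(p))=p$ exists — the angle is only defined modulo $2\pi$ — and even for $F=U_n^1[0]$ exactly, $\Theta\circ F$ would differ from $\theta_n^1[0]$ by nonzero multiples of $2\pi$ on parts of the ball, so the claimed estimate $\|f-\theta_n^1[0]\|_{H^k}\lesssim\|F-U_n^1[0]\|_{H^k}$ fails. Since the lemma is asserted for every $n\in\mathbb{N}$, this is not a bookkeeping issue but a genuine obstruction: what you need is not an absolute inversion of the chart at $F(x)$ but a \emph{lift of $F$ relative to the known reference}. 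This is exactly how the paper proceeds: it defines
\begin{equation*}
f(x)=\theta_{n,1}^1(x)+\arcsin\big([F]_1(x)[U^1_{n,1}]_2(x)-[F]_2(x)[U^1_{n,1}]_1(x)\big),
\end{equation*}
where the argument of $\arcsin$ equals $\sin\big(f(x)-\theta_{n,1}^1(x)\big)$ and is pointwise small by $C^0$-closeness (Sobolev embedding), so the branch issue never arises and the winding of the reference is carried along automatically; the Moser-type composition estimates you invoke then go through for this relative quantity.

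Your treatment of $g$ (setting $g=G\cdot(\cos f,-\sin f)$, which is forced by $F\cdot G=0$ and $|F|=1$, and estimating via the algebra property of $H^{k-1}$ together with composition bounds) is sound and essentially parallel to the paper's, which instead estimates $g-\theta_{n,2}^1$ through the pointwise identity
\begin{equation*}
\big(g-\theta_{n,2}^1\big)^2=\big([G]_1-[U^1_{n,2}]_1\big)^2+\big([G]_2-[U^1_{n,2}]_2\big)^2+2\big[\cos\big(f-\theta_{n,1}^1\big)-1\big]\,g\,\theta_{n,2}^1;
\end{equation*}
either route works once $f$ has been constructed correctly. So the proposal needs the relative-lift correction for $f$ (or an explicit covering-space/branch-patching argument, which amounts to the same thing) before it constitutes a proof.
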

\begin{proof}
	For clarity, we recall that by denoting
	$$
	\theta_{n}^1[0](x)=( \theta_{n,1}^1(x),\theta_{n,2}^1(x)),
	$$
	we have that
\begin{equation*}
	U^1_n[0](x)=(U^1_{n,1}(x),U^1_{n,2}(x))=
	\left(
\begin{pmatrix}
	\sin \theta_{n,1}^1(x)  \\
	\cos \theta_{n,1}^1(x) 
\end{pmatrix},
\begin{pmatrix}
\theta_{n,2}^1(x)	\cos \theta_{n,1}^1(x)  \\
-\theta_{n,2}^1(x)\sin \theta_{n,1}^1(x)
\end{pmatrix}
\right).
\end{equation*}
Assume that $M$ is large enough such that $\| [F]_1 - [U_{n,1}^1]_1\|_{L^\infty(\B^d_{1+\delta_0})}$ and $\| [F]_2 - [U_{n,1}^1]_2\|_{L^\infty(\B^d_{1+\delta_0})}$ are small enough to ensure that
\begin{align*}
	f(x):=&\, \theta_{n,1}^1(x) + \arcsin \big([F]_1(x)[U^1_{n,1}]_2(x)-[F]_2(x)[U^1_{n,1}]_1(x)\big)\\
	=&\, \theta_{n,1}^1(x) + \arcsin \Big(\big([F]_1(x)-[U^1_{n,1}]_1(x)\big)[U^1_{n,1}]_2(x)+\big([U^1_{n,1}]_2(x)-[F]_2(x)\big)[U^1_{n,1}]_1(x)\Big)
\end{align*}
is well-defined for $x \in \B^d_{1+\delta_0}$ (for example, when the argument of $\arcsin$ is globally smaller than $\pi/2$). A direct computation, using that $|F|=1$, yields $\sin f(x) = [F]_1(x)$ and $\cos f(x) = [F]_2(x)$. Furthermore, for such $M$ all derivatives of $\arcsin$ up to order $k$, are uniformly bounded when evaluated at $[F]_1(x)[U^1_{n,1}]_2(x)-[F]_2(x)[U^1_{n,1}]_1(x)$ for all $x \in \B^d_{1+\delta_0}$. 
From this, upon perhaps taking even larger $M$, we can make $\|	f-\theta_{n,1}^1 \|_{H^{k}(\B^d_{1+\delta_0})}$ as small as desired, in particular
\begin{equation*}
\|	f-\theta_{n,1}^1 \|_{H^{k}(\B^d_{1+\delta_0})} \leq \frac{\delta}{C^2}.
\end{equation*}
Knowing the first component $f(x)$, the second component, $g(x)$, is then uniquely defined by \eqref{Eq:relation initial}.
Now, note that since 
\begin{equation*}
	[U^1_{n,2}]_1(x)=\theta_{n,2}^1(x)	\cos \theta_{n,1}^1(x) \quad \text{and} \quad	[U^1_{n,2}]_2(x)=-\theta_{n,2}^1(x)\sin \theta_{n,1}^1(x),
\end{equation*} from \eqref{lem:initial data reduction} we have the identity
\begin{multline*}
	\big(g(x)-\theta_{n,2}^1(x)\big)^2= \\([G]_1(x)-[U^1_{n,2}]_1(x))^2 + ([G]_2(x)-[U^1_{n,2}]_2(x))^2 + 2[\cos(f(x)-\theta_{n,1}^1(x))-1 ]g(x)\theta_{n,2}^1(x).
\end{multline*}
From here, by choosing $M$ even larger in the step above (so as to ensure that $\| f-\theta_{n,1}^1\|_{H^k(\B^d_{1+\delta_0})}$ is small enough), we obtain
\begin{equation*}
	\|	g-\theta_{n,2}^1 \|_{H^{k-1}(\B^d_{1+\delta_0})} \leq \frac{\delta}{C^2}.
\end{equation*}
\end{proof}
Lemma \ref{lem:initial data reduction} shows that relative smallness of prescribed initial data persists when transitioning from Eq.~\eqref{Eq:WM_system} to Eq.~\eqref{Eq:theta}. Next, we need to show that the converse holds for the relative size of the solutions.
\begin{lem} \label{lem:estimate transfer}
In the context of Theorem \ref{prop:theta exist}, the solution $\theta:\Gamma^T(0)\to \R$ satisfies the following estimates
\begin{align*}
\left\|
\begin{pmatrix}
\sin \theta(t,.)
\\
\cos \theta(t,.)
\end{pmatrix}-\begin{pmatrix}
\sin (\theta_n^T(t,.)+c)
\\
\cos(\theta_n^T(t,.)+c)
\end{pmatrix}
\right\|_{H^s(\B^d_{T-t})}&\lesssim (T-t)^{\frac{d}{2}-s+\frac12},
\end{align*}
and
\begin{align*}
\left\|
\partial_t
\begin{pmatrix}
\sin \theta(t,.)
\\
\cos \theta(t,.)
\end{pmatrix}-
\partial_t\begin{pmatrix}
\sin (\theta_n^T(t,.)+c)
\\
\cos(\theta_n^T(t,.)+c)
\end{pmatrix}
\right\|_{H^{s-1}(\B^d_{T-t})}&\lesssim (T-t)^{\frac{d}{2}-s+\frac12},
\end{align*}
for all $s\in \mathbb{N}$ with $s \leq k$, and all $0 \leq t< T$.

\end{lem}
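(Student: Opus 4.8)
The plan is to transfer the estimates through the similarity change of variables $\tau = \ln T - \ln(T-t)$, $\xi = x/(T-t)$ used in the proof of Theorem~\ref{prop:theta exist}, where the shrinking cross-sections $\B^d_{T-t}$ become the fixed ball $\B^d_1$. Write $\psi_1(\tau,\xi) = \theta(t,x)$, and let $\psi_2 = \partial_\tau\psi_1 + \xi^i\partial_{\xi_i}\psi_1$ be the associated first-order variable. Recall from that proof that $\Psi(\tau) = \Psi^T_{n,c} + (c,0) + \Phi(\tau)$ with $\Phi = (\Phi_1,\Phi_2) \in \mathcal{X}$, so $\|\Phi_1(\tau)\|_{H^k(\B^d_1)} + \|\Phi_2(\tau)\|_{H^{k-1}(\B^d_1)} \lesssim e^{-\tau/2}$ (this is just \eqref{est1}--\eqref{est2} rewritten in similarity variables); by monotonicity of Sobolev norms on $\B^d_1$ this gives $\|\Phi_1(\tau)\|_{H^s(\B^d_1)} \lesssim e^{-\tau/2}$ for $0 \le s \le k$ and $\|\Phi_2(\tau)\|_{H^{s-1}(\B^d_1)} \lesssim e^{-\tau/2}$ for $1 \le s \le k$. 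The structural fact I would stress at the outset is that $\psi^T_n(\tau,\xi) = n\ln T - n\tau + \ln\phi_n(|\xi|)$, so that the $\tau$-dependence of the profile is merely an additive constant inside the trigonometric functions; consequently $\sin(\psi^T_n(\tau,\cdot)+c)$, $\cos(\psi^T_n(\tau,\cdot)+c)$ and the $\tau$-independent function $\psi^T_{n,2}(\xi) := -n + \xi^i\partial_i\phi_n(|\xi|)/\phi_n(|\xi|)$ are all bounded in $C^m(\overline{\B^d_1})$ uniformly in $\tau$, for every $m$.

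For the first estimate I would start from the addition formula
\[
\sin\psi_1 - \sin(\psi^T_n+c) = \cos(\psi^T_n+c)\,\sin\Phi_1 - \sin(\psi^T_n+c)\,(1-\cos\Phi_1),
\]
and its $\cos$-analogue. Since $k > \tfrac d2$ we have $H^k(\B^d_1)\hookrightarrow L^\infty$, so $\|\Phi_1(\tau)\|_{L^\infty}\lesssim 1$; the Moser composition estimate on the bounded Lipschitz domain $\B^d_1$, applied to the smooth functions $\sin$ and $1-\cos$ (both vanishing at $0$), then gives $\|\sin\Phi_1(\tau)\|_{H^s(\B^d_1)} + \|1-\cos\Phi_1(\tau)\|_{H^s(\B^d_1)} \lesssim \|\Phi_1(\tau)\|_{H^s(\B^d_1)} \lesssim e^{-\tau/2}$ for $0 \le s \le k$. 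Multiplying by the profile factors and using the tame product estimate (or the Banach algebra property of $\mathcal{H}_{rad}$) together with the $\tau$-uniform $C^s$-bounds above yields $\|\sin\psi_1(\tau,\cdot) - \sin(\psi^T_n(\tau,\cdot)+c)\|_{H^s(\B^d_1)} \lesssim e^{-\tau/2}$, and likewise for $\cos$. Finally, since $\sin\theta(t,x) - \sin(\theta^T_n(t,x)+c) = W(\tau, x/(T-t))$ with $\|W(\tau,\cdot)\|_{H^s(\B^d_1)}\lesssim e^{-\tau/2}$, and the $H^s$-norm on $\B^d_{T-t}$ picks up a factor $(T-t)^{d/2-s}$ under this rescaling (here $T-t \le T \le 1+\delta$ is bounded and $e^{-\tau/2} = T^{-1/2}(T-t)^{1/2}\simeq (T-t)^{1/2}$), I obtain the claimed bound $\lesssim (T-t)^{d/2-s+1/2}$.

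For the derivative estimate I would use $\partial_t = \tfrac{1}{T-t}(\partial_\tau + \xi^i\partial_{\xi_i})$, which gives $\partial_t\sin\theta(t,x) = \tfrac{1}{T-t}\cos\psi_1(\tau,\xi)\,\psi_2(\tau,\xi)$ and $\partial_t\sin(\theta^T_n(t,x)+c) = \tfrac{1}{T-t}\cos(\psi^T_n(\tau,\xi)+c)\,\psi^T_{n,2}(\xi)$, whence
\[
\partial_t\sin\theta - \partial_t\sin(\theta^T_n+c) = \tfrac{1}{T-t}\Big(\big[\cos\psi_1 - \cos(\psi^T_n+c)\big]\psi_2 + \cos(\psi^T_n+c)\,\Phi_2\Big)\big(\tau, \tfrac{x}{T-t}\big).
\]
In $H^{s-1}(\B^d_1)$ the quantity in parentheses is $\lesssim e^{-\tau/2}$: indeed $\psi_2 = \psi^T_{n,2} + \Phi_2$ is bounded uniformly in $\tau$, $\cos(\psi^T_n+c)$ is bounded in $C^{s-1}(\overline{\B^d_1})$ uniformly in $\tau$, and $\cos\psi_1 - \cos(\psi^T_n+c)$ is controlled by $e^{-\tau/2}$ exactly as in the previous paragraph, so the tame product estimate applies. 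Multiplying by $\tfrac{1}{T-t}$ and undoing the scaling — the $H^{s-1}$-norm on $\B^d_{T-t}$ contributes $(T-t)^{d/2-(s-1)}$ — produces $\tfrac{1}{T-t}\cdot(T-t)^{d/2-s+1}\cdot e^{-\tau/2}\simeq (T-t)^{d/2-s+1/2}$, as required, and identically for $\cos$.

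The one delicate point — and the only real obstacle — is ensuring in the derivative estimate that the singular prefactor $\tfrac{1}{T-t}$ produced by the time derivative in similarity coordinates is precisely cancelled by the extra half-power (relative to the $H^s$ case) gained from rescaling an $H^{s-1}$-norm on the shrinking ball, with no loss when the $s-1$ derivatives are distributed by the product rule between the profile factors (whose physical-space $C^{s-1}(\B^d_{T-t})$-norms blow up like $(T-t)^{-(s-1)}$) and the perturbative factors (whose $\dot H^{s-1-j}(\B^d_{T-t})$-norms improve accordingly). Carrying out the computation in the fixed ball $\B^d_1$, where the profile quantities have $\tau$-uniform $C^m$-bounds and the Moser and product estimates live on a domain of fixed size, is precisely what makes this bookkeeping transparent; the remaining steps are routine.
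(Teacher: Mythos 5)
Your argument is correct, but it takes a genuinely different route from the paper. The paper proves the lemma entirely in physical variables: it expands $\partial^\alpha$ of the trigonometric compositions into Fa\`a di Bruno--type terms $\trig f\,\prod_i\partial^{\alpha_i}f$, telescopes the difference with the profile, and estimates each term by H\"older with exponents $p_i=2s/|\alpha_i|$ and the embedding $H^{\frac d2-\frac d{p_i}}\hookrightarrow L^{p_i}$; since the resulting regularities $|\alpha_i|+\frac d2-\frac d{p_i}$ are generally non-integer, this crucially uses the fractional-order version of \eqref{est1}--\eqref{est2} obtained by interpolation (the remark following Theorem \ref{prop:theta exist}). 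You instead return to similarity variables, use the uniform decay $\|\Phi(\tau)\|_{H^k\times H^{k-1}(\B^d_1)}\lesssim e^{-\tau/2}$ of the perturbation (equivalently, the integer-order estimates \eqref{est1}--\eqref{est2} summed over $s\le k$, or directly $\Phi\in\mathcal X_{C\delta}$ from the fixed-point construction), combine the addition formulas with Moser composition and standard product estimates on the fixed ball (where the profile quantities $\sin(\psi_n^T+c)$, $\cos(\psi_n^T+c)$, $\psi_{n,2}^T$ have $\tau$-uniform $C^m$ bounds because the $\tau$-dependence is only an additive constant in the argument), and rescale at the end; your accounting of the $\frac1{T-t}$ from $\partial_t$ against the extra power gained by rescaling $H^{s-1}$ on $\B^d_{T-t}$ is exactly right. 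What each approach buys: the paper's proof uses only the stated conclusion of Theorem \ref{prop:theta exist} (plus interpolation), so the lemma is self-contained relative to that statement; yours avoids fractional interpolation and the Fa\`a di Bruno bookkeeping altogether, at the price of invoking the internal structure of the stability proof (the perturbation $\Phi$ and its $\mathcal{H}$-norm decay). Both are complete; no gap.
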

\begin{proof}
		We prove only the first estimate above; the second one follows similarly. The estimate obviously holds for $s=0$. For $s\geq 1$, we for simplicity denote $f:=\theta(t,\cdot)$ and $g:=\theta_n^T(t,\cdot)+c$, and observe that it is enough to estimate terms of the form
		\begin{equation}\label{trig_expr}
			\trig \, f \prod_{i=1}^{m} \partial^{\alpha_i} f - \trig \, g \prod_{i=1}^{m}\partial^{\alpha_i}g,
		\end{equation}
		for 
		$
		\sum_{i=1}^{m}|\alpha_i| = s$ with $|\alpha_i|\geq1,
		$
		where $\trig$ stands for either $\sin$ or $\cos$.
		Note that \eqref{trig_expr} can be written in the following form (where we suppress the $x$-dependence)
		\begin{multline}\label{expr_0}
			\trig \, f \int_{0}^{1}\frac{d}{dt}\left( \prod_{i=1}^{m}\partial^{\alpha_i}[g+t(f-g)]\right)dt
			+(\trig \, f-\trig \, g)
			 \prod_{i=1}^{m} \partial^{\alpha_i}g\\
=	 \trig \, f \sum_{i=1}^{m} \partial^{\alpha_i}(f-g)\int_{0}^{1}  \prod_{\substack{j=1 \\ j \ne i}}^{m}\partial^{\alpha_j}[g+t(f-g)]dt  +(\trig \, f-\trig \, g)
\prod_{i=1}^{m} \partial^{\alpha_i}g.
		\end{multline}
 To estimate the $L^2$-norm of the terms inside the sum above, we use H\"older's inequality and Sobolev embedding. More precisely,
define $p_i \geq 2$, for $i=1,\dots,m$, by 
\begin{equation*}
p_i=\frac{2s}{|\alpha_i|},
\end{equation*} 
and note that
$
	\sum_{i=1}^{m}\frac{1}{p_i} = \frac 12.
$
Therefore, we apply H\"older's inequality to each term in the sum, putting the $\partial^{\alpha_i}$-derivative in $L^{p_i}$-norm, for all $i=1,\dots,m$.
Then, the Sobolev embedding $H^{\frac{d}{2}-\frac{d}{p_i}} \hookrightarrow L^{p_i}$, and the fact that
\begin{equation*}
	|\alpha_i|+\frac{d}{2}-\frac{d}{p_i} = \frac{|\alpha_i|}{s}\,s+\left(1-\frac{|\alpha_i|}{s}\right)\frac{d}{2} \leq \max \{s,\frac{d}{2} \},
\end{equation*}
allow us to use the stability estimates of $\theta$ to conclude that the $L^2$-norm of each term in the sum is bounded by
\begin{equation*}
	 \| \partial^{\alpha_i}(f-g) \|_{L^{p_i}} \prod_{\substack{j=1 \\ j \ne i}}^{m}\| \partial^{\alpha_j} g \|_{L^{p_j}} \lesssim 	(T-t)^{\frac{d}{p_i}-|\alpha_i|+\frac{1}{2}}\prod_{\substack{j=1 \\ j \ne i}}^{m}
	 (T-t)^{\frac{d}{p_j}-|\alpha_j|	}=(T-t)^{\frac{d}{2}-s+\frac12}.
\end{equation*}
For the remaining term in \eqref{expr_0}, by putting $\trig \, f -\trig\, g$ in the $L^2$-norm, and the rest in the $L^\infty$-norm, we obtain the same bound.
\end{proof}

With this, we can finally proceed to the proof of Theorem~\ref{thm:stabdgeq2}.
\begin{proof}[Proof of Theorem \ref{thm:stabdgeq2}]
Let $\delta$ and $C$ be such that the requirements of Theorem~\ref{prop:theta exist} are satisfied. Then, thanks to Lemma \ref{lem:initial data reduction}, there exists $M>1$ such that for initial data
$(F,G)$ that
satisfy
$$
\|(F,G)-U_{n}^1[0]\|_{H^k\times H^{k-1}(\B^d_{1+\delta_0})}\leq \frac{\delta}{M},
$$
there exist $(f,g) \in H^k_{rad}\times H^{k-1}_{rad}(\B^d_{1+\delta_0})$ such that
\begin{equation}\label{Eq:relation initial}
F(x)=\begin{pmatrix}
\sin f(x)\\
\cos f(x)
\end{pmatrix},
\quad G(x)=\begin{pmatrix}
g(x)\cos f(x)\\
-g(x)\sin f(x)
\end{pmatrix},
\end{equation}
on $\B^d_{1+\delta_0}$, and
\begin{align*}
\|(f,g)-\theta_{n}^1[0]\|_{H^k\times H^{k-1}(\B^d_{1+\delta_0})}\leq \frac{\delta}{C}.
\end{align*}
Consequently, the theorem is a consequence of Theorem \ref{prop:theta exist} and Lemma \ref{lem:estimate transfer}.
\end{proof}

\section{The one-dimensional case}\label{sec:1d}
\noindent This section is devoted to the proof of Theorem \ref{thm:stabd1}. In analogy with the case $d \geq 2$, the claim follows from the corresponding result for the $\theta$-equation \eqref{Eq:theta}, which we establish here in the form of a blowup result for a one-dimensional wave maps equation. An informal version of (a special case of) this result on wave maps appeared in the introduction; see Theorem \ref{thm;wavemaps_informal}. We also refer to Section \ref{Sec:1d_wave} for a discussion of the absence of blowup for one-dimensional wave maps into compact targets.
To set the stage, let us first introduce the one-dimensional Riemannian manifold
\begin{equation}\label{Riem_man}
    (\R,g), \quad g=e^{2\theta}d\theta^2,
\end{equation}
where $\theta$ is the standard Euclidean coordinate on $\R$. Note that this defines a \emph{non-compact} complete Riemannian manifold. The wave maps equation for maps from $(\R^{1+d},\eta)$ into $(\R,g)$ reads as
\begin{equation*}
    \partial^\alpha \partial_\alpha \theta + \Gamma(\theta)\partial^\alpha \theta \partial_\alpha \theta=0,
\end{equation*}
where $\Gamma(\theta)$ is the Christoffel symbol relative to $g$. For the one-dimensional metric \eqref{Riem_man} we simply have
\begin{equation*}
    \Gamma(\theta)= \frac{(e^\theta)'}{e^\theta}=1.
\end{equation*}
In conclusion, the wave maps equation for the target $(\R,g)$ is, in fact, our $\theta$-equation
\begin{equation}\label{Eq:theta_d=1}
		\partial^\alpha \partial^\alpha \theta + \partial^{\alpha} \theta \partial_{\alpha} \theta=0,
\end{equation}
which, in the one-dimensional case, is more explicitly given by
\begin{equation}\label{Eq:theta_d=1_expl}
    \partial_t^2 \theta - \partial_x^2\theta = -(\partial_t \theta)^2 + (\partial_x \theta)^2.
\end{equation}
We also recall the explicit family of blowup solutions to \eqref{Eq:theta_d=1_expl} given by
\begin{equation}\label{Def:theta_gamma}
    	\theta_{n,x_0,\gamma}^T(t,x):=\ln\left[(T-t)^n \phi_{n,\gamma}\left(\frac{x+x_0}{T-t}\right)\right], \quad \textup{for}  \quad n \in \mathbb{N},\ T>0, \ x_0 \in \R,
\end{equation}
where $\gamma=(\gamma_1,\gamma_2)$ for $\gamma_1,\gamma_2>0$, and  
$$
\phi_{n,\gamma}(x)=\left[\gamma_1(1+x)^n+\gamma_2(1-x)^n\right].
$$
Now we proceed with the statement of our theorem.
As usual, we use the notation $\theta[0]:=(\theta,\partial_t \theta)\vert_{t=0}$, and we recall our convention for backward lightcones
$$
\Gamma^T(x_0):=\{(t,x)\in \R^{1+1}: 0 \leq t < T, \ |x-x_0|\leq T-t\},
$$
and the index set $J_n$, for $n \geq 2$, given by
\begin{equation}\label{index_set}
J_2=\emptyset, \quad J_n=\{2,3,\dots n-1\}\ \text{ for }\ n \geq 3.
\end{equation}
\begin{theorem}\label{thm;wavemaps}{\emph{(Blowup for wave maps from $\R^{1+1}$)}}
	The wave maps equation from the Minkowski space $(\R^{1+1},\eta)$ into the one-dimensional Riemannian manifold  $(\R,g)$ in \eqref{Riem_man}, given in the global coordinate $\theta$ by \eqref{Eq:theta_d=1_expl},
	admits a family of blowup solutions
\eqref{Def:theta_gamma} for which the following holds: For every $n \in \mathbb{N}$ there exist constants $M>1$ and $\delta_0>0$ such that for any $0 <\delta \leq \delta_0/2$ and any $f,g\in C^\infty([-1-\delta_0,1+\delta_0])$ for which
	\begin{align*}
		\|(f,g)-\theta_{n,0,(1,1)}^1[0]\|_{H^{n+2}\times H^{n+1} (-1-\delta_0,1+\delta_0)}&\leq\frac{\delta}M,
	\end{align*}
	there exist $T\in [1-\delta, 1+\delta]$, $x_0\in [-\delta,\delta]$, $\gamma=(\gamma_1,\gamma_2) \in [1-\delta,1+\delta]^2$, and, in case $n \geq 2$, constants $\eta_j^\pm, \eta_n \in [-\delta,\delta]$, for $ j\in J_n$, as well as real functions $h_{j,1}^\pm, h_{j,2}^\pm, h_{n,1},h_{n,2} \in C^\infty(\R)$, for which the corrected initial data 
    \begin{equation*}
        \theta[0]=(\tilde{f},\tilde{g}),
    \end{equation*}
    where
	\begin{align*}
		\tilde f(x)&:=f(x)-\sum_{j\in J_n}\big(\eta_j^+ h_{j,1}^+(x)+ \eta_j^- h_{j,1}^-(x)\big) -\eta_n h_{n,1},
		\\
		\tilde g(x)&:=g(x)-\sum_{j\in J_n}\big(\eta_j^+ h_{j,2}^+(x) +\eta_j^- h_{j,2}^-(x)\big) - \eta_n h_{n,2},
	\end{align*}
	yield a unique classical solution $\theta \in C^\infty(\Gamma^T(x_0))$ to \eqref{Eq:theta_d=1} that blows up at $(T,x_0)$ and furthermore satisfies the estimates
	\begin{align}\label{ineq:thm2}
		\left\|\theta(t,\cdot)-\theta_{n,x_0,\gamma}^T(t,\cdot)\right\|_{H^s(x_0-T+t,x_0+T-t)} &\lesssim (T-t)^{1-s},
	\end{align}
    and
    \begin{align}\label{ineq:thm2_1}
		\left\|\partial_t\theta(t,\cdot) - \partial_t\theta_{n,x_0,\gamma}^T(t,\cdot)\right\|_{H^{s-1}(x_0-T+t,x_0+T-t)} &\lesssim (T-t)^{1-s},
	\end{align}
	for all $s\in \mathbb{N}$ with $ s \leq n+2$, and all $t \in [0,T)$.
	
\end{theorem}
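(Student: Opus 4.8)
The plan is to mirror the higher-dimensional argument of Sections~\ref{sec:resolvent}--\ref{sec:nonlin}, exploiting the fact that in one spatial dimension the resolvent can be written down in closed form. Since $(\sin\theta,\cos\theta)$ is a global chart on $\mathbb S^1$, it suffices to prove the stability statement for the scalar equation \eqref{Eq:theta_d=1}; transferring the result back to \eqref{Eq:WM_system} (i.e.\ deducing Theorem~\ref{thm:stabd1}) is then achieved by norm-equivalence arguments identical to those of Lemmas~\ref{lem:initial data reduction} and~\ref{lem:estimate transfer}, which never use $d\ge2$. Passing to similarity variables $\tau=\ln T-\ln(T-t)$, $\xi=\frac{x+x_0}{T-t}$ turns the blowup solution $\theta^T_{n,x_0,\gamma}$ into a global-in-$\tau$ solution $\psi^T_{n,\gamma}$ of the $d=1$ analogue of \eqref{eq:theta_sim_var}; linearizing around it and passing to the first-order vector formulation gives the perturbation equation \eqref{eq:Phi} with a one-dimensional operator $\Lf_n$. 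Applying the conjugation $\Cf$ of Lemma~\ref{lem:conjugation} (which is stated for all $d,n$) reduces matters to $\widetilde{\Lf}_n=\widehat{\Lf}_n+\Lf'_{V_n}$ on $\mathcal H=H^{n+2}\times H^{n+1}(-1,1)$.

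Next I would carry out the linear analysis. The generalized spectral ODE \eqref{Eq:generalised specODE} now has regular singular points only at $\rho=\pm1$ and can be integrated explicitly: a fundamental system of the homogeneous equation \eqref{Eq:hom} is given, up to analytic prefactors, by $(1\pm\rho)^{\frac12+n-\lambda}$, so the resolvent of $\widehat{\Lf}_{n,V}$ is obtained directly by variation of constants, with the required $H^{n+2}$-regularity manifest and a uniform bound for $|\Im\lambda|\to\infty$ coming from a short integration-by-parts scheme — the one-dimensional counterpart of Section~\ref{sec:resolvent}, but without any Bessel analysis. For the spectrum, the homogeneous spectral equation again reduces to a hypergeometric connection problem, yielding the unstable eigenvalues $\{0,1,\dots,n\}$; here $\lambda=n$ is simple, while each $\lambda\in\{0,\dots,n-1\}$ has multiplicity $2$, the two eigenfunctions corresponding to $(1+\rho)^{n-\lambda}$ and $(1-\rho)^{n-\lambda}$, and semi-simplicity follows by adapting the argument of Lemma~\ref{lem:modemulti}. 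The Gearhart-Pr\"uss-Greiner theorem then gives a $C_0$-semigroup $(\Sf_n(\tau))_{\tau\ge0}$ generated by $\Lf_n$ and a finite-rank spectral projection $\Pf_n$ onto the unstable subspace with $\|\Sf_n(\tau)(\I-\Pf_n)\ff\|_{\mathcal H}\lesssim e^{-\tau/2}\|\ff\|_{\mathcal H}$, exactly as in Theorem~\ref{thm:semigroup}.

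Then I would run the nonlinear scheme of Section~\ref{sec:nonlin}: $\Nf$ is locally Lipschitz on the Banach algebra $\mathcal H$, so a Lyapunov-Perron fixed point for the stabilized equation \eqref{Eq:stabilized ev} produces a global decaying solution once the unstable directions are removed. The bookkeeping is heavier than for $d\ge2$ because general data bring in more symmetries of \eqref{Eq:theta_d=1} — time translation, space translation, the constant shift/$\mathbb S^1$-rotation, and the two-parameter scaling of $\gamma_1,\gamma_2$ in \eqref{Def:theta_gamma}. Differentiating the explicit solution family in $T,x_0,\gamma_1,\gamma_2$ shows that these parameters account for $\lambda=0$, $\lambda=1$, one of the two copies of each $\lambda\in J_n$, and the simple mode $\lambda=n$; the remaining genuinely unstable directions — the second copy of each $\lambda\in J_n$ and, for $n\ge2$, the $\lambda=n$ mode after the symmetry parameters are fixed — are neutralized by subtracting multiples $\eta_j^\pm,\eta_n$ of the explicit eigenfunctions, whose coordinate representatives are precisely the functions $h_{j,i}^\pm,h_{n,i}$ recorded in the introduction. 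A Brouwer fixed-point argument on the parameter box $[1-\delta,1+\delta]\times[-\delta,\delta]\times[1-\delta,1+\delta]^2\times[-\delta,\delta]^{2|J_n|+1}$, as in Lemma~\ref{lem: fixed point with vanishing correction}, then selects $T,x_0,\gamma,\eta_j^\pm,\eta_n$ making all correction terms vanish. Finally, undoing the similarity change of variables and transferring decay with the Sobolev estimates of Lemma~\ref{lem:estimate transfer} yields \eqref{ineq:thm2}--\eqref{ineq:thm2_1}. The main obstacle I anticipate is precisely this matching step: verifying that the span of the parameter-derivatives of the explicit blowup family coincides with the ``symmetry part'' of the unstable subspace of $\Lf_n$, so that exactly the stated $\eta$-corrections remain and the dimension count balances.
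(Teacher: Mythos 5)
Your overall route is exactly the one the paper follows in Section \ref{sec:1d}: similarity variables, first-order formulation, a $\gamma$-dependent conjugation to a potential-free operator, an explicit variation-of-constants resolvent (no Bessel analysis), Gearhart--Pr\"uss--Greiner, a Lyapunov--Perron scheme with an initial-data operator carrying the parameters $T,x_0,\gamma_1,\gamma_2$, and transfer back to \eqref{Eq:WM_system} by the analogues of Lemmas \ref{lem:initial data reduction} and \ref{lem:estimate transfer}. However, the mode-matching step --- which you yourself single out as the crux --- is wrong as you state it. The explicit family $\theta^T_{n,x_0,\gamma}$ has only four parameters, so its derivatives at $(T,\gamma_1,\gamma_2,x_0)=(1,1,1,0)$ span at most a four-dimensional subspace; in fact $\partial_{\gamma_1},\partial_{\gamma_2}$ produce the two $\lambda=0$ eigenfunctions $\hf^{\pm}_{0,\gamma}$ and $\partial_T,\partial_{x_0}$ produce the two $\lambda=1$ eigenfunctions, and nothing else (this is the content of Lemma \ref{lem:expansionU}). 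The parameters therefore cannot additionally ``account for one of the two copies of each $\lambda\in J_n$ and the simple mode $\lambda=n$'': all $2(n-2)+1=2n-3$ of those directions are genuine instabilities and must be removed from the data through the corrections $\eta_j^{\pm}$ (both signs, for every $j\in J_n$) and $\eta_n$, which is what the theorem statement prescribes and what your own Brouwer box of dimension $4+(2n-3)=2n+1$ implicitly assumes. As written, your plan is internally inconsistent ($\lambda=n$ is declared both a symmetry mode and a corrected mode), and for $n\geq 2$ the dimension count $2+2+(n-2)+1>4$ cannot be realized by four parameters. Once the matching is stated correctly, the rest of your argument goes through and coincides with the paper's proof.

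Two smaller points. First, for $d=1$ the Frobenius exponent at $\xi=\pm1$ is $n-\lambda$, not $\tfrac12+n-\lambda$; the fundamental system is exactly $(1\pm\xi)^{n-\lambda}$ (with a logarithmic second solution at $\lambda=n$), and the closed-form resolvent is built from these. Second, for $n=1$ the profile $\phi_1$ is constant, so spatial translation acts trivially on the data operator; the three effective parameters $T,\gamma_1,\gamma_2$ then already match the three-dimensional unstable subspace ($\lambda=0$ doubly, $\lambda=1$ simply), consistent with the absence of any $\eta$-corrections in the statement for $n=1$. Your proposal should account for this degeneration rather than using the full four-parameter box uniformly in $n$.
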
	
	
 Our aim is to keep this section brief, yet self-contained. In particular, we highlight the key differences to the higher-dimensional case $d \geq 2$, as, in terms of approach, not much changes in comparison. In fact, the analysis is actually much simpler from the technical point of view, and the only minor point that is new, is the presence of additional symmetries that now need to be taken into account; recall the full family of blowup solutions \eqref{Def:theta_gamma}. Accordingly, we start by defining the similarity variables as
$$\tau:=\ln(T-t)-\ln T,\quad \xi:=\frac{x-x_0}{T-t}.$$
Then, by setting  
$$
\psi(\tau,\xi):=\theta(T-Te^{-\tau},Te^{-\tau} \xi),
$$ 
the $\theta$-equation \eqref{Eq:theta_d=1} transforms into
\begin{align}\label{eq:theta_sim_var_d=1}
\big(-\partial_\tau
-\partial_\tau^2-2\partial_\tau \xi \partial_{\xi}-2\xi \partial_{\xi}+(1-\xi^2 )\partial_{\xi}^2
\big)\psi(\tau,\xi)+N(\psi)(\tau,\xi)=0,
\end{align}
where
\begin{align*}
N(\psi)(\tau,\xi)=-[(\partial_\tau+\xi\partial_{\xi})\psi(\tau,\xi)]^2+ \partial_{\xi}\psi(\tau,\xi)^2.
\end{align*}
To turn this into a first order formulation, we set 
\begin{align*}
\psi_1(\tau,\xi)&:=\psi(\tau,\xi),
\\
\psi_2(\tau,\xi)&:=\big(\partial_\tau +\xi\partial_{\xi}\big) \psi(\tau,\xi),
\end{align*}
and denote
\begin{equation*}
    \Psi(\tau):=
    \begin{pmatrix}
        \psi_1(\tau,\cdot) \\ \psi_2(\tau,\cdot)
    \end{pmatrix}.
\end{equation*}
Thereby, the equation \eqref{eq:theta_sim_var_d=1} takes the abstract form
\begin{equation}\label{Equ:Psi_d=1}
    \partial_\tau \Psi(\tau) = \Lf_0 \Psi(\tau) + \Nf(\Psi(\tau)),
\end{equation}
where 
$\Lf_0$ is the free wave operator 
\begin{align}
\Lf_0\ff(\xi)=\begin{pmatrix}
f_2(\xi)- \xi f_1'(\xi)
\\
-f_2(\xi) -\xi f_2'(\xi)+ f_1''(\xi)
\end{pmatrix}, \quad \ff(\xi)=
\begin{pmatrix}
    f_1(\xi) \\ f_2(\xi)
\end{pmatrix},
\end{align}
and the nonlinearity $\Nf$ is
$$\Nf(\ff(\xi)) := 
\begin{pmatrix}
    0 \\ -f_2(\xi)^2+f_1'(\xi)^2
\end{pmatrix}.
$$
As before, we have that each finite-time blowup solution
$\theta_{n,x_0,\gamma}^T$ of \eqref{Eq:theta_d=1_expl} transforms into a global one for \eqref{Equ:Psi_d=1}
\begin{align*}
\Psi^T_{n,\gamma}(\tau,\xi):=\begin{pmatrix}
\ln \left[T^ne^{-n\tau}\phi_{n,\gamma}(\xi)\right]
\\
-n+\xi \frac{\phi_{n,\gamma}'(\xi)}{\phi_{n,\gamma}(\xi)}
\end{pmatrix}.
\end{align*}
Linearization of $\Nf$ around $\Psi^T_{n,\gamma}$ yields the operator
\begin{align*}
\Lf'_{n,\gamma}\ff(\xi)&=\begin{pmatrix}
0
\\
2 \frac{\phi_{n,\gamma}'(\xi)}{\phi_{n,\gamma}(\xi)}f_1'(\xi)
+
\left(2n-2\xi \frac{\phi_{n,\gamma}'(\xi)}{\phi_{n,\gamma}(\xi)}\right) f_2(\xi)
\end{pmatrix}.
\end{align*}
Thus, on a formal level, the operator to be studied is given by
\begin{align*}
 \Lf_0+\Lf'_{n,\gamma}.
\end{align*} 
To make this precise, we initially set $$\mathcal{D}(\Lf_0+\Lf_{n,\gamma})=C^\infty \times C^\infty([-1,1]),$$ and define 
\begin{equation}\label{def:L_gamma}
    \Lf_{n,\gamma}:=\overline{\Lf_0+\Lf'_{n,\gamma}},
\end{equation}
where the closure is taken in the Sobolev space 
$$
\mathcal{H}:=H^{n+2}\times H^{n+1}(-1,1).
$$
To move on, we state the one-dimensional analogue of the Conjugation Lemma \ref{lem:conjugation}, by which we obtain an operator that is boundedly similar to $\Lf_{n,\gamma}$, yet has no $\gamma$-dependence, and is thereby simpler to analyze.
\begin{lem}\label{lem:conjugation1d}
For every  $n \in \mathbb{N}$ there exists a family of invertible bounded linear operators 
$$
\Cf_\gamma:\mathcal{H} \to \mathcal{H}, \quad \gamma=(\gamma_1,\gamma_2) \in \R_+^2,
$$
depending continuously on $\gamma$, which preserve the test space $C^\infty\times C^\infty([-1,1])$, and are such that
\begin{align*}
\Cf^{-1}_\gamma \Lf_{n,\gamma} \Cf_\gamma \ff&= \begin{pmatrix}
f_2(\xi)- \xi f_1'(\xi)
\\
(2n-1)f_2(\xi) -\xi f_2'(\xi)+ f_1''(\xi)+n(1-n) f_1(\xi)
\end{pmatrix}=:\widetilde{\Lf}_n\ff(\xi),
\end{align*}
for $\ff \in C^\infty \times C^\infty([-1,1]).$
\end{lem}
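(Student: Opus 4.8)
The plan is to replicate the proof of Lemma~\ref{lem:conjugation} essentially verbatim, with the radial polynomial $\phi_n(|\xi|)$ replaced by the one-dimensional profile $\phi_{n,\gamma}(\xi)=\gamma_1(1+\xi)^n+\gamma_2(1-\xi)^n$; being in one space dimension only removes the complications coming from the Laplacian and the $\tfrac{d-1}{\rho}$ term. Concretely, I would set
\begin{equation*}
\Cf_\gamma \ff(\xi):=\frac{1}{\phi_{n,\gamma}(\xi)}
\begin{pmatrix} 1 & 0 \\ -\dfrac{\xi\,\phi_{n,\gamma}'(\xi)}{\phi_{n,\gamma}(\xi)} & 1 \end{pmatrix}\ff(\xi),
\qquad
\Cf_\gamma^{-1}\ff(\xi)=\phi_{n,\gamma}(\xi)
\begin{pmatrix} 1 & 0 \\ \dfrac{\xi\,\phi_{n,\gamma}'(\xi)}{\phi_{n,\gamma}(\xi)} & 1 \end{pmatrix}\ff(\xi),
\end{equation*}
and first dispose of the functional-analytic points. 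Since $\gamma_1,\gamma_2>0$, the polynomial $\phi_{n,\gamma}$ is strictly positive on $[-1,1]$, with a lower bound that is uniform for $\gamma$ ranging over any compact subset of $\R_+^2$. Hence $\xi\mapsto \phi_{n,\gamma}(\xi)^{\pm1}$ and $\xi\mapsto \xi\phi_{n,\gamma}'(\xi)/\phi_{n,\gamma}(\xi)$ are smooth on $[-1,1]$, so multiplication by them is bounded on every Sobolev space over $(-1,1)$; consequently $\Cf_\gamma$ is a bounded invertible operator on $\mathcal{H}=H^{n+2}\times H^{n+1}(-1,1)$ preserving the test space $C^\infty\times C^\infty([-1,1])$, and the uniform positivity together with the polynomial (hence continuous) dependence of the matrix entries on $\gamma$ yields continuity of $\gamma\mapsto\Cf_\gamma$.

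Next I would carry out the conjugation identity on the test space by direct computation, following the bookkeeping of Lemma~\ref{lem:conjugation}. Writing $\widehat V_{n,\gamma}(\xi):=\xi\phi_{n,\gamma}'(\xi)/\phi_{n,\gamma}(\xi)$ and using $-\xi\partial_\xi[\phi_{n,\gamma}(\xi)^{-1}]=\phi_{n,\gamma}(\xi)^{-1}\widehat V_{n,\gamma}(\xi)$, the first component of $\Cf_\gamma^{-1}\Lf_{n,\gamma}\Cf_\gamma\ff$ is immediately seen to equal $f_2(\xi)-\xi f_1'(\xi)$. For the second component one expands exactly as in the higher-dimensional proof: all $f_2$-terms cancel because $\widehat V_{n,\gamma}-\phi_{n,\gamma}\,\xi\partial_\xi[\phi_{n,\gamma}^{-1}]-2\xi\phi_{n,\gamma}'/\phi_{n,\gamma}=0$, all $f_1'$-terms cancel because $2\phi_{n,\gamma}\partial_\xi[\phi_{n,\gamma}^{-1}]+2\phi_{n,\gamma}'/\phi_{n,\gamma}=0$, and what survives is the noncompact part $(2n-1)f_2(\xi)-\xi f_2'(\xi)+f_1''(\xi)$ together with a zeroth-order term $W(\xi)f_1(\xi)$.

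The only point that genuinely uses the explicit profile is the identification $W\equiv n(1-n)$. For this I would record that $\phi_{n,\gamma}$ solves the (effectively hypergeometric, $d=1$) ODE
\begin{equation*}
(1-\xi^2)\phi_{n,\gamma}''(\xi)+2(n-1)\xi\,\phi_{n,\gamma}'(\xi)+n(1-n)\phi_{n,\gamma}(\xi)=0,
\end{equation*}
which holds because each of the functions $(1\pm\xi)^n$ solves it and the equation is linear. Substituting this relation into the collected coefficient of $f_1$ — precisely the ``direct computation'' step concluding the proof of Lemma~\ref{lem:conjugation} — collapses $W$ to the constant $n(1-n)$, which is exactly the potential entering $\widetilde{\Lf}_n$. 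I do not expect any real obstacle here: the argument is a literal specialization of the $d\geq 2$ case, and the only care required is the uniform-in-$\gamma$ control of $\phi_{n,\gamma}^{-1}$ (for boundedness and continuity of $\Cf_\gamma$) and the routine but somewhat lengthy cancellations in the second component, both handled as before.
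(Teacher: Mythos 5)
Your proposal is correct and is essentially the paper's own proof: the paper defines exactly the same operator $\Cf_\gamma$ (multiplication by $\phi_{n,\gamma}^{-1}$ composed with the lower-triangular matrix with off-diagonal entry $-\xi\phi_{n,\gamma}'/\phi_{n,\gamma}$) and merely states that one readily checks the conjugation identity, this being the one-dimensional specialization of Lemma \ref{lem:conjugation}. The details you supply — uniform positivity of $\phi_{n,\gamma}$ on $[-1,1]$ for boundedness, invertibility and continuity in $\gamma$, the two cancellation identities for the $f_2$- and $f_1'$-terms, and the ODE $(1-\xi^2)\phi_{n,\gamma}''+2(n-1)\xi\phi_{n,\gamma}'+n(1-n)\phi_{n,\gamma}=0$ to collapse the zeroth-order coefficient to $n(1-n)$ — are precisely what the paper's ``one readily checks'' suppresses, and they are correct.
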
 
\begin{proof}
For $n \in \mathbb{N}$, we set
\begin{align}
    g_{\gamma}(\xi):=\phi_{n,\gamma}(\xi)^{-1}.
\end{align}
Then one readily checks that conjugation by $\Cf_\gamma$, where 
\begin{align*}
\Cf_\gamma \ff(\xi)= g_\gamma(\xi)
\begin{pmatrix}
1 &0
\\
-\frac{\xi \phi_{n,\gamma}'(\xi)}{\phi_{n,\gamma}(\xi)}  &1 
\end{pmatrix}\ff(\xi)
\quad 
\text{ and } \quad\Cf_\gamma^{-1} \ff(\xi)= g_\gamma(\xi)^{-1}\begin{pmatrix}
1 &0
\\
\frac{\xi \phi_{n,\gamma}'(\xi)}{\phi_{n,\gamma}(\xi)}  &1 
\end{pmatrix}\ff(\xi),
\end{align*} yields the desired result.
\end{proof}  
Initially defined on the test space $C^\infty\times C^\infty([-1,1])$, the operator $\widetilde{\Lf}_n$ is closable in $\mathcal H$, and we denote its closure again by $\widetilde{\Lf}_n$.
The next task is to construct the resolvent of $\widetilde \Lf_n$.
To this end, we consider the inhomogeneous equation 
$$
(\lambda-\widetilde{\Lf}_n)\ff=\gf,
$$
which, in turn, yields an inhomogeneous ODE for the first component of $\ff$
\begin{equation}\label{Eq: comp reduced d21}
\begin{split}
(\xi^2-1)f_1''(\xi)
&+(2\lambda+2-2n)\xi f_1'(\xi)+
[\lambda^2+\lambda-2n\lambda -n(1-n)]f_1(\xi)=G_\lambda,
\end{split}
\end{equation} 
where 
$$
G_\lambda=(1-2n+\lambda)g_1(\xi)+ \xi g_1'(\xi) +g_2(\xi).
$$
Note that $\xi=\pm1$ are regular singular points of the above equation, and a fundamental system of solutions to its homogeneous variant is simply given by 
\begin{align}\label{la_neq_n}
v_+(\xi)=(1+\xi)^{n-\lambda}
\quad \textup{and} \quad
v_-(\xi)=(1-\xi)^{n-\lambda},
\end{align}
for $\lambda \neq n$, and by
\begin{equation}\label{la_eq_n}
v_+(\xi)=1 \quad \textup{and} \quad
v_-(\xi)=\ln(1-x)+\ln(1+x),
\end{equation}
for $\lambda=n$.
Thus, arguing as in the case $d \geq 2$, we readily determine the unstable spectrum of $\widetilde{\Lf}_n$.
\begin{lem}\label{lem:spec_tilde}
For every $n \in \mathbb{N}$, we have that 
$$
\sigma (\widetilde{\Lf}_n)\cap \big\{z\in \C: \Re z\geq -\frac{3}{4} \big\}=\{0,1,\dots,n\} \subseteq \sigma_p(\widetilde{\Lf}_{n}).
$$
Moreover, each eigenvalue $\la \in \{ 0,\dots n-1 \}$ has geometric multiplicity 2, while 
$\la = n$ has geometric multiplicity 1.
\end{lem}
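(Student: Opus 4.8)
The plan is to follow the higher-dimensional scheme of Section~\ref{sec:resolvent}, taking advantage of the fact that in one dimension the fundamental systems are elementary. By Lemma~\ref{lem:conjugation1d} it suffices to analyze $\widetilde\Lf_n$ itself, since bounded similarity preserves both the spectrum and the geometric multiplicities of eigenvalues. As recorded before the statement, the equation $(\lambda-\widetilde\Lf_n)\ff=0$ is equivalent to $f_2=\lambda f_1+\xi f_1'$ together with the homogeneous version of \eqref{Eq: comp reduced d21}, whose singular points $\xi=\pm1$ are regular singular with Frobenius exponents $\{0,n-\lambda\}$ at each (the equation being invariant under $\xi\mapsto-\xi$), and whose solution space is spanned by the $v_\pm$ of \eqref{la_neq_n} when $\lambda\neq n$, and of \eqref{la_eq_n} when $\lambda=n$.

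First I would determine the eigenvalues and their multiplicities. An eigenfunction of $\widetilde\Lf_n$ is determined by its first component $f_1\in H^{n+2}(-1,1)$, the second being $f_2=\lambda f_1+\xi f_1'\in H^{n+1}(-1,1)$; hence the geometric multiplicity of $\lambda$ equals the dimension of the space of $H^{n+2}$-solutions of the homogeneous ODE, which is at most $2$. Fix $\lambda$ with $\Re\lambda\geq-\tfrac34$. If $n-\lambda\notin\mathbb{N}_0$, then $(1+\xi)^{n-\lambda}$ fails to lie in $H^{n+2}$ near $\xi=-1$ and $(1-\xi)^{n-\lambda}$ fails to lie in $H^{n+2}$ near $\xi=1$ --- membership near the respective endpoint would force $\Re(n-\lambda)>n+\tfrac32$, i.e.\ $\Re\lambda<-\tfrac32$ --- so no non-trivial combination is admissible and $\lambda\notin\sigma_p(\widetilde\Lf_n)$. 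If $n-\lambda\in\mathbb{N}_0$, the constraint $\Re\lambda\geq-\tfrac34$ forces $\lambda\in\{0,1,\dots,n\}$. For $\lambda\in\{0,\dots,n-1\}$ both $v_\pm=(1\pm\xi)^{n-\lambda}$ are linearly independent polynomials --- the explicit global formula \eqref{la_neq_n} in particular rules out the logarithmic resonance that could a priori occur when the exponents differ by a positive integer --- so the eigenspace is exactly two-dimensional; for $\lambda=n$ only $v_+\equiv1$ is smooth, the other solution in \eqref{la_eq_n} being logarithmically singular at $\xi=\pm1$ and hence not even in $H^1$, so the eigenspace is one-dimensional. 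This establishes $\{0,\dots,n\}\subseteq\sigma_p(\widetilde\Lf_n)$ with the stated multiplicities, and simultaneously that no other point of $\{\Re z\geq-\tfrac34\}$ is an eigenvalue.

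Next I would show $\{z:\Re z\geq-\tfrac34\}\setminus\{0,\dots,n\}\subseteq\varrho(\widetilde\Lf_n)$. Since $\widetilde\Lf_n$ differs from the free-plus-reaction operator $\ff\mapsto(f_2-\xi f_1',\,(2n-1)f_2-\xi f_2'+f_1'')$ --- which, by the argument of Lemma~\ref{lem:simplegen}, generates a semigroup of growth bound at most $2n$ --- by the operator $\ff\mapsto(0,\,n(1-n)f_1)$, which is compact because $H^{n+2}(-1,1)\hookrightarrow H^{n+1}(-1,1)$ compactly, the part of the spectrum with $\Re z>2n$ consists of eigenvalues only, hence is empty by the previous step. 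On the strip $-\tfrac34\leq\Re z\leq 2n$ one constructs the resolvent directly by variation of parameters from $\{v_+,v_-\}$: the Wronskian is $W(v_+,v_-)=-2(n-\lambda)(1-\xi^2)^{n-\lambda-1}$, so the Green's-function integral carries the weight $(1-s^2)^{\lambda-n}$, which for $\Re\lambda\leq n-1$ is non-integrable at $s=\pm1$ and is tamed by a finite number of integrations by parts, exactly as in the construction of $\Rm_{j,V}$ in Section~\ref{sec:resolvent} but now elementary since $v_\pm$ are monomials in $1\pm\xi$; one then checks, via Hardy-type inequalities and the trace bound for $f$ at $\xi=\pm1$, that the resulting operator is bounded on $\mathcal{H}=H^{n+2}\times H^{n+1}(-1,1)$, has range in $\mathcal{D}(\widetilde\Lf_n)$, and is a two-sided inverse of $\lambda-\widetilde\Lf_n$. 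A pleasant simplification relative to $d\geq2$ is that the exceptional set $\{z\geq0:n-z\in\mathbb{N}_0\}$, where logarithms would enter the endpoint expansions, coincides here with the eigenvalue set $\{0,\dots,n\}$, so no separate soft treatment of such points is needed. Combining the three steps yields the claim.

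The only genuinely technical point is the last step, and within it the integration-by-parts bookkeeping needed to recover the full $H^{n+2}$ regularity from data in $\mathcal{H}$ when $\Re\lambda$ is small; because the fundamental solutions are explicit, however, this is much lighter than its counterpart in Section~\ref{sec:resolvent}, and I expect it to present no real difficulty.
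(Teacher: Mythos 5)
Your proposal is correct and follows essentially the same route as the paper: the paper determines the unstable point spectrum exactly as you do, from the explicit fundamental systems \eqref{la_neq_n}--\eqref{la_eq_n} and endpoint Sobolev regularity in $H^{n+2}$ (mirroring the $d\geq 2$ connection-problem argument), and then disposes of the rest of the half-plane via the explicit variation-of-constants resolvent $\Rm(G_\lambda)$ with integrations by parts, together with the compactness/growth-bound argument for $\Re z>2n$.
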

Now we proceed with constructing the resolvent of $\widetilde{\Lf}_n$ on the complement of the unstable spectrum. Note that for $\lambda\in \C \setminus \{ 0,1,\dots, n \}$,
the Wronskian of $v_+$ and $v_-$ from \eqref{la_neq_n} is
$$W(v_+,v_-)(\xi)=-2(n-\lambda)(1-\xi^2)^{n-\lambda-1}.$$
So, one readily constructs the resolvent as follows. One starts with a variation of constants ansatz for \eqref{Eq: comp reduced d21} 
 \begin{align*}
  -(1+\xi)^{n-\lambda}\int_\alpha^\xi \frac{G_\lambda(s)}{2(n-\lambda)(1+s)^{n-\lambda}}ds-(1-\xi)^{n-\lambda}\int_\xi^\beta \frac{G_\lambda(s)}{2(n-\lambda)(1-s)^{n-\lambda}}ds,
 \end{align*}
for some $\alpha,\beta\in(-1,1)$. 
 Then one integrates by parts as often as needed, cancels boundary terms, and finally adds the last correction term to arrive at 
 \begin{align*}
  \Rm&(G_\lambda)(\xi,\lambda)\\
  &:=
  \sum_{j=1}^{n+1}\frac{(1+\xi)^{n-\lambda}}{2\prod_{\ell=0}^j(n-\lambda-\ell)} \frac{G_\lambda^{(j-1)}(\xi)}{(1+\xi)^{n-\lambda-j}}+ \sum_{j=1}^{n+1} (-1)^{j-1}\frac{(1-\xi)^{n-\lambda}}{2\prod_{\ell=0}^j(n-\lambda-\ell)} \frac{G_\lambda^{(j-1)}(\xi)}{(1-\xi)^{n-\lambda-j}}
  \\
  &\quad - (1+\xi)^{n-\lambda}\int_{-1}^\xi \frac{(1+s)^{\lambda+1}G_\lambda^{(n+1)}(s)}{2\prod_{\ell=0}^{n+1}(n-\lambda-\ell)}ds
 +(-1)^{n}(1-\xi)^{n-\lambda}\int_\xi^1 \frac{(1-s)^{\lambda+1}G_\lambda^{(n+1)}(s)}{2\prod_{\ell=0}^{n+1}(n-\lambda-\ell)}ds.
  \\
  &=  \sum_{j=1}^{n+1} [(-1)^{j-1}(1-\xi)^j+(1+\xi)^j] \frac{G_\lambda^{(j-1)}(\xi)}{2\prod_{\ell=0}^j(n-\lambda-\ell)} 
   \\
  &\quad - (1+\xi)^{n-\lambda}\int_{-1}^\xi \frac{(1+s)^{\lambda+1}G_\lambda^{(n+1)}(s)}{2\prod_{\ell=0}^{n+1}(n-\lambda-\ell)}ds
 +(-1)^{n}(1-\xi)^{n-\lambda}\int_\xi^1 \frac{(1-s)^{\lambda+1}G_\lambda^{(n+1)}(s)}{2\prod_{\ell=0}^{n+1}(n-\lambda-\ell)}ds.
 \end{align*}
 It is remarkable that there is such an explicit, and rather simple, form of a solution to \eqref{Eq: comp reduced d21} that, for large enough $\Re \la$, manifestly exhibits high enough Sobolev regularity, i.e., it is in $H^{n+2}(-1,1)$. More precisely, we have the following result. 
 \begin{lem}
Let $n \in \mathbb{N}$. Then for
\begin{equation}\label{compl_spec}
    \lambda\in \big\{z\in \C:  \Re z \geq -\frac34 \big\} \setminus \big\{0,1,\dots,n \big\},
\end{equation}
 the map
 $$
 R_\la:=(g_1,g_2)\mapsto   \Rm(G_\lambda)(\cdot,\lambda),
 $$
 is a bounded linear operator from $\mathcal{H}$ into $H^{n+2}(-1,1)$. Furthermore, $R_\la$ is uniformly bounded in $\lambda$ on the set
 \begin{equation}\label{compl_spec_1}
      \big\{z\in \C: \Re z \geq -\frac34 \big\} \setminus\Big( \bigcup_{j=0}^n \mathbb{D}_\frac12(j)\Big),
 \end{equation}
 where $\mathbb{D}_\frac12(z_0):= \{z\in \C: |z-z_0|< \frac12\}$.
 \end{lem}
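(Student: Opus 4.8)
The plan is to establish the two claims of the lemma -- boundedness of $R_\la$ from $\mathcal{H}$ into $H^{n+2}(-1,1)$, and uniform boundedness away from the exceptional disks -- directly from the closed-form expression for $\Rm(G_\lambda)$. First I would record the elementary regularity input: since $\gf=(g_1,g_2)\in \mathcal{H}=H^{n+2}\times H^{n+1}(-1,1)$, the forcing $G_\lambda=(1-2n+\lambda)g_1+\xi g_1'+g_2$ lies in $H^{n+1}(-1,1)$, with $\|G_\lambda\|_{H^{n+1}(-1,1)}\lesssim \langle\la\rangle\|\gf\|_{\mathcal{H}}$; moreover by the one-dimensional Sobolev embedding $H^{j}(-1,1)\hookrightarrow C^{j-1}([-1,1])$ the pointwise values $G_\lambda^{(j-1)}(\pm1)$, $j=1,\dots,n+1$, are controlled by $\|G_\lambda\|_{H^{n+1}}$. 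This is exactly what is needed to make sense of the boundary terms $G_\lambda^{(j-1)}(\xi)$ appearing in the polynomial part of $\Rm(G_\lambda)$.

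Next I would treat the two structural pieces of $\Rm(G_\lambda)(\cdot,\la)$ separately. The polynomial part $\sum_{j=1}^{n+1}[(-1)^{j-1}(1-\xi)^j+(1+\xi)^j]\,\dfrac{G_\lambda^{(j-1)}(\xi)}{2\prod_{\ell=0}^{j}(n-\lambda-\ell)}$ is manifestly as smooth as $G_\lambda^{(j-1)}$, i.e. it lies in $H^{n+2}(-1,1)$ once we note $G_\lambda^{(j-1)}\in H^{n+2-j}$ and $j\geq1$; the prefactors $\prod_{\ell=0}^{j}(n-\lambda-\ell)^{-1}$ are bounded because on the region \eqref{compl_spec} (resp. \eqref{compl_spec_1}) $\lambda$ stays a fixed distance from the integers $0,1,\dots,n$, hence from $n,n-1,\dots,n-j$, and for $|\Im\la|$ large the product grows, giving uniform (indeed decaying) bounds. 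For the integral part, one writes $(1+\xi)^{n-\la}\int_{-1}^\xi (1+s)^{\la+1}G_\lambda^{(n+1)}(s)\,ds$ (and the symmetric term) and checks that each $\xi$-derivative either lands on the explicit power $(1+\xi)^{n-\la}$, lowering the power and leaving an integrable expression since $\Re(n-\la)\geq n-2n=-n$ combined with the compensating $(1+s)^{\la+1}$ factor yields net exponent $\geq$ something integrable after the required number of integrations by parts, or produces a boundary term $(1+\xi)^{\la+1+n-\la}G_\lambda^{(n+1)}(\xi)=(1+\xi)^{n+1}G_\lambda^{(n+1)}(\xi)$, which is in $L^2$. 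The key mechanism is that $n+1$ integrations by parts were already built into the construction precisely so that $G_\lambda^{(n+1)}$ appears under an integral that is regularized by the weight $(1\pm s)^{\la+1}$; I would make this quantitative via a Hardy-type estimate of the form $\big\|(1-\xi)^{-a}\int_\xi^1 (1-s)^{a-1}h(s)\,ds\big\|_{L^2(-1,1)}\lesssim \|h\|_{L^2(-1,1)}$ valid for $a>\tfrac12$, here applied with $a=\Re(\la)+1-\Re(\la)+\text{(integer shifts)}$ after distributing derivatives, exactly as in the proof of Lemma~\ref{lem:resbound1}.

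To get the \emph{uniform} bound on the set \eqref{compl_spec_1}, the only additional point is tracking $\la$-dependence: the polynomial prefactors $\prod_{\ell=0}^{j}(n-\la-\ell)^{-1}$ are $\O(\langle\la\rangle^{-(j+1)})$, which more than absorbs the $\langle\la\rangle$ lost in passing from $\|\gf\|_{\mathcal{H}}$ to $\|G_\lambda\|_{H^{n+1}}$ and the further factors of $\langle\la\rangle$ from each differentiation of $G_\lambda$ (at most $n+1$ of them), and similarly the integral terms carry $\prod_{\ell=0}^{n+1}(n-\la-\ell)^{-1}=\O(\langle\la\rangle^{-(n+2)})$; combined with the fact that on \eqref{compl_spec_1} the denominators never vanish, this yields a bound independent of $\la$. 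The main obstacle I anticipate is purely bookkeeping: verifying that after each of the (up to $n+2$) differentiations of the integral terms, the resulting exponents of $(1\pm\xi)$ remain in the range where the Hardy inequality applies and no non-integrable singularity is created at $\xi=\pm1$ -- this requires being careful that $\Re\la\geq -\tfrac34$ is exactly the threshold making $\la+1$ (and its shifts) land on the good side. Everything else is a routine consequence of the explicit formula, the triangle inequality, and the one-dimensional Sobolev embedding, so I would present the argument compactly, citing the Hardy estimate and elliptic-regularity-free structure already used in Lemma~\ref{lem:resbound1}.
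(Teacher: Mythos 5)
Your overall strategy (work directly from the closed-form expression, Sobolev embedding for the pointwise terms, Hardy-type estimates for the weighted integrals, track powers of $\langle\la\rangle$) is in the right spirit — the paper prints no proof here precisely because the formula is designed so that such an argument goes through. But as written your proof has a genuine gap at its core: you propose to estimate the polynomial part and the integral part \emph{separately} in $H^{n+2}$, and neither of them is in $H^{n+2}$ individually. Since $g_2\in H^{n+1}$ and $G_\lambda$ contains $\xi g_1'$ and $g_2$, one only has $G_\lambda\in H^{n+1}$; hence already the $j=1$ term of the polynomial sum, which is a constant multiple of $G_\lambda$, lies a priori only in $H^{n+1}$, and your assertion that the $j$-th term is in $H^{n+2}$ because $G_\lambda^{(j-1)}\in H^{n+2-j}$ is an off-by-one error ($H^{n+2-j}\subseteq H^{n+1}$ for $j\geq 1$). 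Likewise, differentiating the integral part twice produces, from the boundary term $(1+\xi)^{n+1}G_\lambda^{(n+1)}(\xi)$, the nonexistent derivative $G_\lambda^{(n+2)}$. The $H^{n+2}$ regularity of $\Rm(G_\lambda)$ holds only because the top-order contributions cancel \emph{between} the two pieces: e.g.\ the boundary term of $\partial_\xi$ applied to $(1+\xi)^{n-\lambda}\int_{-1}^{\xi}(1+s)^{\lambda+1}G_\lambda^{(n+1)}$ cancels exactly against the $G_\lambda^{(n+1)}$-term coming from differentiating the $j=n+1$ summand, and analogous cancellations must be verified at every order up to $n+2$ so that at most $n+1$ derivatives ever fall on $G_\lambda$ and the surviving weights stay in the Hardy-admissible range. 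This is precisely the bookkeeping the paper carries out at length in the $d\geq2$ case (culminating in the rewritten derivative formula \eqref{Eq:Rj deriv rewritten}); in $d=1$ it is much shorter, but it cannot be skipped, and a decomposition into "polynomial part" plus "integral part" estimated separately can at best give $H^{n+1}$. (Alternatively one can get the top derivative from the ODE \eqref{Eq: comp reduced d21} itself away from $\pm1$ and handle the degenerate endpoints by the weighted estimates, but that too requires an argument you have not supplied.)

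The uniformity count also needs repair. Differentiating $G_\lambda$ in $\xi$ does \emph{not} produce factors of $\langle\la\rangle$ (the only explicit $\la$ in $G_\lambda$ is the prefactor $(\lambda-2n+1)$ of $g_1$), and the constants $\prod_{\ell=0}^{j}(n-\la-\ell)^{-1}$ do not "more than absorb" the growth: in the worst term, where all $n+2$ derivatives land on $(1\pm\xi)^{n-\la}$, the factors brought down cancel the denominator $\prod_{\ell=0}^{n+1}(n-\la-\ell)$ \emph{exactly}, leaving an $O(1)$ constant times $(1\pm\xi)^{-\la-2}\int (1\mp s)^{\la+1}G_\lambda^{(n+1)}$, so the single factor $\la$ sitting inside $G_\lambda$ is not compensated by your counting and the naive estimate loses one power of $\langle\la\rangle$. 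One must absorb it, e.g.\ by one further integration by parts in $s$ (trading $(1\pm s)^{\la+1}$ for $(1\pm s)^{\la+2}/(\la+2)$ and a derivative on $g_1$, which is affordable since $g_1\in H^{n+2}$) before applying the Hardy inequality, exactly in the spirit of the manipulations in the proofs of Lemmas \ref{lem:resbound1} and \ref{lem:uniformbound1}. With the cancellation identities and this extra step supplied, your outline becomes a correct proof; without them it does not establish either assertion of the lemma.
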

 As a consequence, we obtain the existence of the resolvent 
 $$
 \widetilde{\Rf}_n(\la):=(\la-\widetilde{\Lf}_n)^{-1}:\mathcal{H} \to \mathcal{H},
 $$
 on the set \eqref{compl_spec}, and its uniform boundedness on  \eqref{compl_spec_1}.
 Proceeding as in the multidimensional case, an application of the Gearhart-Pr\"uss-Greiner theorem yields the desired spectral mapping property relating $\widetilde\Lf_n$ and the semigroup it generates.
 \begin{proposition}
For every $n\in \mathbb N$, the operator $\widetilde{\Lf}_n:\mathcal{D}(\widetilde{\Lf}_n)\subseteq \mathcal{H} \to \mathcal{H}$ generates a $C_0$-semigroup $(\widetilde \Sf_n(\tau))_{\tau \geq 0}$ of bounded linear operators on $\mathcal{H}$. Furthermore, if $\widetilde{\Pf}_n:\mathcal{H} \to \mathcal{H}$ is the spectral projection associated with the set of unstable eigenvalues $\{0,1,\dots,n \}$, then 
$$
\|\widetilde{\Sf}_n(\tau)(\I-\widetilde{\Pf}_n)\ff\|_\mathcal{H} \lesssim e^{-\frac{3}{4}\tau}\|\ff\|,
$$ 
for all $\ff\in \mathcal{H}$ and all $\tau \geq 0$.
\end{proposition}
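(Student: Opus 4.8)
The plan is to mirror the proof of Proposition \ref{generation_hat}, with the explicit resolvent construction of the preceding paragraph (the inversion of \eqref{Eq: comp reduced d21} via \eqref{la_neq_n}, the integrations by parts, and the uniform bounds) playing the role of the multidimensional resolvent calculus. I would proceed in three steps: (i) show that $\widetilde{\Lf}_n$ generates a $C_0$-semigroup; (ii) form the finite-rank spectral projection $\widetilde{\Pf}_n$ and check that the resolvent of the part of $\widetilde{\Lf}_n$ on $\ran(\I-\widetilde{\Pf}_n)$ is uniformly bounded on $\{\Re\lambda\geq-\tfrac34\}$; (iii) invoke the Gearhart--Pr\"uss--Greiner theorem on the Hilbert space $\ran(\I-\widetilde{\Pf}_n)$.

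For step (i), I would decompose $\widetilde{\Lf}_n=\Lf_0+\Lf_{per}$, where $\Lf_0$ is the one-dimensional free wave operator in similarity variables introduced above and $\Lf_{per}\ff=(0,\,2nf_2+n(1-n)f_1)$ is a bounded operator on $\mathcal{H}=H^{n+2}\times H^{n+1}(-1,1)$. That the closure of $\Lf_0$ (with domain the test space) generates a $C_0$-semigroup on $\mathcal{H}$ whose growth bound is $\leq\varepsilon$ for every $\varepsilon>0$ after passing to an equivalent norm follows from standard similarity-variable energy estimates; the argument of Lemma \ref{lem:simplegen} and the references therein apply verbatim in $d=1$. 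The Bounded Perturbation Theorem then yields that $\widetilde{\Lf}_n$ generates a $C_0$-semigroup $(\widetilde{\Sf}_n(\tau))_{\tau\geq0}$ with some finite growth bound $\omega_n$, so that $\sigma(\widetilde{\Lf}_n)\subseteq\{\Re z\leq\omega_n\}$; combined with Lemma \ref{lem:spec_tilde} this gives $\sigma(\widetilde{\Lf}_n)\cap\{\Re z\geq-\tfrac34\}=\{0,1,\dots,n\}$, a finite set of isolated eigenvalues.

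For step (ii), since the resolvent $\widetilde{\Rf}_n(\lambda)=(\lambda-\widetilde{\Lf}_n)^{-1}$ exists on $\{\Re z\geq-\tfrac34\}\setminus\{0,\dots,n\}$ and is uniformly bounded on $\{\Re z\geq-\tfrac34\}\setminus\bigcup_{j=0}^n\mathbb{D}_{1/2}(j)$ by the lemma preceding this proposition, each $\lambda=j\in\{0,\dots,n\}$ is an isolated eigenvalue at which $\widetilde{\Rf}_n$ has a pole of finite order, hence of finite algebraic multiplicity; I let $\widetilde{\Pf}_n$ be the associated Riesz spectral projection, which is of finite rank and commutes with $\widetilde{\Sf}_n(\tau)$. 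On the complementary $\widetilde{\Sf}_n(\tau)$-invariant closed subspace $\ran(\I-\widetilde{\Pf}_n)$ the generator is $\widetilde{\Lf}_n|_{\ran(\I-\widetilde{\Pf}_n)}$, whose spectrum lies in $\{\Re z<-\tfrac34\}$, and whose resolvent is $\widetilde{\Rf}_n(\lambda)(\I-\widetilde{\Pf}_n)$. This is uniformly bounded on all of $\{\Re\lambda\geq-\tfrac34\}$: away from the disks $\mathbb{D}_{1/2}(j)$ it is the stated uniform bound, while on each compact set $\overline{\mathbb{D}_{1/2}(j)}\cap\{\Re\lambda\geq-\tfrac34\}$ the singular part of the Laurent expansion of $\widetilde{\Rf}_n$ at $j$ has range inside $\ran(\widetilde{\Pf}_n)$, so $\widetilde{\Rf}_n(\lambda)(\I-\widetilde{\Pf}_n)$ extends holomorphically across $j$ and is bounded there. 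Step (iii) is then immediate: the Gearhart--Pr\"uss--Greiner theorem applied to the $C_0$-semigroup $(\widetilde{\Sf}_n(\tau)(\I-\widetilde{\Pf}_n))_{\tau\geq0}$ on the Hilbert space $\ran(\I-\widetilde{\Pf}_n)$ turns the uniform resolvent bound on $\{\Re\lambda\geq-\tfrac34\}$ into $\|\widetilde{\Sf}_n(\tau)(\I-\widetilde{\Pf}_n)\ff\|_{\mathcal{H}}\lesssim e^{-\frac34\tau}\|\ff\|_{\mathcal{H}}$ for all $\tau\geq0$. Since the heavy resolvent analysis has already been done, the only genuinely delicate point inside this proof is the finiteness of the algebraic multiplicities of the unstable eigenvalues together with the fact that $\widetilde{\Pf}_n$ exactly cancels the resolvent singularities, which is precisely what legitimizes the Gearhart--Pr\"uss--Greiner step; the remaining ingredient — generation of the free semigroup in $d=1$ — is standard and can be imported from the one-dimensional analogue of Lemma \ref{lem:simplegen}.
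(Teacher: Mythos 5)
Your proposal is correct and follows essentially the same route as the paper, whose proof is simply ``proceed as in the multidimensional case'' (i.e.\ as in Proposition \ref{generation_hat}): generation via the free similarity-variable wave operator plus the Bounded Perturbation Theorem, the uniform resolvent bounds from the explicit construction, a finite-rank spectral projection onto $\{0,1,\dots,n\}$, and the Gearhart--Pr\"uss--Greiner theorem on $\ran(\I-\widetilde{\Pf}_n)$. The one point you assert rather than prove --- that each unstable eigenvalue is a finite-order pole of $\widetilde{\Rf}_n$ so that $\widetilde{\Pf}_n$ has finite rank --- is immediate here from the explicit formula for $\Rm(G_\lambda)$, which is rational in $\lambda$ with simple factors $\prod_{\ell}(n-\lambda-\ell)$ in the denominators, combined with the finite geometric multiplicities of Lemma \ref{lem:spec_tilde}.
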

We now turn to studying the semigroup-generation properties of the operator $\Lf_{n,\gamma}$, defined in \eqref{def:L_gamma}. To start, in view of Lemmas \ref{lem:spec_tilde} and \ref{lem:conjugation1d} and the explicit profiles \eqref{compl_spec} and \eqref{compl_spec_1}, we straightforwardly obtain a complete description of the unstable spectrum.
\begin{lem}
For $n \in \mathbb{N}$ and $\gamma \in \R_+^2$, we have that
$$
\sigma (\Lf_{n,\gamma})\cap \big\{z\in \C: \Re z\geq -\frac{3}{4} \big\}=\{0,1,\dots,n\} \subseteq \sigma_p(\Lf_{n,\gamma}).
$$
Moreover, each eigenvalue $\la=j$, with $j \in \{ 0,\dots n-1 \}$, has geometric multiplicity 2, with a pair of linearly independent eigenfunctions given by
$$
\hf_{j,\gamma}^\pm(\xi)=\begin{pmatrix}
 \frac12\frac{(1\pm \xi)^{n-j}}{\phi_{n,\gamma}(\xi)}
\\
\frac12(j+\xi\partial_\xi) \left[\frac{(1\pm \xi)^{n-j}  }{\phi_{n,\gamma}(\xi)}\right]\end{pmatrix}, 
$$ while the eigenvalue $\la = n$ has geometric multiplicity $1$, with an associated eigenfunction
$$
\hf_{n,\gamma}(\xi)=
\begin{pmatrix}
\frac{1}{\phi_{n,\gamma}(\xi)}
\\
\frac{n \phi_{n,\gamma}(\xi) - \xi \phi_{n,\gamma}'(\xi)}{\phi_{n,\gamma}(\xi)^2}
\end{pmatrix}
.
$$ 
Lastly, all of these eigenvalues are semi-simple.
\end{lem}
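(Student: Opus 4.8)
The plan is to reduce everything to the operator $\widetilde{\Lf}_n$ by means of the Conjugation Lemma \ref{lem:conjugation1d}. Since $\Cf_\gamma$ is bounded and boundedly invertible and (by the usual closure argument) $\Cf_\gamma^{-1}\Lf_{n,\gamma}\Cf_\gamma=\widetilde{\Lf}_n$, one has $\sigma(\Lf_{n,\gamma})=\sigma(\widetilde{\Lf}_n)$, the resolvents are related by $(\lambda-\Lf_{n,\gamma})^{-1}=\Cf_\gamma(\lambda-\widetilde{\Lf}_n)^{-1}\Cf_\gamma^{-1}$, and $\ker(\lambda-\Lf_{n,\gamma})^k=\Cf_\gamma\,\ker(\lambda-\widetilde{\Lf}_n)^k$ for every $k$; in particular geometric and algebraic multiplicities, and hence semi-simplicity, are preserved, with eigenfunctions mapped by $\Cf_\gamma$. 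So it suffices to prove the statement for $\widetilde{\Lf}_n$ and then push it through $\Cf_\gamma$, and since all objects involved are $\gamma$-independent, the $\gamma$ in the final formulas will enter only through $\Cf_\gamma$, i.e.\ through division by $\phi_{n,\gamma}$.

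For the location of the spectrum, the resolvent construction carried out above shows that $\widetilde{\Rf}_n(\lambda)=(\lambda-\widetilde{\Lf}_n)^{-1}$ exists as a bounded operator on $\mathcal H$ for all $\lambda$ with $\Re\lambda\ge-\tfrac34$ outside $\{0,1,\dots,n\}$, so $\sigma(\widetilde{\Lf}_n)\cap\{\Re z\ge-\tfrac34\}\subseteq\{0,\dots,n\}$; combined with Lemma \ref{lem:spec_tilde}, which identifies each of these points as an eigenvalue of geometric multiplicity $2$ for $\lambda\le n-1$ and $1$ for $\lambda=n$, this yields the first two assertions after conjugation. For the explicit eigenfunctions, the equation $(\lambda-\widetilde{\Lf}_n)\ff=0$ reduces, exactly as in the derivation of \eqref{Eq: comp reduced d21}, to $f_2=(\lambda+\xi\partial_\xi)f_1$ together with the homogeneous version of \eqref{Eq: comp reduced d21}. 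For $\lambda=j\le n-1$ the fundamental system \eqref{la_neq_n} consists of the polynomials $(1\pm\xi)^{n-j}$, which lie in $\mathcal H=H^{n+2}\times H^{n+1}(-1,1)$, giving the eigenfunctions $\bigl((1\pm\xi)^{n-j},\ (j+\xi\partial_\xi)(1\pm\xi)^{n-j}\bigr)$ of $\widetilde{\Lf}_n$; for $\lambda=n$ the fundamental system \eqref{la_eq_n} is $\{1,\ \ln(1-\xi)+\ln(1+\xi)\}$, and only the constant is regular enough to belong to $\mathcal H$, giving the eigenfunction $(1,n)$. Applying $\Cf_\gamma$ — i.e.\ dividing the first component by $\phi_{n,\gamma}$ and using that the map $f_2\mapsto\phi_{n,\gamma}^{-1}\bigl(f_2-\xi\tfrac{\phi_{n,\gamma}'}{\phi_{n,\gamma}}f_1\bigr)$ applied to $f_2=(\lambda+\xi\partial_\xi)f_1$ reproduces $(\lambda+\xi\partial_\xi)(\phi_{n,\gamma}^{-1}f_1)$ — reproduces, up to harmless scalar normalization, precisely the stated $\hf_{j,\gamma}^\pm$ and $\hf_{n,\gamma}$.

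Finally, semi-simplicity will be read off the closed-form resolvent. For fixed $\xi$, the first component $\Rm(G_\lambda)(\xi,\lambda)$ is meromorphic in $\lambda$: its $\lambda$-dependence enters only through entire functions (the prefactors $(1\pm\xi)^{n-\lambda}$, the affine dependence of $G_\lambda$ on $\lambda$, and the integral terms) divided by the polynomials $\prod_{\ell=0}^{m}(n-\lambda-\ell)$, so its poles sit only at the zeros of these polynomials; each factor $n-\lambda-\ell$ is linear in $\lambda$ and, for $\lambda\in\{0,\dots,n\}$, vanishes for exactly one value of $\ell$, whence at most a simple pole at each such $\lambda$. The same applies to the second component of $\widetilde{\Rf}_n$, which is assembled from $\Rm(G_\lambda)$ and $\partial_\xi\Rm(G_\lambda)$. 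Therefore $\widetilde{\Rf}_n$ has at most a first-order pole at every $\lambda\in\{0,\dots,n\}$; since each such $\lambda$ is an isolated point of the spectrum, the pole is in fact exactly simple, the associated Riesz projection has finite-dimensional range equal to $\ker(\lambda-\widetilde{\Lf}_n)$, and hence $\lambda$ is semi-simple. Conjugating by $\Cf_\gamma$ transfers this to $\Lf_{n,\gamma}$. The one point that needs a little care — and is the main obstacle — is $\lambda=n$, where the factor $(n-\lambda)$ appears in \emph{every} denominator and the fundamental system \eqref{la_eq_n} degenerates; there one must confirm, again from the ratio-of-entire-functions structure with only simple denominator zeros, that the formula still develops no worse than a simple pole, so that the geometric multiplicity one found in Lemma \ref{lem:spec_tilde} coincides with the algebraic one.
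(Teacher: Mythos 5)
Your argument is correct and follows essentially the same route as the paper: conjugate to $\widetilde{\Lf}_n$ via Lemma \ref{lem:conjugation1d}, classify eigenfunctions through the explicit fundamental systems \eqref{la_neq_n}--\eqref{la_eq_n} (only the constant surviving the $H^{n+2}$ requirement at $\lambda=n$), and transport them back with $\Cf_\gamma$, which reproduces the stated $\hf_{j,\gamma}^\pm$ and $\hf_{n,\gamma}$ up to normalization. Your semi-simplicity step — reading the at-most-simple pole directly off the closed-form resolvent, since each denominator $\prod_{\ell=0}^{m}(n-\lambda-\ell)$ has only simple zeros at $\lambda\in\{0,\dots,n\}$ (including $\lambda=n$, where the vanishing factor is the single $\ell=0$ term in each product) — is the one-dimensional incarnation of the paper's ``simple pole of the resolvent implies semi-simple eigenvalue'' argument from Lemmas \ref{lem: spectrally simple} and \ref{lem:modemulti}, so it matches the intended proof.
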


With the above spectral information at hand, one can readily employ Lemma \ref{lem:boundcarriesover}, and Theorem  A.1.~of \cite{Ost24} together with the smooth dependence of $\phi_{n,\gamma}$ on $\gamma$, to derive the analogue of Theorem \ref{thm:semigroup}. For completeness, we include in the statement the definitions of the relevant operators and function spaces.
\begin{theorem}\label{thm:semigroup1d}
Let $n \in \mathbb{N}$. Then for every $\gamma \in \R_+^2$ the operator
\begin{align*}
\Lf_{n,\gamma} \ff(\xi):=\begin{pmatrix}
f_2(\xi)-\xi f_1'(\xi)
\\
-f_2(\xi) -\xi f_2'(\xi)+ f_1''(\xi)
\end{pmatrix}+\begin{pmatrix}
0
\\
2\frac{\phi_{n,\gamma}'(\xi)}{\phi_n(\xi)} f_1'(\xi)+\left(2n-2\frac{\xi \phi_{n,\gamma}'(\xi)}{\phi_{n,\gamma}(\xi)} \right) f_2(\xi)
\end{pmatrix},
\end{align*}
with $\mathcal{D}(\Lf_{n,\gamma}):=C^\infty\times C^\infty([-1,1])$
is closable in 
$$
\mathcal{H}:=H^{n+2}\times H^{n+1}(-1,1),
$$
and its closure, which we also denote by $\Lf_{n,\gamma}$, generates a $C_0$-semigroup $(\Sf_{n,\gamma}(\tau))_{\tau\geq 0}$ of bounded linear operators  on $\mathcal{H}$. Furthermore,
\begin{equation*}
    \sigma(\Lf_{n,\gamma}) \cap \big\{ z\in \C : \Re z \geq -\frac 34 \big\} = \{ 0,1,\dots,n \},
\end{equation*}
with each $\la \in \{ 0,1,\dots,n \}$ being a semi-simple eigenvalue. Finally, the spectral projection $\Pf_{n,\gamma}: \mathcal{H} \rightarrow \mathcal{H}$ associated with the set of unstable eigenvalues $\{0,1,\dots,n\}$ is of rank $2n+1$, and we have that
\begin{align*}
\|\Sf_{n,\gamma}(\tau)(\I-\Pf_{n,\gamma})\ff\|_{\mathcal{H}}&\lesssim e^{-\frac{\tau}{2}}\|\ff\|_{\mathcal{H}},
\end{align*}
for all $\ff\in \mathcal{H}$, $\tau \geq 0$, and $\gamma \in [\frac12,\frac32]^2$.  
\end{theorem}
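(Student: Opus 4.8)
The plan is to obtain Theorem~\ref{thm:semigroup1d} as a direct consequence of the analysis already carried out for the conjugated operator $\widetilde{\Lf}_n$, transported to $\Lf_{n,\gamma}$ by the bounded similarity of Lemma~\ref{lem:conjugation1d}, in complete analogy with how Theorem~\ref{thm:semigroup} follows from Proposition~\ref{generation_hat} in the higher-dimensional setting. The starting point is the preceding proposition, which asserts that $\widetilde{\Lf}_n$ generates a $C_0$-semigroup $(\widetilde{\Sf}_n(\tau))_{\tau\geq 0}$ on $\mathcal{H}=H^{n+2}\times H^{n+1}(-1,1)$ with
\[
\|\widetilde{\Sf}_n(\tau)(\I-\widetilde{\Pf}_n)\ff\|_{\mathcal{H}}\lesssim e^{-\tfrac34\tau}\|\ff\|_{\mathcal{H}},
\]
where $\widetilde{\Pf}_n$ is the spectral projection onto the unstable eigenvalues. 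That proposition itself rests on the explicit resolvent $\widetilde{\Rf}_n(\lambda)$ constructed above (in particular its uniform boundedness on $\{\Re z\geq -\tfrac34\}\setminus\bigcup_{j=0}^n\mathbb{D}_{1/2}(j)$), on the generation result for the free wave operator $\Lf_0$ in one dimension (Theorem~A.1 of \cite{Ost24}) together with a bounded-perturbation argument yielding resolvent boundedness for $\Re\lambda$ large, and on the Gearhart-Pr\"uss-Greiner theorem. Since $\sigma(\widetilde{\Lf}_n)\cap\{\Re z\geq-\tfrac34\}=\{0,1,\dots,n\}$ by Lemma~\ref{lem:spec_tilde}, with all these eigenvalues semi-simple, the generator restricted to $\ran(\I-\widetilde{\Pf}_n)$ has spectrum in $\{\Re z<-\tfrac34\}$, which is why the decay rate $\tfrac34$ — hence a fortiori $\tfrac12$ — is available.

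Next I would invoke Lemma~\ref{lem:conjugation1d}: for each $\gamma\in\R_+^2$ the bounded invertible operator $\Cf_\gamma$ preserves the test space $C^\infty\times C^\infty([-1,1])$, depends continuously on $\gamma$, and satisfies $\Cf_\gamma^{-1}\Lf_{n,\gamma}\Cf_\gamma=\widetilde{\Lf}_n$ there. Since $\Lf'_{n,\gamma}$ is a bounded operator on $\mathcal{H}$ (the coefficients $1/\phi_{n,\gamma}$ and $\phi_{n,\gamma}'/\phi_{n,\gamma}$ are smooth on $[-1,1]$ because $\phi_{n,\gamma}$ is a positive polynomial), closability of $\Lf_{n,\gamma}=\overline{\Lf_0+\Lf'_{n,\gamma}}$ in $\mathcal{H}$ follows, and its closure is boundedly similar to $\widetilde{\Lf}_n$ via $\Cf_\gamma$. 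Bounded similarity then transports everything we need: $\Lf_{n,\gamma}$ generates the $C_0$-semigroup $\Sf_{n,\gamma}(\tau)=\Cf_\gamma\widetilde{\Sf}_n(\tau)\Cf_\gamma^{-1}$; its spectrum equals $\sigma(\widetilde{\Lf}_n)$, so $\sigma(\Lf_{n,\gamma})\cap\{\Re z\geq-\tfrac34\}=\{0,1,\dots,n\}$; semi-simplicity is a similarity invariant, so each of these eigenvalues remains semi-simple; and the spectral projection is $\Pf_{n,\gamma}=\Cf_\gamma\widetilde{\Pf}_n\Cf_\gamma^{-1}$, whose rank equals that of $\widetilde{\Pf}_n$, namely $2n+1$. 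This last count is read off from the spectral lemma preceding the theorem: the eigenvalues $\lambda=0,1,\dots,n-1$ have geometric, hence by semi-simplicity algebraic, multiplicity $2$, with eigenfunctions $\hf_{j,\gamma}^\pm$, while $\lambda=n$ is simple, with eigenfunction $\hf_{n,\gamma}$.

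It remains to produce the decay estimate with a constant uniform in $\gamma\in[\tfrac12,\tfrac32]^2$. From $\I-\Pf_{n,\gamma}=\Cf_\gamma(\I-\widetilde{\Pf}_n)\Cf_\gamma^{-1}$ one gets $\Sf_{n,\gamma}(\tau)(\I-\Pf_{n,\gamma})=\Cf_\gamma\,\widetilde{\Sf}_n(\tau)(\I-\widetilde{\Pf}_n)\,\Cf_\gamma^{-1}$, hence
\[
\|\Sf_{n,\gamma}(\tau)(\I-\Pf_{n,\gamma})\ff\|_{\mathcal{H}}\lesssim \|\Cf_\gamma\|\,\|\Cf_\gamma^{-1}\|\,e^{-\tfrac34\tau}\|\ff\|_{\mathcal{H}}\leq \|\Cf_\gamma\|\,\|\Cf_\gamma^{-1}\|\,e^{-\tfrac12\tau}\|\ff\|_{\mathcal{H}}.
\]
Since $\gamma\mapsto\Cf_\gamma$ and $\gamma\mapsto\Cf_\gamma^{-1}$ are continuous and $[\tfrac12,\tfrac32]^2$ is compact, the norms $\|\Cf_\gamma\|$ and $\|\Cf_\gamma^{-1}\|$ are bounded uniformly over this parameter set, giving the claimed $\gamma$-uniform bound. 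Equivalently, one may simply appeal to the abstract transfer result Lemma~\ref{lem:boundcarriesover}, whose $d=1$ version is stated there, once its hypotheses are verified for $\widetilde{\Lf}_n$. I do not anticipate a genuine obstacle here: the substantive work — the closed-form resolvent and its uniform estimates, the reduction of the eigenvalue problem to a hypergeometric connection problem, and the semi-simplicity of the unstable eigenvalues — is already in place, and the only point requiring a little care is the book-keeping that renders all constants independent of $\gamma$ on the compact range $[\tfrac12,\tfrac32]^2$, which is precisely what the continuous $\gamma$-dependence of $\Cf_\gamma$ supplies.
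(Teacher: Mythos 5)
Your proposal is correct and follows essentially the same route as the paper, which derives Theorem \ref{thm:semigroup1d} from the generation result for $\widetilde{\Lf}_n$ by transferring it through the bounded similarity $\Cf_\gamma$ of Lemma \ref{lem:conjugation1d} (equivalently, via the $d=1$ version of Lemma \ref{lem:boundcarriesover}), with the $\gamma$-uniformity on $[\tfrac12,\tfrac32]^2$ supplied by the continuous dependence of $\Cf_\gamma$ on $\gamma$ and compactness. Your rank count $2n+1$ and the downgrade of the decay rate from $\tfrac34$ to $\tfrac12$ match the paper's reasoning as well.
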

Having established Theorem \ref{thm:semigroup1d}, we turn to defining the corresponding initial data operator. Namely, for every $n\in \mathbb{N}$ and 
\begin{equation}\label{d_d_0}
    \delta_0,\delta>0 \quad \text{with} \quad 3\delta \leq \delta_0 \leq 1,
\end{equation} we define
$$
\Uf_n:[1-\delta,1+\delta]^3\times [-\delta,\delta]\times \big(H^{n+2}\times H^{n+1}(-1-\delta_0,1+\delta_0) \big)\to \mathcal{H},
$$
via
\begin{align*}
\Uf_n(T,\gamma,\xi_0,\vf)(\xi)=\begin{pmatrix}
v_1(T(\xi+\xi_0))
\\
T v_2(T(\xi+\xi_0))
\end{pmatrix}+
\begin{pmatrix}
\ln\left[\phi_{n,\gamma}(T(\xi+\xi_0))\right]
\\
-nT+\frac{T(\xi_0+\xi)\phi_{n,\gamma}'(T(\xi+\xi_0))}{\phi_{n,\gamma}(T(\xi+\xi_0))}. 
\end{pmatrix}
-
\Psi^T_{n,\gamma}(0,\xi).
\end{align*}
In the ensuing lemma, we collect the properties of $\Uf_n$ that will be relevant later on.
\begin{lem}\label{lem:expansionU}
Assume \eqref{d_d_0}. Let $n=1$. Then for $(T,\gamma,\xi_0)\in [1-\delta,1+\delta]^3\times [-\delta,\delta]$, and 
$$
\vf\in H^{3}\times H^{2}(-1-\delta_0,1+\delta_0),
$$ we can write the initial data operator in the form
$$
\Uf_1(T,\gamma,\xi_0,\vf)=\begin{pmatrix}
v_1(T(\cdot+\xi_0))
\\
T v_2(T(\cdot+\xi_0))\end{pmatrix}+ \gamma_1 \hf_{0,\gamma}^+ +\gamma_2\hf_{0,\gamma}^- +(T-1) \hf_{1,\gamma} +\rf_1(T,\gamma,\cdot),
$$
where $\rf_1$ is a smooth function with 
$$
|\rf_1(T,\gamma,\xi)|\lesssim |T-1|^2+|\gamma|^2,
$$
for all $(T,\gamma,\xi_0)\in [1-\delta,1+\delta]^3\times [-\delta,\delta]$ and $\xi \in (-1,1)$.
If $n\geq 2$, then, for $(T,\gamma,\xi_0)\in [1-\delta,1+\delta]^3\times [-\delta,\delta]$, and
$$
\vf \in H^k\times H^{k-1}(-1-\delta_0,1+\delta_0),
$$
we have that 
$$
\Uf(T,\gamma,\xi_0,\vf)=\begin{pmatrix}
v_1(T(\cdot+\xi_0))
\\
T v_2(T(\cdot+\xi_0))\end{pmatrix}+ \gamma_1 \hf_{0,\gamma}^+ + \gamma_2\hf_{0,\gamma}^- +(T-1) n \hf_{1,\gamma}^+ + n\xi_0 \hf_{1,\gamma}^- +\rf_n(T,\gamma,\xi_0,\cdot),$$
where $\rf_n$ is a smooth function with 
$$
|\rf_n(T,\gamma,\xi_0,\xi)|\lesssim |T-1|^2+|\gamma|^2 +|\xi_0|^2,
$$
for all $(T,\gamma,\xi_0)\in [1-\delta,1+\delta]^3\times [-\delta,\delta]$ and $\xi \in (-1,1)$.
\end{lem}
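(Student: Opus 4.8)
The plan is to prove the decomposition by Taylor–expanding $\Uf_n$ in its four scalar symmetry parameters, exactly along the lines of the expansion of the $d\ge 2$ initial-data operator carried out inside the proof of Lemma~\ref{lem: fixed point with vanishing correction}. The first move is to isolate the $\vf$-dependence: from the definition, $\vf=(v_1,v_2)$ enters $\Uf_n(T,\gamma,\xi_0,\vf)$ only through the single summand $(v_1(T(\cdot+\xi_0)),Tv_2(T(\cdot+\xi_0)))$, so that $\Uf_n(T,\gamma,\xi_0,\vf)=(v_1(T(\cdot+\xi_0)),Tv_2(T(\cdot+\xi_0)))+\Uf_n^0(T,\gamma,\xi_0)$ with $\Uf_n^0:=\Uf_n(T,\gamma,\xi_0,0)$; here the hypothesis $3\delta\le\delta_0$ from \eqref{d_d_0} is precisely what ensures $T(\xi+\xi_0)\in[-1-\delta_0,1+\delta_0]$ for every $\xi\in[-1,1]$, so that this summand is well defined for $\vf\in H^{n+2}\times H^{n+1}(-1-\delta_0,1+\delta_0)$. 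It thus remains to expand the explicit, $\vf$-free $\mathcal H$-valued map $\Uf_n^0(T,\gamma,\xi_0)$.

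Second, I would establish the joint smoothness needed for Taylor's theorem. On $(T,\gamma_1,\gamma_2)\in[1-\delta,1+\delta]^3$, $\xi_0\in[-\delta,\delta]$ and $\xi$ in a fixed neighbourhood of $[-1,1]$, the polynomial $\phi_{n,\gamma}(y)=\gamma_1(1+y)^n+\gamma_2(1-y)^n$ is bounded away from zero: it is positive on $[-1,1]$ because $\gamma_i\ge 1-\delta>0$, and for $\delta_0$ small it stays positive on $[-1-\delta_0,1+\delta_0]$ (when $n$ is odd, $(2\pm\delta_0)^n$ dominates $\delta_0^n$). Consequently $\ln\phi_{n,\gamma}$, $\phi_{n,\gamma}^{-1}$ and $\phi_{n,\gamma}'/\phi_{n,\gamma}$, hence every ingredient of $\Uf_n^0$, are jointly $C^\infty$ in $(T,\gamma,\xi_0,\xi)$ with all partial derivatives uniformly bounded on this set. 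A direct check gives $\Uf_n^0(1,\gamma,0)\equiv 0$. I would then compute the first-order part of $\Uf_n^0$ in the symmetry parameters and recognise the resulting expansion as the asserted form; this uses the elementary identities $\xi\partial_\xi(1\pm\xi)^m=m(1\pm\xi)^m\mp m(1\pm\xi)^{m-1}$, the defining relation $\gamma_1(1+\xi)^n+\gamma_2(1-\xi)^n=\phi_{n,\gamma}(\xi)$, and the explicit fundamental systems \eqref{la_neq_n}--\eqref{la_eq_n} of the conjugated spectral ODE, which—after dividing by $\phi_{n,\gamma}$, i.e.\ applying the conjugation of Lemma~\ref{lem:conjugation1d}—express the combinations of $(1\pm\xi)$-powers over $\phi_{n,\gamma}$ that arise as the $\la=0$ and $\la=1$ eigenfunctions $\hf_{0,\gamma}^\pm,\hf_{1,\gamma}^\pm$ of $\Lf_{n,\gamma}$. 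For $n=1$ the two solutions $v_\pm$ in \eqref{la_neq_n} coincide, so $\hf_{1,\gamma}^+=\hf_{1,\gamma}^-=:\hf_{1,\gamma}$, and the spatial-translation parameter $\xi_0$ does not yield an independent eigendirection (a shift of $\xi$ is, for $n=1$, a reparametrisation of $(\gamma_1,\gamma_2)$), which accounts for the absence of an $\hf_{1,\gamma}^-$-term in the $n=1$ case. All contributions of order $\ge 2$ in $|T-1|,|\gamma|,|\xi_0|$ are then, by definition, collected into $\rf_n$, which is smooth in all its variables since it is assembled from smooth functions.

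Third, the pointwise estimate on $\rf_n$ follows from Taylor's formula with integral remainder, applied to $\Uf_n^0$ and combined with the uniform bounds on its second-order partials from the previous step. The genuinely non-routine part of the argument is the middle step, namely the explicit matching of the first-order coefficients to the eigenfunctions $\hf_{j,\gamma}^\pm$: one has to track the left/right $(1\pm\xi)$-structure as it is differentiated in $T$ and $\xi_0$ and check that it splits precisely along $\hf_{1,\gamma}^+$ and $\hf_{1,\gamma}^-$ (and, for $n\ge 2$, that the symmetries generate no $\hf_{n,\gamma}$-component). This is entirely elementary once the identities above are in place, but it is where the bookkeeping is heaviest; it is also where the present $1$d computation is genuinely simpler than its $d\ge 2$ counterpart, since the relevant solutions are powers of $1\pm\xi$ rather than hypergeometric functions.
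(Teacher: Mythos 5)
Your proposal is correct and follows essentially the same route as the paper: compute the partial derivatives of $\Uf_n$ in $T,\gamma_1,\gamma_2,\xi_0$ at the reference configuration, identify them with the symmetry eigenfunctions $\hf_{0,\gamma}^{\pm},\hf_{1,\gamma}^{\pm}$, and conclude by Taylor expansion with the remainder absorbed into $\rf_n$, including the same explanation of the $n=1$ degeneracy (the translation direction collapses onto the $T$/$\gamma$ directions since $\hf_{1,\gamma}^+=\hf_{1,\gamma}^-$). Your write-up merely spells out the smoothness/uniform-bound bookkeeping that the paper leaves implicit.
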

\begin{proof}
One readily computes 
\begin{gather*}
\partial_{\gamma_1} \Uf(T,\gamma,\xi_0,\vf)(\xi)|_{T=\gamma_1=\gamma_2=1, \xi_0=0}=\hf_0^+,
\\
\partial_{\gamma_2} \Uf(T,\gamma,\xi_0,\vf)(\xi)|_{T=\gamma_1=\gamma_2=1, \xi_0=0}=\hf_0^-,
\\
\partial_{T} \Uf(T,\gamma,\xi_0,\vf)(\xi)|_{T=\gamma_1=\gamma_2=1, \xi_0=0}=n \hf_1^+,
\end{gather*}
for all $n \geq 1$. For $n \geq 2$ we have that
$$
\partial_{\xi_0} \Uf(T,\gamma,\xi_0,\vf)(\xi)|_{T=\gamma_1=\gamma_2=1,\xi_0=0}=n \hf_1^+.
$$
Consequently, the claim follows from a Taylor expansion. Note that there is no dependence of of $\rf_1$ on $\xi_0$, since the $\phi_1$ is constant, and therefore invariant under spatial translations.
\end{proof}
As before, we define the space $\mathcal X$ by 
\begin{gather*}
\mathcal{X}:=\{(\Phi\in C([0,\infty),\mathcal{H}):\|\Phi(\tau)\|_\mathcal{H}\lesssim e^{-\frac{\tau}{2}} \text{ for all } \tau \geq 0\},
\\
\|\Phi\|_\mathcal{X}:=\sup_{\tau\geq 0} \left[e^{\frac{\tau}{2}}\|\psi(\tau)\|_\mathcal{H}\right].
\end{gather*}
Next, our aim to construct in $\mathcal{X}$ solutions to the integral equation 
\begin{align}\label{Eq:tosolve1d}
\Phi(\tau)= \Sf_{n,\gamma}(\tau)\Uf(T,\gamma,\xi_0,\vf)+ \int_0^\tau \Sf_{n,\gamma}(\tau-\sigma)\Nf(\Phi(\sigma)) d\sigma,
\end{align}
upon possibly modifying $\vf$ along the genuinely unstable modes of the semigroup.
For the statement, we denote by $\mathcal{X}_\delta$ the unit ball centered at zero in $\mathcal{X}$, and we recall our index set $J_n$ \eqref{index_set}.
\begin{lem}\label{lem:1dlem}
Let $n\in \mathbb{N}$. Then there exist constants $\delta_0>0$ and $C>1$ such that for all $0<\delta\leq \delta_0$ and all $\vf \in H^k\times H^{k-1} (-1-\delta_0,1 +\delta_0)$ with 
$$
\|\vf\|_{H^k\times H^{k-1} (-1-\delta_0,1 +\delta_0)} \leq \frac{\delta}{C^2},
$$ there exist $(T,\gamma)\in [1-\tfrac{\delta}{C},1+\tfrac{\delta}{C}]^3$, $x_0 \in [-\tfrac{\delta}{C},\tfrac{\delta}{C}]$ and, in case $n \geq2 $,  constants $\eta_n, \eta_j^{\pm} \in  [-\tfrac{\delta}{C},\tfrac{\delta}{C}]$, for $j\in J_n$, such that for
$$
\widetilde \vf:= \vf-\sum_{j=2}^{n-1} (\eta^+_j \hf_{j,0}^+ + \eta^-_j \hf_{j,0}^-) -\eta_n \hf_{n,0},
$$
there exists a unique solution $\Phi \in \mathcal{X}_\delta$ to the equation
 \begin{align*}
 \Phi(\tau)= \Sf_{n,\gamma}(\tau)\Uf_n(T,\gamma,\xi_0,\widetilde \vf)+ \int_0^\tau \Sf_{n,\gamma}(\tau-\sigma)\Nf(\Phi(\sigma)) d\sigma.
 \end{align*}
\end{lem}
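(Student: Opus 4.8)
The plan is to follow the Lyapunov--Perron scheme used in the proof of Proposition 4.2 of \cite{Ost24} and of Lemma \ref{lem: fixed point with vanishing correction}, the only genuine addition being the bookkeeping of the enlarged family of symmetries. The first ingredient is the one-dimensional counterpart of Lemma \ref{lem:fixed point}: for each $\gamma\in[\tfrac12,\tfrac32]^2$ one introduces the correction term $\Cf_\gamma(\ff,\Phi)\in\ran(\Pf_{n,\gamma})$, obtained by summing over all unstable modes $j=0,1,\dots,n$ (with multiplicities) the spectral projections of $\ff+\int_0^\infty e^{-j\sigma}\Nf(\Phi(\sigma))\,d\sigma$, together with the stabilized map
\begin{align*}
\Kf_\gamma(\ff,\Phi)(\tau):=\Sf_{n,\gamma}(\tau)\big[\ff-\Cf_\gamma(\ff,\Phi)\big]+\int_0^\tau \Sf_{n,\gamma}(\tau-\sigma)\Nf(\Phi(\sigma))\,d\sigma.
\end{align*}
Using the Banach-algebra (local Lipschitz) estimate for $\Nf$, the exponential decay of the semigroup orthogonal to the unstable subspace and the semi-simplicity of the unstable eigenvalues from Theorem \ref{thm:semigroup1d}---all with constants uniform in $\gamma$ on $[\tfrac12,\tfrac32]^2$---a standard contraction argument gives, for $\|\ff\|_\mathcal{H}$ small enough, a unique fixed point $\Phi\in\mathcal{X}_\delta$ of $\Phi\mapsto\Kf_\gamma(\ff,\Phi)$, depending continuously on $\ff$ and $\gamma$; moreover this $\Phi$ solves \eqref{Eq:tosolve1d} if and only if $\Cf_\gamma(\ff,\Phi)=0$.

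It then remains to choose the symmetry and unstable-mode parameters so that the correction vanishes. For $n\geq2$ I would write $\eta=(\{\eta_j^\pm\}_{j\in J_n},\eta_n)$ and $\Hf(\vf,\eta):=\vf-\sum_{j\in J_n}(\eta_j^+\hf_{j,0}^++\eta_j^-\hf_{j,0}^-)-\eta_n\hf_{n,0}$, while for $n=1$ one sets $\Hf(\vf,\eta):=\vf$ and $\xi_0:=0$. By the first step, for every $(T,\gamma,\xi_0,\eta)$ in a small ball around the base point $(1,(1,1),0,0)$ there is a unique $\Phi\in\mathcal{X}_\delta$ with $\Phi=\Kf_\gamma\big(\Uf_n(T,\gamma,\xi_0,\Hf(\vf,\eta)),\Phi\big)$. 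Feeding in the Taylor expansion of $\Uf_n$ from Lemma \ref{lem:expansionU} and pairing $\Cf_\gamma(\Uf_n(\cdots),\Phi)$ against the system $\{\widehat\gf\}$ biorthogonal to the eigenfunctions $\{\hf_{j,\gamma}^\pm\}_{j=0}^{n-1}$ together with $\hf_{n,\gamma}$ (such a biorthogonal basis of $\ran(\Pf_{n,\gamma})$ exists and varies continuously with $\gamma$ near $(1,1)$, since these eigenfunctions remain linearly independent), the vanishing of $\Cf_\gamma$ becomes a system of $2n+1$ scalar equations. Its leading linear part reads off $\gamma_1$, $\gamma_2$, $n(T-1)$, $n\xi_0$ and each $\eta_j^\pm,\eta_n$ separately, because each parameter deviation couples, to first order, to exactly one of the $2n+1$ eigenfunctions with a nonzero scalar; the remaining terms are quadratically small in $\delta$, together with the a priori small contributions $\Pf_{n,\gamma}\vf$ and $\Pf_{n,\gamma}\int_0^\infty e^{-j\sigma}\Nf(\Phi(\sigma))\,d\sigma$. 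Thus the problem reduces to a fixed point $p=F(p)$ for a continuous map $F$ carrying the closed ball of radius $\delta/C$ in the $2n+1$ parameters into itself; Brouwer's theorem (or, equivalently, a contraction estimate, the nonlinear part being quadratically small) yields a solution, whose components are the claimed $(T,\gamma,\xi_0,\eta)$. For $n=1$ only the three parameters $(\gamma_1,\gamma_2,T)$ enter the $3\times3$ system, $\xi_0$ decoupling entirely, and there are no $\eta$'s.

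The fixed point of the first step and the ``ball into itself'' check are routine and run parallel to the $d\geq2$ case. The one point demanding care will be the $\gamma$-dependence of the spectral data: the projection $\Pf_{n,\gamma}$, the eigenfunctions $\hf_{j,\gamma}^\pm$ and the dual functionals all move with $\gamma$, whereas the mode corrections $\eta_j^\pm,\eta_n$ are carried out along the fixed profile $\gamma=(1,1)$. One must check that near $\gamma=(1,1)$ these objects depend continuously on $\gamma$, that the $2n+1$ ``diagonal'' couplings stay uniformly bounded away from zero, and, most importantly, that the semigroup decay estimate of Theorem \ref{thm:semigroup1d} holds with a constant uniform in $\gamma\in[\tfrac12,\tfrac32]^2$, so that the entire scheme closes with a single pair $(\delta_0,C)$. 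This is exactly where the conjugation Lemma \ref{lem:conjugation1d} is useful: all $\gamma$-dependent resolvent and semigroup bounds reduce to the single $\gamma$-independent operator $\widetilde\Lf_n$, the $\gamma$-dependence re-entering only through the bounded, continuously varying similarity maps $\Cf_\gamma$.
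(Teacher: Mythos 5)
Your proposal is correct and follows essentially the same route as the paper, whose proof of this lemma is precisely the "straightforward modification" of Lemma \ref{lem: fixed point with vanishing correction} (and of Proposition 4.2 in \cite{Ost24}) that you spell out: a stabilized fixed-point map with correction terms, the Taylor expansion of $\Uf_n$ from Lemma \ref{lem:expansionU}, a biorthogonal pairing on $\ran(\Pf_{n,\gamma})$, and a finite-dimensional fixed-point argument in the $2n+1$ parameters, with uniformity in $\gamma$ supplied by Theorem \ref{thm:semigroup1d} via the conjugation of Lemma \ref{lem:conjugation1d}. Your handling of the extra symmetries (including setting $\xi_0=0$ when $n=1$, where $\phi_1$ is constant) matches the paper's intent.
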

\begin{proof}
This follows from a straightforward modification of the arguments used in the proof of Lemma \ref{lem: fixed point with vanishing correction}.
\end{proof}
\begin{proof}[Proof of Theorems \ref{thm;wavemaps} and \ref{thm:stabd1}]
Using Lemma \ref{lem:1dlem}, one easily establishes Theorem \ref{thm;wavemaps}, and subsequently Theorem \ref{thm:stabd1} in the exact same fashion as Theorem \ref{thm:stabdgeq2}.
\end{proof}

\bibliographystyle{plain}
\bibliography{references_dss}
\end{document}